\theoremstyle{plain}
\newtheorem{theorem}{Theorem}
\newtheorem{lemma}[theorem]{Lemma}
\newtheorem{corollary}[theorem]{Corollary}
\theoremstyle{definition}
\newtheorem{remark}{Remark}
\newtheorem{definition}{Definition}
\newtheorem{assumption}{Assumption}
\newcommand{\bTheta}{\boldsymbol{\Theta}}
\newcommand{\btheta}{\boldsymbol{\theta}}
\newcommand{\balpha}{\boldsymbol{\alpha}}
\newcommand{\bbeta}{\boldsymbol{\beta}}
\newcommand{\bd}[1]{\boldsymbol{#1}}
\newcommand{\hbtheta}{\widehat{\boldsymbol{\theta}}}
\newcommand{\idf}{\mathbbm{1}}
\newcommand{\bX}{\boldsymbol{X}}
\newcommand{\bSigma}{\boldsymbol{\Sigma}}
\DeclareMathOperator*{\argmin}{argmin}
\DeclareMathOperator*{\argmax}{argmax}
\newcommand{\op}[1]{{\left\vert\kern-0.25ex\left\vert\kern-0.25ex\left\vert #1\right\vert\kern-0.25ex\right\vert\kern-0.25ex\right\vert}}
\newcommand{\mn}{\vert\kern-0.25ex \vert\kern-0.25ex \vert}
\newcommand{\lmin}[1]{\lambda_{\min}\left(#1\right)}
\newcommand{\lmax}[1]{\lambda_{\max}\left(#1\right)}
\newcommand{\mR}{\mathbb{R}}
\newcommand{\hS}{\hat{S}}
\newcommand{\ltheta}{\hbtheta^{\textup{Lasso}}}
\newcommand{\hdtheta}{\hat{\theta}^{\textup{hard}}}
\newcommand{\hdbtheta}{\hat{\boldsymbol{\theta}}^{\textup{hard}}}
\newcommand{\ptheta}{\widehat{\boldsymbol{\theta}}^{\textup{OPT}}}
\newcommand{\mS}{\mathcal{S}}
\newcommand{\mI}{\mathcal{I}}
\newcommand{\bY}{\boldsymbol{Y}}
\newcommand{\mN}{\mathcal{N}}
\newcommand{\tbtheta}{\widetilde{\boldsymbol{\theta}}}
\newcommand{\bA}{\boldsymbol{A}}
\newcommand{\mC}{\mathcal{C}}
\newcommand{\tgamma}{\widetilde{\gamma}}
\newcommand{\hdlambda}{\lambda^{\textup{OPT}}}
\newcommand{\tS}{\tilde{S}}
\newcommand{\bu}{\boldsymbol{u}}
\newcommand{\bv}{\boldsymbol{v}}
\newcommand{\tbu}{\tilde{\boldsymbol{u}}}
\newcommand{\hbu}{\hat{\boldsymbol{u}}}
\newcommand{\hbTheta}{\hat{\boldsymbol{\Theta}}}
\newcommand{\bZ}{\boldsymbol{Z}}
\newcommand{\bW}{\boldsymbol{W}}
\newcommand{\mE}{\mathcal{E}}
\newcommand{\pS}{\hat{S}^{\textup{OPT}}}
\newcommand{\hdS}{\hat{S}^{\textup{OPT}}}
\newcommand{\hbSigma}{\hat{\boldsymbol{\Sigma}}}
\newcommand{\bepsilon}{\bd{\epsilon}}
\newcommand{\bx}{\boldsymbol{x}}
\newcommand{\bR}{\mathbb{R}}
\newcommand{\bN}{\mathbb{N}}
\newcommand{\cA}{\mathcal{A}}
\newcommand{\cF}{\mathcal{F}}
\newcommand{\Exp}{\mathbb{E}}
\newcommand{\Pro}{\mathbb{P}}
\newcommand{\tL}{\widetilde{\mathcal{L}}}
\begin{document}

\begin{frontmatter}
\title{Minimax Rate-Optimal Algorithms for High-Dimensional Stochastic Linear Bandits}
\runtitle{}

\begin{aug}
\author{\fnms{Jingyu}~\snm{Liu}\ead[label=e1]{jingyu.liu@queensu.ca}}
\and
\author{\fnms{Yanglei}~\snm{Song}\ead[label=e2]{yanglei.song@queensu.ca}}
\address{Department of Mathematics and Statistics, Queen's University }

\address{\printead[presep={\ }]{e1,e2}}
\end{aug}

\begin{abstract}
We study the stochastic linear bandit problem with multiple arms over $T$ rounds, where the covariate dimension $d$ may exceed $T$, but each arm-specific parameter vector is $s$-sparse. We begin by analyzing the sequential estimation problem in the single-arm setting, focusing on cumulative mean-squared error. We show that Lasso estimators are provably suboptimal in the sequential setting, exhibiting suboptimal dependence on $d$ and $T$, whereas thresholded Lasso estimators---obtained by applying least squares to the support selected by thresholding an initial Lasso estimator---achieve the minimax rate. Building on these insights, we consider the full linear contextual bandit problem and propose a three-stage arm selection algorithm that uses thresholded Lasso as the main estimation method. We derive an upper bound on the cumulative regret of order $s(\log s)(\log d + \log T)$, and establish a matching lower bound up to a $\log s$ factor, thereby characterizing the minimax regret rate up to a logarithmic term in $s$. Moreover, when a short initial period is excluded from the regret, the proposed algorithm achieves exact minimax optimality.
\end{abstract}

\begin{keyword}[class=MSC]
\kwd[Primary ]{62L12}
\kwd[; secondary ]{62J07}
\end{keyword}

\begin{keyword}
\kwd{Stochastic Linear Bandits}
\kwd{High-dimensional Covariates}
\kwd{Minimax Optimality}
\kwd{Threshoded Lasso}
\end{keyword}

\end{frontmatter}

\section{Introduction}
With the rapid growth of data from sources such as mobile applications    
and healthcare monitoring, streaming data has become an important focus in modern statistical analysis \citep{shalev2012online,lattimore2020bandit}. Streaming data refers to data arriving continuously over time, often in large volumes and at high velocity. In these settings, statistical procedures must operate in real time, updating models and making decisions as new observations arrive.
In this work, we focus on multi-armed bandit (MAB) problems,  a fundamental example of sequential decision-making, with wide applications including online advertising \citep{chapelle2011empirical}, personalized medicine \citep{tewari2017ads}, and recommendation systems \citep{Li2010}.

Classical MAB problems, introduced by \cite{thompson1933likelihood} and formalized by \cite{robbins1952some}, concern the sequential selection of arms with unknown mean rewards to maximize cumulative reward---or equivalently, to minimize cumulative regret relative to always choosing the best arm. Foundational algorithms such as upper confidence bound \citep{lai1985asymptotically, auer2002finite}, Thompson sampling \citep{agrawal2012analysis}, and information-directed sampling \citep{russo2018learning} achieve near-optimal regret guarantees in this setting \citep{audibert2009minimax, bubeck2012regret}. However, these models do not incorporate contextual information, or covariates, which can greatly enhance decision-making. The inclusion of context leads to diverse modeling frameworks: parametric \citep{li2010contextual} vs.\ nonparametric \citep{perchet2013multi}, linear \citep{abbasi2011improved, goldenshluger2013linear, bastani2020online, bastani2021mostly} vs.\ nonlinear \citep{jun2017scalable, ding2021efficient}, and stochastic \citep{goldenshluger2013linear, bastani2020online, bastani2021mostly} vs.\ adversarial \citep{auer2002nonstochastic, kakade2008efficient} environments;  see \cite{lattimore2020bandit} for a comprehensive overview. Given the breadth of the literature, we begin by precisely defining our setting and focusing on the most relevant prior work.  

Suppose there are $K$ arms, each associated with an unknown $d$-dimensional parameter vector $\btheta^{(k)} \in \bR^{d}$ for $k \in [K] := \{1,\ldots, K\}$. We consider the following sequential decision-making problem over $T$ rounds. At each round $t \in [T] := \{1,\ldots,T\}$, a covariate vector $\bX_t \in \bR^{d}$ is observed, an arm $A_t \in [K]$ is selected, and a reward $Y_t \in \bR$ is received. The arm selection $A_t$ is allowed to depend only on $\bX_t$ and the past history up to time $t-1$, that is, $\{(\bX_s, Y_s, A_s) : s < t\}$. We assume that the covariate vectors $\{\bX_t: t \in [T]\}$ are independent and identically distributed (i.i.d.) random vectors in $\bR^d$, and that the expected reward is linear in $\bX_t$ and the parameter of the selected arm:
\[
Y_t = \bX_t'\btheta^{(A_t)} + \epsilon_t,
\]
where $\{\epsilon_t: t \in [T]\}$ are i.i.d. zero-mean random variables, independent of $\{\bX_t: t \in [T]\}$. Given an arm selection rule with arm choices $\{A_t: t \in [T]\}$, we evaluate its performance using the expected cumulative regret. Specifically, for each $t \in [T]$, define the instantaneous regret:
$r_t := \max_{k \in [K]} \bX_t'\btheta^{(k)} - \bX_t'\btheta^{(A_t)}$, 
which compares the chosen arm to the best possible arm at time $t$. The expected cumulative regret is then defined as $\sum_{t=1}^T \mathbb{E}[r_t]$. The objective is to find an arm selection rule that minimizes this quantity. 

When the covariate dimension $d$ is much smaller than the time horizon $T$, we refer to this as the low-dimensional regime. In this setting, the arm parameters are typically estimated using ordinary least squares (OLS) or ridge regression. \cite{goldenshluger2013linear} proposes a \emph{forced sampling} strategy based on OLS for the two-arm case ($K = 2$), and establishes an upper bound on the cumulative regret of order $O(d^3 \log T)$ under a \emph{margin condition}. This condition requires that the probability of $\bX_t$ falling within a distance $\tau$ of the decision boundary $\{ \bx \in \mathbb{R}^d : (\btheta^{(1)})'\bx = (\btheta^{(2)})'\bx \}$ is at most $C \tau$ for any $\tau > 0$. This bound is improved to $O(d^2 \log^{3/2} d \log T)$ in \cite{bastani2020online}. The work \cite{goldenshluger2013linear} also establishes a lower bound of order $\Omega(\log T)$, which characterizes the optimal dependence on $T$, though the optimal dependence on $d$ remains unresolved. An alternative approach is proposed in \cite{abbasi2011improved}, where the LinUCB algorithm---based on ridge regression and the principle of upper confidence bounds---is shown to attain a regret upper bound of $O(\log^2 T)$ \citep{DBLP:journals/corr/abs-2002-05152}. However, \cite{song2022truncated} demonstrates that LinUCB is suboptimal, and proposes a variant in which the algorithm is truncated at a prescribed time and followed by pure exploitation. The resulting truncated LinUCB algorithm achieves a regret bound of $O(d \log T)$, which is shown to be optimal in both $d$ and $T$.

In this work, we focus on the \emph{high-dimensional} regime, where the covariate dimension $d$ may substantially exceed the time horizon $T$. Such settings arise in applications where rich contextual features are available but sample sizes are limited due to budget or time constraints \cite{bastani2020online,lattimore2020bandit}. To ensure statistical tractability, we assume that for each arm $k \in [K]$,  $\btheta^{(k)}$ is $s_0$-sparse, meaning that it has at most $s_0$ nonzero components. This reflects the standard assumption that only a small subset of covariates is relevant for reward prediction, and is widely used in the high-dimensional bandit literature \cite{bastani2020online,hao2020high,oh2021sparsity,ren2024dynamic}. Our goal is to characterize the minimax rate of cumulative regret and to develop algorithms that approach this rate.
 
For any bandit algorithm, a fundamental component is the estimation of the arm parameters. Based on these estimates---and possibly on a quantification of their uncertainty---the algorithm selects an arm to pull. In order to understand the statistical limits of estimation methods in sequential, high-dimensional settings,
we first isolate the \textit{sequential estimation} problem, where a \textit{single} sparse parameter vector is estimated sequentially without adaptive arm selection.   This problem is of independent interest and provides insight into the design of effective arm-selection strategies in the full \textit{bandit problem}.

\subsection{Sequential High-Dimensional Linear Regression}\label{sec: sequential estimation}
Consider the setting with a single arm, i.e., $K = 1$, and denote $\btheta^{(1)}$ by $\btheta$. At each round $t \in [T]$, we construct an estimator $\hat{\btheta}_t$ based on the data $\{(\bX_s, Y_s): s \in [t]\}$ observed up to time $t$. We evaluate the sequence of estimators $\{\hat{\btheta}_t: t \in [T]\}$ through the cumulative capped mean-square error:
$\sum_{t=1}^{T} \mathbb{E}[\min\{ \| \hat{\btheta}_t - \btheta \|_2^2, \; \xi \} ]$,
where $\xi > 0$ is a fixed constant;  see Section~\ref{sec: multi linear}.

Since no arm selection is involved, at each round $t \in [T]$, the data $\{(\bX_s, Y_s): s \in [t]\}$ are i.i.d., allowing us to leverage the extensive literature on \textit{fixed-sample} high-dimensional linear regression. In particular, the Lasso estimator \cite{tibshirani1996regression}, which minimizes the $\ell_1$-penalized least squares criterion, is among the most widely used methods in this setting. For fixed $t = \Omega(s_0 \log d)$, and under restricted eigenvalue-type conditions, it is well known \citep{van2008high, van2016estimation, wainwright2019high} that the Lasso estimator $\hat{\btheta}_t^{\textup{Lasso}}$ achieves a mean-squared error of order $O(s_0 \log d / t)$, which matches the minimax rate. Hence, no estimator can uniformly outperform Lasso over the class of all $s_0$-sparse parameter vectors $\btheta$ for a \textit{fixed} $t$. 
Despite its minimax optimality, many variants of Lasso have been proposed---such as the elastic net \cite{zou2005regularization}, group Lasso \cite{yuan2006model}, adaptive Lasso \cite{zou2006adaptive}, and thresholded Lasso  \cite{van2011adaptive, belloni2013least}---with the goal of improving instance-specific estimation accuracy or variable selection performance.

Now, in the sequential estimation problem, we may apply the Lasso estimator $\hat{\btheta}_t^{\textup{Lasso}}$ at each round $t \in [T]$. Summing the upper bound $s_0 \log d  / t$ over $t \in [T]$  yields a cumulative error bound of order $O(s_0 \log(d) \log(T))$ for the sequence of Lasso estimators. However, it remains unclear whether this bound is tight for Lasso, or whether it coincides with the minimax rate. Importantly, the fact that Lasso is minimax optimal at each fixed time $t \in [T]$ does not imply minimax optimality in terms of \textit{cumulative} error. The key distinction is that in the sequential setting, the underlying parameter vector $\btheta$ is shared across all $T$ rounds, whereas in the \textit{fixed-sample} minimax theory, the worst-case $\btheta$ may vary across rounds.

\smallskip
\noindent \textbf{Our contributions.} First, we establish in Theorem \ref{thm: lower bound of lasso} that the worst-case cumulative error of Lasso estimators is of order $\Omega(s_0 \log(d) \log(T))$, indicating that the previously derived upper bound is tight.
Second, we consider the OPT-Lasso estimators\footnote{This is another term for ``thresholded Lasso''. We adopt this terminology from \cite{belloni2013least}.} \cite{van2011adaptive, belloni2013least}, which first threshold an initial Lasso estimator and then apply ordinary least squares to the selected support. In Theorem \ref{thm: seq multi linear, upper bound}, we provide an instance-specific upper bound on the mean-squared error of OPT-Lasso for a fixed $t \in [T]$, which explicitly characterizes the impact of the nonzero components of $\btheta$, and leads to a cumulative error bound of order $s_0(\log d + \log T)$. A key technical tool is the recent $\ell_{\infty}$ bound for Lasso estimators established in \cite{bellec2022biasing}.
Third, we establish a matching lower bound that holds uniformly over all permissible estimators. Specifically, we lower bound the minimax rate via Bayesian risks by constructing two families of prior distributions, and show in Theorem~\ref{thm: seq multi linear, lower bound} that any permissible sequence of estimators must incur a worst-case cumulative error of order at least $s_0(\log d + \log T)$.

These results characterize the minimax rate and demonstrate that Lasso estimators are provably suboptimal in the \textit{sequential} setting, while OPT-Lasso estimators are minimax optimal. The sequential estimation problem is of independent interest, and our analysis further indicates that OPT-Lasso is broadly preferable to Lasso in sequential tasks---for example, in the bandit setting.

\subsection{High-dimensional Linear Contextual Bandits}
Next, we focus on the bandit setting with at least two arms, i.e., $K \geq 2$, under the high-dimensional linear contextual framework. In this setting, \cite{bastani2020online} extends the ``forced sampling'' strategy of \cite{goldenshluger2013linear} by replacing ridge regression with Lasso estimators, and establishes an upper bound on the expected cumulative regret of order $O(s_0^2(\log d + \log T)^2)$. Subsequently, \cite{wang2018minimax} proposes an algorithm based on a two-step weighted Lasso estimator, achieving an improved upper bound of order $O(s_0^2(s_0 + \log d)\log T)$. More recently, \cite{ariu2022thresholded} introduces a pure exploitation algorithm based on thresholded Lasso estimators. Under the assumptions that (i) $s_0$ is a constant, and (ii) the nonzero components of $\{\btheta^{(k)}: k \in [K]\}$ are bounded away from zero by a \textit{constant},
\cite{ariu2022thresholded} derives an upper bound on the cumulative regret of order $O((\log\log d)  \log d  +  \log T)$. 
It is important to note that all of the aforementioned results rely on the margin condition, which enables regret bounds that are logarithmic in the time horizon $T$. Without this condition, the cumulative regret grows polynomially in $T$; see, for instance, \cite{kim2019doubly, oh2021sparsity, ariu2022thresholded, ren2024dynamic}.  

As for lower bounds, the result in the low-dimensional regime \cite{song2022truncated} implies a trivial lower bound of order $\Omega(s_0 \log T)$, which corresponds to the case where the support of $\btheta^{(k)}$ is known for $k \in [K]$. However, it remains unclear whether this lower bound is tight in the high-dimensional setting or what the minimax regret rate should be in general.

\smallskip
\noindent \textbf{Our contributions.} First, building on insights from the sequential estimation problem, we propose a three-stage algorithm. In Stage 1, spanning from round $t = 1$ to $\gamma_1$, arms are selected uniformly at random from $[K]$. In Stage 2, from round $\gamma_1 + 1$ to $\gamma_2$, we estimate the arm parameters using the \textit{Lasso} method and select the arm that maximizes the \textit{estimated} reward based on these estimates. In Stage 3, from round $\gamma_2 + 1$ to $T$, we replace the Lasso estimators with \textit{OPT-Lasso} estimators and continue to perform pure exploitation as in Stage 2. 
To improve computational efficiency, the estimators in Stages 2 and 3 are updated periodically rather than at every round. As discussed in Section~\ref{sec: problem formulation}, we choose $\gamma_1$ to be of order $s_0 \log(dT)$ and $\gamma_2$ to be of order $s_0^5 \log(dT)$, so that Stage 3 dominates the time horizon and the algorithm predominantly relies on OPT-Lasso estimators. The motivation for introducing Stage 2 is discussed in Section~\ref{sec: problem formulation}.

Second, in Theorem \ref{thm: regret over horizon}, we establish an upper bound of order $O(s_0 (\log s_0) (\log d + \log T))$ on the overall cumulative regret of the proposed three-stage algorithm. Furthermore, if the cumulative regret is computed starting from round $C s_0^5 \log(dT)$, for a sufficiently large constant $C$, the upper bound improves to $O\left(s_0 (\log d + \log T)\right)$, removing the $\log s_0$ factor. We emphasize that our analysis allows $s_0$ to grow with $d$ and $T$, and does not require any ``beta-min'' type condition as in \cite{ariu2022thresholded}. See more discussions in Subsection \ref{subsec:bandit_minimax}.

Third, in Theorem~\ref{thm: regret lower bound}, we establish a lower bound of order $s_0 (\log d + \log T)$ for the cumulative regret starting from round $C s_0^5 \log(dT)$, uniformly over all permissible rules and for any constant $C > 0$. This result automatically implies a lower bound of the same order for the \textit{overall} cumulative regret. As a consequence, we characterize the minimax rate of the overall cumulative regret, up to a $\log s_0$ factor, and show that the proposed algorithm is nearly minimax optimal. Moreover, if the initial period of length $O(s_0^5 \log T)$ is ignored, the minimax rate is $s_0 (\log d + \log T)$, which is attained by the proposed algorithm.

Finally, we show in Theorem \ref{thm: bandit lower bound of lasso} that using Lasso instead of OPT-Lasso leads to provably suboptimal performance, which demonstrates the necessity of using thresholded estimators in this setting. 
In addition, simulation results further demonstrate the favorable performance of the proposed three-stage algorithm compared to alternative approaches.

\subsection{Paper Organization and Notations}
In Section \ref{sec: multi linear}, we study the sequential high-dimensional linear regression problem. The Lasso and OPT-Lasso estimators are formally introduced in Subsection \ref{subsec:lasso_and_opt_lasso}. In Subsection \ref{subsec:minimax_rate}, we characterize the minimax rate and show that OPT-Lasso achieves minimax optimality. The suboptimality of Lasso is analyzed in Subsection \ref{sec: suboptimality of lasso}, with supporting simulation results presented in Subsection \ref{sec: HSLR simulation}.

Section \ref{sec: problem formulation} turns to the bandit setting. A three-stage algorithm is introduced in Subsection \ref{subsec:three_stage_algo}, and its cumulative regret is analyzed in Subsection \ref{subsec:bandit_minimax}, where we also establish nearly matching lower bounds. The suboptimality of Lasso in this setting is analyzed in Subsection \ref{subsec:lasso_bandit}. Corresponding simulation results are presented in Subsection \ref{sec: bandit simulation}. The proof of the regret bounds is provided in Subsection \ref{sec: proof of thm: regret over horizon}. We conclude in Section \ref{sec:conclusion}, and include additional proofs and simulation results in the Appendix.

\smallskip
\noindent \textbf{Notations.} The abbreviation ``i.i.d.'' stands for ``independent and identically distributed''. For each $n\in \mathbb{N} := \{1,2,\ldots\}$, we denote by $[n]=\{1,\cdots, n\}$, and by $\bd{0}_n$ the zero vector in $\bR^{n}$. For $a,b \in \bR$, $a \vee b= \max\{a,b\}$ and $a\wedge b = \min\{a,b\}$.  $\idf\{R\}$ denotes the indicator function of event $R$; that is,  $\idf\{R\}$ is $1$ (resp.~$0$) if $R$ is true (resp.~false). 
Let $\{a_T; T \in \mathbb{N}\}$ be a sequence of real numbers. We denote $a_T=O(T)$ (resp.~$a_T=\Omega(T)$) if there exists a constant $C>0$, that does not depend on $T$, such that $a_T \le C T$ (resp.~$a_T \ge C^{-1} T$). 

Unless otherwise specified, all vectors are assumed to be column vectors. For a vector $\bv = (v_1, \ldots, v_d)' \in \mathbb{R}^d$, the $\ell_p$ norm is defined as $\|\bv\|_p = ( \sum_{i \in [d]} |v_i|^p )^{1/p}$ for $1 \le p < \infty$. The $\ell_0$ “norm” $\|\bv\|_{0} =|\{j \in [d]:  \bv_j \neq 0\}|$ denotes the number of nonzero components in $\bv$, and the $\ell_\infty$ norm is given by $\|\bv\|_\infty = \max_{j \in [d]} |v_j|$. 
For a matrix $\bA\in \mR^{m\times n}$,  define the matrix norms $\op{\bA}_1 = \max_{1\le j\le n}\sum_{i=1}^m |\bA_{i,j}|$, $\op{\bA}_{\infty} = \max_{1\le i\le m}\sum_{j=1}^n |\bA_{i,j}|$, and $\op{\bA}_{\textup{max}} = \max_{1\leq i \leq m, 1\leq j \leq n}|\bA_{i,j}|$. If $\bA$ is symmetric  and positive semidefinite,  denote by $\lmin{\bA}$ and $\lmax{\bA}$ the smallest and largest eigenvalue of $\bA$ respectively, and
by $\bA^{1/2}$ a symmetric matrix such that $\bA^{1/2} \bA^{1/2} = \bA$. Given subsets $S_1 \subseteq [m]$ and $S_2 \subseteq [n]$, denote by $\bA_{S_1, S_2} \in \bR^{|S_1| \times |S_2|}$ the submatrix of $\bA$ consisting of the rows indexed by $S_1$ and the columns indexed by $S_2$. We abbreviate $\bA_{[m],S_2}$ as $\bA_{S_2}$. For $\bA\in \mR^{m\times n}$,  denote its transpose by $\bA'$.

\section{Sequential High-Dimensional Linear Regression} \label{sec: multi linear}
In this section, we focus on the \textit{sequential} high-dimensional linear regression problem. Specifically, let $T$ be the total number of rounds, which is also called the horizon. At each round (or time) $t\in [T]$, a pair of observations $(Y_t, \bX_t)\in\mR\times\mR^d$ is collected, which is assumed to follow a linear regression model with an unknown parameter $\btheta\in\mR^d$ and an observation noise  $\epsilon_t\in\mR$, that is,
\begin{equation*}
Y_t=\bX_t'\btheta + \epsilon_t.
\end{equation*}
Assume that $\{\bX_t \in \mathbb{R}^d : t \in [T]\}$ are i.i.d. random vectors drawn from the multivariate normal distribution $\mathcal{N}(\mathbf{0}_d, \bSigma)$, where $\mathbf{0}_d \in \mathbb{R}^{d}$ is the mean vector, and $\bSigma \in \mathbb{R}^{d \times d}$ is the covariance matrix.
Further, assume that $\{\epsilon_t \in \mathbb{R} : t \in [T]\}$ are i.i.d. random variables drawn from the normal distribution $\mathcal{N}(0, \sigma^2)$.

Let $S$ denote the support of $\btheta = (\btheta_1, \ldots, \btheta_d)'$, i.e., $S = \{j \in [d] : \btheta_j \neq 0\}$. We define $s_0 = |S| \vee 1$, so that $s_0 = 1$ when $S$ is empty. Denote by $\bTheta_d[s]:=\{\btheta\in \mR^d: \|\btheta\|_0\leq s\}$ the set of all $s$-sparse vectors. Thus, $\btheta \in \bTheta_d[s_0]$. We focus on the high-dimensional regime, where the covariate dimension $d$  is potentially much larger than the horizon $T$, i.e. $d\gg T$, while only a few covariates capture the overall impact on the responses,  i.e., $s_0 \ll d$. 


For each time $t\in [T]$, we denote by $\hbtheta_t$ an estimator of the unknown parameter $\btheta$ based on the currently available observations $\bX_{[t]} =(\bX_1,\cdots,\bX_t)'$ and $\bY_{[t]} = (Y_1,\cdots,Y_t)'$. Our goal is to find a sequence of estimators $\{\hbtheta_t:t\in [T]\}$ to minimize the cumulative capped mean-square error 
$$
\sum_{t=1}^T \Exp \left[\|\hbtheta_t-\btheta \|_2^2 \wedge \xi\right],
$$
where $\xi>0$ is an arbitrary user-specified parameter. To make the dependence on $\btheta$ explicit, we use the notation   $\Exp_{\btheta}$ in place of $\Exp$ when needed.

\begin{remark}
In high-dimensional regression literature, estimation error bounds are typically stated with high probability: for each $t \in [T]$, there exists a high-probability event on which $\|\hbtheta_t - \btheta\|_2$ is controlled. Introducing a cap parameter allows these bounds to be converted into expected error bounds.
\end{remark}

In the sequential estimation problem considered in this section, no arm selection is involved. As a result, for each round $t \in [T]$, the observations $\{(X_s, Y_s): s \in [t]\}$ are i.i.d. and can be treated as a fixed-sample problem. However, the problems across rounds are \textit{coupled} due to the shared parameter $\btheta$ and the cumulative nature of the performance criterion.

\subsection{Lasso and OPT-Lasso: Fixed-Sample Discussion} \label{subsec:lasso_and_opt_lasso}

In this subsection, we discuss two well-known procedures in the \textit{fixed-sample} high-dimensional linear regression literature. We start with the Lasso estimator \citep{tibshirani1996regression}, which applies $\ell_1$-regularization and is defined as follows.

\begin{definition}\label{def: lasso}
Let $n \in \bN$. 
Given a matrix $\bX\in\mR^{n\times d}$, a vector $\bY\in \mR^n$ and a regularization parameter $\lambda > 0$, the Lasso estimator $\ltheta_n(\bX, \bY, \lambda)$ 
is defined as 
$$\ltheta_n(\bX, \bY, \lambda) :=\argmin_{\bd{\beta}\in \mR^d} \left\{\frac{1}{2n}\|\bY-\bX\bd{\beta} \|_2^2+\lambda\|\bd{\beta}\|_1 \right\}.$$ 
\end{definition}

The Lasso estimator is known to be minimax optimal for a fixed sample size. Specifically, see Example 7.14 in \cite{wainwright2019high} for a high probability upper bound on the $\ell_2$ estimation error, and Theorem 1(b) in \cite{raskutti2011minimax} for a matching minimax lower bound. Therefore, it may be surprising that, as we show in section \ref{sec: suboptimality of lasso}, the Lasso estimator becomes \textit{suboptimal} in the sequential setting in terms of the worst-case cumulative error.   We note that this is not contradictory. For a general sequence of estimators $\{\hbtheta_t: t \in [T]\}$, we have
$$
\sup_{\btheta\in \bTheta_d[s_0]}\sum_{t=1}^T \Exp_{\btheta} \left[\|\hbtheta_t-\btheta\|_2^2 \wedge \xi \right]\le \sum_{t=1}^T \sup_{\btheta\in \bTheta_d[s_0]} \Exp_{\btheta} \left[\|\hbtheta_t-\btheta\|_2^2 \wedge \xi \right].
$$
The Lasso estimator achieves minimax optimality for each (sufficiently large) $t$, and is therefore optimal with respect to the criterion on the right-hand side. However, that does not imply its minimax optimality in the \textit{sequential} setting,  since the problems across rounds are coupled by a single $\btheta$, as previously discussed.


The suboptimality of Lasso in the sequential setup motivates us to consider another well-known estimator in the fixed-sample literature, which we refer to as OPT-Lasso; see, e.g., Section 7.6 of \cite{buhlmann2011statistics} and \cite{belloni2013least}. Note that ``OPT'' stands for ``OLS Post Thresholded''.
\begin{definition} \label{def: OPT-Lasso}
Let $n \in \bN$. Given  a matrix $\bX\in\mR^{n\times d}$, a vector $\bY\in\mR^n$, and two regularization parameters $\lambda, \hdlambda > 0$, the OPT-Lasso estimator $\ptheta_n := \ptheta_n(\bX, \bY, \lambda, \hdlambda)$ is defined as follows:
\begin{enumerate}
    \item Compute the Lasso estimator $\ltheta_n(\bX,\bY,\lambda)$, denoted by $\ltheta_n$; 
    \item Threshold each component of the Lasso estimator using $\hdlambda$: 
    \begin{align*}
        \pS_n := \left\{j \in [d]: |(\ltheta_n)_j| > \hdlambda \right\}.
    \end{align*}
    
    \item Apply the ordinary least squares (OLS) procedure using the covariates in $\hdS_n$, and set the remaining coordinates to zero:
    \begin{equation*}
        (\ptheta_n)_{\hdS_n}=\min_{\bd{\beta}\in \mR^{\left|\hdS_n\right|}} \left\|\bY-\bX_{\hdS_n}\bd{\beta}\right\|_2^2, \quad (\ptheta_n)_j=0 \text{ for } j\in(\hdS_n)^c.
    \end{equation*}
\end{enumerate}
\end{definition}

For a fixed round $t \in [T]$, as discussed in \cite{belloni2013least}, OPT-Lasso achieves the same minimax convergence rate of $O(s_0 \log d/t)$ as Lasso, while additionally providing improved instance-specific rates by reducing Lasso’s bias. Specifically, if the initial Lasso estimator has the variable screening property---that is, its support contains the true support $S$---then OPT-Lasso has a strictly faster rate of convergence. Moreover, if the support of the initial Lasso estimator is equal to $S$, then OPT-Lasso attains the oracle rate $O(s_0/t)$. 

The variable screening property of Lasso in general requires the so-called \textit{beta-min condition} (see (2.23) in \cite{buhlmann2011statistics}) as follows:
\begin{align}\label{def: beta-min}
\min_{j \in S} |\btheta_j| = \Omega(\sqrt{(s_0 \log d)/n}),  
\end{align}
which ensures that the smallest nonzero coefficient in the true support $S$ is sufficiently large in absolute value. If, with a high probability, the following incoherence condition holds:
\begin{equation}\label{def:incoherence}
\max_{i\neq j\in[d]}|n^{-1}\sum_{\ell=1}^n{\bX_{\ell, i}\bX_{\ell, j}}| = O(1/s_0),
\end{equation}
then  condition \eqref{def: beta-min} can be relaxed as $\min_{j \in S} |\btheta_j| = \Omega(\sqrt{\log d/n})$ (see Theorem 2 in \cite{belloni2013least}).

As we will see, in the sequential setup, the beta-min condition is particularly restrictive, and under this condition, it is unclear how to derive the minimax rate; see discussions following Theorem \ref{thm: seq multi linear, upper bound} and in Remark \ref{remark:beta_min_lower_bound_diff}. In this work, we do not impose ``beta-min'' type conditions. 

In this section, for each time $t \in [T]$, we denote by $\ptheta_t$ the OPT-Lasso estimator $\ptheta_t(\bX_{[t]}, \bY_{[t]}, \lambda_t, \hdlambda_t)$ and by $\ltheta_t$ the Lasso estimator $\ltheta_t(\bX_{[t]}, \bY_{[t]}, \lambda_t)$, where recall that $\bX_{[t]}$ and $\bY_{[t]}$ are the observations available up to time $t$, and $\lambda_t, \hdlambda_t$ are tuning parameters to be specified.

\subsection{Minimax Optimality of OPT-Lasso in  Sequential Estimation}\label{subsec:minimax_rate}
In this subsection, we show that in the sequential setup, OPT-Lasso achieves minimax optimality with respect to the cumulative capped mean-squared error. Recall that $S$ denotes the support of the unknown parameter $\btheta$.

\begin{assumption} \label{assumption: sequential estimation}
Let $L_0>0$ be a constant. 
\begin{enumerate}[label={(\alph*)}, ref={\theassumption\ (\alph*)}]
\item \label{assumption: bounded variance} For $j \in [d]$, $\bSigma_{j,j}\le 1$. 
\item \label{assumption: zhang}
  Assume that
$$\max_{A\subseteq [d]: |A\backslash S|\le 1\vee (L_0s_0+1)} \frac{\lmax{\bSigma_{A,A}}}{\lmin{\bSigma_{A,A}}} \le L_0/2, \ \min_{A\subseteq[d]: |A\backslash S|=L_0s_0+1} \lmin{\bSigma_{A,A}} \ge L_0^{-1}.$$
\item \label{assumption: bounded cov mat 1}
If $S$ is non-empty, assume that $L_0^{-1}\le \lmin{\bSigma_{S,S}}\le\lmax{\bSigma_{S,S}}\le L_0$.
\item \label{assumption: constraint on the covariance matrix}
$\bSigma$ is invertible, and $\op{\bSigma^{-1}}_{\infty} \le L_0$.
\end{enumerate}
\end{assumption}

Assumption \ref{assumption: bounded variance} requires that the variances of the covariates are uniformly bounded by a constant, which, without loss of generality, is assumed to be $1$. Assumption \ref{assumption: bounded cov mat 1} ensures that the population covariance matrix of the covariates, when restricted to the support $S$, is non-singular and well-conditioned. Assumption \ref{assumption: zhang} is known as the sparse Riesz condition (see \cite{zhang2008sparsity,bellec2022biasing}), which controls the range of the eigenvalues of the covariance matrices restricted to subsets containing a bounded number of covariates. This condition ensures that Lasso estimators can recover a model with the correct order of sparsity. Assumption \ref{assumption: constraint on the covariance matrix} requires that the $\ell_1$ norm of each column of $\bSigma^{-1}$ is bounded. This type of condition commonly appears in the literature on $\ell_{\infty}$ estimation error bounds for Lasso; see, for example, Theorem 5.1 in \cite{bellec2022biasing}, and Lemma 4.1 in \cite{van2016estimation}.

We note that if $C^{-1} \le \lmin{\bSigma} \le \lmax{\bSigma} \le C$ for some constant $C > 0$, then all conditions in Assumption \ref{assumption: sequential estimation} hold after a suitable normalization and with an adjusted constant $L_0$, except for the second part of Assumption \ref{assumption: constraint on the covariance matrix}. Examples that satisfy Assumption \ref{assumption: constraint on the covariance matrix} include: block diagonal matrices where both the size of blocks and the entries of $\bSigma^{-1}$ are bounded by a constant, or circulant matrices where $\bSigma_{i,j} = r^{|i-j|}$ for $i,j \in [d]$ and some $r\in (0,1)$ (see Problem 2.4 in \cite{buhlmann2011statistics}).

In the following theorem, we provide an instance-specific upper bound on the capped mean-square estimation error of the OPT-Lasso estimator for each round, which leads to an upper bound on the expected cumulative error across all rounds.

\begin{theorem}\label{thm: seq multi linear, upper bound}
Suppose that Assumption \ref{assumption: sequential estimation} holds, and that $d \ge (2L_0+1)s_0 + 2$. There exist constants $\kappa_1, C_0, C_0^{\textup{hard}},C>0$ depending only on $L_0$,  such that if we set the regularization and threshold parameters as follows
\begin{equation}\label{eq: online regularization threshold parameter}
    \lambda_t = C_0\sigma\sqrt{\frac{\log (dt)}{t}},\quad \hdlambda_t =  C_0^{\textup{hard}}\lambda_t,
\end{equation} 
then for all $t\ge \kappa_1 s_0\log d$, we have 
\begin{equation}\label{eq: opt_lasso_t}
    \Exp\left[\|\ptheta_t-\btheta \|_2^2 \wedge \xi\right] \le C\sum_{j\in S} \btheta_j^2 \idf\left\{|\btheta_j| \le 2\hdlambda_t \right\} 
    + C\sigma^2 s_0/t + C \xi (de^{-t/C}+1/t).
\end{equation}
Consequently, we have
$$
\sum_{t=1}^T \Exp\left[\|\ptheta_t-\btheta \|_2^2 \wedge \xi\right] \le C((\sigma^2 \vee 1) +\xi) s_0(\log d + \log T).
$$
\end{theorem}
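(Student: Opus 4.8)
The plan is to derive the per-round bound \eqref{eq: opt_lasso_t} first, and then obtain the cumulative bound simply by summing over $t \in [T]$. The main work is the per-round estimate, and the passage to the cumulative bound is a routine summation that I will describe below.

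\textbf{Per-round bound.} Fix $t \ge \kappa_1 s_0 \log d$ and work on a high-probability ``good'' event $\mathcal{E}_t$ on which: (i) the empirical Gram matrix $t^{-1} \bX_{[t]}' \bX_{[t]}$ satisfies restricted-eigenvalue / sparse-Riesz conditions inherited from Assumption \ref{assumption: zhang} (via standard matrix concentration for Gaussian designs, e.g.\ the arguments behind the cited results in \cite{wainwright2019high}), (ii) the Lasso estimator $\ltheta_t$ with $\lambda_t = C_0 \sigma \sqrt{\log(dt)/t}$ satisfies the usual $\ell_2$ and $\ell_1$ oracle bounds and, crucially, the $\ell_\infty$ bound of \cite{bellec2022biasing}, namely $\|\ltheta_t - \btheta\|_\infty \lesssim \lambda_t$, and (iii) after variable selection, the reduced design $\bX_{\hdS_t}$ is well-conditioned and the sub-Gaussian noise is controlled. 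The probability of $\mathcal{E}_t^c$ is at most $\mathrm{const}\cdot(d e^{-t/C} + t^{-1})$, which accounts for the last term $C\xi(de^{-t/C} + 1/t)$ after capping at $\xi$. On $\mathcal{E}_t$, the $\ell_\infty$ bound forces two things: any coordinate $j \in S$ with $|\btheta_j| > 2\hdlambda_t = 2 C_0^{\textup{hard}} C_0 \sigma \sqrt{\log(dt)/t}$ survives thresholding (so $S \cap \{|\btheta_j| > 2\hdlambda_t\} \subseteq \hdS_t$), while $\hdS_t$ does not contain too many spurious coordinates (using the sparse-Riesz-type control on the size of the Lasso support, so $|\hdS_t| \le L_0 s_0 + 1$ or similar). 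Writing $S_{\text{small}} = \{j \in S : |\btheta_j| \le 2\hdlambda_t\}$, the coordinates in $S_{\text{small}}$ that get dropped contribute at most $\sum_{j \in S_{\text{small}}} \btheta_j^2$ to the squared error — this is the first term in \eqref{eq: opt_lasso_t}. On $\hdS_t$, the OLS step has error of order $\sigma^2 |\hdS_t| / t \lesssim \sigma^2 s_0 / t$ (by a $\chi^2$-type bound for the OLS residual on the selected model, using well-conditioning of $\bX_{\hdS_t}$), which is the second term. Combining these on $\mathcal{E}_t$, and bounding $\|\ptheta_t - \btheta\|_2^2 \wedge \xi \le \xi$ on $\mathcal{E}_t^c$, yields \eqref{eq: opt_lasso_t}.

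\textbf{From per-round to cumulative.} Summing \eqref{eq: opt_lasso_t} over $t$ from $1$ to $T$ (treating the rounds $t < \kappa_1 s_0 \log d$ by the trivial cap bound $\xi$, which contributes $O(\xi s_0 \log d)$), I handle the three terms separately. The noise term sums as $\sum_{t=1}^T C\sigma^2 s_0 / t \le C\sigma^2 s_0 (\log T + 1)$. The tail term sums as $C\xi \sum_{t} (d e^{-t/C} + 1/t) \le C\xi(d \cdot \frac{e^{-1/C}}{1-e^{-1/C}} + \log T + 1) = O(\xi(\log d + \log T))$, where I use $d e^{-t/C} \le d e^{-\kappa_1 s_0 \log d / C} \le 1$ once $t \ge \kappa_1 s_0 \log d$ with $\kappa_1$ large enough relative to $C$, so the geometric sum is $O(1)$ after the first relevant round; more crudely one just bounds $\sum_t d e^{-t/C}$ by a convergent geometric series times $d$, and absorbs the finitely many early terms into the $O(\xi(\log d + \log T))$ budget. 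The delicate term is $C \sum_{t=1}^T \sum_{j \in S} \btheta_j^2 \idf\{|\btheta_j| \le 2\hdlambda_t\}$. Interchanging the order of summation, this equals $C \sum_{j \in S} \btheta_j^2 \cdot |\{t \le T : |\btheta_j| \le 2 C_0^{\textup{hard}} C_0 \sigma \sqrt{\log(dt)/t}\}|$. For each fixed $j$, the condition $|\btheta_j| \le \mathrm{const}\cdot \sigma\sqrt{\log(dt)/t}$ defines an (essentially) initial segment of times $t \le t_j^\ast$, and on that segment $\btheta_j^2 \lesssim \sigma^2 \log(dt)/t$. Therefore $\sum_{t \le t_j^\ast} \btheta_j^2 \lesssim \sigma^2 \sum_{t \le t_j^\ast} \log(dt)/t \lesssim \sigma^2 (\log d + \log t_j^\ast)(\log t_j^\ast) $... — but this overcounts; the cleaner bound is $\sum_{t=1}^{t_j^\ast} \btheta_j^2 \le \btheta_j^2 t_j^\ast$ combined with $\btheta_j^2 t_j^\ast \lesssim \sigma^2 \log(d t_j^\ast) \le \sigma^2 (\log d + \log T)$ from the defining inequality at $t = t_j^\ast$. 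Hence each $j \in S$ contributes $O(\sigma^2 (\log d + \log T))$, and summing over the $s_0$ indices in $S$ gives $O(\sigma^2 s_0 (\log d + \log T))$. Adding the three contributions and the initial-segment cap term produces the claimed bound $C((\sigma^2 \vee 1) + \xi) s_0 (\log d + \log T)$.

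\textbf{Main obstacle.} The genuinely hard part is establishing the per-round inequality \eqref{eq: opt_lasso_t} — in particular, showing simultaneously that the thresholded support $\hdS_t$ captures all ``large'' coordinates of $\btheta$ and stays of size $O(s_0)$, which is exactly where the $\ell_\infty$ Lasso bound from \cite{bellec2022biasing} and the sparse-Riesz condition (Assumption \ref{assumption: zhang}) do the heavy lifting, and then controlling the post-selection OLS error uniformly over the random model $\hdS_t$. The cumulative step, by contrast, is the elementary summation argument above, whose only subtlety is the reordering-of-summation trick for the bias term together with the observation that the defining inequality $|\btheta_j| \lesssim \sigma\sqrt{\log(dt)/t}$ at the last surviving time $t_j^\ast$ pins down $\btheta_j^2 t_j^\ast$ at the scale $\sigma^2(\log d + \log T)$.
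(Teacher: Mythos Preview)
Your overall architecture is right, and the cumulative step --- bounding the initial segment by $\xi$, summing $\sigma^2 s_0/t$ and the tail term, and handling the bias term via $\sum_t \btheta_j^2\idf\{|\btheta_j|\le 2\hdlambda_t\} \le \btheta_j^2 t_j^\ast \lesssim \sigma^2\log(dT)$ --- is exactly the paper's Lemma~\ref{lemma: instance sum} argument.

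There is, however, a genuine gap in your per-round bound. You invoke the $\ell_\infty$ bound only to show that large coordinates survive thresholding, and then separately appeal to sparse-Riesz to say ``$\hdS_t$ does not contain too many spurious coordinates, $|\hdS_t|\le L_0 s_0+1$''. But once you allow spurious coordinates, the OLS error claim $\sigma^2|\hdS_t|/t$ is not justified: $\hdS_t$ is random and depends on $\bepsilon_{[t]}$, so $\|\mathcal P_{\hdS_t}\bepsilon_{[t]}\|_2^2$ is not a fixed-subspace projection, and controlling it uniformly over all subsets of size $O(s_0)$ via a union bound would cost an extra factor $\log\binom{d}{O(s_0)}\asymp s_0\log d$. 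That would give $\sigma^2 s_0\log d/t$ for the noise term and, after summing, $\sigma^2 s_0\log d\log T$ --- the suboptimal Lasso rate you are trying to beat.

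The fix is that the $\ell_\infty$ bound you already have does more than you are using it for: if $\|\ltheta_t-\btheta\|_\infty\le \hdlambda_t$ and $j\notin S$, then $\btheta_j=0$ forces $|(\ltheta_t)_j|\le\hdlambda_t$, so $j\notin\hdS_t$. Hence $\hdS_t\subseteq S$ with no spurious coordinates at all (this is Lemma~\ref{lemma: threshold Lasso_ell_infty bound bandit}). Now $\|\mathcal P_{\hdS_t}\bepsilon_{[t]}\|_2^2\le\|\mathcal P_S\bepsilon_{[t]}\|_2^2$, a projection onto a \emph{fixed} $s_0$-dimensional subspace, and the paper simply takes the expectation $\Exp\|\bX_{[t],S}'\bepsilon_{[t]}/t\|_2^2=\sigma^2\mathrm{tr}(\bSigma_{S,S})/t\le\sigma^2 s_0/t$. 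Correspondingly, the only sample-level eigenvalue control needed is on $(\hbSigma_t)_{S,S}$ (Lemma~\ref{lemma: lower eigenvalue whp}, via Assumption~\ref{assumption: bounded cov mat 1}), not a general sample RE/sparse-Riesz condition; the sparse-Riesz assumption enters only at the population level, inside the $\ell_\infty$ result of \cite{bellec2022biasing}. With this correction your per-round bound goes through exactly as in Theorem~\ref{thm: ols post l2 estimation error}.
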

\begin{proof} See Appendix \ref{sec: proof of seq multi linear, upper bound}.
\end{proof}

First, we focus on the \textit{sequential} setup and explain how the upper bound on the \textit{cumulative} error is derived from the result in \eqref{eq: opt_lasso_t}. For $1 \leq t \leq \kappa_1 s_0\log d$, we simply bound $\Exp\left[\|\ptheta_t-\btheta \|_2^2 \wedge \xi\right]$ by $\xi$. For $\kappa_1 s_0\log d < t \leq T$, we apply the upper bound given in \eqref{eq: opt_lasso_t}, and then control the expected cumulative error using the following lemma.

\begin{lemma}\label{lemma: instance sum}
Let $a  \neq 0$ and $b > 0$. Then
$\sum_{t=1}^T a^2 \idf\{|a|\le \sqrt{b/t}\} \le b$. Consequently, 
$\sum_{t=1}^T \sum_{j\in S} \btheta_j^2\idf\{|\btheta_j|\le \sqrt{b/t}\} \le b s_0$.
\end{lemma}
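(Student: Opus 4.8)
The plan is to reduce the sum to a simple counting argument. Fix $a \neq 0$ and $b > 0$. Since both $|a|$ and $\sqrt{b/t}$ are nonnegative, the event $\{|a| \le \sqrt{b/t}\}$ is equivalent to $\{a^2 \le b/t\}$, i.e.\ to $\{t \le b/a^2\}$. Hence for each $t$ the summand $a^2 \idf\{|a| \le \sqrt{b/t}\}$ equals $a^2$ when $t \le b/a^2$ and $0$ otherwise, so that
$$\sum_{t=1}^T a^2 \idf\{|a| \le \sqrt{b/t}\} = a^2 \, \bigl| \{t \in [T] : t \le b/a^2 \} \bigr|.$$

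First I would bound the cardinality on the right by $b/a^2$: indeed $|\{t \in [T] : t \le b/a^2\}| \le \lfloor b/a^2 \rfloor \le b/a^2$ (this also covers the case $b/a^2 < 1$, where the set is empty and the bound is $0$), and, crucially, this bound is uniform in $T$. Multiplying through by $a^2$ gives the first claim $\sum_{t=1}^T a^2 \idf\{|a| \le \sqrt{b/t}\} \le b$. For the second claim I would apply the first one coordinatewise: for every $j \in S$ we have $\btheta_j \neq 0$ by definition of the support $S$, so $\sum_{t=1}^T \btheta_j^2 \idf\{|\btheta_j| \le \sqrt{b/t}\} \le b$; summing this over $j \in S$ and exchanging the two finite sums yields $\sum_{t=1}^T \sum_{j \in S} \btheta_j^2 \idf\{|\btheta_j| \le \sqrt{b/t}\} \le |S|\, b \le s_0 b$, where the last inequality uses $|S| \le s_0$ (and the bound is trivially $0 \le s_0 b$ when $S = \emptyset$).

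There is no real obstacle here; the statement is elementary. The only points worth stating carefully are that the threshold comparison $|a| \le \sqrt{b/t} \iff t \le b/a^2$ requires $a \neq 0$ (hence the hypothesis, and the restriction to $j \in S$ in the consequence), and that the resulting bound is independent of the horizon $T$ --- which is precisely what makes this lemma useful when combined with \eqref{eq: opt_lasso_t} to control the cumulative error in Theorem \ref{thm: seq multi linear, upper bound}.
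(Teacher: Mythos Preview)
Your proof is correct and takes essentially the same approach as the paper: rewrite the indicator $\idf\{|a|\le\sqrt{b/t}\}$ as $\idf\{t\le b/a^2\}$ and bound the resulting count by $b/a^2$. The paper's proof is just a one-line compression of exactly this argument.
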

\begin{proof} Note that
$\sum_{t=1}^T a^2\idf\{|a|\le \sqrt{b/t}\} = \sum_{t=1}^T a^2\idf\{t\le b/a^2\} \le a^2 (b/a^2) = b.$
\end{proof}
Specifically, due to \eqref{eq: online regularization threshold parameter}, we have $\hdlambda_t \leq \sqrt{C \sigma^2 \log(dT)/t}$. Thus,  by applying Lemma \ref{lemma: instance sum} with $b = 4C\sigma^2\log(dT)$, we see that for each $j \in S$, the contribution of the $j$-th covariate to the expected cumulative regret is at most $4C\sigma^2\log(dT)$, \textit{regardless of the magnitude of $\btheta_j$}. Therefore, we avoid imposing any ``beta-min" type conditions.

We note that the bound in \eqref{eq: opt_lasso_t} for a \textit{fixed} round $t \geq \kappa_1 s_0\log d$ is \textit{instance-specific}, as it depends on the parameter $\btheta$. To see its importance, consider the following pessimistic upper bound: for $j \in S$,
\begin{equation}\label{pess_bound}
    \btheta_j^2 \idf\left\{|\btheta_j| \le 2\hdlambda_t \right\} \leq 4 (\hdlambda_t)^2 = C \log(dt)/t.
\end{equation}
Since $\sum_{t=1}^{T}\log(dt)/t = O(\log(T)\log(dT))$, this non-instance-specific bound would introduce an additional $\log T$ factor, leading to suboptimal results.

As mentioned in Subsection \ref{subsec:lasso_and_opt_lasso}, under the ``beta-min'' type conditions \eqref{def: beta-min} or \eqref{def:incoherence}, we can achieve an $O(s_0/t)$ bound on the estimation error of OPT-Lasso for sufficiently large $t$. On one hand, in order for \eqref{def:incoherence} to hold with high probability, we must have $t = \Omega(s_0^2\log d)$ (see discussions following Theorem 5.1 in \cite{bellec2022biasing}), which would introduce a $s_0^2 \log d$ term in the expected cumulative error. Thus, the relaxed beta-min condition \eqref{def:incoherence} is not applicable.
On the other hand, if we expect \eqref{def: beta-min} 
 to hold for $t = O(s_0\log(dT))$, and if $\log T = O(\log d)$, condition \eqref{def: beta-min} becomes: $  \min_{j \in S} |\btheta_j| = \Omega(1)$. 
That is, the minimal signal strength must be bounded away from zero by a constant. This condition is highly restrictive and, moreover, prevents us from establishing a matching lower bound, as discussed in Remark \ref{remark:beta_min_lower_bound_diff}.

Next, we focus on the strategy for deriving the instance-specific bound in \eqref{eq: opt_lasso_t} for a \textit{fixed}, sufficiently large round $t$. 
The key tool is drawn from Theorem 5.1 in \cite{bellec2022biasing}, which provides the following bound on the $\ell_{\infty}$ estimation error of Lasso estimators: for each $t \geq C s_0\log d$, we have that with high probability, $\|\ltheta_t -\btheta \|_{\infty} \le C\lambda_t$;
see Lemma \ref{lemma: improved_ell_infty} in Appendix  \ref{sec: bounds on sequential estimation} for a precise statement. This implies that with a proper choice of $\hdlambda_t$, the support $\pS_t$ defined in Definition \ref{def: OPT-Lasso} is contained in the true support $S$, and contains those components with a magnitude at least $2\hdlambda_t$, that is,
\begin{align}\label{OPT_Lasso_selection_prop}
  S_t^{\textup{str}} := \{j\in[d]: |\btheta_j|\ge 2\hdlambda_t\} \;\subseteq \;   \pS_t \subseteq S.
\end{align}
In Theorem \ref{thm: ols post l2 estimation error} in Appendix  \ref{app:lasso_deterministic_analysis}, we show that the impact of  ``missing'' covariates---those in $S\setminus S_t^{\textup{str}}$---can be controlled, as their magnitude is at most $2\hdlambda_t$. 

An important feature of Theorem 5.1 in \cite{bellec2022biasing} is that it applies when $t = \Omega(s_0\log d)$. In contrast, previous $\ell_{\infty}$ bounds, such as Theorem 7.21 in \cite{wainwright2019high} and Lemma 4.1 in \cite{van2016estimation}, generally require $t = \Omega(s_0^2\log d)$. This refinement contributes to the tighter upper bound established in Theorem \ref{thm: seq multi linear, upper bound}.

Finally, we establish a matching minimax lower bound for all permissible rules, showing that the algorithm based on the OPT-Lasso estimators is minimax optimal. To this end, we strengthen Assumption \ref{assumption: bounded cov mat 1} as follows.

\begin{assumption}\label{assumption: bounded cov mat 1'}
For some constant $L_1 > 1$, 
$L_1^{-1}\le \lmin{\bSigma}\le\lmax{\bSigma}\le L_1$. 
\end{assumption}

Denote by $\bTheta_d[s_0]=\{\btheta\in \mR^d: \|\btheta\|_0\leq s_0\}$ the set of all $s_0$-sparse vectors.

\begin{theorem}\label{thm: seq multi linear, lower bound}
Suppose that Assumption \ref{assumption: bounded cov mat 1'} holds, and that $3 \leq s_0 \leq (d+2)/3$ and $T\ge 2\sigma^2 \xi^{-1}s_0\log d$. 
 Then, there exists a constant $C>0$ depending only on $L_1, \xi$, such that for any permissible estimators $\{\hbtheta_t, t\in [T]\}$, we have the following lower bound:
$$\sup_{\btheta\in \bTheta_d[s_0]} \sum_{t=1}^{T} \Exp_{\btheta} \left[\|\hbtheta_t-\btheta\|_2^2\wedge \xi \right] \ge C^{-1} \sigma^2 s_0 \left(\log\left( \frac{T+\sigma^2 s_0}{2+\sigma^2s_0} \right)+ \log\frac{d}{s_0} \right).
$$
Consequently, if, additionally, $d \geq s_0^{\kappa}$ and $T \geq (2+\sigma^2 s_0)^{\kappa}$ for some $\kappa > 1$, then 
\begin{align*}
    \sup_{\btheta\in \bTheta_d[s_0]} \sum_{t=1}^{T} \Exp_{\btheta} \left[\|\hbtheta_t-\btheta\|_2^2\wedge \xi \right] \ge C^{-1} \sigma^2  (1-1/\kappa) s_0\left(\log\left( d\right)+ \log\left(T\right) \right).
\end{align*}
\end{theorem}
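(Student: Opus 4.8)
The plan is to lower bound the minimax cumulative (capped) risk by the maximum of two Bayes risks, one for each of two families of priors on $\bTheta_d[s_0]$, and to combine them via $\max(u,v)\ge(u+v)/2$. Writing $\ell_t(\hbtheta_t,\btheta):=\|\hbtheta_t-\btheta\|_2^2\wedge\xi$, I will show that for a constant $c>0$ depending only on $L_1,\xi$, every permissible $\{\hbtheta_t\}$ has $\sup_{\btheta\in\bTheta_d[s_0]}\sum_{t=1}^T\Exp_{\btheta}[\ell_t]$ at least $c\,\sigma^2 s_0\log(d/s_0)$ and at least $c\,\sigma^2 s_0\log\frac{T+\sigma^2 s_0}{2+\sigma^2 s_0}$. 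Two standard reductions are used throughout. Since $K=1$ there is no arm selection, so $\{(\bX_s,Y_s)\}_{s\le t}$ is an i.i.d.\ sample and, for each fixed $t$, recovering $\btheta$ from it is a genuine $t$-sample minimax problem; and for any prior $\pi$ one has $\inf_{\{\hbtheta_t\}}\sup_{\btheta}\sum_t\Exp_{\btheta}[\ell_t]\ge\sum_t\inf_{\hbtheta_t}\Exp_{\btheta\sim\pi}\Exp_{\btheta}[\ell_t]$, so it suffices to bound each per-round Bayes risk from below.

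\emph{The $\log(d/s_0)$ term --- sparse packing and Fano.} Using $s_0\ge 3$ and $s_0\le(d+2)/3$, the Varshamov--Gilbert bound gives supports $S_1,\dots,S_M\subseteq[d]$ with $|S_i|=s_0$, $|S_i\triangle S_j|\ge s_0/2$, and $\log M\gtrsim s_0\log(d/s_0)$. Put $\btheta^{(i)}=\mu\,\idf_{S_i}\in\bTheta_d[s_0]$, so $\tfrac12 s_0\mu^2\le\|\btheta^{(i)}-\btheta^{(j)}\|_2^2\le 2s_0\mu^2$. The $t$-sample Kullback--Leibler divergence equals $\tfrac{t}{2\sigma^2}(\btheta^{(i)}-\btheta^{(j)})'\bSigma(\btheta^{(i)}-\btheta^{(j)})\le\tfrac{tL_1}{\sigma^2}s_0\mu^2$. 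I choose $\mu^2$ at the Fano threshold, $\mu^2\asymp\sigma^2\log(d/s_0)/(L_1 T)$, so this divergence is a small fraction of $\log M$ \emph{for every} $t\le T$; Fano then yields, under the uniform prior over the $\btheta^{(i)}$ and a minimum-distance decoder, $\Pro_{\btheta}(\|\hbtheta_t-\btheta\|_2\ge\delta/2)\ge$ const for all $t\le T$, with $\delta^2\asymp s_0\mu^2$. Crucially, $T\ge 2\sigma^2\xi^{-1}s_0\log d\ge 2\sigma^2\xi^{-1}s_0\log(d/s_0)$ forces $\delta^2\asymp\sigma^2 s_0\log(d/s_0)/T\lesssim\xi$, so at scale $\delta$ the cap is inactive and $\Exp_{\btheta}[\ell_t]\gtrsim\delta^2$ for every $t\le T$. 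Summing over $t=1,\dots,T$ gives $\gtrsim T\delta^2\asymp\sigma^2 s_0\log(d/s_0)$.

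\emph{The $\log\frac{T+\sigma^2 s_0}{2+\sigma^2 s_0}$ term --- van Trees.} Fix $S_0=\{1,\dots,s_0\}$ and let $\pi$ be the $s_0$-fold product over $S_0$ of a fixed smooth density compactly supported on $[-\rho,\rho]$ (e.g.\ a raised cosine), with $\rho^2=\xi/(4s_0)$, so $\|\btheta\|_2\le\rho\sqrt{s_0}\le\sqrt\xi/2$ almost surely. For each $t$ replace $\hbtheta_t$ by its Euclidean projection $\tbtheta_t$ onto $\{\btheta:\ \operatorname{supp}(\btheta)\subseteq S_0,\ \|\btheta\|_2\le\sqrt\xi/2\}$, which does not increase the loss on that set; then $\|\tbtheta_t-\btheta\|_2^2\le\xi$, so the cap disappears and $\Exp_{\btheta}[\ell_t]\ge\Exp_{\btheta}\|\tbtheta_t-\btheta\|_2^2$. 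The covariate law is free of $\btheta$, so the per-sample Fisher information of $\btheta_{S_0}$ is $\sigma^{-2}\bSigma_{S_0,S_0}$, of trace $\le s_0 L_1/\sigma^2$, while the prior's Fisher information has trace $\asymp s_0/\rho^2\asymp s_0^2/\xi$. The multivariate van Trees (Bayesian Cram\'er--Rao) inequality then gives, for every $t$,
\[
\Exp_{\btheta\sim\pi}\Exp_{\btheta}\|\tbtheta_t-\btheta\|_2^2\ \ge\ \frac{s_0^2}{\,t\,s_0 L_1/\sigma^2+\operatorname{tr}(J(\pi))\,}\ \gtrsim\ \frac{\sigma^2 s_0}{L_1}\cdot\frac{1}{t+a},\qquad a=c(L_1,\xi)\,\sigma^2 s_0.
\]
Summing over $t=1,\dots,T$ yields $\gtrsim\sigma^2 s_0\log\frac{T+a}{1+a}$. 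Since $a$ is within a constant factor of $\sigma^2 s_0$, an elementary comparison --- monotonicity of $x\mapsto(T+x)/(1+x)$, and, in the regime where the logarithm would otherwise be small, absorbing the discrepancy into the already-proved $\log(d/s_0)$ bound --- gives $\log\frac{T+a}{1+a}\gtrsim\log\frac{T+\sigma^2 s_0}{2+\sigma^2 s_0}$.

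\emph{Combining and the ``consequently'' clause.} Taking the larger of the two bounds and using $\max(u,v)\ge(u+v)/2$ proves the first assertion after renaming $C$. For the second, $d\ge s_0^\kappa$ gives $\log(d/s_0)=\log d-\log s_0\ge(1-1/\kappa)\log d$, and $T\ge(2+\sigma^2 s_0)^\kappa$ gives $\frac{T+\sigma^2 s_0}{2+\sigma^2 s_0}\ge\frac{T}{2+\sigma^2 s_0}\ge T^{1-1/\kappa}$, hence $\log\frac{T+\sigma^2 s_0}{2+\sigma^2 s_0}\ge(1-1/\kappa)\log T$; substituting yields the stated bound. The conceptual skeleton (a maximum of two Bayes risks, Fano for the dimension term, van Trees for the horizon term) is routine. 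I expect the main obstacle to be the interaction with the cap $\wedge\,\xi$: the packing radius and the prior's support radius must be taken of order $\sqrt\xi$ --- this is exactly where the hypothesis $T\ge 2\sigma^2\xi^{-1}s_0\log d$ enters, making the Fano packing fit under the cap --- while keeping the signal large enough that the Fano separation and the prior Fisher information are of the right order; and then the van Trees output $\log\frac{T+a}{1+a}$ with $a=\Theta(\sigma^2 s_0)$ must be matched to the precise form $\log\frac{T+\sigma^2 s_0}{2+\sigma^2 s_0}$, which I expect requires a short case split rather than a one-line estimate.
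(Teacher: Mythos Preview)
Your proposal is correct and follows essentially the same approach as the paper: two priors on $\bTheta_d[s_0]$, van Trees (with a prior of radius $O(\sqrt{\xi})$ supported on a fixed set of $s_0$ coordinates) for the $\log T$ term, a sparse packing plus Fano for the $\log(d/s_0)$ term, and a projection/truncation argument to remove the cap $\wedge\,\xi$. The only cosmetic differences are that the paper takes the two priors as a single mixture (so the cap is removed once, via truncation to zero rather than Euclidean projection) and uses a radially symmetric prior on $\{\|\btheta_{[s_0]}\|_2\in[r/2,r]\}$ in place of your product prior, but the resulting bound and its dependence on $L_1,\xi$ are the same.
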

\begin{proof}
See Appendix \ref{sec: proof of seq multi linear, lower bound}.
\end{proof}

The proof strategy for Theorem \ref{thm: seq multi linear, lower bound} is to lower bound the worst-case risk over $\btheta \in \bTheta_d[s_0]$ by the Bayes risk with respect to a prior distribution, which is a \textit{mixture} of two distributions, $\omega_1$ and $\omega_2$ on $\bTheta_d[s_0]$. Under $\omega_1$, $\btheta$ is supported on $[s_0]$---that is, the first $s_0$-components---corresponding to a low-dimensional setup. We then apply van Tree's inequality \citep{gill1995applications} to show that the Bayes risk under $\omega_1$ is $\Omega(s_0\log T)$. Further, $\omega_2$ is a finitely discrete distribution constructed based on Theorem 1(b) in \cite{raskutti2011minimax}, and we apply Fano's inequality (see \cite[Theorem 4.10]{rigollet2023highdimensionalstatistics}) to show that the Bayes risk under $\omega_2$ is $\Omega(s_0\log d)$.

\begin{remark} \label{remark:beta_min_lower_bound_diff}
Under the ``beta-min'' condition  $\min_{j \in S} |\btheta_j| = \Omega(1)$, we would need to construct a prior distribution supported on $\{\btheta \in \bTheta_d[s_0]: \min_{j \in [d]}\{|\btheta_j|; \btheta_j \neq 0\} \geq C^{-1}\}$ for some constant $C > 0$. However, neither $\omega_1$ nor $\omega_2$ mentioned above satisfies this property.
\end{remark}
 
Combining Theorems \ref{thm: seq multi linear, upper bound} and \ref{thm: seq multi linear, lower bound}, we obtain the following corollary, which characterizes the minimax rate of the expected cumulative error as $s_0(\log d + \log T)$ and shows that the OPT-Lasso estimators are minimax optimal in the sequential setup.

\begin{corollary} \label{cor: seq multi linear}
Suppose that Assumptions \ref{assumption: sequential estimation} and \ref{assumption: bounded cov mat 1'} hold, and that for some $\kappa > 1$,
\begin{align*}
    \begin{split}
    &s_0 \geq 3, \quad d \ge \max\{(2L_0+1)s_0 + 2,\; s_0^{\kappa}\}, \;\;
    T\ge \max\{2\sigma^2 \xi^{-1}s_0\log d,\; (2+\sigma^2 s_0)^{\kappa}\}.
        \end{split}
\end{align*}
Then, there exists a constant $C>0$ only depending on $L_0,L_1, \xi$, such that 
$$C^{-1}(1-1/\kappa)\sigma^2 \le \frac{\inf_{\{\hbtheta_t, t\in [T]\}}\sup_{\btheta\in \bTheta_d[s_0]} \Exp_{\btheta}\sum_{t=1}^{T} (\|\hbtheta_t-\btheta\|_2^2\wedge \xi)} {s_0(\log d + \log T)} \le C((\sigma^2 \vee 1) + \xi),$$
where the infimum is taken over all permissible estimators. Further, the minimax rate is achieved by the OPT-Lasso estimators if we set the tuning parameters as in \eqref{eq: online regularization threshold parameter}.
\end{corollary}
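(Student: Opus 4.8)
The plan is to simply combine the upper bound of Theorem~\ref{thm: seq multi linear, upper bound} with the lower bound of Theorem~\ref{thm: seq multi linear, lower bound}, checking that the hypotheses of both theorems are met under the stated conditions and that the two bounds match up to constants. First I would verify the upper bound: the hypotheses $d \ge (2L_0+1)s_0 + 2$ and Assumption~\ref{assumption: sequential estimation} are directly assumed, so Theorem~\ref{thm: seq multi linear, upper bound} applies with the tuning parameters from \eqref{eq: online regularization threshold parameter}, yielding
\[
\sup_{\btheta\in \bTheta_d[s_0]}\sum_{t=1}^T \Exp_{\btheta}\left[\|\ptheta_t-\btheta\|_2^2\wedge\xi\right] \le C\big((\sigma^2\vee 1)+\xi\big)\, s_0(\log d+\log T),
\]
which gives the right-hand inequality after taking the infimum over permissible estimators (the OPT-Lasso sequence is one such choice, and this also establishes the final claim that the rate is achieved by OPT-Lasso).

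For the left-hand inequality I would invoke Theorem~\ref{thm: seq multi linear, lower bound}. Its hypotheses are Assumption~\ref{assumption: bounded cov mat 1'}, $3\le s_0 \le (d+2)/3$, and $T \ge 2\sigma^2\xi^{-1}s_0\log d$. The first and third are assumed; for the middle one, $s_0 \le (d+2)/3$ follows from $d \ge (2L_0+1)s_0+2 \ge 3s_0 + 2$ (using $L_0 \ge 1$, or more conservatively $2L_0+1 \ge 3$ since $L_0>0$ and Assumption~\ref{assumption: zhang} forces $L_0 \ge 2$ via the eigenvalue-ratio bound). Thus Theorem~\ref{thm: seq multi linear, lower bound} gives, under the additional conditions $d \ge s_0^{\kappa}$ and $T \ge (2+\sigma^2 s_0)^{\kappa}$,
\[
\inf_{\{\hbtheta_t\}}\sup_{\btheta\in \bTheta_d[s_0]}\sum_{t=1}^{T}\Exp_{\btheta}\left[\|\hbtheta_t-\btheta\|_2^2\wedge\xi\right] \ge C^{-1}\sigma^2(1-1/\kappa)\,s_0\big(\log d + \log T\big).
\]

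Finally I would assemble the two-sided bound by dividing through by $s_0(\log d+\log T)$ and absorbing all the $L_0,L_1,\xi$-dependent constants into a single constant $C$, renaming as needed so that the same symbol $C$ works for both inequalities. There is essentially no obstacle here — the corollary is a bookkeeping statement — but the one point requiring a small amount of care is confirming that the hypotheses of the two theorems are simultaneously consistent and are all implied by the displayed condition on $s_0, d, T$; in particular checking that $d \ge \max\{(2L_0+1)s_0+2, s_0^{\kappa}\}$ and $T \ge \max\{2\sigma^2\xi^{-1}s_0\log d, (2+\sigma^2 s_0)^{\kappa}\}$ jointly imply both $s_0 \le (d+2)/3$ and the growth conditions needed for the clean $\log d + \log T$ form of the lower bound.
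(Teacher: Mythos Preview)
Your proposal is correct and matches the paper's approach exactly: the paper presents this corollary as an immediate consequence of combining Theorems~\ref{thm: seq multi linear, upper bound} and~\ref{thm: seq multi linear, lower bound}, with no separate proof given. Your hypothesis check that $s_0 \le (d+2)/3$ follows from $d \ge (2L_0+1)s_0+2$ (using $L_0 \ge 2$ from Assumption~\ref{assumption: zhang}) is the only nontrivial verification, and it is sound.
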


\begin{remark}
Note that in \eqref{eq: online regularization threshold parameter}, both $\lambda_t$ and $\hdlambda_t$ are of order $\sqrt{\log(dt)/t}$. Without further assumptions on $d$ and $T$, OPT-Lasso estimators are minimax optimal in terms of the \textit{cumulative} error. On the other hand, for a fixed, sufficiently large $t$, applying  \eqref{pess_bound}, the upper bound in \eqref{eq: opt_lasso_t} on the mean-squared estimation error of OPT-Lasso at time $t$ becomes $O(s_0\log(dt)/t)$. Consequently, for a \textit{fixed} $t$, OPT-Lasso would be minimax optimal only if $\log T = O(\log d)$. This also highlights the distinction between sequential and fixed-sample setups.
\end{remark}

\subsection{Suboptimality of Lasso in Sequential Estimation} \label{sec: suboptimality of lasso}

In this subsection, our goal is to show that, in the sequential setting, unlike OPT-Lasso, the Lasso estimators are minimax \textit{suboptimal}. Under mild conditions, it is well known that for a sufficient large $C > 0$ and for each $t \geq C s_0 \log d$, with high probability,  $\|\ltheta_t-\btheta \|_2^2 \leq C \sigma^2 s_0 \log(d)/t$; see, for example, Example 7.14 in \cite{wainwright2019high}. Thus, for the sequential setup, summing over $T$ rounds yields:
$$\sum_{i=1}^T \Exp\left[\|\ltheta_t-\btheta \|_2^2\wedge \xi\right] = O((\sigma^2+\xi)s_0\log d\log T).$$
This \textit{upper} bound does not match the minimax rate in Corollary \ref{cor: seq multi linear}, namely $s_0(\log d + \log T)$. In the following, we establish the suboptimality of the Lasso estimators by deriving a lower bound on their worst-case cumulative error.

\begin{assumption}\label{assumption: multi linear cov matrix}
There exist a constant $L_2 > 1$ and a subset $S \subseteq [d]$ such that $|S| = s_0$ and 
$\max_{j\in S^c}\left\|(\bSigma_{S,S})^{-1} \bSigma_{S,\{j\}}\right\|_1\le  L_2^{-1}.$
\end{assumption}
\begin{remark}
Assumption \ref{assumption: multi linear cov matrix} is known as the irrepresentable condition, which imposes a constraint on the correlation among covariates in the support of $\btheta$ and those outside it. For more details, we refer to \cite{zhao2006model}. 
\end{remark}

\begin{theorem}\label{thm: lower bound of lasso}
Suppose that Assumptions  \ref{assumption: bounded variance},  \ref{assumption: bounded cov mat 1}, and  \ref{assumption: multi linear cov matrix} hold. 
Let $\kappa >1$ and $C_0 > 0$ be any constant. Assume that 
\begin{align*}
   1 \geq  \lambda_t \geq \frac{16\sigma}{1 - L_2^{-1}} \sqrt{\frac{2\log d}{t}},\quad \text{ for } \; t \geq C_0 s_0^2 \log d.
\end{align*}
There exist $C \geq C_0$ depending only on $L_0, L_2, \xi$, and $C'$ depending only on $L_0, L_2$ such that if 
$d \geq C$ and $T \geq \left(C s_0^2 \log d \right)^{\kappa}$, 
then for the Lasso estimators $\{\ltheta_t: t\in[T]\}$, we have
$$\sup_{\btheta\in \bTheta_d[s_0]} \Exp_{\btheta} \left[\sum_{t=1}^T (\|\ltheta_t-\btheta \|_2^2 \wedge \xi) \right] \ge (C')^{-1} \sigma^2 (1-1/\kappa) s_0\log d\log T.$$
\end{theorem}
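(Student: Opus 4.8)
The plan is to lower bound the worst-case cumulative error of Lasso by constructing a single ``hard'' parameter $\btheta^\star \in \bTheta_d[s_0]$ supported on the set $S$ from Assumption \ref{assumption: multi linear cov matrix}, on which the Lasso estimator $\ltheta_t$ is \emph{biased} throughout a long stretch of rounds. The mechanism is the well-known bias of Lasso under a regularization parameter of the standard order $\lambda_t \asymp \sigma\sqrt{\log d / t}$: on a high-probability event, $\ltheta_t$ restricted to $S$ behaves like the OLS estimator shrunk toward zero by an amount proportional to $\lambda_t$ in each coordinate. Concretely, under the irrepresentable condition (Assumption \ref{assumption: multi linear cov matrix}) the Lasso has the sign-consistency/variable-selection representation (the primal-dual witness construction, see \cite{zhao2006model, wainwright2019high}): with high probability $\pS \subseteq S$ and, on $S$, $(\ltheta_t)_S = (\btheta^\star)_S - \lambda_t (\hbSigma^{(t)}_{S,S})^{-1} \hat{\bz}_S + (\text{mean-zero noise of order } \sigma\sqrt{s_0/t})$, where $\hat{\bz}_S$ is the sub-gradient vector with entries equal to $\mathrm{sign}(\btheta^\star_j)$ on the ``active'' coordinates. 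Choosing all nonzero entries of $\btheta^\star$ to have the same sign and magnitude slightly above the threshold where sign-consistency holds forces $\|\hat{\bz}_S\|$ to be of order $\sqrt{s_0}$, so the deterministic bias term has squared norm $\gtrsim \lambda_t^2 s_0 \cdot \lmin{\bSigma_{S,S}}^{-2} \gtrsim \sigma^2 s_0 \log d / t$ (using Assumption \ref{assumption: bounded cov mat 1} to control the inverse covariance). Hence on this event $\|\ltheta_t - \btheta^\star\|_2^2 \gtrsim \sigma^2 s_0 \log d / t$.

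The key steps, in order, are: (i) fix $\btheta^\star$ supported on $S$ with all nonzero coordinates equal to some value $v$ with $|v| = \Theta(\sqrt{s_0 \log d / (C_0 s_0^2 \log d)}) = \Theta(1/\sqrt{s_0})$ — small enough that the shrinkage bias is not washed out, large enough (relative to $\lambda_t\sqrt{s_0}$ for the relevant range of $t$) that $\btheta^\star$ stays detectable; in fact we want the \emph{capped} error $\|\ltheta_t-\btheta^\star\|_2^2 \wedge \xi$ to equal the true error, which holds once the error is $\lesssim \xi$, i.e. for $t \gtrsim s_0\log d/\xi$, absorbed into the constant $C$. (ii) Establish, for each fixed $t$ in the range $C s_0^2 \log d \le t \le T$, a high-probability event $\mathcal{E}_t$ (probability $\ge 1 - c/d$ or similar) on which the sign/selection representation holds and the empirical covariance $\hbSigma^{(t)}_{S,S}$ is within a constant factor of $\bSigma_{S,S}$; this is where the requirement $t \gtrsim s_0^2\log d$ is used, and it is exactly the regime covered by classical Lasso selection-consistency results. (iii) On $\mathcal{E}_t$, lower bound $\|\ltheta_t - \btheta^\star\|_2^2$ by the squared bias $\gtrsim \sigma^2 s_0 \log d / t$, using the lower bound on $\lambda_t$ in the hypothesis and Assumption \ref{assumption: bounded cov mat 1} to bound $\lmax{(\bSigma_{S,S})^{-1}}$; subtract off the noise fluctuation, which is a lower-order $\sqrt{s_0/t}$ term and is dominated since $\log d \to \infty$. (iv) Take expectations: $\Exp_{\btheta^\star}[\|\ltheta_t-\btheta^\star\|_2^2 \wedge \xi] \ge \Pro(\mathcal{E}_t)\cdot c\,\sigma^2 s_0\log d/t \ge \tfrac{c}{2}\sigma^2 s_0\log d/t$ for $d$ large. (v) Sum over $t$ from $C s_0^2\log d$ to $T$: $\sum_t \sigma^2 s_0\log d/t \ge \sigma^2 s_0 \log d\,\big(\log T - \log(C s_0^2\log d)\big) \ge \sigma^2 s_0\log d\,(1-1/\kappa)\log T$, using the hypothesis $T \ge (C s_0^2\log d)^\kappa$, which gives $\log(C s_0^2 \log d) \le \tfrac{1}{\kappa}\log T$. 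This yields the claimed bound with $C' $ absorbing the constants $c, L_0, L_2$.

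The main obstacle is step (ii)–(iii): making the bias lower bound genuinely \emph{uniform} and \emph{quantitative} over the full range of $t$, rather than just ``with high probability for fixed $t$.'' One subtlety is that the active set of the Lasso subgradient (the coordinates where $|(\ltheta_t)_j|$ would exceed the threshold and thus contribute $\mathrm{sign}(\btheta^\star_j)$ rather than an interior subgradient value) must contain a constant fraction of $S$ so that $\|\hat{\bz}_S\|_2 \gtrsim \sqrt{s_0}$ — if $\lambda_t$ were so large that Lasso killed most of $S$, the argument would instead need to charge the error to the \emph{missed} coordinates $\|(\btheta^\star)_{S\setminus\pS}\|_2^2$, which is also $\gtrsim s_0 v^2 = \Omega(1)$, still giving a $\gtrsim s_0 \log d / t$-type bound for the relevant $t$; so one must handle these two regimes (partial shrinkage vs.\ partial killing) in a case split, both leading to the same order. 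A second technical point is controlling $\lmin{\hbSigma^{(t)}_{S,S}}$ away from zero uniformly, needed to invert the Gram matrix in the witness construction; this follows from a standard concentration/union bound over $t$ (e.g. a matrix Bernstein or $\epsilon$-net argument on the $s_0\times s_0$ block), at the cost of the constant in $C_0 s_0^2\log d$. The noise-versus-bias comparison in step (iii) is routine once the orders are pinned down, since the deterministic bias is of exact order $\lambda_t\sqrt{s_0} \gg \sigma\sqrt{s_0/t}$ whenever $\lambda_t \gg \sigma/\sqrt{t}$, which the hypothesis $\lambda_t \ge 16\sigma(1-L_2^{-1})^{-1}\sqrt{2\log d/t}$ guarantees by the factor $\sqrt{\log d} \to \infty$.
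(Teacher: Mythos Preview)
Your approach is essentially the same as the paper's: pick a single $\btheta^\star$ supported on $S$ with all entries of a common sign, invoke the primal--dual witness / selection-consistency representation under the irrepresentable condition to expose the deterministic shrinkage bias $\lambda_t(\hbSigma^{(t)}_{S,S})^{-1}\hat{\bz}_S$, lower-bound its squared norm by $c\,\sigma^2 s_0 \log d/t$ using the hypothesis on $\lambda_t$ and Assumption~\ref{assumption: bounded cov mat 1}, subtract the $O(\sigma^2 s_0/t)$ noise term, and sum over $t\in[C s_0^2\log d,T]$ using $T\ge(Cs_0^2\log d)^\kappa$ to extract the $(1-1/\kappa)\log T$ factor.

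Where you diverge from the paper is in the choice of the signal magnitude $v$ and the bookkeeping that choice forces. Your rationale ``small enough that the shrinkage bias is not washed out'' is a misconception: once sign consistency holds, the bias term $\lambda_t(\hbSigma^{(t)}_{S,S})^{-1}\hat{\bz}_S$ does not depend on $v$ at all, so there is no upper constraint on $v$. The paper exploits this by taking $v=2(\sqrt{L_0}+2L_0 s_0)$, i.e.\ $v=\Theta(s_0)$, which guarantees $(\ltheta_t)_j>0$ for all $j\in S$ whenever the $\ell_\infty$ error is below $(\sqrt{L_0}+2L_0 s_0)\lambda_t$ --- and since $\lambda_t\le 1$ by hypothesis, this holds for \emph{every} allowed $\lambda_t$ and every $t\ge C_T s_0^2\log d$. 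Your choice $v=\Theta(1/\sqrt{s_0})$ fails sign consistency when $\lambda_t$ is near its upper bound $1$ (the hypothesis allows this), which is what drives you into the partial-killing case split; the paper simply sidesteps it. A second simplification in the paper: it only establishes $\Pro(\mathcal{E}_t)\ge 1/5$ rather than $1-c/d$, which is all that is needed for a lower bound in expectation and spares some concentration work.
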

\begin{proof}
    See Appendix \ref{sec: proof of thm: lower bound of lasso}. 
\end{proof}

If $\lambda_t = C \sqrt{\log(d)/t}$ for a sufficiently large $C > 0$, the Lasso estimator is minimax optimal for a \textit{fixed} and sufficiently large $t$ \cite{wainwright2019high}. However, as shown in Theorem \ref{thm: lower bound of lasso}, the worst-case cumulative error of the Lasso estimators is of order $s_0\log(d)\log(T)$, indicating that Lasso is suboptimal in the sequential setting.



\subsection{Simulation Study in Sequential Estimation} \label{sec: HSLR simulation}
In this section, we present simulations to compare the performance of OPT-Lasso and Lasso estimators in the sequential setup. 

Specifically, we consider the following scenarios: (a) $s_0=5, d=100, T=10000$; (b) $s_0=10, d=500, T=10000$; (c) $s_0=5,d=1000, T=5000$; (d) $s_0=10,d=1000, T=5000$.  For each scenario, we perform 200 repetitions and report the average of various metrics for OPT-Lasso and Lasso estimators under different tuning parameters. In each repetition, for the parameter $\btheta$, we first randomly select $s_0$ indices, with their values independently sampled from the uniform distribution on $[0,1]$; the remaining indices are set to zero. Further, for each $t \in [T]$, $\bX_t$ follows a normal distribution with zero mean and an identity covariance matrix, i.e., $\bSigma=\mathbb{I}_d$, and the variance of the noise $\epsilon_t$ is set to $\sigma^2 = 1$.

For the Lasso estimators in Definition \ref{def: lasso} and the OPT-Lasso estimators in Definition \ref{def: OPT-Lasso}, we set the regularization parameter $\lambda_t = C_0\sqrt{(\log d)/t}$ and the threshold parameter $\hdlambda_t=C_0^{\textup{hard}}(C_0\sqrt{[\log (dt)]/t})$, where $C_0$ is chosen from $\{0.4,0.6,0.8,1,1.2\}$ and $C_0^{\textup{hard}}$ is chosen from $\{0.2,0.4,0.6,0.8,2\}$.

In Table \ref{fig: HSLR compare}, we consider two choices for the tuning parameters, and compare the cumulative estimation error of OPT-Lasso and Lasso, computed from time $T/10$ to $T$.  In Figure \ref{fig: HSLR sensitivity} for scenario (c), for $t \in [T/10,T]$, we report the \textit{running} cumulative error from time $T/10$ to time $t$ of OPT-Lasso, Lasso, and an \textit{oracle}, which knows the true support and runs the ordinary least squares on it. The results for the other scenarios can be found in Figure \ref{fig: HSLR plot 1} in Appendix \ref{app: more_simulations}. We observe that, across scenarios, OPT-Lasso consistently achieves smaller cumulative estimation error than Lasso.

In Figure \ref{fig: HSLR support}, we compare the support recovery property of the OPT-Lasso and Lasso estimators for scenario (c) and $C_0=0.8, C_0^{\textup{hard}}=0.6$. Specifically, for each time $t \in [T]$, we report the number of false positives (incorrectly selecting covariates not in the true support) and false negatives (failing to select covariates in the true support). The results show that the better performance of OPT-Lasso compared to Lasso is due to OPT-Lasso having significantly smaller false positives. Further, as discussed in \eqref{OPT_Lasso_selection_prop}, OPT-Lasso does contain ``strong signals''. Finally, we present a sensitivity analysis of OPT-Lasso and Lasso with respect to the tuning parameters $C_0$ and $C_0^{\textup{hard}}$, reported in Table \ref{fig: HSLR sensitivity 1} in Appendix \ref{app: more_simulations}. The results indicate that both estimators are sensitive to the choice of tuning parameters, highlighting the importance of developing principled tuning strategies as a direction for future research.


\begin{table}[t!]
\centering
\caption{Each row corresponds to one scenario. The second and third columns show the cumulative estimation error from round $T/10$ to $T$, along with its standard variance, of OPT-Lasso with 
$(C_0,C_0^{\textup{hard}}) = (0.8,0.6)$ and $(1,0.4)$,  respectively.
The last two columns are for  Lasso with $C_0=0.8$ and $1$, respectively.
} \label{fig: HSLR compare}
\begin{tabular}{l|cc|cc}
  $(s_0,d,T)$   & (0.8, 0.6) & (1, 0.4)   & 0.8         & 1.0           \\ \hline
(a). $(5,100,10000)$ & 16.0 $\pm$ 0.4   & 15.9 $\pm$ 0.3 & 52.5 $\pm$ 1.5  & 64.6 $\pm$ 1.4  \\
(b). $(10,500,10000)$ & 33.3 $\pm$ 0.9 & 33.5 $\pm$ 0.7 & 120.7 $\pm$ 2.2 & 162.4 $\pm$ 2.1 \\
(c). $(5,1000,5000)$ & 21.1 $\pm$ 0.4 & 20.6 $\pm$ 0.4 & 79.9 $\pm$ 1.1  & 94.3 $\pm$ 1.9  \\
(d). $(10,1000,5000)$  & 40.5 $\pm$ 0.8 & 41.0 $\pm$ 0.6   & 144.9 $\pm$ 1.0   & 182.0 $\pm$ 1.2   \\  
\end{tabular}
\end{table}

\begin{figure}[t!] 
\centering
\includegraphics[width=0.93\textwidth]{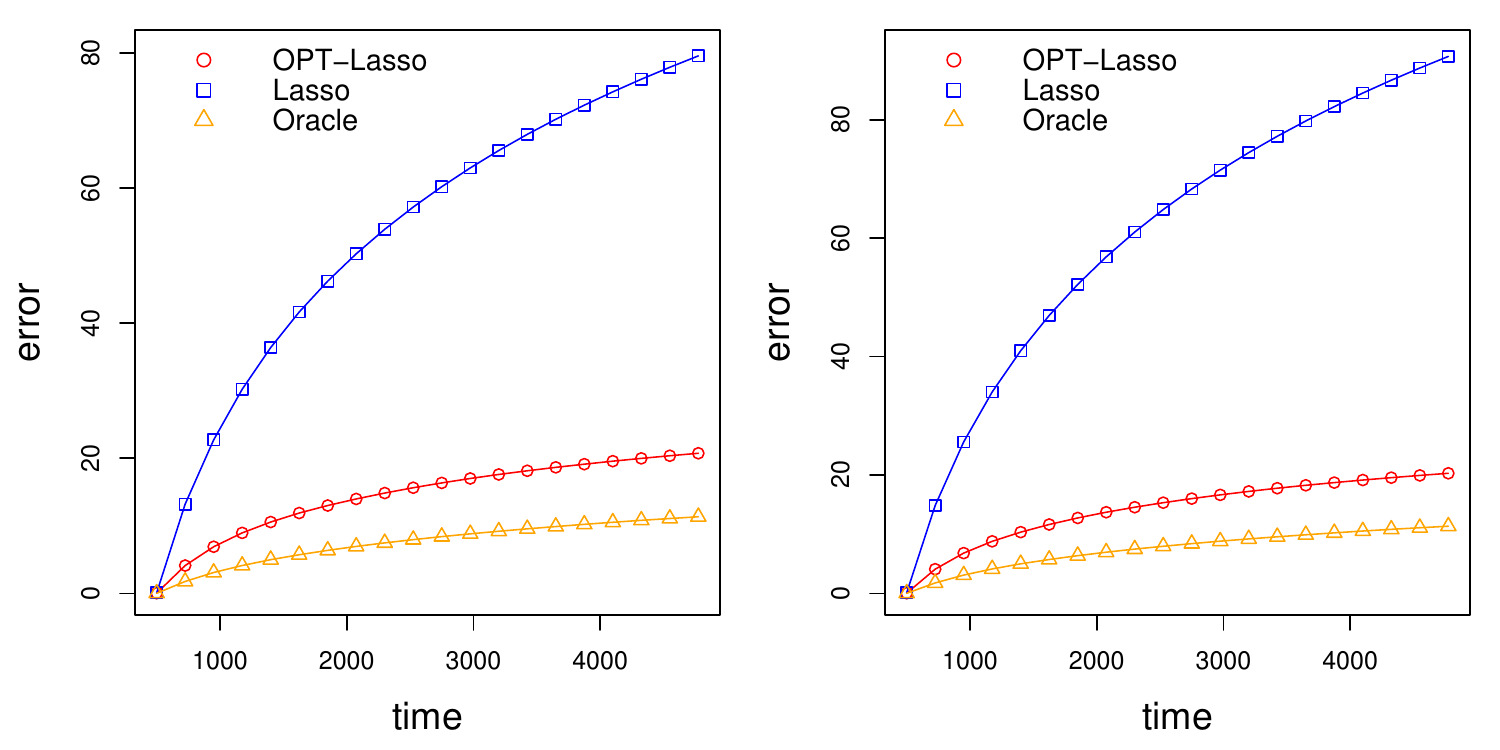}
\caption{
The $x$-axis represents time, and the $y$-axis the cumulative error from time $T/10$ to time $t \in [T/10,T]$. For scenario (c), we report the running cumulative estimation error of OPT-Lasso, Lasso and an Oracle (``LS''). The left plot corresponds to $C_0=0.8, C_0^{\textup{hard}}=0.6$ and the right plot to $C_0=1, C_0^{\textup{hard}}=0.4$.
} \label{fig: HSLR sensitivity}
\end{figure}

\begin{figure}[t!] 
\centering
\includegraphics[width=0.93\textwidth]{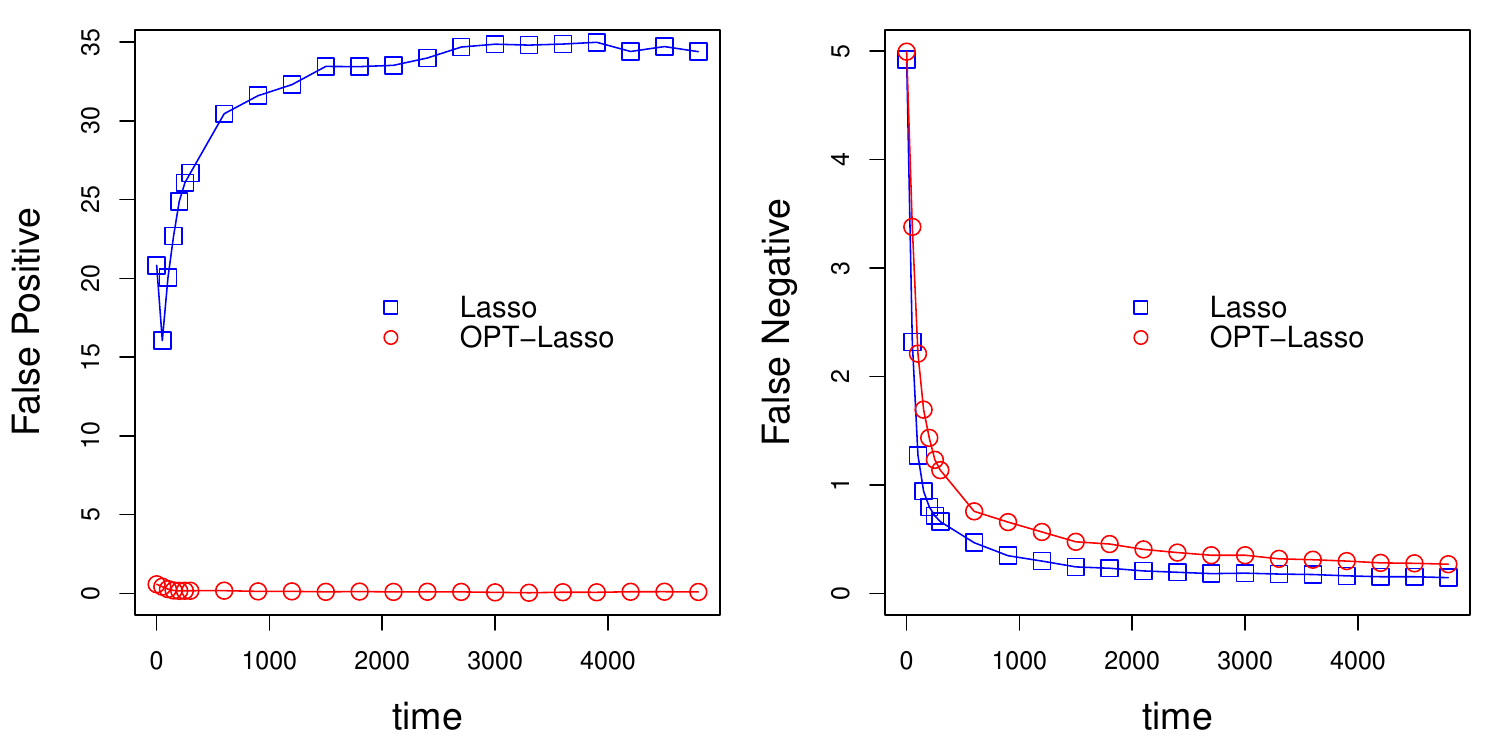}
\caption{
We consider scenario (c) and set $C_0=0.8, C_0^{\textup{hard}}=0.6$. The left (resp.~right) plot shows the number of false positives (resp.~negatives) at each time $t \in [T]$ for Lasso and OPT-Lasso.
} \label{fig: HSLR support}
\end{figure}

\section{High-dimensional Linear Contextual Bandits} \label{sec: problem formulation}
In this section, we investigate the linear contextual bandit problem in high dimensions. Specifically, consider a set of $K \geq 2$ arms, each associated with an arm parameter $\btheta^{(k)} \in \mathbb{R}^d$. In addition, assume that there are $T$ rounds, and at each round/time $t \in [T]$, we observe a covariate vector $\bX_t \in \mathbb{R}^d$, select an arm $A_t \in [K]$, and receive a reward $Y_t \in \bR$, which satisfies the following linear model:
\begin{equation*} 
    Y_t = \bX_t' \btheta^{(A_t)} + \epsilon_t.
\end{equation*}

We assume that the covariate vectors $\{\bX_t : t \in [T]\}$ are i.i.d., and that $\{\epsilon_t \in \mathbb{R} : t \in [T]\}$ are zero-mean i.i.d. random variables that are $\sigma$-sub-Gaussian, i.e., $\Exp[\exp(\tau \epsilon_1)] \leq \exp(\sigma^2 \tau^2 / 2)$ for $\tau \in \mathbb{R}$. Further, these two sequences are independent. At each round $t \in [T]$, we can select the arm $A_t$ based solely on $\bX_t$ and the previous observations $\{ (\bX_s, A_s, Y_s) : s < t \}$, possibly with the help of a random variable $U_t$.  Specifically, a permissible rule $\{\pi_t : t \in [T]\}$ is a sequence of measurable functions such that 
$$ 
A_t = \pi_t\left( \bX_t, \{(\bX_s, A_s, Y_s) : s < t\}, U_t \right),\; \text{ for } t \in [T],
$$
where $\{U_t : t \in [T]\}$ are i.i.d. random variables uniformly distributed on $(0,1)$ and independent of all other random variables. Let $\cF_t := \sigma(\{(\bX_s, A_s, Y_s), s\le t\})$ be the $\sigma$-algebra generated by the observations up to time $t \geq 0$.

Given a permissible rule $\{\pi_t : t \in [T]\}$, or equivalently the arm selections $\{A_t : t \in [T]\}$, we define the instantaneous regret $r_t$ at time $t \in [T]$ as follows:
\begin{equation}
    \label{eq: def of regret}
    r_t := \bX_{t}'\left(\btheta^{(A_t^*)} - \btheta^{(A_t)} \right), \;\; \text{ where } \;A_t^*:= \argmax_{k \in [K]} \bX_t'\btheta^{(k)}.
\end{equation}
Our goal is to find a procedure that minimizes the expected cumulative regret $\sum_{t \in [T]} \Exp[r_t]$.

We focus on the high-dimensional regime, where the time horizon $T$ may exceed the dimension $d$, but the arm parameters are assumed to be sparse. Specifically, for each arm $k \in [K]$, let $S_k = \{ j \in [d] : \btheta^{(k)}_j \neq 0 \}$ denote the support of $\btheta^{(k)}$, and define $s_0 := \max_{k \in [K]} |S_k|$ as the maximum support size across all arms. We consider the setting where $s_0 \ll d$. Throughout, we assume $s_0,d, T \geq 2$.

\subsection{A Three-Stage Algorithm} \label{subsec:three_stage_algo}
We propose a three-stage algorithm that separates exploration and exploitation, as depicted in Figure \ref{fig:proposed_three_stages} and described in detail below.

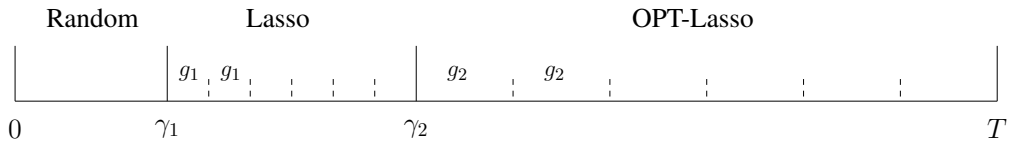
\begin{figure}[!t]
\centering
\resizebox{0.95\textwidth}{!}{
\begin{circuitikz}
\tikzstyle{every node}=[font=\large]
\draw [short] (6,24) -- (23.75,24);
\draw [short] (6,25) -- (6,24);
\draw [short] (8.75,25) -- (8.75,24);
\draw [dashed] (9.5,24.4) -- (9.5,24);
\draw [dashed] (10.25,24.4) -- (10.25,24);
\draw [dashed] (11,24.4) -- (11,24);
\draw [dashed] (11.75,24.4) -- (11.75,24);
\draw [dashed] (12.5,24.4) -- (12.5,24);
\draw [short] (13.25,25) -- (13.25,24);
\draw [dashed] (15,24.4) -- (15,24);
\draw [dashed] (16.75,24.4) -- (16.75,24);
\draw [dashed] (18.5,24.4) -- (18.5,24);
\draw [dashed] (20.25,24.4) -- (20.25,24);
\draw [dashed] (22,24.4) -- (22,24);
\draw [short] (23.75,25) -- (23.75,24);

\node [font=\Large] at (7.4,25.5) {Random};
\node [font=\Large] at (10.75,25.5) {Lasso};
\node [font=\Large] at (18.25,25.5) {OPT-Lasso};
\node [font=\large] at (9.15,24.5) {$g_1$};
\node [font=\large] at (9.9,24.5) {$g_1$};
\node [font=\large] at (14,24.5) {$g_2$};
\node [font=\large] at (15.75,24.5) {$g_2$};
\node [font=\Large] at (6,23.5) {$0$};
\node [font=\Large] at (8.75,23.5) {$\gamma_1$};
\node [font=\Large] at (13.25,23.5) {$\gamma_2$};
\node [font=\Large] at (23.75,23.5) {$T$};
\end{circuitikz}
}
\caption{An illustration of the three-stage algorithm. The three stages are as follows: a pure exploration stage $(0, \gamma_1]$ with random arm selection; an exploitation stage $(\gamma_1, \gamma_2]$ based on Lasso estimators; and a final exploitation stage $(\gamma_2, T]$ based on OPT-Lasso estimators. During Stages 2 and 3, the estimators are updated every $g_1$ and $g_2$ rounds, respectively.}
\label{fig:proposed_three_stages}
\end{figure}

For round $t \in [1, \gamma_1]$, where $\gamma_1$ is to be specified, due to data scarcity, we perform pure exploration by randomly selecting one of the $K$ arms. Afterwards, we estimate the parameter of each arm and select the arm that maximizes the \textit{estimated} reward, i.e., by pure exploitation. For the exploitation period, we divide it into two segments. For round $t \in (\gamma_1, \gamma_2]$, where $\gamma_2$ is to be specified, we adopt Lasso as the estimation method, while for round $t \in (\gamma_2, T]$, we use OPT-Lasso. For computational efficiency, we do not update the Lasso (resp.~OPT-Lasso) estimator at each round, but rather every $g_1$ (resp.~$g_2$) rounds, where $g_1$ and $g_2$ are tuning parameters.

Specifically, for each time $t \in [T]$ and arm $k \in [K]$, we define
\begin{equation*}
\begin{aligned}
    &\bX_{[t]}^{(k)} := \left(\bX_1 \mathbb{I}\{A_1 = k\},\; \cdots\;, \bX_t \mathbb{I}\{A_t = k\}\right)', \\
    &\bY_{[t]}^{(k)} := \left(Y_1 \mathbb{I}\{A_1 = k\},\; \cdots\;, Y_t \mathbb{I}\{A_t = k\}\right)',
\end{aligned}
\end{equation*}
where $(\bX_{[t]}^{(k)}, \bY_{[t]}^{(k)})$ represents the data available for estimating $\btheta^{(k)}$ up to time $t$. Furthermore, for each $t \in [T]$ and $k \in [K]$, given a regularization parameter $\lambda_t^{(k)}$ and a hard-threshold parameter $\lambda_t^{\textup{OPT},(k)}$, we denote by
\begin{equation*}
\hbtheta_t^{(k)} = \ltheta_t\left(\bX_{[t]}^{(k)}, \bY_{[t]}^{(k)}, \lambda_t^{(k)}\right), \quad \tbtheta_t^{(k)} = \ptheta_t\left(\bX_{[t]}^{(k)}, \bY_{[t]}^{(k)}, \lambda_t^{(k)}, \lambda_t^{\textup{OPT},(k)}\right), 
\end{equation*}
the Lasso and OPT-Lasso estimators, respectively, of the $k$-th arm parameter $\btheta^{(k)}$, where we recall their definitions in Definition \ref{def: lasso} and \ref{def: OPT-Lasso}. In general, $\lambda_t^{(k)}$ and $\lambda_t^{\textup{OPT},(k)}$ can depend on the data up to time $t$, that is, $\cF_t$-measurable.

\begin{definition} \label{def:three_stages}

Given positive integers $\gamma_1, \gamma_2, g_1, g_2$ such that $\gamma_1 \leq \gamma_2$, and  parameters $\{\lambda_t^{(k)}, \lambda_t^{\textup{OPT},(k)} : t \in [T], k \in [K]\}$, we formally define the three-stage algorithm as follows:
\begin{enumerate}[label={Stage \arabic*} ]
    \item (Random selection).  At round $t \in [1, \gamma_1]$, $A_t$ is randomly selected from $[K]$ according to a uniform distribution;
    \item(Lasso).  At round $t \in (\gamma_1+mg_1,(\gamma_1+(m+1)g_1) \wedge \gamma_2]$ with $m\in\{0,1,2,\cdots\}$, we use the Lasso estimators at time $\gamma_1 + m g_1$, and select the arm  greedily:
    $$A_t = \text{argmax}_{k\in[K]} (\hbtheta_{\gamma_1+mg_1}^{(k)})' \bX_t.$$
    \item(OPT-Lasso).   At round $t \in (\gamma_2+mg_2,(\gamma_2+(m+1)g_2) \wedge T]$ with $m\in\{0,1,2,\cdots\}$, we use the OPT-Lasso estimators at time $\gamma_2 + m g_2$, and select the arm  greedily:
    $$A_t = \text{argmax}_{k\in[K]} (\tbtheta_{\gamma_2+mg_2}^{(k)})' \bX_t.$$ 
\end{enumerate}
Here, ties in the “argmax” are broken either according to a fixed rule or randomly.
\end{definition}  

A few remarks are in order. First, based on the results in Section~\ref{sec: multi linear}, OPT-Lasso is the preferred estimation method over Lasso in the sequential setting. Therefore, the durations of the first two stages should be significantly shorter than that of the third stage; that is, we choose $\gamma_2 \ll T$. Second, the second stage is introduced primarily for technical reasons. Specifically, as we explain in Appendix \ref{subsec:discussion_bandit_est}, the $\ell_2$ estimation error of the Lasso estimators can be bounded when $t = \Omega(s_0 \log(dT))$. In contrast, our analysis of the OPT-Lasso estimators requires $t = \Omega(s_0^5 \log(dT))$. To bridge this gap, we include an intermediate stage that uses Lasso before transitioning to OPT-Lasso in the third stage. In the simulation studies presented in Section~\ref{sec: bandit simulation}, we observe that including Stage 2 leads to a slight improvement in the performance of the proposed procedure. Finally, in Stage 2 (resp.~Stage 3), the estimators are updated every $g_1$ (resp.~$g_2$) steps and may use arm-specific tuning parameters $\lambda_t^{(k)}$ (resp.~$\lambda_t^{(k)}$ and $\lambda_t^{\textup{OPT},(k)}$). For simplicity and to reduce the number of free parameters in the subsequent \textit{analysis}, we set
\begin{align}\label{parameters_for_analysis}
    g_1 = \gamma_1, \;\; \quad g_2 = \gamma_2, \;\; 
    \lambda_t^{(k)} = \lambda_t \;\text{ and }\;\lambda_t^{\textup{OPT},(k)} = \lambda_t^{\textup{OPT}}\; \text{ for } k \in [K],\; t \in [T].
\end{align}
This choice allows us to (nearly) characterize the minimax rate of the cumulative regret.

\subsection{Minimax optimality of Proposed Algorithm}\label{subsec:bandit_minimax}
For the regret analysis, we first establish an upper bound on the cumulative regret of the proposed algorithm, followed by a (nearly) matching lower bound that holds for all admissible policies. These results demonstrate, in particular, that the proposed algorithm is (nearly) minimax optimal. We begin by stating our assumptions.

A Lebesgue-density function $f:\mR^d \mapsto [0,\infty)$ is called log-concave  if the function $\log f:\mR^d \mapsto [-\infty,\infty)$ is concave.
In addition, we define the following matrices: for $t \in [T]$,
\begin{align*}
\bSigma :=\Exp[\bX_t\bX_t'], \quad
    \bSigma^{(k)} := \Exp\left[\bX_t\bX_t'\idf\{(\btheta^{(k)})'\bX_t\ge \max_{\ell \neq k}((\btheta^{(\ell)})'\bX_t)\}\right] \text{ for } k \in [K]. 
\end{align*}
Thus, $\bSigma$ is the covariance matrix of the covariate vector, while $\bSigma^{(k)}$ denotes the covariance matrix of the covariate vector on the event that the $k$-th arm is the optimal one. For a random variable $Z$,  its $\psi_{2}$ norm is defined as follows: $ \|Z\|_{\psi_{2}} := \inf\{C > 0: \exp((Z/C)^2) \leq 2\}$.

\begin{assumption}[Covariate vectors]\label{assumption: covariates_bandit}
For constants $m_X, \alpha_X, L_3>1$,  we assume
\begin{enumerate}[label={(\alph*)}, ref={\theassumption\ (\alph*)}]
\item \label{assumption: context}
For each $t \in [T]$, the covariate vector $\bX_t$ satisfies $\Exp[\bX_t] = \bd{0}_d$, $\|\bX_t\|_{\infty} \leq m_X$, and has a log-concave Lebesgue-density. Moreover, $\bX_t$ is sub-Gaussian in the sense that for every unit vector $\bu \in \bR^d$ with $\|\bu\|_2 = 1$, we have $\|\bu' \bX_t\|_{\psi_2} \leq \alpha_X$.

 \item \label{assumption: bounded cov mat}
The eigenvalues of $\bSigma$ satisfy $L_3^{-1}\le \lmin{\bSigma}\le \lmax{\bSigma} \le L_3$.
\item \label{assumption: constraint on the covariance matrix 1}
The covariance matrix for each arm $k\in[K]$ satisfies $\mn(\bSigma^{(k)})^{-1}\mn_{\infty} \le L_3$. 
\end{enumerate}    
\end{assumption}

For Assumption \ref{assumption: context}, it is common to assume that the entries of the covariate vectors are bounded; see, e.g., \cite{bastani2020online, ariu2022thresholded}. 
Examples of distributions that are bounded, sub-Gaussian, and possess log-concave densities include the truncated normal distributions and the uniform distribution over a convex and compact set in $\bR^{d}$.

When the covariate vector $\bX_t$ has a log-concave density and bounded eigenvalues as assumed in Assumption~\ref{assumption: bounded cov mat}, we highlight the following two properties.  First, if arms are selected to maximize the estimated rewards, then the conditional regret $\Exp[r_t \mid \mathcal{F}_{t-1}]$ at time $t \in [T]$ is, up to a constant depending only on $L_3$, both upper and lower bounded by the mean-squared estimation error of the arm parameters at time $t-1$; see Lemma~\ref{lemma: estErr_to_regret} in Appendix.  Second, for any unit vector $\bu \in \bR^{d}$, the random variable $\bu' \bX_t$ has a density bounded by a constant $C$ depending only on $L_3$; see Lemma~\ref{lemma: margin condition} in Appendix. This implies that for any $i \neq j \in [K]$, the probability that $\bX_t$ falls within a $\tau$-distance of the  boundary 
$\{ \bx \in \bR^{d} : (\btheta^{(i)})'\bx = (\btheta^{(j)})' \bx \}$ 
is at most $C \tau$ for any $\tau > 0$, a condition known as the margin condition \citep{goldenshluger2013linear, bastani2020online}. 
Additional properties are summarized in Appendix~\ref{sec: log-concave}.

Similar to Assumption \ref{assumption: constraint on the covariance matrix} imposed in the sequential estimation problem, Assumption \ref{assumption: constraint on the covariance matrix 1} is used to derive instance-specific upper bounds on the $\ell_2$ estimation error of the OPT-Lasso estimators for $\{\btheta^{(k)} : k \in [K]\}$. 
For example, if $K = 2$ and $\bX_t$ has a symmetric distribution, then $\bSigma_1 = \bSigma_2 = \bSigma/2$. In this case, Assumption~\ref{assumption: constraint on the covariance matrix 1} holds if $\mn (\bSigma)^{-1} \mn_{\infty} \leq L_3/2$.

Next, we state the assumptions on the arm parameters. For a vector $\bv \in \mathbb{R}^d$ and a linear subspace $\mathcal{S} \subseteq \mathbb{R}^d$, let $\textup{proj}(\bv;\; \mathcal{S})$ denote the projection of $\bv$ onto $\mathcal{S}$. Additionally, for a collection of vectors $\{\bv_1, \ldots, \bv_{\ell}\} \subseteq \mathbb{R}^d$, let $\textup{span}(\bv_1, \ldots, \bv_{\ell})$ denote the linear subspace spanned by this collection. We adopt the convention that the span of an empty set is $\{\bd{0}_d\}$.

\begin{assumption}[Arm Parameters] \label{assumption:arm_parameters}
For constants $m_{\theta} > 1$ and $L_4 > 1$, we assume
\begin{enumerate}[label={(\alph*)}, ref={\theassumption\ (\alph*)}]
\item \label{assumption: theta_bound}
$\|\btheta^{(k)}\|_2\le m_{\theta}$ for all $k\in [K]$.

\item \label{assumption: linear_independent}
Let $\bu_{i,j} := \btheta^{(i)}-\btheta^{(j)}$ for $i,j\in[K]$. Assume that for arms $k,\ell \in [K]$ with $k \neq \ell$, 
$$
\|\bu_{k,\ell}-\textup{proj}(\bu_{k,\ell};\;\textup{span}\left(\{\bu_{k,j}:j\neq k,\ell\}\right)\|_2\ge L_4^{-1}.
$$
\end{enumerate}
\end{assumption}

Intuitively, Assumption~\ref{assumption: linear_independent} requires that the arm parameters are well-separated and linearly independent.  When $K=2$,   Assumption \ref{assumption: linear_independent} is \textit{equivalent} to requiring $\|\btheta^{(1)}  - \btheta^{(2)}\|_2 \geq L_4^{-1}$. For $K \geq 2$, Assumption \ref{assumption: linear_independent} implies that $\|\btheta^{(k)}  - \btheta^{(\ell)}\|_2 \geq L_4^{-1}$ for $k \neq \ell \in [K]$. 

Note that by definition, for $k \neq \ell \in [K]$, we have
\begin{align*}
\|\bu_{k,\ell}-\textup{proj}(\bu_{k,\ell};\textup{span}(\{\bu_{k,j}:j\neq k,\ell\})\|_2 \ge \|\btheta^{(\ell)}-\textup{proj}(\btheta^{(\ell)};\; \textup{span}(\{\btheta^{(j)}:j\neq l\}))\|_2.
\end{align*}
Therefore, a sufficient and more interpretable condition for Assumption~\ref{assumption: linear_independent} is that, for each arm $k \in [K]$, $\|\btheta^{(k)}-\textup{proj}(\btheta^{(k)};\; \textup{span}(\{\btheta^{(j)}:j\neq k\}))\|_2\ge L_4^{-1}$.


Next, we provide an upper bound on the cumulative regret of the proposed three-stage algorithm defined in Definition \ref{def:three_stages}. For simplicity, we assume that the parameter choices in \eqref{parameters_for_analysis} hold, and recall that the instantaneous regret $r_t$ is defined in~\eqref{eq: def of regret}.

\begin{theorem} \label{thm: regret over horizon}
Suppose Assumptions \ref{assumption: covariates_bandit} and  \ref{assumption:arm_parameters} hold, and consider the proposed three-stage algorithm in Definition \ref{def:three_stages}. Set the regularization and threshold parameters as
\begin{equation} \label{eq: bandit_paras_est}
\lambda_t = 6m_X\sigma\sqrt{\frac{\log (dT)}{t}},\quad \text{ and } \quad \hdlambda_t = 28L_3\lambda_t,
\end{equation}
and for some integers $C_{\gamma_1}$ and $C_{\gamma_2}$, set the end times of Stage 1 and 2, respectively, as follows
\begin{equation} \label{eq: end_times}
    \gamma_1 = C_{\gamma_1} \lceil(\sigma^2\vee 1)\rceil s_0 \lceil \log (dT) \rceil, \quad \text{ and } \quad \gamma_2 = C_{\gamma_2} s_0^4 \gamma_1.
\end{equation} 
Then, there exist constants $C,C^* > 0$ depending only on $K,  m_X, \alpha_X,  L_3, m_{\theta}, L_4$, such that if $C_{\gamma_1} \geq C^*$ and $C_{\gamma_2} \geq C^*$, then
$$
\Exp\left[\sum_{t=1}^T r_t \right] \le C C_{\gamma_1} (\sigma^2\vee 1) (\log s_0) s_0(\log d + \log T).
$$ 
Further, the following upper bound on the cumulative regret in Stage 3 holds:
$$\Exp\left[\sum_{t=\gamma_2+1}^T r_t \right] \le C (\sigma^2\vee 1) s_0(\log d + \log T).$$
\end{theorem}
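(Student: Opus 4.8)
The plan is to split the horizon $\{1,\dots,T\}$ according to the three stages of Definition~\ref{def:three_stages} and bound the contribution of each piece separately, with the bulk of the work concentrated in Stage~3. Since $\|\btheta^{(k)}\|_2\le m_\theta$ and, by Assumption~\ref{assumption: context}, $\bu'\bX_t$ is $\alpha_X$-sub-Gaussian for every unit vector $\bu$, one gets $\Exp[r_t]\le C$ for a constant depending only on $K,m_\theta,\alpha_X$; hence Stage~1 contributes at most $C\gamma_1=C\,C_{\gamma_1}(\sigma^2\vee1)s_0\lceil\log(dT)\rceil$ to $\Exp[\sum_t r_t]$, already inside the asserted total bound. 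For Stages~2 and~3 I would invoke the regret-to-estimation-error reduction (Lemma~\ref{lemma: estErr_to_regret}), which uses that $\bX_t$ has a log-concave density with $\bSigma$ well-conditioned (Assumptions~\ref{assumption: context} and~\ref{assumption: bounded cov mat}), and in particular the induced margin property (Lemma~\ref{lemma: margin condition}), to show $\Exp[r_t\mid\cF_{t-1}]\le C\sum_{k\in[K]}\bigl(\|\hbtheta^{(k)}-\btheta^{(k)}\|_2^2\wedge c_0\bigr)$, where $\hbtheta^{(k)}$ denotes whichever estimator of $\btheta^{(k)}$ the algorithm is currently using and $c_0$ depends only on $m_\theta$. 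It then remains to bound, over the piecewise-constant sequence of Lasso and OPT-Lasso estimators, the sums of these truncated squared errors.

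The technical heart is to transfer the fixed-design Lasso and OPT-Lasso guarantees to the adaptively collected per-arm data $(\bX^{(k)}_{[n]},\bY^{(k)}_{[n]})$. Two facts are needed. First, each arm is optimal on an event of probability bounded below by a constant: under Assumption~\ref{assumption: linear_independent} the vectors $\{\btheta^{(k)}-\btheta^{(\ell)}:\ell\neq k\}$ lie in an open halfspace, so $\btheta^{(k)}$ is not in the convex hull of the remaining parameters, and combined with the zero mean and full-support log-concave density of $\bX_t$ this makes each region $\{\bx:(\btheta^{(k)})'\bx\ge\max_{\ell}(\btheta^{(\ell)})'\bx\}$ carry positive mass. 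Since after Stage~1 every per-arm error is already a small constant (of order $C_{\gamma_1}^{-1/2}$ from $\gamma_1/K$ i.i.d.\ samples), the greedy rule is ``approximately correct'', and a bootstrapping/concentration argument then shows that, with high probability, each arm $k$ is pulled $\Omega(n)$ times by round $n$. Second, on that event the empirical Gram matrix $n^{-1}\sum_{s\le n}\bX_s\bX_s'\idf\{A_s=k\}$ concentrates around a positive fraction of $\bSigma^{(k)}$, which, by Assumption~\ref{assumption: constraint on the covariance matrix 1}, inherits the restricted-eigenvalue / sparse-Riesz and bounded-$\ell_1$-column-of-the-inverse conditions that drive the analyses behind Theorem~\ref{thm: seq multi linear, upper bound} and the $\ell_\infty$ bound of~\cite{bellec2022biasing}. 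On the complementary ``bad'' event (of probability at most $C(de^{-n/C}+n^{-1})$) I would bound that round's conditional regret by the constant $c_0$; summed over the update times this contributes only $O(1)$. This is also where the Stage~2/Stage~3 split matters: the Lasso error bound needs only per-arm sample size $\Omega(s_0\log(dT))$, valid throughout Stage~2, whereas the OPT-Lasso analysis (as noted in Appendix~\ref{subsec:discussion_bandit_est}) needs per-arm sample size $\gtrsim s_0^5\log(dT)$, which is exactly what $\gamma_2=C_{\gamma_2}s_0^4\gamma_1=\Theta(s_0^5\log(dT))$ guarantees once $t>\gamma_2$.

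For Stage~2, if $t$ lies in the $m$-th update block the estimator is the Lasso at time $n=m\gamma_1$, whose truncated per-arm MSE is $O(\sigma^2 s_0\log(dT)/n)$ by the standard restricted-eigenvalue analysis on the event above (plus an exponentially small contribution off it); the block has length $\gamma_1$, so it contributes $O(\sigma^2 s_0\log(dT)/m)$, and summing over $m=1,\dots,\gamma_2/\gamma_1=C_{\gamma_2}s_0^4$ gives a Stage-2 total of order $\sigma^2 s_0\log(dT)\,\log(C_{\gamma_2}s_0^4)=O\bigl((\sigma^2\vee1)(\log s_0)\,s_0(\log d+\log T)\bigr)$. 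This harmonic sum over linearly spaced update times is precisely what produces the extra $\log s_0$ factor in the overall bound. For Stage~3, if $t$ lies in the $m$-th update block the estimator is the OPT-Lasso at time $n_m=(m+1)\gamma_2$, and the bandit analogue of~\eqref{eq: opt_lasso_t} gives, for each arm $k$, a truncated MSE of at most $C\sum_{j\in S_k}(\btheta^{(k)}_j)^2\idf\{|\btheta^{(k)}_j|\le 2\hdlambda_{n_m}\}+C\sigma^2 s_0/n_m+C(de^{-n_m/C}+n_m^{-1})$. Multiplying by the block length $\gamma_2$ and summing over $m$: the $\sigma^2 s_0/n_m$ and tail terms sum to $O(\sigma^2 s_0\log(T/\gamma_2)+1)=O(\sigma^2 s_0\log T)$, while by~\eqref{eq: bandit_paras_est} one has $2\hdlambda_n\le\sqrt{b/n}$ with $b=C\sigma^2\log(dT)$, so the computation of Lemma~\ref{lemma: instance sum} yields $\sum_m\gamma_2(\btheta^{(k)}_j)^2\idf\{|\btheta^{(k)}_j|\le\sqrt{b/n_m}\}\le b$ for each coordinate $j$, hence a contribution of $O(Ks_0 b)=O(\sigma^2 s_0\log(dT))$ after summing over $k$ and $j\in S_k$ --- crucially \emph{without} a $\log s_0$ factor. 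Adding these pieces gives $\Exp[\sum_{t=\gamma_2+1}^T r_t]=O\bigl((\sigma^2\vee1)s_0(\log d+\log T)\bigr)$, which is the second assertion; combining it with the Stage-1 and Stage-2 bounds yields the first.

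I expect the step in the second paragraph --- the adaptive-sampling control, i.e.\ proving $\Omega(n)$ per-arm coverage under pure-greedy selection with only a short warm-up, and porting the restricted-eigenvalue and $\ell_\infty$ machinery to the bandit-collected design matrices $\bX^{(k)}_{[n]}$ --- to be the main obstacle. Once that is in place, Stages~2 and~3 reduce to bookkeeping with the harmonic sum and with Lemma~\ref{lemma: instance sum}, respectively, and the instance-specific form of~\eqref{eq: opt_lasso_t} is exactly what keeps Stage~3 free of the $\log s_0$ loss.
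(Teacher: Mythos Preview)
Your plan matches the paper's proof: it too splits by stage, bounds Stage~1 trivially, reduces Stages~2--3 to estimation error via Lemma~\ref{lemma: estErr_to_regret}, invokes a separately-proved estimation-error result (Theorem~\ref{thm:bandit_est_accuracy}, whose inductive proof is exactly the ``adaptive-sampling control'' you flag as the main obstacle), and finishes with the harmonic sum for Stage~2 and Lemma~\ref{lemma: instance sum} for Stage~3. One small clarification: as the paper notes in Appendix~\ref{subsec:discussion_bandit_est}, the \cite{bellec2022biasing} $\ell_\infty$ bound does \emph{not} carry over to the dependent bandit design, and the $s_0^5$ requirement you correctly cite arises precisely because the paper falls back to the classical $\ell_\infty$ bound (Lemma~\ref{lemma: bounds of lasso}) in that setting.
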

\begin{proof}
The proof is provided in Section \ref{sec: proof of thm: regret over horizon}.
\end{proof}

Under the same problem formulation,   \cite{bastani2020online} proposes the Lasso bandit algorithm, and provides an upper bound on the expected regret of order $O(s_0^2(\log d+\log T)^2)$, which has worse dependence on $s_0, d, T$ compared to the upper bound established above. As we demonstrate in Subsection \ref{sec: bandit simulation}, algorithms based on Lasso estimators are provably \emph{suboptimal}.

Furthermore, \cite{ariu2022thresholded} proposes a related algorithm called the ``thresholded Lasso bandit'',   which does not have the pure exploration stage---that is,  it does not include Stage 1---and applies thresholding twice to the initial Lasso estimators.  For the analysis, they assume that $s_0$ is constant and that the nonzero components of the parameter vectors are bounded away from zero by a constant, that is,
\begin{align}\label{ariu_assumption}
    \text{(i) } s_0 \text{ is constant,} \quad \text{and} \quad \text{(ii) } \min_{j \in S_k,\; k \in [K]} |\btheta^{(k)}_j| = \Omega(1).
\end{align}
See Assumption 3.1 in \cite{ariu2022thresholded}.  Under this restrictive assumption,  
\cite{ariu2022thresholded} establishes an upper bound on the cumulative regret of order $O( (\log\log d)  \log d  +  \log T)$ (see Theorem 5.2 therein). Under Assumption \eqref{ariu_assumption}, our result in Theorem \ref{thm: regret over horizon} provides a mild improvement by a factor of $\log\log d$ on the dependence of $d$.  Importantly, our analysis allows   $s_0$ to grow with $d$ and $T$, and does not require the beta-min condition. Moreover, if the beta-min condition is imposed, we are unable to establish a matching lower bound as in Corollary~\ref{cor:bandit_minimax_rate}—that is, the minimax rate under this assumption remains unclear.


Finally, we show that the algorithm is nearly minimax optimal with respect to $s_0,d,T$, by establishing a nearly matching lower bound of the cumulative regret under the case $K=2$.  We define the following space of parameters for $\btheta^{(1)}$ and $\btheta^{(2)}$: for $L > 0$,
\begin{align*}
\begin{split}
        \widetilde{\bTheta}_{d}(s_0,L) := & \left\{(\btheta^{(1)},\btheta^{(2)}) \in \bR^{d + d}:\;\; \|\btheta^{(1)}\|_0 \leq s_0, \; \;\|\btheta^{(2)}\|_0 \leq s_0, \;\; \right.\\
    &  \left. \qquad\qquad \|\btheta^{(1)}\|_{2} \leq 2L, \;\; \|\btheta^{(2)}\|_{2} \leq 2L, \;\;
    \|\btheta^{(1)} -\btheta^{(2)} \|_{2} \geq L^{-1} \right\}.
    \end{split}
\end{align*}
As $L$ increases, $\widetilde{\bTheta}_{d}(s_0,L)$ becomes larger. Further, as $L \uparrow \infty$, $\widetilde{\bTheta}_d(s_0, L)$ exhausts all possible pairs of $s_0$-sparse vectors in $\bR^{d}$.

\begin{theorem}\label{thm: regret lower bound}
Consider the case $K=2$, and let $L > 1$ and $C_{*} > 0$ be arbitrary fixed constants. Suppose that Assumptions \ref{assumption: context} and \ref{assumption: bounded cov mat} hold for $\{\bX_t: t \in [T]\}$, and that the noise $\{\epsilon_t:t\in[T]\}$ are $i.i.d.$ normal random variables with distribution $\mathcal{N}(0,\sigma^2)$. Further, assume that for some constant $\kappa > 1$, we have
\begin{equation}\label{eq: regret lower bound 1}
3 \leq s_0 \leq (d+2)/3, \quad  d \geq s_0^{\kappa}, \quad 
T \geq \left( 2 (C_{*} \vee \sigma^2 \vee 1) s_0^5 \log(dT)\right)^{\kappa}.
\end{equation}
Then there exists a constant $C>0$ depending only on $L_3$ and $L$, such that for any permissible rule,  the following lower bound on its worst-case cumulative regret  over $\widetilde{\bTheta}_{d}(s_0,L)$, starting from time $\tilde{\gamma} := \left\lfloor  C_{*} s_0^5\log(dT) \right\rfloor$ holds:
$$
\sup_{(\btheta^{(1)}, \btheta^{(2)}) \in \widetilde{\bTheta}_{d}(s_0,L)}  \Exp_{\btheta^{(1)}, \btheta^{(2)}}\left[\sum_{t=\tilde{\gamma}}^T r_t \right] \geq  C^{-1} \sigma^2 (1-1/\kappa) s_0 \left(\log d + \log T \right),$$
where the expectation $\Exp_{\btheta^{(1)}, \btheta^{(2)}}$ is taken under the true parameter values $\btheta^{(1)}$ and $\btheta^{(2)}$.
\end{theorem}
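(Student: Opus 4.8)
The plan is to lower-bound the worst-case regret over $\widetilde{\bTheta}_{d}(s_0,L)$ by the Bayes regret under two prior families, each supported inside $\widetilde{\bTheta}_{d}(s_0,L)$: a ``low-dimensional'' family driving the $\log T$ term and a ``support-recovery'' family driving the $\log d$ term. This mirrors the two-prior strategy behind Theorem~\ref{thm: seq multi linear, lower bound}, where $\omega_1$ is handled by van Trees' inequality and $\omega_2$ by Fano's inequality, now transported to the adaptive bandit setting. In both families I fix $\btheta^{(1)}=\bd{0}_d$ (which is $s_0$-sparse with norm $0\le 2L$), so that only rounds in which arm $2$ is pulled carry any information about the ``hard'' part of $\btheta^{(2)}$; the information budget is therefore controlled by the adaptive policy solely through the number of arm-$2$ pulls, which never exceeds $t$. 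The bridge from posterior uncertainty to regret is the margin condition: by Lemma~\ref{lemma: margin condition} the linear statistic $\bX_t'\bu$ has a density bounded by a constant depending only on $L_3$ for every unit $\bu$, so for any decision rule at round $t$ the expected instantaneous regret is, in expectation over the prior, at least the Bayes regret of the optimal rule given $\cF_{t-1}$, which is in turn $\gtrsim$ the posterior MSE of $\btheta^{(2)}$ (cf.\ also the lower-bound direction of Lemma~\ref{lemma: estErr_to_regret}).

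For the $\log T$ term, take $\btheta^{(2)}=\btheta^{(2)}_0+\bd{v}$, where $\btheta^{(2)}_0$ is a fixed vector supported on $[s_0]$ with $\|\btheta^{(2)}_0\|_2=\tfrac32 L^{-1}$ and $\bd{v}$ ranges (zero outside $[s_0]$) over the box $[-\tfrac{1}{2}L^{-1}s_0^{-1/2},\,\tfrac{1}{2}L^{-1}s_0^{-1/2}]^{s_0}$ under a fixed smooth prior; then $\|\bd v\|_2\le\tfrac12 L^{-1}$ forces every realization into $\widetilde{\bTheta}_{d}(s_0,L)$. Conditioning regret on $\cF_{t-1}$ and applying van Trees' inequality to the posterior of $\bd v$, the data-side Fisher information about $\bd v$ is contributed only by arm-$2$ rounds and is $\preceq t\,\bSigma_{[s_0],[s_0]}/\sigma^2\preceq (tL_3/\sigma^2)\mathbf{I}$, while the prior contributes $O(L^2s_0)$ per coordinate; hence the posterior Bayes risk is $\gtrsim s_0\sigma^2/(L_3 t)$ for all $t\ge\tilde\gamma$ (using $\tilde\gamma\gg\sigma^2 s_0$). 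Therefore $\Exp[r_t]\gtrsim s_0\sigma^2/t$, and summing over $t\in[\tilde\gamma,T]$ gives $\gtrsim\sigma^2 s_0\log(T/\tilde\gamma)$; since $\tilde\gamma\le C_{*}s_0^5\log(dT)\le\tfrac12 T^{1/\kappa}$ by the hypothesis on $T$, this is $\gtrsim\sigma^2(1-1/\kappa)s_0\log T$. (Alternatively, this term can be read off from the low-dimensional lower bound of \cite{song2022truncated} restricted to rounds $\ge\tilde\gamma$.)

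For the $\log d$ term, keep $\btheta^{(1)}=\bd{0}_d$ and take $\btheta^{(2)}=\tfrac32 L^{-1}\bd{e}_1+\delta\sum_{j\in J}\bd{e}_j$, where $J$ is drawn uniformly from a code of $M=\exp(\Omega(s_0\log(d/s_0)))$ subsets of $\{2,\dots,d\}$ of size $s_0-1$ with pairwise symmetric differences $\Omega(s_0)$ (the packing of \cite{raskutti2011minimax}); then $\|\btheta^{(2)}\|_0=s_0$, and provided $\delta\le\tfrac12 L^{-1}s_0^{-1/2}$ every realization lies in $\widetilde{\bTheta}_{d}(s_0,L)$. Arm-$1$ pulls reveal nothing about $J$, while a single arm-$2$ pull contributes mutual information $O(s_0\delta^2/\sigma^2)$ about $J$, so a chain-rule/data-processing argument along the adaptively generated filtration gives $I(J;\cF_{t-1})=O(t\,s_0\delta^2/\sigma^2)$. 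Choosing $\delta$ as the largest scale compatible both with membership and with $I(J;\cF_{t-1})\le\tfrac12\log M$ holding up to some $t_B\ge 2\tilde\gamma$ — possible because $\tilde\gamma=\Omega(s_0^5\log(dT))\gg s_0\log(d/s_0)\asymp\log M$ — the posterior over $J$ stays on average close to uniform on $[\tilde\gamma,t_B]$, so the margin condition yields $\Exp[r_t]\gtrsim s_0\delta^2$ there, and $(t_B-\tilde\gamma)\,s_0\delta^2\gtrsim\sigma^2\log M\asymp\sigma^2 s_0\log(d/s_0)\ge\sigma^2(1-1/\kappa)s_0\log d$ by $d\ge s_0^\kappa$. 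Since the union of the two families sits inside $\widetilde{\bTheta}_{d}(s_0,L)$, taking the larger of the two bounds and halving gives the claim.

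The main obstacle is the passage, in each half, from an information budget to a genuine regret bound in the presence of \emph{adaptive} arm selection. Two things need care: (i) arm choices are randomized functions of the past that do not depend on the hard parameter, hence contribute no Fisher or mutual information — so that only the at most $t$ arm-$2$ reward observations count — and this must be argued cleanly through the chain rule for mutual information (resp.\ additivity of Fisher information) along the filtration $\{\cF_t\}$; and (ii) residual posterior uncertainty about $\btheta^{(2)}$ must be converted into instantaneous regret, which is exactly what the margin condition (Lemma~\ref{lemma: margin condition}) and the two-sided regret/estimation-error relation (Lemma~\ref{lemma: estErr_to_regret}) provide. A secondary but delicate point is the calibration of $\delta$ in the support-recovery family so that $J$ is provably unidentifiable on a window of length $\Omega(\tilde\gamma\vee\sigma^2 s_0\log(d/s_0))$ beginning at $\tilde\gamma$ while still respecting $\|\btheta^{(1)}-\btheta^{(2)}\|_2\ge L^{-1}$; this is where the hypothesis $\tilde\gamma=\Omega(s_0^5\log(dT))$ enters, and the analogous slack $\tilde\gamma\le T^{1/\kappa}$ is what produces the $(1-1/\kappa)$ factor and keeps the final constant free of $C_{*}$.
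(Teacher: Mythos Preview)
Your overall plan---two priors with $\btheta^{(1)}=\bd{0}_d$, van Trees for $\log T$, Fano over a packing for $\log d$, and the regret--to--estimation bridge via the margin condition---is the same strategy as the paper's. Where the paper differs, and where you are making your life harder than necessary, is in the treatment of adaptivity. The paper does \emph{not} track Fisher or mutual information along the adaptive filtration via chain rules. Instead, it introduces potential rewards $Y_t^{(1)},Y_t^{(2)}$ (one per arm), enlarges $\cF_{t-1}$ to $\mathcal H_{t-1}=\sigma(\{(\bX_s,Y_s^{(1)},Y_s^{(2)}):s<t\})$, and lower bounds by the Bayes risk over all $\mathcal H_{t-1}$--measurable estimators. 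Since $\btheta^{(1)}=\bd{0}_d$, the $Y_s^{(1)}$ are pure noise and can be dropped, leaving exactly the i.i.d.\ sequential estimation problem with $t-1$ observations $(\bX_s,Y_s^{(2)})$. At that point Lemmas~\ref{lemma: K known} and~\ref{lemma: k unknown} (the same ones behind Theorem~\ref{thm: seq multi linear, lower bound}) apply verbatim at each $t$, and summing over $t\in[\tilde\gamma,T]$ gives both terms simultaneously---no window $[\tilde\gamma,t_B]$, no optimization of $\delta$, just $\delta_*^2\asymp\sigma^2 s_0\log(d/s_0)/T$ fixed once.

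Two smaller points where your sketch is loose relative to the paper: (i) the bridge in Lemma~\ref{lemma: estErr_to_regret} delivers a lower bound in terms of $\|\btheta^{(2)}/\|\btheta^{(2)}\|_2-\hat\psi\|_2^2$, i.e.\ the \emph{normalized} direction, not the raw posterior MSE of $\btheta^{(2)}$; the paper's priors $\omega_1(s_0,2/L)$ and $\omega_2(s_0,1/L,\delta_*)$ are chosen so that Lemmas~\ref{lemma: K known} and~\ref{lemma: k unknown} give bounds directly for this normalized target. (ii) Your claim that ``only the at most $t$ arm--2 reward observations count'' is morally the full--information trick in disguise (you are upper bounding the information by what $t$ arm--2 pulls would yield), but stating it as an enlargement of the filtration avoids having to justify van Trees or Fano under adaptive sampling.
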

\begin{proof}
The proof is provided in Appendix \ref{sec: proof of cumulative regret, lower bound}.
\end{proof}

\begin{remark}
The key argument is as follows: since the context vectors have a log-concave density with bounded eigenvalues, the conditional expected instantaneous regret $\Exp[r_t \vert \cF_{t-1}]$ can be lower bounded by the best possible mean-squared estimation errors of the arm parameters $\btheta^{(1)}$ and $\btheta^{(2)}$ at time $t-1$, using Lemma \ref{lemma: estErr_to_regret} in Appendix. The remainder of the argument then follows similarly to the proof of Theorem \ref{thm: seq multi linear, lower bound}.
\end{remark}

Note that a lower bound on the cumulative regret computed from time $\tilde{\gamma}$ also serves as a lower bound on the cumulative regret computed from the beginning (i.e., from time $1$). Similarly, a lower bound for a larger $L$ also applies to any smaller $L$.     
Combining Theorem \ref{thm: regret over horizon} with Theorem \ref{thm: regret lower bound}, we immediately obtain the following corollary.

\begin{corollary}\label{cor:bandit_minimax_rate}
Consider the case $K=2$ and let $L, \kappa > 1$ be fixed constants. Suppose that Assumption \ref{assumption: covariates_bandit} holds, and  that the noise $\{\epsilon_t:t\in[T]\}$ are $i.i.d.$ normal random variables with distribution $\mathcal{N}(0,\sigma^2)$. 
Then there exist constants $C^*,C>0$ depending only on $m_X, \alpha_X,  L_3, L$, such that 
if condition \eqref{eq: regret lower bound 1} holds with $\kappa$ and $C^*$ in place of $C_*$, then
$$
C^{-1} (1-1/\kappa)\sigma^2 \leq \frac{\inf_{\{\pi_t\}}\sup_{(\btheta^{(1)}, \btheta^{(2)}) \in \widetilde{\bTheta}_{d}(s_0,L)}  \Exp_{\btheta^{(1)}, \btheta^{(2)}} \sum_{t=1}^T r_t }{ s_0 \left(\log d + \log T \right)}
 \leq  C (\sigma^2 \vee 1) \log(s_0),$$
 where the infimum is taken over all permissible rules $\{\pi_t:t \in [T]\}$. Further, if the cumulative regret is computed starting from time $\tilde{\gamma} := \lfloor C^* s_0^5\log(dT)\rfloor$, then
 $$
C^{-1} (1-1/\kappa)\sigma^2 \leq \frac{\inf_{\{\pi_t\}}\sup_{(\btheta^{(1)}, \btheta^{(2)}) \in \widetilde{\bTheta}_{d}(s_0,L)}  \Exp_{\btheta^{(1)}, \btheta^{(2)}} \sum_{t=\tilde{\gamma}}^T r_t  }{ s_0 \left(\log d + \log T \right)}
 \leq  C (\sigma^2 \vee 1).$$
\end{corollary}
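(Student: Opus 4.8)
The plan is to obtain Corollary~\ref{cor:bandit_minimax_rate} as a bookkeeping combination of the upper bound in Theorem~\ref{thm: regret over horizon} with the lower bound in Theorem~\ref{thm: regret lower bound}; no new probabilistic estimate is needed, so all the work lies in reconciling the hypotheses, the parameter classes, and the constants of the two theorems.

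\emph{Lower bounds.} Since the instantaneous regret $r_t$ in \eqref{eq: def of regret} is nonnegative, $\sum_{t=1}^{T} r_t \ge \sum_{t=\tilde{\gamma}}^{T} r_t$ pointwise, so it suffices to prove the lower bound for the cumulative regret starting at $\tilde{\gamma}=\lfloor C^{*} s_0^5\log(dT)\rfloor$. Under the stated hypotheses---Assumption~\ref{assumption: covariates_bandit} (in particular \ref{assumption: context} and \ref{assumption: bounded cov mat}), Gaussian noise, and condition \eqref{eq: regret lower bound 1} with $C^{*}$ in place of $C_{*}$---Theorem~\ref{thm: regret lower bound} applies with $C_{*}=C^{*}$ and gives, for every permissible rule,
$$
\sup_{(\btheta^{(1)},\btheta^{(2)})\in\widetilde{\bTheta}_{d}(s_0,L)} \Exp_{\btheta^{(1)},\btheta^{(2)}}\Big[\textstyle\sum_{t=\tilde{\gamma}}^{T} r_t\Big]\ \ge\ C^{-1}\sigma^2(1-1/\kappa)\,s_0(\log d+\log T),
$$
with $C$ depending only on $L_3$ and $L$. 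Taking the infimum over permissible rules yields the left-hand inequality in both displays of the corollary.

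\emph{Upper bounds.} For the right-hand inequalities I would exhibit the three-stage algorithm of Definition~\ref{def:three_stages} with the tuning choices \eqref{parameters_for_analysis}, \eqref{eq: bandit_paras_est}, \eqref{eq: end_times}, and $C_{\gamma_1}=C_{\gamma_2}=C_0^{*}$, where $C_0^{*}$ is the constant denoted ``$C^{*}$'' in Theorem~\ref{thm: regret over horizon}. First one checks that Assumption~\ref{assumption:arm_parameters} holds on $\widetilde{\bTheta}_{d}(s_0,L)$: every pair in this class satisfies $\|\btheta^{(k)}\|_2\le 2L$, so \ref{assumption: theta_bound} holds with $m_{\theta}=2L$, and, since $K=2$, Assumption~\ref{assumption: linear_independent} reduces to $\|\btheta^{(1)}-\btheta^{(2)}\|_2\ge L_4^{-1}$, which holds with $L_4=L$; Assumption~\ref{assumption: covariates_bandit} is assumed outright. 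Theorem~\ref{thm: regret over horizon} then gives $\Exp[\sum_{t=1}^{T} r_t]\le C(\sigma^2\vee 1)(\log s_0)\,s_0(\log d+\log T)$ with a (renamed) constant $C$ depending, after these identifications, only on $m_X,\alpha_X,L_3,L$, which is the right-hand inequality in the first display. The same theorem also gives $\Exp[\sum_{t=\gamma_2+1}^{T} r_t]\le C(\sigma^2\vee 1)\,s_0(\log d+\log T)$, \emph{without} the $\log s_0$ factor; since $r_t\ge 0$, it then suffices to have $\tilde{\gamma}\ge\gamma_2+1$, which holds once $C^{*}$ is taken large enough relative to $C_0^{*}$ (recall $\gamma_2=C_0^{*}s_0^4\gamma_1$ with $\gamma_1$ of order $s_0\log(dT)$, whereas $\tilde{\gamma}$ is of order $C^{*}s_0^5\log(dT)$), after which $\sum_{t=\tilde{\gamma}}^{T} r_t\le\sum_{t=\gamma_2+1}^{T} r_t$ pointwise.

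\emph{Main obstacle.} There is no genuine mathematical obstacle---the content is entirely carried by the two cited theorems---but the proof does require care in the order of constants: fix $C_0^{*}$ from Theorem~\ref{thm: regret over horizon} first, then choose the corollary's $C^{*}$ large enough both to serve as a valid $C_{*}$ for Theorem~\ref{thm: regret lower bound} and to force $\tilde{\gamma}\ge\gamma_2+1$; one should also note that condition \eqref{eq: regret lower bound 1} makes $T$ large enough for Stage~3 of the algorithm to be non-empty (it forces $T\ge(2(C^{*}\vee\sigma^2\vee 1)s_0^5\log(dT))^{\kappa}\gg\gamma_2$), and check that every constant ultimately depends only on $m_X,\alpha_X,L_3,L$ once one uses $K=2$, $m_{\theta}=2L$, $L_4=L$.
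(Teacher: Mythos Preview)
Your proposal is correct and follows essentially the same route as the paper: combine the lower bound of Theorem~\ref{thm: regret lower bound} with the upper bound of Theorem~\ref{thm: regret over horizon}, after checking that every $(\btheta^{(1)},\btheta^{(2)})\in\widetilde{\bTheta}_{d}(s_0,L)$ satisfies Assumption~\ref{assumption:arm_parameters} with $m_{\theta}=2L$ and $L_4=L$. The paper's proof is a three-line sketch, whereas you spell out the bookkeeping (nonnegativity of $r_t$, choosing $C^{*}$ large enough that $\tilde{\gamma}\ge\gamma_2+1$, and tracking constant dependencies), which is exactly the care the argument requires.
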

\begin{proof}
    The lower bound is established in Theorem \ref{thm: regret lower bound}, while the upper bound is from Theorem \ref{thm: regret over horizon}, which concerns the regret analysis of the proposed procedure. Note that $K=2$ is fixed, and for $(\btheta^{(1)}, \btheta^{(2)}) \in \widetilde{\bTheta}_{d}(s_0,L)$, Assumption \ref{assumption:arm_parameters} holds with $m_{\theta} = 2L$ and $L_4 = L$.
\end{proof}

Corollary \ref{cor:bandit_minimax_rate} characterizes the minimax rate of the overall cumulative regret up to a $\log(s_0)$ factor, with optimal dependence on $d$ and $T$. By Theorem \ref{thm: regret over horizon}, the proposed procedure achieves this rate up to a $\log(s_0)$ factor and is therefore minimax optimal when $s_0$ is constant. 

The extra $\log(s_0)$ factor in the upper bound arises from Stage 2 of the proposed algorithm, where Lasso estimators are applied to bridge the gap between the pure exploration phase and Stage 3, in which OPT-Lasso is used. Indeed, if the cumulative regret is computed starting from time $C^* s_0^5 \log(dT)$, for some sufficiently large constant $C^*$, thereby ignoring the short initial period, then Corollary~\ref{cor:bandit_minimax_rate} implies that the minimax rate is exactly of order $s_0(\log d + \log T)$, and the proposed procedure is minimax optimal.

\subsection{Suboptimality of Lasso in the Bandit Setting}\label{subsec:lasso_bandit}
In this subsection, we analyze the proposed algorithm without Stage 3 (see also Figure \ref{fig:proposed_three_stages}). Specifically, by setting the end time $\gamma_2$ of Stage 2 to $T$ in Definition \ref{def:three_stages}, the algorithm reduces to two stages. After the initial exploration phase from time $1$ to $\gamma_1$, we employ Lasso estimators to estimate the arm parameters and select arms greedily by maximizing the estimated rewards. The goal is to demonstrate the suboptimality of Lasso in the bandit setting.

For simplicity, we assume that condition \eqref{parameters_for_analysis} holds for $g_1$ and $\lambda_t^{(k)}$. We start with an upper bound on the cumulative regret.

\begin{corollary} 
Suppose Assumptions \ref{assumption: covariates_bandit} and \ref{assumption:arm_parameters} hold. 
Let $\gamma_2 = T$ in Definition \ref{def:three_stages}, that is, without Stage 3. 
Set the regularization parameter $\lambda_t$ and the end time $\gamma_1$ of Stage 1 as in \eqref{eq: bandit_paras_est} and \eqref{eq: end_times}, respectively. There exist constants $C,C^* > 0$ depending only on $K,  m_X, \alpha_X,  L_3, m_{\theta}, L_4$, such that
 if the integer constant $C_{\gamma_1}$ in \eqref{eq: end_times} satisfies  $C_{\gamma_1} \geq C^*$, then
$$
\Exp\left[\sum_{t=1}^T r_t \right] \le C C_{\gamma_1} (\sigma^2\vee 1)  s_0(\log d + \log T)\log(T).
$$ 
\end{corollary}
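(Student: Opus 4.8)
The plan is to mirror the analysis of Stages~1 and~2 in the proof of Theorem~\ref{thm: regret over horizon}, the only change being that the Stage~2 (Lasso) regret is now summed over the full horizon $(\gamma_1,T]$ rather than over $(\gamma_1,\gamma_2]$; it is precisely this longer harmonic summation that produces the extra $\log T$ factor, so the corollary is essentially a book-keeping consequence of the Stage~1--Stage~2 estimates already needed for Theorem~\ref{thm: regret over horizon}. Concretely, I would split $\Exp[\sum_{t=1}^{T}r_t]=\Exp[\sum_{t=1}^{\gamma_1}r_t]+\Exp[\sum_{t=\gamma_1+1}^{T}r_t]$ and bound the two pieces separately.

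\textbf{Stage~1 and the trivial regret bound.} For any $t$ and $k$, Hölder's inequality together with Assumptions~\ref{assumption: context} and~\ref{assumption: theta_bound} gives $|\bX_t'\btheta^{(k)}|\le\|\bX_t\|_{\infty}\|\btheta^{(k)}\|_1\le m_X\sqrt{s_0}\,m_{\theta}$, so $r_t\le 2m_X\sqrt{s_0}\,m_{\theta}$ deterministically. Hence $\Exp[\sum_{t=1}^{\gamma_1}r_t]\le 2m_X\sqrt{s_0}\,m_{\theta}\,\gamma_1\le C\,C_{\gamma_1}(\sigma^2\vee1)s_0(\log d+\log T)$ using \eqref{eq: end_times} and $\sqrt{s_0}\le s_0$, which already lies within the claimed bound. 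This deterministic bound is also what I would use to discard a low-probability ``bad'' event below.

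\textbf{Stage~2: good event plus harmonic summation.} Next I would introduce the high-probability event $\mathcal{E}$, taken verbatim from the Stage~1--Stage~2 portion of the proof of Theorem~\ref{thm: regret over horizon}, on which the following hold simultaneously for every update time $\tau_m:=\gamma_1+m\gamma_1$, $m=0,1,\ldots,\lceil T/\gamma_1\rceil$, and every arm $k\in[K]$: (i) arm $k$ has been selected at least $c\,\tau_m$ times by round $\tau_m$; and (ii) $\|\hbtheta^{(k)}_{\tau_m}-\btheta^{(k)}\|_2^2\le C\sigma^2 s_0\log(dT)/\tau_m$. Claim (i) uses that Stage~1 pulls each arm a $\mathrm{Binomial}(\gamma_1,1/K)$ number of times---so $\Omega(\gamma_1)$ with high probability once $C_{\gamma_1}\ge C^*$---and that this fraction is propagated through Stage~2 via the margin property of Lemma~\ref{lemma: margin condition} and the non-degeneracy of $\bSigma^{(k)}$; claim (ii) is the $\ell_2$ Lasso error bound for adaptively collected, arm-filtered designs from Appendix~\ref{subsec:discussion_bandit_est}, valid since $\tau_m\ge\gamma_1\ge C^*s_0\log(dT)$ and, by (i), arm $k$ has $\Omega(\tau_m)$ observations. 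Choosing $C^*$ large makes $\Pro(\mathcal{E}^c)\le (s_0 T)^{-1}$, so $\sum_{t=1}^{T}\Exp[r_t\idf\{\mathcal{E}^c\}]\le 2m_X\sqrt{s_0}\,m_{\theta}\,T\,\Pro(\mathcal{E}^c)\le C$. On $\mathcal{E}$, for $t\in(\tau_m,\tau_{m+1}]$ the arm is chosen to maximize the estimated reward using $\hbtheta^{(k)}_{\tau_m}$, so the upper-bound direction of Lemma~\ref{lemma: estErr_to_regret}, applied conditionally on $\cF_{\tau_m}$ (legitimate since $\bX_t$ is independent of $\cF_{\tau_m}$), yields $\Exp[r_t\idf\{\mathcal{E}\}\mid\cF_{\tau_m}]\le C\sum_{k\in[K]}\|\hbtheta^{(k)}_{\tau_m}-\btheta^{(k)}\|_2^2\idf\{\mathcal{E}\}\le C'\sigma^2 s_0\log(dT)/\tau_m$. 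Each block has at most $\gamma_1$ rounds and $\tau_m=(m+1)\gamma_1$, so block $m$ contributes at most $C'\sigma^2 s_0\log(dT)/(m+1)$; summing over $m$ and using $\sum_{0\le m\le T/\gamma_1}1/(m+1)\le 1+\log(T/\gamma_1)\le C\log T$ gives $\Exp[\sum_{t=\gamma_1+1}^{T}r_t\idf\{\mathcal{E}\}]\le C\sigma^2 s_0(\log d+\log T)\log T$. Adding the three contributions and absorbing constants (using $\log T\ge1$, $C_{\gamma_1}\ge1$, $\sigma^2\le\sigma^2\vee1$) gives the stated bound.

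\textbf{Main obstacle.} The genuinely hard parts are the two ingredients of $\mathcal{E}$: controlling the Lasso estimator built from data that is \emph{adaptively} filtered by the arm-selection rule---one must verify restricted-eigenvalue/compatibility-type conditions for the random design $\bX^{(k)}_{[\tau_m]}$ despite the dependence of $\idf\{A_s=k\}$ on the past---and the self-bounding argument showing each arm is pulled a constant fraction of the time once the estimators have warmed up, which couples estimation accuracy to arm frequencies through the margin condition. Both are already established for the Stage~1--Stage~2 portion of Theorem~\ref{thm: regret over horizon}, so the present corollary adds only the summation above; the $\log T$ loss is unavoidable here because the pure-Lasso per-round error decays only like $\sigma^2 s_0\log(dT)/t$, so $\sum_{t\le T}1/t$ contributes a genuine $\log T$---in contrast to the instance-specific OPT-Lasso bound of Theorem~\ref{thm: seq multi linear, upper bound}, whose per-coordinate contributions are summable by Lemma~\ref{lemma: instance sum}.
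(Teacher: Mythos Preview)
Your approach is essentially the paper's: the corollary follows from the Stage~1--Stage~2 portion of the proof of Theorem~\ref{thm: regret over horizon}, extending the Stage~2 harmonic sum to the full horizon so that $\sum_{m=1}^{\lfloor T/\gamma_1\rfloor}1/m\le C\log T$ replaces the factor $\log(\gamma_2/\gamma_1)$ appearing there. The key tools you cite---Theorem~\ref{thm:bandit_est_accuracy}(i) for the per-update Lasso $\ell_2$ error and Lemma~\ref{lemma: estErr_to_regret} to translate estimation error into instantaneous regret---are exactly what the paper uses.

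One small arithmetic slip: your deterministic bound $r_t\le 2m_X\sqrt{s_0}\,m_\theta$ yields a Stage~1 contribution of order $C_{\gamma_1}(\sigma^2\vee1)s_0^{3/2}\log(dT)$, not $s_0\log(dT)$; the inequality ``$\sqrt{s_0}\le s_0$'' does not salvage this (it points the wrong way). The paper instead uses the second-moment bound $\Exp|\bX_t'\btheta^{(k)}|\le\sqrt{(\btheta^{(k)})'\bSigma\btheta^{(k)}}\le m_\theta\sqrt{L_3}$ from Assumption~\ref{assumption: bounded cov mat}, giving $\Exp[r_t]\le C$ with no $s_0$ dependence. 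The same expectation bound, together with the independence of $\bX_t$ from $\cF_{m\gamma_1}$ and the fact that $\mathcal{E}_{m\gamma_1}\in\cF_{m\gamma_1}$, also handles the complement event block-by-block without needing a pointwise deterministic bound on $r_t$. With this minor fix, your argument lines up with the paper's.
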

\begin{proof}
This is implied by the proof of Theorem \ref{thm: regret over horizon}; see Subsection \ref{sec: proof of thm: regret over horizon}.
\end{proof}

Compared with the minimax rate in Corollary \ref{cor:bandit_minimax_rate}, the above upper bound includes an additional $\log T$ factor. The following theorem shows that this bound is generally tight, implying that the use of standard Lasso estimators--—rather than OPT-Lasso—--is suboptimal in the bandit setting. Denote by $\mathbb{I}_{d \times d}$ the $d$-by-$d$ identity matrix.

\begin{theorem}\label{thm: bandit lower bound of lasso} Consider the case $K =2$ and set $\gamma_2 = T$ in Definition \ref{def:three_stages}, that is, without Stage 3. 
Suppose that Assumption  \ref{assumption: context} holds and that $\bSigma^{(1)} = \bSigma^{(2)} = 2^{-1} \mathbb{I}_{d \times d}$.  Set the regularization parameter $\lambda_t$ and the end time $\gamma_1$ of Stage 1 as follows:
\begin{align*}
\gamma_1 = C_{\gamma_1} \lceil(\sigma^2\vee 1)\rceil s_0 \lceil \log (dT) \rceil, \quad
\lambda_t = 48 m_X\sigma\sqrt{\frac{\log (dT)}{t}}.
\end{align*}
Let $\kappa >1$ be a constant. There exist  constants $C^*, C > 0$ depending only on $m_X$ such that if 
$
C_{\gamma_1} \geq C^*$ and $T  \geq (2C_{\gamma_1} C^*  s_0^5\log(dT) )^\kappa$,
then we have the following lower bound on the worst-case cumulative regret over $\widetilde{\bTheta}_{d}(s_0,L)$, excluding an initial period:
$$
\sup_{(\btheta^{(1)}, \btheta^{(2)}) \in \widetilde{\bTheta}_{d}(s_0,L)}  \Exp_{\btheta^{(1)}, \btheta^{(2)}}\left[\sum_{t=C_* C_{\gamma_1} s_0^5 \log(dT)}^T r_t \right] 
\ge C^{-1} \sigma^2 (1-1/\kappa)  \log(T)(\log d+\log T).$$
\end{theorem}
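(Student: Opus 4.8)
The plan is to exhibit one hard instance in $\widetilde{\bTheta}_{d}(s_0,L)$ and show that the two‑stage algorithm, which after exploration uses only Lasso, provably pays the claimed regret on it. I would take $\btheta^{(2)}=\bd{0}_d$ and $\btheta^{(1)}=L^{-1}\bd{e}_1$ with $\bd{e}_1$ the first coordinate vector: then $\|\btheta^{(k)}\|_0\le 1\le s_0$, $\|\btheta^{(k)}\|_2\le L^{-1}\le 2L$ and $\|\btheta^{(1)}-\btheta^{(2)}\|_2=L^{-1}$, so the pair lies in $\widetilde{\bTheta}_{d}(s_0,L)$; moreover $\bSigma=\bSigma^{(1)}+\bSigma^{(2)}=\mathbb I_{d\times d}$, so Assumptions \ref{assumption: bounded cov mat}, \ref{assumption: constraint on the covariance matrix 1} hold with $L_3=2$ and Assumption \ref{assumption:arm_parameters} with $m_\theta=2L$, $L_4=L$. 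Let $\tau(t)$ denote the most recent estimator‑update time used at round $t$ of the Lasso stage $(\gamma_1,T]$, so that $A_t=\argmax_{k}(\hbtheta^{(k)}_{\tau(t)})'\bX_t$ and $t-\gamma_1\le\tau(t)\le t-1$. (In the degenerate regimes where $\gamma_1$ is not $\ll T$ the $\Omega(1)$ per‑round regret of the random‑selection stage already gives the bound, so I focus on the main regime.)

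The first step is to turn regret into estimation error of the arm‑$1$ Lasso estimator. Since arms are chosen greedily with respect to the estimated rewards and the covariates have a log‑concave density with bounded eigenvalues, Lemma \ref{lemma: estErr_to_regret} provides constants $c,\xi_0>0$ (with $\xi_0\le L^{-2}$) such that $\Exp[r_t\mid\cF_{t-1}]\ge c\big((\hbtheta^{(1)}_{\tau(t)}-\btheta^{(1)})'\bSigma^{(1)}(\hbtheta^{(1)}_{\tau(t)}-\btheta^{(1)})\big)\wedge\xi_0$; as $\bSigma^{(1)}=\tfrac12\mathbb I_{d\times d}$, this is $\ge\tfrac{c}{2}\big(\|\hbtheta^{(1)}_{\tau(t)}-\btheta^{(1)}\|_2^2\wedge 2\xi_0\big)$. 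So it suffices to show that for every $n=\tau(t)$ with $\gamma_1\le n\le T$,
\[
\Exp\big[\|\hbtheta^{(1)}_n-\btheta^{(1)}\|_2^2\wedge 2\xi_0\big]\ \ge\ c'\lambda_n^2\ =\ c'(48m_X\sigma)^2\,\frac{\log(dT)}{n}
\]
for a constant $c'>0$, and then sum.

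The core is this per‑round bound, which I would establish on a high‑probability event $\mathcal G_n$ on which: (i) $\big|\tfrac1n\sum_{s\le n,\,A_s=1}X_{s,1}\epsilon_s\big|\le\lambda_n/2$ — the increments $X_{s,1}\idf\{A_s=1\}\epsilon_s$ form a martingale difference sequence that is conditionally $\sigma m_X$‑sub‑Gaussian, so an Azuma/Freedman bound, together with the deliberately large constant $48$ in $\lambda_t$, leaves ample room; (ii) $N_1(n):=|\{s\le n:A_s=1\}|\ge\gamma_1/4$, by a Chernoff bound on the $\mathrm{Binomial}(\gamma_1,1/2)$ number of arm‑$1$ pulls in Stage~1; and (iii) $\|\hbtheta^{(1)}_n\|_0\le C_{\mathrm{RE}}s_0$ and $\lmax{\hbSigma^{(1)}(n)_{A,A}}\le C_{\mathrm{RE}}$ for every $A\subseteq[d]$ with $|A|\le C_{\mathrm{RE}}s_0+1$, where $\hbSigma^{(1)}(n):=N_1(n)^{-1}\sum_{s\le n,\,A_s=1}\bX_s\bX_s'$ — this is exactly the sparsity / restricted‑eigenvalue control for the adaptively collected arm‑$1$ design underlying the Lasso part of the proof of Theorem \ref{thm: regret over horizon}, and $C_{\gamma_1}\ge C^*$ is what makes $\gamma_1/4=\Omega(s_0\log(dT))$ exploration samples enough to get $\Pro(\mathcal G_n^c)\le(dT)^{-2}$ for every relevant $n$. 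On $\mathcal G_n$: if $\hbtheta^{(1)}_{n,1}\le 0$ then $\|\hbtheta^{(1)}_n-\btheta^{(1)}\|_2\ge L^{-1}$ and the capped error is $\ge\xi_0$; otherwise $\hbtheta^{(1)}_{n,1}>0$, the Lasso stationarity at coordinate $1$ reads $\tfrac1n\sum_{s\le n,A_s=1}X_{s,1}(Y_s-\bX_s'\hbtheta^{(1)}_n)=\lambda_n$, and writing $Y_s-\bX_s'\hbtheta^{(1)}_n=\epsilon_s-\bX_s'\hbu$ with $\hbu:=\hbtheta^{(1)}_n-\btheta^{(1)}$ and using (i) gives $\tfrac{N_1(n)}{n}\big|\hbSigma^{(1)}(n)_{1,\cdot}\hbu\big|\ge\lambda_n/2$; since $\hbu$ is supported on $A:=\{1\}\cup\mathrm{supp}(\hbtheta^{(1)}_n)$, Cauchy–Schwarz, positive‑semidefiniteness of $\hbSigma^{(1)}(n)_{A,A}$, $N_1(n)\le n$ and (iii) yield $\lambda_n/2\le C_{\mathrm{RE}}\|\hbu\|_2$. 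In both cases the capped error on $\mathcal G_n$ is $\ge\big(\lambda_n/(2C_{\mathrm{RE}})\big)^2\wedge\xi_0$, which equals $\big(\lambda_n/(2C_{\mathrm{RE}})\big)^2$ once $n\gtrsim\log(dT)$; taking expectations and using $\Pro(\mathcal G_n^c)\le(dT)^{-2}$ gives the per‑round bound.

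Summing over the Lasso stage and using $\lambda_{\tau(t)}\ge\lambda_t$ (since $\tau(t)\le t$),
\[
\Exp\Big[\sum_{t=\tilde\gamma}^{T}r_t\Big]\ \ge\ \frac{cc'}{2}\sum_{t=\tilde\gamma}^{T}\lambda_{\tau(t)}^2\ \ge\ \frac{cc'}{2}(48m_X\sigma)^2\log(dT)\sum_{t=\tilde\gamma}^{T}\frac1t\ \ge\ \frac{cc'}{2}(48m_X\sigma)^2\log(dT)\,\log\frac{T}{\tilde\gamma},
\]
where $\tilde\gamma$ is the stated starting time; the hypothesis $T\ge(2\tilde\gamma)^\kappa$ gives $\log\tilde\gamma\le\tfrac1\kappa\log T$, hence $\log(T/\tilde\gamma)\ge(1-1/\kappa)\log T$, and with $\log(dT)=\log d+\log T$ this is the claimed bound, the constant $C$ being controlled by $(48m_X)^2$ and by $c,c',C_{\mathrm{RE}}$ (governed by $L_3=2$, $\alpha_X$ and $L$). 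The main obstacle is ingredient (iii): controlling the support size and the sparse Gram matrix of the Lasso estimator built from an \emph{adaptively selected} subsample, uniformly over the $\Theta(T/\gamma_1)$ update times — precisely the guarantee already needed for the regret upper bound, which I would import. Everything else (the martingale bound, the binomial bound, and the deterministic KKT manipulation) is routine, and the only subtlety in the KKT step is robustness to the Lasso picking up spurious coordinates, handled by isolating the case $\hbtheta^{(1)}_{n,1}\le 0$ and enlarging $A$ to contain $\mathrm{supp}(\hbtheta^{(1)}_n)$.
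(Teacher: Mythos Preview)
Your reduction from regret to the raw $\ell_2$ estimation error of arm~1 is the gap, and it breaks the whole argument. Lemma~\ref{lemma: estErr_to_regret} does \emph{not} give a lower bound of the form $\Exp[r_t\mid\cF_{t-1}]\ge c\,\|\hbtheta^{(1)}_{\tau(t)}-\btheta^{(1)}\|_2^2\wedge\xi_0$. Its lower bound reads $\Exp[|\bu'\bZ|\idf\{\textup{sgn}(\bu'\bZ)\ne\textup{sgn}(\bv'\bZ)\}]\ge C^{-1}\|\bu\|_2\,\bigl\|\bu/\|\bu\|_2-\bv/\|\bv\|_2\bigr\|_2^2$, i.e.\ it depends only on the \emph{directions} of $\bu=\btheta^{(1)}-\btheta^{(2)}$ and $\bv=\hbtheta^{(1)}_{\tau(t)}-\hbtheta^{(2)}_{\tau(t)}$. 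This is unavoidable: the greedy rule $A_t=\argmax_k(\hbtheta^{(k)})'\bX_t$ is scale-invariant, so if $\bv$ is a positive multiple of $\bu$ the instantaneous regret is zero regardless of $\|\bu-\bv\|_2$.

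With your instance $\btheta^{(2)}=\bd{0}_d$, $\btheta^{(1)}=L^{-1}\bd{e}_1$, this is exactly what happens. Since $\bSigma^{(k)}=\tfrac12\mathbb I_{d\times d}$ the irrepresentable condition holds, and the paper's Lemma used in Stage~3 analysis (event $\tL_m^{\textup{Inc}}$) shows $\hat S^{(1)}_t\subseteq\{1\}$ and $\hbtheta^{(2)}_t=\bd{0}_d$ with high probability once $t\gtrsim s_0^5\log(dT)$. On that event $\hbtheta^{(1)}_t-\hbtheta^{(2)}_t$ is a positive multiple of $\bd{e}_1=L(\btheta^{(1)}-\btheta^{(2)})$, the greedy decision is optimal, and the regret is \emph{zero}. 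Your KKT calculation correctly shows $\|\hbtheta^{(1)}_t-\btheta^{(1)}\|_2\gtrsim\lambda_t$, but this bias lies along $\bd{e}_1$ and is invisible to the arm selection.

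The paper's fix is to pick both arms nonzero with \emph{orthogonal} supports, $\btheta^{(1)}=\bd{e}_1$ and $\btheta^{(2)}=\tfrac{1}{12}\bd{e}_2$. Each Lasso estimate shrinks along its own coordinate, so $\hbtheta^{(1)}_t-\hbtheta^{(2)}_t\approx(1-z_1)\bd{e}_1-(\tfrac{1}{12}-z_2)\bd{e}_2$ with $z_1,z_2\asymp\lambda_t$; an elementary calculation (Lemma~\ref{lemma:elementary}) shows this \emph{rotates} the direction of $\btheta^{(1)}-\btheta^{(2)}$ by $\Theta(\lambda_t)$, which is what produces the $\lambda_t^2$-per-round regret lower bound.
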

\begin{proof}
    See Appendix \ref{sec: proof of thm: bandit lower bound of lasso}. 
\end{proof}

\begin{remark}
As previously discussed, a \emph{lower} bound over fewer rounds immediately implies a lower bound on the overall regret.
\end{remark}

In Theorem \ref{thm: bandit lower bound of lasso}, we assume $\bSigma^{(1)} = \bSigma^{(2)} = 2^{-1}\mathbb{I}_{d \times d}$. As discussed following Assumption \ref{assumption: covariates_bandit}, this condition is satisfied by many examples. Under the assumptions of Theorem \ref{thm: bandit lower bound of lasso}, Corollary \ref{cor:bandit_minimax_rate} implies that the minimax rate for cumulative regret over the same time horizon is of order $s_0(\log d + \log T)$. In contrast, the lower bound in Theorem \ref{thm: bandit lower bound of lasso} includes an additional $\log T$ factor. This indicates that when using Lasso estimators for pure exploitation, the resulting algorithm is minimax \emph{suboptimal} in terms of both $d$ and $T$, in general. We also note that, due to technical challenges, the lower bound does not capture the explicit dependence on $s_0$.

\subsection{Simulation Study in the Bandit Setting} \label{sec: bandit simulation}
In this section, we conduct simulations to compare the empirical performance of the proposed procedure in Definition~\ref{def:three_stages} (denoted as ``Three-stage'') with the following alternatives:  
(i) the Lasso bandit algorithm \citep{bastani2020online};  
(ii) the procedure in Definition~\ref{def:three_stages} with $\gamma_2 = \gamma_1$, i.e., an algorithm that uses only OPT-Lasso estimators (denoted as ``Two-stage with OPT'');  
(iii) the procedure in Definition~\ref{def:three_stages} with $\gamma_2 = T$, i.e., an algorithm that uses only Lasso estimators (denoted as ``Two-stage with Lasso'').

We fix the time horizon to $T = 10000$ and consider eight scenarios with varying sparsity $s_0$, dimension $d$, and number of arms $K$. The scenarios for $(s_0, d, K)$ are: (a) $(5, 100, 5)$, (b) $(5, 100, 10)$, (c) $(10, 500, 5)$, (d) $(10, 500, 10)$, (e) $(5, 1000, 5)$, (f) $(5, 1000, 10)$, (g) $(10, 1000, 5)$, and (h) $(10, 1000, 10)$.
In each scenario, for each arm $k \in [K]$, we first randomly select a subset $S_k$ of $s_0$ covariates. Then, for each $j \notin S_k$, we set $\btheta^{(k)}_j = 0$, and for each $j \in S_k$, we sample $\btheta^{(k)}_j$ independently from the uniform distribution on $[0,1]$.  
Next, at each time $t \in [T]$, the covariate vector $\bX_t$ is independently drawn from the standard $d$-dimensional normal distribution $\mathcal{N}(\mathbf{0}_d, \mathbb{I}_{d \times d})$, with entries truncated to lie within $[-1, 1]$; that is, for each $j \in [d]$, if $(\bX_t)_j > 1$, we set $(\bX_t)_j = 1$, and if $(\bX_t)_j < -1$, we set $(\bX_t)_j = -1$.  
Finally, at each time $t \in [T]$, the noise $\bepsilon_t$ is independently drawn from the standard normal distribution $\mathcal{N}(0,1)$.  
We perform $1000$ repetitions for each scenario.

For the proposed algorithm (``Three-stage'') and the alternative algorithms (ii) ``Two-stage with OPT'' and (iii) ``Two-stage with Lasso,'' we set the regularization and threshold parameters in Definition \ref{def:three_stages} as follows: for each $t \in [T]$ and $k \in [K]$,
\[
\lambda_t^{(k)} = C_0 \hat{p}_t^{(k)} \sqrt{{ \log(d)}/{t}}, \quad
\lambda_t^{\mathrm{OPT},(k)} = C_0^{\mathrm{hard}} \left( C_0 \sqrt{{ \log(dt)}/{t}} \right),
\]
where  $\hat{p}_t^{(k)} := t^{-1} \sum_{s=1}^t \mathbf{1}\{A_s = k\}$ denotes the proportion of times the $k$-th arm has been pulled up to time $t$.
The end times of Stage 1 and 2 are set to $\gamma_1 = 10K$ and $\gamma_2 = 8\gamma_1$, respectively. Furthermore, $g_1$ and $g_2$ in Definition \ref{def:three_stages} are both set to $50$; that is, the arm parameter estimates are updated every $50$ rounds during Stages 2 and 3. Following Section 5.1 of \cite{bastani2020online}, for the Lasso bandit algorithm in \cite{bastani2020online}, we set $q = 1$, $h = 5$, and $\lambda_1 = \lambda_{2,0} = 0.025$\footnote{The Lasso estimator in \cite{bastani2020online} uses a slightly different constant than ours.}.


Table \ref{table: compare stages} presents the cumulative regret up to time $T$ for all scenarios, where $C_0 = 2$, $C_0^{\textup{hard}} = 0.6$ when $K = 5$, and $C_0 = 2$, $C_0^{\textup{hard}} = 1$ when $K = 10$. For scenarios (e) and (f), Figure \ref{fig: cum regret} shows the cumulative regret up to time $t$ for each $t \in [\gamma_2, T]$ for the proposed three-stage algorithm and the ``Two-stage with Lasso'' algorithm. The remaining scenarios are presented in Appendix \ref{app: more_simulations}. From Table \ref{table: compare stages}, we observe that the proposed three-stage procedure performs favorably against the alternatives, and that the impact of $K$ is more significant than that of $s_0$ or $d$. The Lasso bandit algorithm exhibits significantly higher cumulative regret than the others. Furthermore, although we introduce Stage 2 with Lasso estimators (see Definition \ref{def:three_stages}) for technical reasons, Table \ref{table: compare stages} shows that its inclusion slightly improves performance over the ``Two-stage with OPT'' algorithm. One possible explanation is that when the data is limited, as in Stage 2 under the bandit setup, Lasso outperforms OPT-Lasso. Finally, from both Table \ref{table: compare stages} and Figure \ref{fig: cum regret}, it is evident that using OPT-Lasso estimators in the bandit setup has much better empirical performance than using Lasso estimators. This aligns with our theoretical findings and with the observations from the sequential estimation problem in Section~\ref{sec: HSLR simulation}.


\begin{table}[!t]
\caption{The table shows the cumulative regret of scenarios (a)(c)(e)(g) with $C_0=2, C_0^{\textup{hard}}=0.6$ and (b)(d)(f)(h) with $C_0=2, C_0^{\textup{hard}}=1$, along with the standard deviations. } \label{table: compare stages}
\begin{tabular}{l|cccc}
 $(s_0,d,K)$   &Three-stage    & Two-stage with OPT & Two-stage with Lasso & Lasso bandit   \\ \hline
(a). $(5,100,5)$ &341.4 $\pm$ 1.6  & 367.5 $\pm$ 2.5     & 376.7 $\pm$ 1.7     & 2109.8 $\pm$ 11.0    \\
(b). $(5,100,10)$ &869.1 $\pm$ 2.8  & 924.5 $\pm$ 3.3     & 986.2 $\pm$ 3.2     & 4174.8 $\pm$ 12.6  \\
(c). $(10,500,5)$ &666.0 $\pm$ 2.2    & 692.5 $\pm$ 2.3     & 724.9 $\pm$ 2.3     & 6584.9 $\pm$ 32.1  \\
(d). $(10,500,10)$ &1644.5 $\pm$ 4.2 & 1728.8 $\pm$ 4.4    & 1881.2 $\pm$ 4.1    & 12967.9 $\pm$ 40.6 \\
(e). $(5,1000,5)$ &481.6 $\pm$ 2.1  & 516.2 $\pm$ 2.3     & 559.3 $\pm$ 2.4     & 7356.6 $\pm$ 30.4  \\
(f). $(5,1000,10)$ &1271.4 $\pm$ 5.5 & 1391.4 $\pm$ 5.1    & 1624.8 $\pm$ 4.7    & 12025.5 $\pm$ 64.4 \\
(g). $(10,1000,5)$ &744.8 $\pm$ 2.5 &761.4 $\pm$ 2.5 &825.2 $\pm$ 2.7 &9511.8 $\pm$ 46.2 \\
(h). $(10,1000,10)$ &1861.3 $\pm$ 4.7 &1951.6 $\pm$ 4.9 &2190.4 $\pm$ 5.3 &16414.6 $\pm$ 57.7  
\end{tabular}
\end{table}

 \begin{figure}[!t]
\centering
\includegraphics[width=0.93\textwidth]{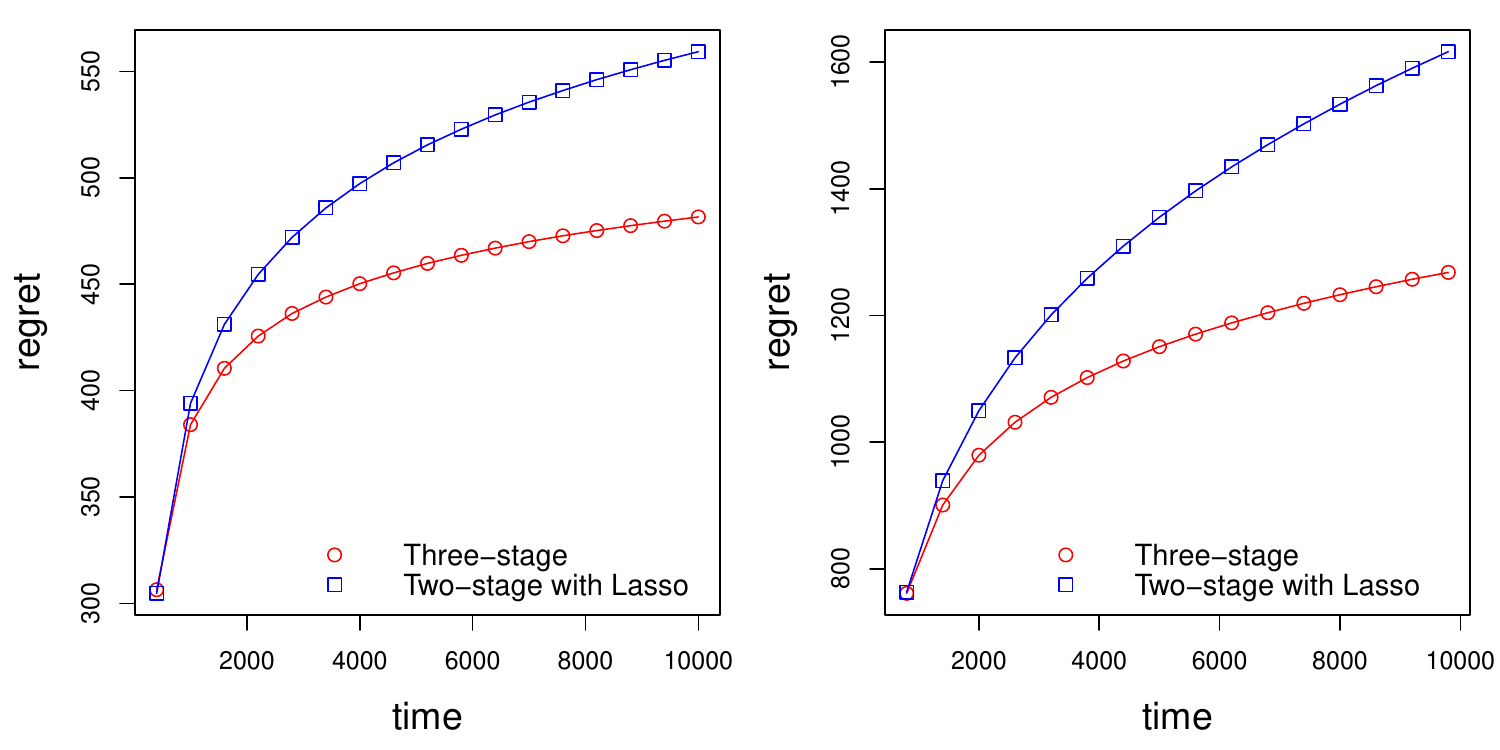}
\caption{The $y$-axis represents the cumulative regret up to time $t \in [\gamma_2,T]$. The left plot is for scenario (e) with $C_0=2, C_0^{\textup{hard}}=0.6, \gamma_2 = 400$, while the right plot is for  scenario (f) with $C_0=2, C_0^{\textup{hard}}=1$, $\gamma_2 =800$.} \label{fig: cum regret}
\end{figure}

In Figure \ref{fig: bandit support}, we compare the support recovery performance of the OPT-Lasso and Lasso estimators under the bandit setup for scenario (e), with $C_0 = 2$ and $C_0^{\textup{hard}} = 0.6$. Specifically, for each $t \in [T/2]$, we report the number of false positives (incorrectly selecting covariates not in the true support) and false negatives (failing to select covariates in the true support),  averaged over the $K$ arms. We observe similar patterns to those in the simulation results for sequential estimation in Section \ref{sec: HSLR simulation}; that is, the better performance of the OPT-Lasso estimator compared to Lasso is largely due to its significantly lower number of false positives.

Finally, we examine the sensitivity of the proposed three-stage algorithm to the tuning parameters $C_0$ and $C_0^{\textup{hard}}$. Table \ref{table: bandit sensitivity} shows the cumulative regret under scenarios (e) and (f); additional results are in Appendix \ref{app: more_simulations}. The results indicate that developing principled methods for tuning parameter selection remains an important direction for future work.


\begin{figure}[!t]
\centering
\includegraphics[width=0.93\textwidth]{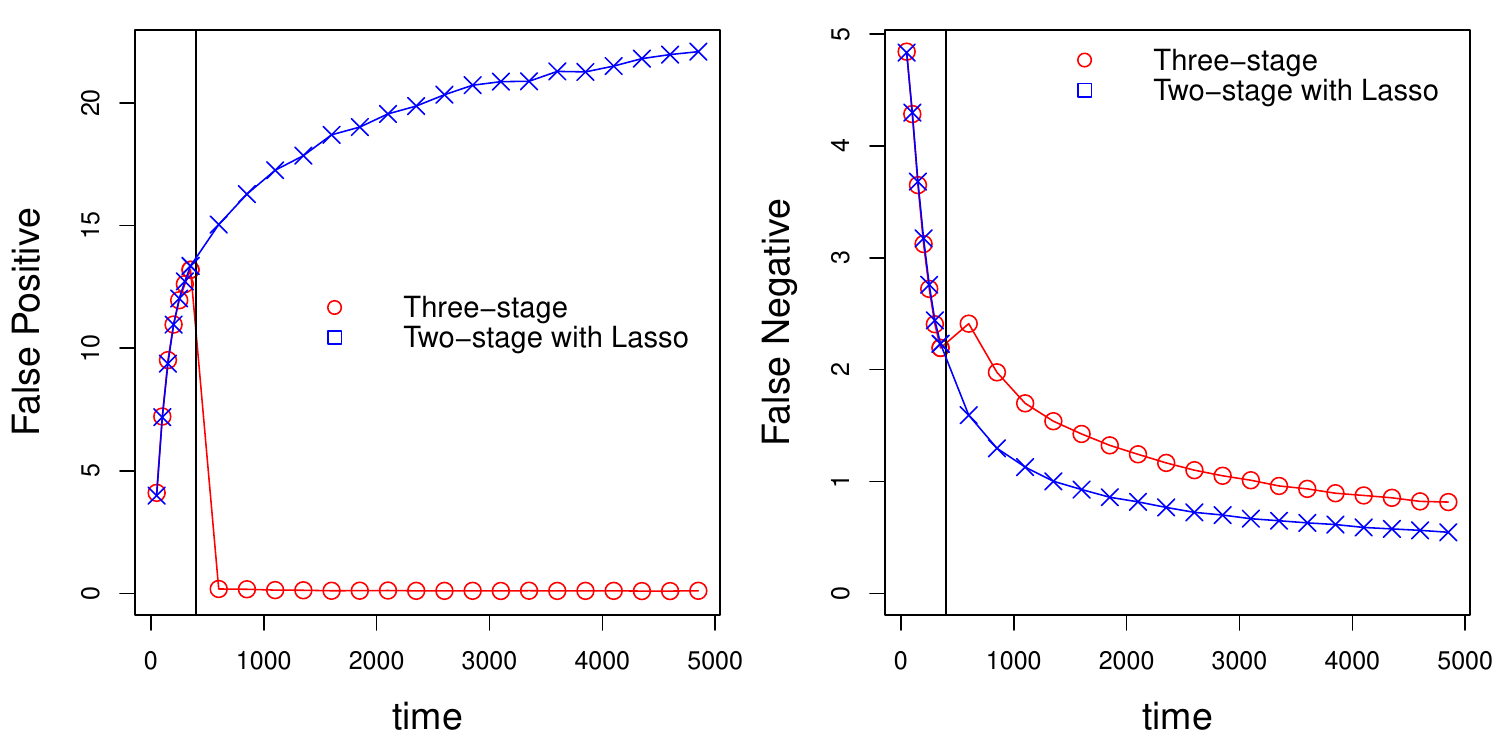}
\caption{We consider scenario (e) and (f) with $C_0=2, C_0^{\textup{hard}}=0.6$. The left plot shows the number of false positives, while the right plot shows the number of false negatives at each time $t \in [T/2]$,  averaged across $K$ arms.  The trend after time $T/2$ remains similar and is therefore not shown.} \label{fig: bandit support}
\end{figure}

\begin{table}[!t]
\caption{Cumulative regret for scenarios (e) and (f). Bold indicates the lowest cumulative regret.} \label{table: bandit sensitivity}
\centering
$(e)$
\begin{tabular}{c|ccccc}
$C_0\;\backslash \; C_0^{\textup{hard}}$ & 0.2          & 0.6          & 1.0           & 1.5           & 2             \\ \hline
1.0   & 2208.7 $\pm$ 5.1 & 1374.5 $\pm$ 4.2 & 784.5 $\pm$ 2.8 & 557.5 $\pm$ 2.2   & 544.1 $\pm$ 2.1   \\
1.6 & 1166.6 $\pm$ 3.8 & 539.5 $\pm$ 2.3  & 495.0 $\pm$ 3.0     & 599.3 $\pm$ 4.1   & 790.8 $\pm$ 6.3   \\
2.0   & 734.1 $\pm$ 3.1  & \textbf{481.6 $\pm$ 2.1}  & 561.3 $\pm$ 3.7 & 766.1 $\pm$ 6.3   & 1098.8 $\pm$ 10.8 \\
2.6 & 505.8 $\pm$ 2.3  & 553.0 $\pm$ 2.8    & 761.8 $\pm$ 4.5 & 1226.2 $\pm$ 9.9  & 1873.5 $\pm$ 12.2 \\
3.0   & 497.0 $\pm$ 2.3    & 642.7 $\pm$ 3.7  & 983.5 $\pm$ 7.0   & 1608.4 $\pm$ 10.7 & 2456.4 $\pm$ 15.4  
\end{tabular} 

\medskip

$(f)$
\begin{tabular}{c|ccccc}
\hline
$C_0\;\backslash \; C_0^{\textup{hard}}$ & 0.2          & 0.6          & 1.0            & 1.5           & 2            \\ \hline
1.0   & 4695.3 $\pm$ 8.2 & 4023.5 $\pm$ 7.0   & 2976.4 $\pm$ 5.3 & 1932.0 $\pm$ 4.3    & 1481.4 $\pm$ 3.6  \\
1.6 & 3593.9 $\pm$ 7.4 & 2211.4 $\pm$ 5.2 & 1456.3 $\pm$ 5.9 & 1301.8 $\pm$ 5.8  & 1425.4 $\pm$ 7.8  \\
2.0   & 2849.1 $\pm$ 5.8 & 1587.5 $\pm$ 4.4 & \textbf{1271.4 $\pm$ 5.5} & 1413.6 $\pm$ 7.8  & 1721.7 $\pm$ 11.1 \\
2.6 & 1990.5 $\pm$ 5.2 & \textbf{1273.4 $\pm$ 4.4} & 1350.3 $\pm$ 5.1 & 1776.4 $\pm$ 9.1  & 2416.4 $\pm$ 14.5 \\
3.0   & 1639.0 $\pm$ 4.8   & 1257.0 $\pm$ 4.4   & 1516.9 $\pm$ 7.0   & 2145.7 $\pm$ 12.3 & 3063.0 $\pm$ 19.5     
\end{tabular}
\end{table}

\subsection{Proof of Theorem \ref{thm: regret over horizon}: upper bound for the proposed procedure} \label{sec: proof of thm: regret over horizon}

In this subsection, we prove Theorem \ref{thm: regret over horizon}, which provides an upper bound on the cumulative regret of the procedure in Definition \ref{def:three_stages}. A key step is bounding the $\ell_2$ estimation error of the Lasso estimators in Stage 2 and the OPT-Lasso estimators in Stage 3, as stated in the following theorem.

\begin{theorem}\label{thm:bandit_est_accuracy}
 Suppose Assumptions \ref{assumption: covariates_bandit} and  \ref{assumption:arm_parameters} hold. Set $\lambda_t, \hdlambda_t$ as in \eqref{eq: bandit_paras_est}, and $\gamma_1,\gamma_2$ as in \eqref{eq: end_times}. 
 There exist constants $C,C^* > 0$ depending only on $K,  m_X, \alpha_X,  L_3, m_{\theta}, L_4$, such that
 if the integer constants in \eqref{eq: end_times} satisfy  $C_{\gamma_1} \geq C^*$ and $C_{\gamma_2} \geq C^*$, then
\begin{enumerate}[label=\roman*)]
\item (Stage 2) For $t = m\gamma_1$ with $m \in \{1,2, \ldots, \gamma_2/\gamma_1\}$, we have
$$
\Pro\left( \mathcal{E}_{t} \right) \geq 1- \frac{6K}{T}, \;\; \text{ where }\; \mathcal{E}_{t} := \bigcap_{k=1}^{K} \left\{\|\hbtheta_t^{(k)}-\btheta^{(k)}\|_2^2\le  \frac{C \sigma^2 s_0\log (dT)}{t}\right\}.
$$

\item (Stage 3) For $t = m\gamma_2$ with $m \in \{1,2,\ldots, \lfloor T/\gamma_2 \rfloor\}$, there exists an event $\mathcal{A}_t \in \cF_t$ such that $\Pro(\mathcal{A}_t)\ge 1-17K/T$, and for each arm $k \in [K]$,
\begin{equation*}
    \Exp\left[\|\tbtheta_t^{(k)}-\btheta^{(k)} \|_2^2 \idf{\{\mathcal{A}_t\}}\right] \le C\sum_{j\in S_k} (\btheta^{(k)}_j)^2 \idf\left\{|\btheta^{(k)}_j| \le 2\hdlambda_t \right\} + C\sigma^2 s_0/t.
\end{equation*}
\end{enumerate}   
\end{theorem}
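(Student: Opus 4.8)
The plan is to reduce both claims to a deterministic analysis on a high-probability event, mirroring the treatment of the sequential problem (Theorem \ref{thm: seq multi linear, upper bound}) but accounting for the fact that, after Stage 1, the covariates assigned to each arm are collected \emph{adaptively}. For each arm $k\in[K]$ I would build a good event out of four ingredients, holding at the relevant update times $t$: (i) a lower bound $n_t^{(k)} := \sum_{s\le t}\idf\{A_s=k\}\ge c_k t$ on the number of pulls, where $c_k>0$ is a constant---this uses that $\bSigma^{(k)}$ is nonsingular (Assumption \ref{assumption: constraint on the covariance matrix 1}), so arm $k$ is optimal with probability bounded below, together with the margin condition (Lemma \ref{lemma: margin condition}) so that an \emph{accurate} greedy rule still selects arm $k$ at a linear rate; (ii) restricted-eigenvalue and sparse-Riesz conditions for the normalized per-arm design $t^{-1}\sum_{s\le t}\bX_s\bX_s'\idf\{A_s=k\}$ and its $O(s_0)$-dimensional submatrices; (iii) control of the noise martingale, $\|t^{-1}\sum_{s\le t}\bX_s\epsilon_s\idf\{A_s=k\}\|_\infty\le \lambda_t/2$, via a Bernstein-type inequality for bounded martingale differences ($|\bX_{s,j}\epsilon_s\idf\{A_s=k\}|$ has conditional variance $\le m_X^2\sigma^2$), matching $\lambda_t=6m_X\sigma\sqrt{\log(dT)/t}$; and (iv) closeness of the per-round conditional design $\Exp[\bX_s\bX_s'\idf\{A_s=k\}\mid\cF_{s-1}]$ to its oracle value $\Pro(A_s^*=k)\bSigma^{(k)}$, with error at most a constant times $\Pro(A_s\ne A_s^*\mid\cF_{s-1})$, which by the margin condition is of the order of the $\ell_2$ estimation error at the last update. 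A union bound over $k\in[K]$, the $\gamma_2/\gamma_1$ Stage-2 update times and the $\lfloor T/\gamma_2\rfloor$ Stage-3 update times, together with $(dT)^{-c}$-level concentration for $c$ large, keeps the total failure probability $O(K/T)$, consistent with the stated $6K/T$ and $17K/T$.

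For Part i) (Stage 2) I would argue by induction over $t_m=m\gamma_1$. In the base case $t_1=\gamma_1$, the Stage-1 selections are independent of the covariates, so conditionally on $n_{\gamma_1}^{(k)}$ the assigned covariates are i.i.d.; since $C_{\gamma_1}$ is large, $n_{\gamma_1}^{(k)}\gtrsim\gamma_1/K\gtrsim s_0\log(dT)$, and standard Lasso theory (RE from this fresh sample, noise bound from (iii)) gives $\|\hbtheta_{\gamma_1}^{(k)}-\btheta^{(k)}\|_2^2\le C\sigma^2 s_0\log(dT)/\gamma_1$. For the inductive step, on the event that the estimates at $t_m$ are accurate, the greedy rule over $(t_m,t_{m+1}]$ coincides with $A_s^*$ except when $\bX_s$ falls in the wedge between the true and estimated decision boundaries, an event of conditional probability $O(\sqrt{s_0\log(dT)/t_m})$ by the margin condition; hence each arm is still pulled at a linear rate, and the Gram matrix at $t_{m+1}$ equals the Stage-1 Gram matrix (already satisfying RE) plus a positive semidefinite matrix, so RE persists with the same constant. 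Combining RE with (iii) and $\lambda_{t_{m+1}}$ yields $\|\hbtheta_{t_{m+1}}^{(k)}-\btheta^{(k)}\|_2^2\le C\sigma^2 s_0\log(dT)/t_{m+1}$, closing the induction and defining $\mE_t$.

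For Part ii) (Stage 3) I would carry this accuracy to the end of Stage 2 and, by the same greedy/margin argument, verify that the per-arm $\ell_2$ errors stay at the oracle rate $\sqrt{s_0\log(dT)/t}$ throughout Stage 3. The crucial upgrade is an $\ell_\infty$ bound $\|\hbtheta_t^{(k)}-\btheta^{(k)}\|_\infty\le C\lambda_t$ for the initial Lasso estimator inside OPT-Lasso, obtained by invoking the deterministic $\ell_\infty$ estimate behind Lemma \ref{lemma: improved_ell_infty} (following Theorem 5.1 of \cite{bellec2022biasing}), now with the adaptive per-arm Gram matrix $\hbSigma$ in place of the fixed one; this requires the relevant $O(s_0)$-dimensional submatrices of $\hbSigma^{-1}$ to be close to those of $(\bSigma^{(k)})^{-1}$, which forces $\|\hbSigma-\bSigma^{(k)}\|_{\max}$---whose dominant term is the selection bias of order $\sqrt{s_0\log(dT)/t}$ from (iv)---to be $\lesssim s_0^{-2}$, i.e.\ $t=\Omega(s_0^5\log(dT))$; this is essentially why $\gamma_2=\Theta(s_0^5\log(dT))$ appears and where $C_{\gamma_2}\ge C^*$ enters. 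Given this bound and $\hdlambda_t=28L_3\lambda_t\gg C\lambda_t$, the thresholding step yields the support sandwich $\{j:|\btheta^{(k)}_j|\ge 2\hdlambda_t\}\subseteq\widehat{S}_t^{(k)}\subseteq S_k$. On the subsequent OLS step restricted to $\widehat{S}_t^{(k)}$, the deterministic decomposition of Theorem \ref{thm: ols post l2 estimation error} (again with the RE-regular adaptive design) splits the error into a bias term bounded by $C\sum_{j\in S_k}(\btheta^{(k)}_j)^2\idf\{|\btheta^{(k)}_j|\le 2\hdlambda_t\}$ (from the covariates in $S_k$ dropped by thresholding, each of magnitude $\le 2\hdlambda_t$) plus the OLS-noise term; taking $\cA_t$ to be the intersection of the above good events up to time $t$ (so $\cA_t\in\cF_t$ and $\Pro(\cA_t)\ge 1-17K/T$) and taking the conditional expectation of the noise term given the design and $\widehat{S}_t^{(k)}$ (sub-Gaussian noise; $\operatorname{tr}((\bX_{\widehat{S}_t^{(k)}}'\bX_{\widehat{S}_t^{(k)}})^{-1})\le C|\widehat{S}_t^{(k)}|/t\le Cs_0/t$ on $\cA_t$) gives the claimed bound.

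The main obstacle is the adaptivity of each arm's design, which destroys the i.i.d.\ structure the sequential analysis relied on. The RE and sparse-Riesz conditions can be secured cheaply via the ``Stage-1 base plus positive semidefinite increments'' observation, but the $\ell_\infty$ bound required for OPT-Lasso's thresholding to succeed is far more delicate: it needs the per-arm sample Gram matrix (and small submatrices of its inverse) to be close to a fixed well-conditioned target at a scale finer than RE demands, and since the only handle on the selection bias is the margin condition applied to the previous round's $\ell_2$ error $\sqrt{s_0\log(dT)/t}$, closing the estimate forces the sample-size threshold up to order $s_0^5\log(dT)$. A secondary technical point is the bookkeeping needed to keep the union-bound failure probability at $O(K/T)$ across all Stage-2 and Stage-3 update times and all arms, and to ensure the good events are $\cF_t$-measurable.
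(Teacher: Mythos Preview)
Your overall skeleton (induction over update times, good-event decomposition into RE, noise, and selection-bias pieces, then deterministic Lasso/OPT-Lasso analysis) matches the paper, but two of the four load-bearing steps do not go through as written.

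\textbf{RE persistence in Stage 2.} The claim ``the Gram matrix at $t_{m+1}$ equals the Stage-1 Gram matrix (already satisfying RE) plus a positive semidefinite matrix, so RE persists with the same constant'' is false: RE is stated for the \emph{normalized} design, and after dividing by $(m+1)\gamma_1$ the Stage-1 block only contributes $a^*/(m+1)$. Feeding $a=a^*/(m+1)$ into the Lasso $\ell_2$ bound gives an error that \emph{grows} with $m$, destroying the induction. ``Each arm is pulled at a linear rate'' does not repair this, since the adaptively selected covariates for arm $k$ could be concentrated in a degenerate region. The paper's fix is to show that each increment block itself carries RE: on the good event one has $\{A_t=k\}\supseteq\{\bX_t\in\mathcal U_{h^*}^{(k)}\}\setminus(\text{wedge})$, and the i.i.d.\ covariates restricted to $\mathcal U_{h^*}^{(k)}$ satisfy RE with a fixed constant $\ell_0^*$ (this is where log-concavity is used beyond the margin condition). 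The wedge contribution is then subtracted as a small RE-upper-bound term. This is the content of Lemmas~\ref{lemma: arm optimality sample} and~\ref{lemma: prediction error controlled sample}, and is essential.

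\textbf{The $\ell_\infty$ bound in Stage 3.} Invoking ``the deterministic $\ell_\infty$ estimate behind Lemma~\ref{lemma: improved_ell_infty} (following Theorem~5.1 of \cite{bellec2022biasing})'' does not work here: that result is not a black-box deterministic inequality but relies on the Gaussian i.i.d.\ structure of the design (through delicate conditioning arguments), which the adaptive bandit design lacks. The paper explicitly abandons Bellec's bound in the bandit setting and uses the classical $\ell_\infty$ inequality (Lemma~\ref{lemma: bounds of lasso}, from \cite{van2016estimation}), which only needs $\mn\hbSigma_t^{(k)}-\bSigma^{(k)}\mn_{\max}\le C/s_0$---not $C/s_0^2$ as you wrote. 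To obtain this $1/s_0$ closeness, the paper bounds the selection bias via the $\ell_1$ (not $\ell_2$) Lasso error and $\|\bX_t\|_\infty\le m_X$, and introduces an intermediate time $\tgamma_1\asymp s_0^4\log(dT)$: for $t\le\tgamma_1$ the contribution is brute-forced by $m_X^2\tgamma_1/\gamma_2\le C/s_0$, and for $t>\tgamma_1$ the $\ell_1$ error is already $\lesssim 1/s_0$; balancing these two forces $\gamma_2\asymp s_0\cdot\tgamma_1\asymp s_0^5\log(dT)$. Your arithmetic reaches the same $s_0^5$ but for the wrong reason, and the mechanism you propose for the $\ell_\infty$ bound would not actually deliver it.
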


Part (i) of Theorem \ref{thm:bandit_est_accuracy} follows directly from Lemma \ref{lemma: RE and estimation error of stage 2} in Appendix \ref{sec: proof of thm: regret in stage 2}, and Part (ii) is established in Lemma \ref{stage3: main lemma} in Appendix \ref{sec: proof of thm: regret in stage 3}. An overview of the proof strategy is provided in Appendix \ref{subsec:discussion_bandit_est}, where we also elaborate on the motivation for introducing Stage 2. Note that, as with \eqref{eq: opt_lasso_t}, the upper bound in part (ii) is instance-specific.

Given the upper bounds on the $\ell_2$ estimation errors, we use the assumption that the covariate vectors have a log-concave density to convert these bounds into bounds on the instantaneous regret (see Lemma \ref{lemma: estErr_to_regret} in Appendix). The key observation is that for  $t \in [T]$, $\hbtheta_t^{(k)} \in \cF_t$ and 
$\tbtheta_t^{(k)} \in \cF_t$ for $k \in [K]$, while $\bX_{t+1}$ is independent from $\cF_{t}$.

\begin{proof}[Proof of Theorem \ref{thm: regret over horizon}]
We choose $C^* > 0$ as in Theorem \ref{thm:bandit_est_accuracy}. 
In this proof, $C$ denotes a constant, that only depends on $K,  m_X, \alpha_X,  L_3, m_{\theta}, L_4$ and may vary from line to line. Recall the definition of $r_t$ in \eqref{eq: def of regret}.

\medskip
\noindent \underline{\textbf{Stage 1.}} Let $t \in [1,\gamma_1]$. By definition, $r_t \leq \sum_{k \in [K]} |\bX_t'\btheta^{(k)}|$. For each arm $k \in [K]$, 
due to Assumption \ref{assumption: bounded cov mat} and \ref{assumption: theta_bound}, we have 
\begin{align}\label{trival_bound}
\Exp\left[|\bX_t'\btheta^{(k)}|\right] \leq  \sqrt{ (\btheta^{(k)})'\bSigma \btheta^{(k)} } \leq m_{\theta} \sqrt{L_3},
\end{align}
which, together with the choice of $\gamma_1$ in \eqref{eq: end_times}, implies  $\sum_{t=1}^{\gamma_1} \Exp[r_t] \leq C C_{\gamma_1}(\sigma^2\vee 1) s_0\log(dT)$.

\medskip
\noindent \underline{\textbf{Stage 2.}} Let $t \in (\gamma_1,\gamma_2]$. For some $m \in \{1,\ldots, \gamma_2/\gamma_1\}$, we have $t \in (m\gamma_1,(m+1)\gamma_1]$. Let
\begin{align*}
    \bu_{i,j} := \btheta^{(i)} - \btheta^{(j)}, \quad
    \hat{\bu}_{i,j} := \hbtheta^{(i)}_{m\gamma_1} - \hbtheta^{(j)}_{m \gamma_1}, \quad \text{ for } i,j \in [K].
\end{align*}
Due to the definition of $r_t$ in \eqref{eq: def of regret} and $A_t$ in Definition \ref{def:three_stages}, we have
\begin{align*}
    r_t &= |\bu_{A_t^*,A_t}'\bX_t| \idf\{\textup{sgn}(\bu_{A_t^*,A_t}'\bX_t)\neq \textup{sgn}(\hbu_{A_t^*,A_t}'\bX_t)\},
\end{align*}
where for $\tau \in \bR$, $\textup{sgn}(\tau) := \idf\{\tau > 0\} - \idf\{\tau < 0\}$. This is because if $r_t = 0$, both sides equal zero; while if $r_t > 0$, then $\bu_{A_t^*,A_t}'\bX_t > 0$ and $\hbu_{A_t^*,A_t}'\bX_t \leq 0$. Therefore, we have
\begin{align*}
    r_t\leq \sum_{i \neq j \in [K]}
    |\bu_{i,j}'\bX_t| \idf\{\textup{sgn}(\bu_{i,j}'\bX_t)\neq \textup{sgn}(\hbu_{i,j}'\bX_t)\}.
\end{align*}

Since $\hat{\bu}_{i,j} \in \cF_{m\gamma_1}$  and $\bX_t$ is independent from $\cF_{m\gamma_1}$, due to Assumption \ref{assumption: context} and by applying Lemma \ref{lemma: estErr_to_regret}  conditional on $\cF_{m\gamma_1}$, we have that for $i\neq j \in [K]$, almost surely (a.s.),
\begin{align*}
    \Exp\left[|\bu_{i,j}'\bX_t| \idf\{\textup{sgn}(\bu_{i,j}'\bX_t)\neq \textup{sgn}(\hbu_{i,j}'\bX_t)\}\vert \cF_{m\gamma_1}\right] \leq C  {\|\bu_{i,j} -\hbu_{i,j}\|_2^{2}}/{\|\bu_{i,j}\|_2}.
\end{align*}
Due to Assumption \ref{assumption: linear_independent}, $\|\bu_{i,j}\|_2 \geq L_4^{-1}$ for $i \neq j \in [K]$. By the triangle inequality, a.s.,
\begin{align*}
\Exp\left[r_t \vert \cF_{m\gamma_1}\right]
\leq C \sum_{k \in [K]} \|\hbtheta^{(k)}_{m\gamma_1} - \btheta^{(k)}\|_2^2.
\end{align*}
Recall the event $\mathcal{E}_{m\gamma_1} \in \cF_{m\gamma_1}$ in Theorem \ref{thm:bandit_est_accuracy}. By definition, a.s.,
\begin{align*}
  \Exp[r_t \idf\{\mathcal{E}_{m\gamma_1}\} \vert \cF_{m\gamma_1}] \leq  C \sum_{k \in [K]} \|\hbtheta^{(k)}_{m\gamma_1} - \btheta^{(k)}\|_2^2 \idf\{\mathcal{E}_{m\gamma_1}\} \leq C \sigma^2 \frac{s_0\log(dT)}{m\gamma_1}.
\end{align*}
Further, on the event $\mathcal{E}_{m\gamma_1}^c$, we use the bound: $r_t \leq \sum_{k \in [K]} |\bX_t'\btheta^{(k)}|$. Since $X_t$ is independent from $\cF_{m\gamma_1}$, a.s.,
\begin{align*}
 \Exp[r_t \idf\{\mathcal{E}_{m\gamma_1}^c\} \vert \cF_{m\gamma_1}] =   \sum_{k \in [K]}\Exp\left[  |\bX_t'\btheta^{(k)}|\right] \idf\{\mathcal{E}_{m\gamma_1}^c\} \leq C \idf\{\mathcal{E}_{m\gamma_1}^c\},
\end{align*}
where we use \eqref{trival_bound} in the last step. 
Due to Theorem \ref{thm:bandit_est_accuracy}(i) and by the law of total expectation,
\begin{align*}
    \Exp\left[r_t\right] 
    \leq  C \sigma^2 \frac{s_0\log(dT)}{m\gamma_1} + C \frac{1}{T},
\end{align*}
which implies the following upper bound on the cumulative regret in Stage 2:
\begin{align*}
  \sum_{t = \gamma_1 + 1}^{\gamma_2} \Exp[r_t]  
  \leq C \sum_{m=1}^{\gamma_2/\gamma_1} \gamma_1\left(\sigma^2 \frac{s_0\log(dT)}{m\gamma_1} +  \frac{1}{T}\right)
  \leq C (\sigma^2 \vee 1)\log(\gamma_2/\gamma_1) s_0\log(dT).
\end{align*}
Due to the choice of $\gamma_1$ and $\gamma_2$ in \eqref{eq: end_times}, $\sum_{t = \gamma_1 + 1}^{\gamma_2} \Exp[r_t] \leq C (\sigma^2 \vee 1) \log(s_0) s_0\log(dT)$.

\medskip
\noindent \underline{\textbf{Stage 3.}} 
Let $t \in (\gamma_2,T]$. Then for some $m \in \{1,\ldots, \lfloor T/\gamma_2\rfloor\}$, we have $t \in (m\gamma_2,(m+1)\gamma_2]$.   Recall the event $\mathcal{A}_{m\gamma_2}$ in Theorem \ref{thm:bandit_est_accuracy}(ii). 
By similar arguments as for Stage 2,  
\begin{align*}
    \Exp[r_t]  \le C\sum_{k\in[K]}\left( \Exp\left[\|\tbtheta_{m\gamma_2}^{(k)}-\btheta^{(k)}\|_2^2\idf{\{\mathcal{A}_{m\gamma_2}\}}\right]\right)  +  C \Pro(\cA_{m\gamma_2}^c).
\end{align*}
Then due to Theorem \ref{thm:bandit_est_accuracy}(ii),
\begin{align*}
    \Exp[r_t]  \le  C\sum_{k\in[K]}\left(\sum_{j\in S_k} (\btheta^{(k)}_j)^2 \idf\left\{|\btheta^{(k)}_j| \le 2\hdlambda_{m\gamma_2} \right\} + \frac{\sigma^2 s_0}{m\gamma_2}   \right) + \frac{C}{T} .
\end{align*}
It is elementary to see that
$\sum_{m=1}^{\lfloor T/\gamma_2\rfloor} \gamma_2 \left( \frac{\sigma^2 s_0}{m\gamma_2} + \frac{1}{T} \right) \leq C (\sigma^2 \vee 1) s_0 \log T$. 
Further, due to Lemma \ref{lemma: instance sum} and the choice of $\hdlambda_t$ in \eqref{eq: bandit_paras_est}, we have
$$
 \sum_{k\in[K]} \sum_{m=1}^{\lfloor T/\gamma_2\rfloor} \gamma_2\sum_{j\in S_k} (\btheta^{(k)}_j)^2 \idf\left\{|\btheta^{(k)}_j| \le 2\hdlambda_{m\gamma_2}\right\} \leq C \sigma^2 s_0 \log(dT).
$$
Thus, $\sum_{t = \gamma_2+1}^{T} \Exp[r_t] \leq C (\sigma^2 \vee 1) s_0\log(dT)$. The proof is then complete by combining the upper bounds on the cumulative regret from each stage.
\end{proof}

\section{Conclusion} \label{sec:conclusion}
We study the problem of high-dimensional stochastic linear contextual bandits under sparsity constraints. To isolate the effect of estimation methods, we first analyze a related sequential estimation problem in which no arm selection is involved. We show that while the Lasso estimator is minimax optimal in the \textit{fixed-time} setting, it is suboptimal in terms of cumulative estimation error in the sequential setting. In contrast, OPT-Lasso estimators attain the minimax rate in the sequential regime. Building on these insights, we propose a three-stage algorithm for the contextual bandit problem that primarily relies on OPT-Lasso estimators. We derive upper bounds on its cumulative regret and matching lower bounds over all permissible policies, thereby showing that the algorithm is nearly minimax optimal---up to a $\log(s_0)$ factor---over the full horizon, and exactly minimax optimal if an initial short transient phase is excluded.

Several directions remain for future investigation. First, it is an open question whether the $\log s_0$ gap in the overall regret bound can be eliminated. Second, developing principled methods for tuning parameter selection in the proposed three-stage algorithm is of practical importance. Third, extending the framework to nonlinear models with high-dimensional covariates is an interesting direction for future research.

\bibliographystyle{imsart-number} 
\bibliography{ref}       

\begin{thebibliography}{51}

\bibitem{abbasi2011improved}
\begin{barticle}[author]
\bauthor{\bsnm{Abbasi-Yadkori},~\bfnm{Yasin}\binits{Y.}},
  \bauthor{\bsnm{P{\'a}l},~\bfnm{D{\'a}vid}\binits{D.}} \AND
  \bauthor{\bsnm{Szepesv{\'a}ri},~\bfnm{Csaba}\binits{C.}}
(\byear{2011}).
\btitle{Improved algorithms for linear stochastic bandits}.
\bjournal{Advances in neural information processing systems}
\bvolume{24}
\bpages{2312--2320}.
\end{barticle}
\endbibitem

\bibitem{agrawal2012analysis}
\begin{binproceedings}[author]
\bauthor{\bsnm{Agrawal},~\bfnm{Shipra}\binits{S.}} \AND
  \bauthor{\bsnm{Goyal},~\bfnm{Navin}\binits{N.}}
(\byear{2012}).
\btitle{Analysis of thompson sampling for the multi-armed bandit problem}.
In \bbooktitle{Conference on learning theory}
\bpages{39--1}.
\bpublisher{JMLR Workshop and Conference Proceedings}.
\end{binproceedings}
\endbibitem

\bibitem{ariu2022thresholded}
\begin{binproceedings}[author]
\bauthor{\bsnm{Ariu},~\bfnm{Kaito}\binits{K.}},
  \bauthor{\bsnm{Abe},~\bfnm{Kenshi}\binits{K.}} \AND
  \bauthor{\bsnm{Prouti{\`e}re},~\bfnm{Alexandre}\binits{A.}}
(\byear{2022}).
\btitle{Thresholded lasso bandit}.
In \bbooktitle{International Conference on Machine Learning}
\bpages{878--928}.
\bpublisher{PMLR}.
\end{binproceedings}
\endbibitem

\bibitem{audibert2009minimax}
\begin{binproceedings}[author]
\bauthor{\bsnm{Audibert},~\bfnm{Jean-Yves}\binits{J.-Y.}} \AND
  \bauthor{\bsnm{Bubeck},~\bfnm{S{\'e}bastien}\binits{S.}}
(\byear{2009}).
\btitle{Minimax Policies for Adversarial and Stochastic Bandits}.
In \bbooktitle{22nd Conference on Learning Theory}
\bvolume{217–226}.
\end{binproceedings}
\endbibitem

\bibitem{auer2002finite}
\begin{barticle}[author]
\bauthor{\bsnm{Auer},~\bfnm{Peter}\binits{P.}},
  \bauthor{\bsnm{Cesa-Bianchi},~\bfnm{Nicolo}\binits{N.}} \AND
  \bauthor{\bsnm{Fischer},~\bfnm{Paul}\binits{P.}}
(\byear{2002}).
\btitle{Finite-time analysis of the multiarmed bandit problem}.
\bjournal{Machine learning}
\bvolume{47}
\bpages{235--256}.
\end{barticle}
\endbibitem

\bibitem{auer2002nonstochastic}
\begin{barticle}[author]
\bauthor{\bsnm{Auer},~\bfnm{Peter}\binits{P.}},
  \bauthor{\bsnm{Cesa-Bianchi},~\bfnm{Nicolo}\binits{N.}},
  \bauthor{\bsnm{Freund},~\bfnm{Yoav}\binits{Y.}} \AND
  \bauthor{\bsnm{Schapire},~\bfnm{Robert~E}\binits{R.~E.}}
(\byear{2002}).
\btitle{The nonstochastic multiarmed bandit problem}.
\bjournal{SIAM journal on computing}
\bvolume{32}
\bpages{48--77}.
\end{barticle}
\endbibitem

\bibitem{bastani2020online}
\begin{barticle}[author]
\bauthor{\bsnm{Bastani},~\bfnm{Hamsa}\binits{H.}} \AND
  \bauthor{\bsnm{Bayati},~\bfnm{Mohsen}\binits{M.}}
(\byear{2020}).
\btitle{Online decision making with high-dimensional covariates}.
\bjournal{Operations Research}
\bvolume{68}
\bpages{276--294}.
\end{barticle}
\endbibitem

\bibitem{bastani2021mostly}
\begin{barticle}[author]
\bauthor{\bsnm{Bastani},~\bfnm{Hamsa}\binits{H.}},
  \bauthor{\bsnm{Bayati},~\bfnm{Mohsen}\binits{M.}} \AND
  \bauthor{\bsnm{Khosravi},~\bfnm{Khashayar}\binits{K.}}
(\byear{2021}).
\btitle{Mostly exploration-free algorithms for contextual bandits}.
\bjournal{Management Science}
\bvolume{67}
\bpages{1329--1349}.
\end{barticle}
\endbibitem

\bibitem{bellec2022biasing}
\begin{barticle}[author]
\bauthor{\bsnm{Bellec},~\bfnm{Pierre~C}\binits{P.~C.}} \AND
  \bauthor{\bsnm{Zhang},~\bfnm{Cun-Hui}\binits{C.-H.}}
(\byear{2022}).
\btitle{De-biasing the lasso with degrees-of-freedom adjustment}.
\bjournal{Bernoulli}
\bvolume{28}
\bpages{713--743}.
\end{barticle}
\endbibitem

\bibitem{belloni2013least}
\begin{barticle}[author]
\bauthor{\bsnm{Belloni},~\bfnm{Alexandre}\binits{A.}} \AND
  \bauthor{\bsnm{Chernozhukov},~\bfnm{Victor}\binits{V.}}
(\byear{2013}).
\btitle{Least squares after model selection in high-dimensional sparse models}.
\bjournal{Bernoulli}
\bvolume{19}
\bpages{521--547}.
\end{barticle}
\endbibitem

\bibitem{bubeck2012regret}
\begin{barticle}[author]
\bauthor{\bsnm{Bubeck},~\bfnm{S{\'{e}}bastien}\binits{S.}} \AND
  \bauthor{\bsnm{Cesa{-}Bianchi},~\bfnm{Nicol{\`{o}}}\binits{N.}}
(\byear{2012}).
\btitle{Regret Analysis of Stochastic and Nonstochastic Multi-armed Bandit
  Problems}.
\bjournal{Found. Trends Mach. Learn.}
\bvolume{5}
\bpages{1--122}.
\bdoi{10.1561/2200000024}
\end{barticle}
\endbibitem

\bibitem{buhlmann2011statistics}
\begin{bbook}[author]
\bauthor{\bsnm{B{\"u}hlmann},~\bfnm{Peter}\binits{P.}} \AND \bauthor{\bsnm{Van
  De~Geer},~\bfnm{Sara}\binits{S.}}
(\byear{2011}).
\btitle{Statistics for high-dimensional data: methods, theory and
  applications}.
\bpublisher{Springer Science \& Business Media}.
\end{bbook}
\endbibitem

\bibitem{chapelle2011empirical}
\begin{barticle}[author]
\bauthor{\bsnm{Chapelle},~\bfnm{Olivier}\binits{O.}} \AND
  \bauthor{\bsnm{Li},~\bfnm{Lihong}\binits{L.}}
(\byear{2011}).
\btitle{An empirical evaluation of thompson sampling}.
\bjournal{Advances in neural information processing systems}
\bvolume{24}.
\end{barticle}
\endbibitem

\bibitem{ding2021efficient}
\begin{binproceedings}[author]
\bauthor{\bsnm{Ding},~\bfnm{Qin}\binits{Q.}},
  \bauthor{\bsnm{Hsieh},~\bfnm{Cho-Jui}\binits{C.-J.}} \AND
  \bauthor{\bsnm{Sharpnack},~\bfnm{James}\binits{J.}}
(\byear{2021}).
\btitle{An efficient algorithm for generalized linear bandit: Online stochastic
  gradient descent and thompson sampling}.
In \bbooktitle{International Conference on Artificial Intelligence and
  Statistics}
\bpages{1585--1593}.
\bpublisher{PMLR}.
\end{binproceedings}
\endbibitem

\bibitem{gill1995applications}
\begin{barticle}[author]
\bauthor{\bsnm{Gill},~\bfnm{Richard~D}\binits{R.~D.}} \AND
  \bauthor{\bsnm{Levit},~\bfnm{Boris~Y}\binits{B.~Y.}}
(\byear{1995}).
\btitle{Applications of the van Trees inequality: a Bayesian Cram{\'e}r-Rao
  bound}.
\bjournal{Bernoulli}
\bpages{59--79}.
\end{barticle}
\endbibitem

\bibitem{goldenshluger2013linear}
\begin{barticle}[author]
\bauthor{\bsnm{Goldenshluger},~\bfnm{Alexander}\binits{A.}} \AND
  \bauthor{\bsnm{Zeevi},~\bfnm{Assaf}\binits{A.}}
(\byear{2013}).
\btitle{A linear response bandit problem}.
\bjournal{Stochastic Systems}
\bvolume{3}
\bpages{230--261}.
\end{barticle}
\endbibitem

\bibitem{DBLP:journals/corr/abs-2002-05152}
\begin{barticle}[author]
\bauthor{\bsnm{Hamidi},~\bfnm{Nima}\binits{N.}} \AND
  \bauthor{\bsnm{Bayati},~\bfnm{Mohsen}\binits{M.}}
(\byear{2020}).
\btitle{A General Framework to Analyze Stochastic Linear Bandit}.
\bjournal{CoRR}
\bvolume{abs/2002.05152}.
\end{barticle}
\endbibitem

\bibitem{hao2020high}
\begin{barticle}[author]
\bauthor{\bsnm{Hao},~\bfnm{Botao}\binits{B.}},
  \bauthor{\bsnm{Lattimore},~\bfnm{Tor}\binits{T.}} \AND
  \bauthor{\bsnm{Wang},~\bfnm{Mengdi}\binits{M.}}
(\byear{2020}).
\btitle{High-dimensional sparse linear bandits}.
\bjournal{Advances in Neural Information Processing Systems}
\bvolume{33}
\bpages{10753--10763}.
\end{barticle}
\endbibitem

\bibitem{jin2019short}
\begin{barticle}[author]
\bauthor{\bsnm{Jin},~\bfnm{Chi}\binits{C.}},
  \bauthor{\bsnm{Netrapalli},~\bfnm{Praneeth}\binits{P.}},
  \bauthor{\bsnm{Ge},~\bfnm{Rong}\binits{R.}},
  \bauthor{\bsnm{Kakade},~\bfnm{Sham~M}\binits{S.~M.}} \AND
  \bauthor{\bsnm{Jordan},~\bfnm{Michael~I}\binits{M.~I.}}
(\byear{2019}).
\btitle{A short note on concentration inequalities for random vectors with
  subgaussian norm}.
\bjournal{arXiv preprint arXiv:1902.03736}.
\end{barticle}
\endbibitem

\bibitem{jun2017scalable}
\begin{barticle}[author]
\bauthor{\bsnm{Jun},~\bfnm{Kwang-Sung}\binits{K.-S.}},
  \bauthor{\bsnm{Bhargava},~\bfnm{Aniruddha}\binits{A.}},
  \bauthor{\bsnm{Nowak},~\bfnm{Robert}\binits{R.}} \AND
  \bauthor{\bsnm{Willett},~\bfnm{Rebecca}\binits{R.}}
(\byear{2017}).
\btitle{Scalable generalized linear bandits: Online computation and hashing}.
\bjournal{Advances in Neural Information Processing Systems}
\bvolume{30}.
\end{barticle}
\endbibitem

\bibitem{kakade2008efficient}
\begin{binproceedings}[author]
\bauthor{\bsnm{Kakade},~\bfnm{Sham~M}\binits{S.~M.}},
  \bauthor{\bsnm{Shalev-Shwartz},~\bfnm{Shai}\binits{S.}} \AND
  \bauthor{\bsnm{Tewari},~\bfnm{Ambuj}\binits{A.}}
(\byear{2008}).
\btitle{Efficient bandit algorithms for online multiclass prediction}.
In \bbooktitle{Proceedings of the 25th international conference on Machine
  learning}
\bpages{440--447}.
\end{binproceedings}
\endbibitem

\bibitem{kim2019doubly}
\begin{barticle}[author]
\bauthor{\bsnm{Kim},~\bfnm{Gi-Soo}\binits{G.-S.}} \AND
  \bauthor{\bsnm{Paik},~\bfnm{Myunghee~Cho}\binits{M.~C.}}
(\byear{2019}).
\btitle{Doubly-robust lasso bandit}.
\bjournal{Advances in Neural Information Processing Systems}
\bvolume{32}.
\end{barticle}
\endbibitem

\bibitem{lai1985asymptotically}
\begin{barticle}[author]
\bauthor{\bsnm{Lai},~\bfnm{Tze~Leung}\binits{T.~L.}},
  \bauthor{\bsnm{Robbins},~\bfnm{Herbert}\binits{H.}} \betal{et~al.}
(\byear{1985}).
\btitle{Asymptotically efficient adaptive allocation rules}.
\bjournal{Advances in applied mathematics}
\bvolume{6}
\bpages{4--22}.
\end{barticle}
\endbibitem

\bibitem{lattimore2020bandit}
\begin{bbook}[author]
\bauthor{\bsnm{Lattimore},~\bfnm{Tor}\binits{T.}} \AND
  \bauthor{\bsnm{Szepesv{\'a}ri},~\bfnm{Csaba}\binits{C.}}
(\byear{2020}).
\btitle{Bandit algorithms}.
\bpublisher{Cambridge University Press}.
\end{bbook}
\endbibitem

\bibitem{Li2010}
\begin{binproceedings}[author]
\bauthor{\bsnm{Li},~\bfnm{Lihong}\binits{L.}},
  \bauthor{\bsnm{Chu},~\bfnm{Wei}\binits{W.}},
  \bauthor{\bsnm{Langford},~\bfnm{John}\binits{J.}} \AND
  \bauthor{\bsnm{Schapire},~\bfnm{Robert~E.}\binits{R.~E.}}
(\byear{2010}).
\btitle{A contextual-bandit approach to personalized news article
  recommendation}.
In \bbooktitle{Proceedings of the 19th International Conference on World Wide
  Web}
\bpages{661--670}.
\bpublisher{ACM}.
\end{binproceedings}
\endbibitem

\bibitem{li2010contextual}
\begin{binproceedings}[author]
\bauthor{\bsnm{Li},~\bfnm{Lihong}\binits{L.}},
  \bauthor{\bsnm{Chu},~\bfnm{Wei}\binits{W.}},
  \bauthor{\bsnm{Langford},~\bfnm{John}\binits{J.}} \AND
  \bauthor{\bsnm{Schapire},~\bfnm{Robert~E}\binits{R.~E.}}
(\byear{2010}).
\btitle{A contextual-bandit approach to personalized news article
  recommendation}.
In \bbooktitle{Proceedings of the 19th international conference on World wide
  web}
\bpages{661--670}.
\end{binproceedings}
\endbibitem

\bibitem{oh2021sparsity}
\begin{binproceedings}[author]
\bauthor{\bsnm{Oh},~\bfnm{Min-hwan}\binits{M.-h.}},
  \bauthor{\bsnm{Iyengar},~\bfnm{Garud}\binits{G.}} \AND
  \bauthor{\bsnm{Zeevi},~\bfnm{Assaf}\binits{A.}}
(\byear{2021}).
\btitle{Sparsity-agnostic lasso bandit}.
In \bbooktitle{International Conference on Machine Learning}
\bpages{8271--8280}.
\bpublisher{PMLR}.
\end{binproceedings}
\endbibitem

\bibitem{perchet2013multi}
\begin{barticle}[author]
\bauthor{\bsnm{Perchet},~\bfnm{Vianney}\binits{V.}} \AND
  \bauthor{\bsnm{Rigollet},~\bfnm{Philippe}\binits{P.}}
(\byear{2013}).
\btitle{The multi-armed bandit problem with covariates}.
\bjournal{The Annals of Statistics}
\bvolume{41}
\bpages{693--721}.
\end{barticle}
\endbibitem

\bibitem{raskutti2011minimax}
\begin{barticle}[author]
\bauthor{\bsnm{Raskutti},~\bfnm{Garvesh}\binits{G.}},
  \bauthor{\bsnm{Wainwright},~\bfnm{Martin~J}\binits{M.~J.}} \AND
  \bauthor{\bsnm{Yu},~\bfnm{Bin}\binits{B.}}
(\byear{2011}).
\btitle{Minimax rates of estimation for high-dimensional linear regression over
  $\ell_q$-balls}.
\bjournal{IEEE transactions on information theory}
\bvolume{57}
\bpages{6976--6994}.
\end{barticle}
\endbibitem

\bibitem{ren2024dynamic}
\begin{barticle}[author]
\bauthor{\bsnm{Ren},~\bfnm{Zhimei}\binits{Z.}} \AND
  \bauthor{\bsnm{Zhou},~\bfnm{Zhengyuan}\binits{Z.}}
(\byear{2024}).
\btitle{Dynamic batch learning in high-dimensional sparse linear contextual
  bandits}.
\bjournal{Management Science}
\bvolume{70}
\bpages{1315--1342}.
\end{barticle}
\endbibitem

\bibitem{rigollet2023highdimensionalstatistics}
\begin{bmisc}[author]
\bauthor{\bsnm{Rigollet},~\bfnm{Philippe}\binits{P.}} \AND
  \bauthor{\bsnm{Hütter},~\bfnm{Jan-Christian}\binits{J.-C.}}
(\byear{2023}).
\btitle{High-Dimensional Statistics}.
\end{bmisc}
\endbibitem

\bibitem{robbins1952some}
\begin{barticle}[author]
\bauthor{\bsnm{Robbins},~\bfnm{Herbert}\binits{H.}}
(\byear{1952}).
\btitle{Some aspects of the sequential design of experiments}.
\bjournal{Bulletin of the American Mathematical Society}
\bvolume{58}
\bpages{527--535}.
\end{barticle}
\endbibitem

\bibitem{pmlr-v23-rudelson12}
\begin{binproceedings}[author]
\bauthor{\bsnm{Rudelson},~\bfnm{Mark}\binits{M.}} \AND
  \bauthor{\bsnm{Zhou},~\bfnm{Shuheng}\binits{S.}}
(\byear{2012}).
\btitle{Reconstruction from Anisotropic Random Measurements}.
In \bbooktitle{Proceedings of the 25th Annual Conference on Learning Theory}.
\bseries{Proceedings of Machine Learning Research}
\bvolume{23}
\bpages{10.1--10.24}.
\bpublisher{PMLR}, \baddress{Edinburgh, Scotland}.
\end{binproceedings}
\endbibitem

\bibitem{russo2018learning}
\begin{barticle}[author]
\bauthor{\bsnm{Russo},~\bfnm{Daniel}\binits{D.}} \AND
  \bauthor{\bsnm{Van~Roy},~\bfnm{Benjamin}\binits{B.}}
(\byear{2018}).
\btitle{Learning to optimize via information-directed sampling}.
\bjournal{Operations Research}
\bvolume{66}
\bpages{230--252}.
\end{barticle}
\endbibitem

\bibitem{shalev2012online}
\begin{barticle}[author]
\bauthor{\bsnm{Shalev-Shwartz},~\bfnm{Shai}\binits{S.}} \betal{et~al.}
(\byear{2012}).
\btitle{Online learning and online convex optimization}.
\bjournal{Foundations and Trends{\textregistered} in Machine Learning}
\bvolume{4}
\bpages{107--194}.
\end{barticle}
\endbibitem

\bibitem{song2022truncated}
\begin{barticle}[author]
\bauthor{\bsnm{Song},~\bfnm{Yanglei}\binits{Y.}} \AND
  \bauthor{\bsnm{Zhou},~\bfnm{Meng}\binits{M.}}
(\byear{2022}).
\btitle{Truncated LinUCB for Stochastic Linear Bandits}.
\bjournal{arXiv preprint arXiv:2202.11735}.
\end{barticle}
\endbibitem

\bibitem{tewari2017ads}
\begin{bincollection}[author]
\bauthor{\bsnm{Tewari},~\bfnm{Ambuj}\binits{A.}} \AND
  \bauthor{\bsnm{Murphy},~\bfnm{Susan~A}\binits{S.~A.}}
(\byear{2017}).
\btitle{From ads to interventions: Contextual bandits in mobile health}.
In \bbooktitle{Mobile Health}
\bpages{495--517}.
\bpublisher{Springer}.
\end{bincollection}
\endbibitem

\bibitem{thompson1933likelihood}
\begin{barticle}[author]
\bauthor{\bsnm{Thompson},~\bfnm{William~R}\binits{W.~R.}}
(\byear{1933}).
\btitle{On the likelihood that one unknown probability exceeds another in view
  of the evidence of two samples}.
\bjournal{Biometrika}
\bvolume{25}
\bpages{285--294}.
\end{barticle}
\endbibitem

\bibitem{tibshirani1996regression}
\begin{barticle}[author]
\bauthor{\bsnm{Tibshirani},~\bfnm{Robert}\binits{R.}}
(\byear{1996}).
\btitle{Regression shrinkage and selection via the lasso}.
\bjournal{Journal of the Royal Statistical Society: Series B (Methodological)}
\bvolume{58}
\bpages{267--288}.
\end{barticle}
\endbibitem

\bibitem{van2011adaptive}
\begin{barticle}[author]
\bauthor{\bparticle{Van~de} \bsnm{Geer},~\bfnm{Sara}\binits{S.}},
  \bauthor{\bsnm{B{\"u}hlmann},~\bfnm{Peter}\binits{P.}} \AND
  \bauthor{\bsnm{Zhou},~\bfnm{Shuheng}\binits{S.}}
(\byear{2011}).
\btitle{The adaptive and the thresholded Lasso for potentially misspecified
  models (and a lower bound for the Lasso)}.
\end{barticle}
\endbibitem

\bibitem{van2008high}
\begin{barticle}[author]
\bauthor{\bparticle{Van~de} \bsnm{Geer},~\bfnm{Sara~A}\binits{S.~A.}}
(\byear{2008}).
\btitle{High-dimensional generalized linear models and the lasso}.
\bjournal{The Annals of Statistics}
\bvolume{36}
\bpages{614--645}.
\end{barticle}
\endbibitem

\bibitem{van2016estimation}
\begin{bbook}[author]
\bauthor{\bparticle{Van~de} \bsnm{Geer},~\bfnm{Sara~A}\binits{S.~A.}}
(\byear{2016}).
\btitle{Estimation and testing under sparsity}.
\bpublisher{Springer}.
\end{bbook}
\endbibitem

\bibitem{vershynin2018high}
\begin{bbook}[author]
\bauthor{\bsnm{Vershynin},~\bfnm{Roman}\binits{R.}}
(\byear{2018}).
\btitle{High-dimensional probability: An introduction with applications in data
  science}
\bvolume{47}.
\bpublisher{Cambridge university press}.
\end{bbook}
\endbibitem

\bibitem{wainwright2009sharp}
\begin{barticle}[author]
\bauthor{\bsnm{Wainwright},~\bfnm{Martin~J}\binits{M.~J.}}
(\byear{2009}).
\btitle{Sharp thresholds for High-Dimensional and noisy sparsity recovery using
  $\ell_ 1$-Constrained Quadratic Programming (Lasso)}.
\bjournal{IEEE transactions on information theory}
\bvolume{55}
\bpages{2183--2202}.
\end{barticle}
\endbibitem

\bibitem{wainwright2019high}
\begin{bbook}[author]
\bauthor{\bsnm{Wainwright},~\bfnm{Martin~J}\binits{M.~J.}}
(\byear{2019}).
\btitle{High-dimensional statistics: A non-asymptotic viewpoint}
\bvolume{48}.
\bpublisher{Cambridge University Press}.
\end{bbook}
\endbibitem

\bibitem{wang2018minimax}
\begin{binproceedings}[author]
\bauthor{\bsnm{Wang},~\bfnm{Xue}\binits{X.}},
  \bauthor{\bsnm{Wei},~\bfnm{Mingcheng}\binits{M.}} \AND
  \bauthor{\bsnm{Yao},~\bfnm{Tao}\binits{T.}}
(\byear{2018}).
\btitle{Minimax concave penalized multi-armed bandit model with
  high-dimensional covariates}.
In \bbooktitle{International Conference on Machine Learning}
\bpages{5200--5208}.
\bpublisher{PMLR}.
\end{binproceedings}
\endbibitem

\bibitem{yuan2006model}
\begin{barticle}[author]
\bauthor{\bsnm{Yuan},~\bfnm{Ming}\binits{M.}} \AND
  \bauthor{\bsnm{Lin},~\bfnm{Yi}\binits{Y.}}
(\byear{2006}).
\btitle{Model selection and estimation in regression with grouped variables}.
\bjournal{Journal of the Royal Statistical Society: Series B (Statistical
  Methodology)}
\bvolume{68}
\bpages{49--67}.
\end{barticle}
\endbibitem

\bibitem{zhang2008sparsity}
\begin{barticle}[author]
\bauthor{\bsnm{Zhang},~\bfnm{Cun-Hui}\binits{C.-H.}} \AND
  \bauthor{\bsnm{Huang},~\bfnm{Jian}\binits{J.}}
(\byear{2008}).
\btitle{{The sparsity and bias of the Lasso selection in high-dimensional
  linear regression}}.
\bjournal{The Annals of Statistics}
\bvolume{36}
\bpages{1567 -- 1594}.
\bdoi{10.1214/07-AOS520}
\end{barticle}
\endbibitem

\bibitem{zhao2006model}
\begin{barticle}[author]
\bauthor{\bsnm{Zhao},~\bfnm{Peng}\binits{P.}} \AND
  \bauthor{\bsnm{Yu},~\bfnm{Bin}\binits{B.}}
(\byear{2006}).
\btitle{On model selection consistency of Lasso}.
\bjournal{The Journal of Machine Learning Research}
\bvolume{7}
\bpages{2541--2563}.
\end{barticle}
\endbibitem

\bibitem{zou2006adaptive}
\begin{barticle}[author]
\bauthor{\bsnm{Zou},~\bfnm{Hui}\binits{H.}}
(\byear{2006}).
\btitle{The adaptive lasso and its oracle properties}.
\bjournal{Journal of the American statistical association}
\bvolume{101}
\bpages{1418--1429}.
\end{barticle}
\endbibitem

\bibitem{zou2005regularization}
\begin{barticle}[author]
\bauthor{\bsnm{Zou},~\bfnm{Hui}\binits{H.}} \AND
  \bauthor{\bsnm{Hastie},~\bfnm{Trevor}\binits{T.}}
(\byear{2005}).
\btitle{Regularization and variable selection via the elastic net}.
\bjournal{Journal of the Royal Statistical Society Series B: Statistical
  Methodology}
\bvolume{67}
\bpages{301--320}.
\end{barticle}
\endbibitem

\end{thebibliography}

\newpage
\begin{appendix}

\section{Lasso and OPT-Lasso: Deterministic Analysis}
\label{app:lasso_deterministic_analysis}
In this Appendix, we consider the following \textit{deterministic} linear regression model 
$$\bY = \bZ\btheta +\bepsilon,$$ 
where $\bZ \in \mR^{n\times d}$ is a deterministic design matrix, $\bepsilon\in \mR^n$ is a deterministic noise vector, and $\btheta = (\btheta_1,\ldots,\btheta_d)' \in \bR^{d}$ is an unknown vector. We review several well-known results for Lasso estimators and derive new results for OPT-Lasso estimators. Specifically, denote by 
$$\ltheta_n:=\ltheta_n(\bZ,\bY,\lambda_n),\quad
\text{ and } \quad
\ptheta_n := \ptheta_n(\bZ,\bY,\lambda_n,\hdlambda_n)
$$ 
the Lasso estimator (see Definition \ref{def: lasso}) and the OPT-Lasso estimator (see Definition \ref{def: OPT-Lasso}) respectively,
where $\lambda_n, \hdlambda_n > 0$ are to be specified.

Denote by $S := \{j \in [d]: \btheta_j \neq 0\}$ the support of $\btheta$ and let $s_0 = |S| \vee 1$. Further, denote by $\hbSigma_n = n^{-1}\bZ'\bZ \in \bR^{d\times d}$ the sample covariance matrix. Note that   $\bZ_S'\bZ_S/n = (\hbSigma_n)_{S,S}$.

\begin{definition} \label{def: RE}
A matrix $\bZ\in \mR^{n\times d}$ is said to satisfy the restricted eigenvalue condition $\textup{RE}(s,\kappa,\rho)$ for some $s \in [d]$ and $\kappa, \rho > 0$ if 
$$n^{-1}\|\bZ \bv\|_2^2 \ge \rho\|\bv\|_2^2, \quad \text{ for all } \bv\in \mathcal{C}(s,\kappa),
$$
where we define
$$
\mathcal{C}(s,\kappa):=\{\bv\in \mR^d \backslash \{\bd{0}_d\}: \text{there exists } J\subseteq [d] \text{ such that } |J| = s, \text{ and } \|\bv_{J^c}\|_1 \le \kappa\|\bv_J\|_1\}.
$$
\end{definition}

\begin{remark} 
The above definition is adapted from \cite[Definition 7.12]{wainwright2009sharp}.
\end{remark}

The following $\ell_1, \ell_2$, and $\ell_{\infty}$ bounds on the estimation error of the Lasso estimator $\ltheta_n$ are well-known and are provided here for convenient reference.

\begin{lemma} \label{lemma: bounds of lasso}
Let $a >0$ and $\bA\in \mR^{d\times d}$ be any invertible matrix. Assume that $\bZ$ satisfies the $\textup{RE}(s_0, 3, a)$ condition, and that 
$\lambda_n \ge 2\|\bZ'\bepsilon/n\|_{\infty}$. Then
\begin{align*}
&\|\ltheta_n-\btheta\|_1\le {8s_0\lambda_n}/{a}, \\
&\|\ltheta_n-\btheta\|_2 \le {3\sqrt{s_0}\lambda_n}/{a},\\
& \|\ltheta_n-\btheta\|_{\infty} \le \mn\bA^{-1}\mn_1 \left(4 + 8\mn\hbSigma_n-\bA\mn_{\textup{max}} s_0/a\right)\lambda_n.
\end{align*}
\end{lemma}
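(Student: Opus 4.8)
The plan is to establish the three bounds in the standard order $\ell_1 \to \ell_2 \to \ell_\infty$, all of which flow from the basic inequality for the Lasso together with the event $\lambda_n \geq 2\|\bZ'\bepsilon/n\|_\infty$. First I would write $\hbu := \ltheta_n - \btheta$ and start from optimality of $\ltheta_n$: since $\ltheta_n$ minimizes the penalized objective, comparing its value to that at $\btheta$ gives $\frac{1}{2n}\|\bY - \bZ\ltheta_n\|_2^2 + \lambda_n\|\ltheta_n\|_1 \leq \frac{1}{2n}\|\bepsilon\|_2^2 + \lambda_n\|\btheta\|_1$. Expanding $\bY = \bZ\btheta + \bepsilon$ yields $\frac{1}{2n}\|\bZ\hbu\|_2^2 \leq \frac{1}{n}\bepsilon'\bZ\hbu + \lambda_n(\|\btheta\|_1 - \|\ltheta_n\|_1)$. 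Bounding $\frac1n|\bepsilon'\bZ\hbu| \leq \|\bZ'\bepsilon/n\|_\infty \|\hbu\|_1 \leq \frac{\lambda_n}{2}\|\hbu\|_1$, and using the decomposition $\|\btheta\|_1 - \|\ltheta_n\|_1 \leq \|\hbu_S\|_1 - \|\hbu_{S^c}\|_1$ (triangle inequality on coordinates in and out of $S$), I obtain the cone inequality $\|\hbu_{S^c}\|_1 \leq 3\|\hbu_S\|_1$, so $\hbu \in \mathcal{C}(s_0,3)$, together with $\frac{1}{2n}\|\bZ\hbu\|_2^2 \leq \frac{3\lambda_n}{2}\|\hbu_S\|_1$.

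Next I would feed this into the $\textup{RE}(s_0,3,a)$ condition: since $\hbu \in \mathcal{C}(s_0,3)$, we have $a\|\hbu\|_2^2 \leq \frac1n\|\bZ\hbu\|_2^2 \leq 3\lambda_n\|\hbu_S\|_1 \leq 3\lambda_n\sqrt{s_0}\|\hbu_S\|_2 \leq 3\lambda_n\sqrt{s_0}\|\hbu\|_2$. Dividing gives $\|\hbu\|_2 \leq 3\sqrt{s_0}\lambda_n/a$, which is the $\ell_2$ bound. For the $\ell_1$ bound, from $\|\hbu\|_1 = \|\hbu_S\|_1 + \|\hbu_{S^c}\|_1 \leq 4\|\hbu_S\|_1 \leq 4\sqrt{s_0}\|\hbu\|_2$ and the $\ell_2$ bound, I get $\|\hbu\|_1 \leq 12 s_0\lambda_n/a$; the constant $8$ in the statement is recovered by a slightly tighter accounting (e.g. combining $a\|\hbu\|_2^2 \le 3\lambda_n\|\hbu_S\|_1$ with $\|\hbu\|_1 \le 4\|\hbu_S\|_1$ and Cauchy–Schwarz applied to $\|\hbu_S\|_1 \le \sqrt{s_0}\|\hbu_S\|_2$ directly yields $\|\hbu_S\|_1 \le 3\sqrt{s_0}\lambda_n \cdot \sqrt{s_0}/a = 3s_0\lambda_n/a$... wait, this needs care), so I would simply track constants carefully to land on $8s_0\lambda_n/a$.

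For the $\ell_\infty$ bound I would use the KKT conditions for the Lasso: $\frac1n\bZ'(\bY - \bZ\ltheta_n) = \lambda_n \hat{z}$ for some subgradient $\hat{z}$ with $\|\hat{z}\|_\infty \leq 1$. Rewriting, $\hbSigma_n\hbu = \frac1n\bZ'\bepsilon - \lambda_n\hat{z}$, hence $\bA\hbu = (\bA - \hbSigma_n)\hbu + \frac1n\bZ'\bepsilon - \lambda_n\hat{z}$, and so $\hbu = \bA^{-1}\big[(\bA - \hbSigma_n)\hbu + \frac1n\bZ'\bepsilon - \lambda_n\hat{z}\big]$. Taking $\ell_\infty$ norms and using $\|\bA^{-1}\bv\|_\infty \le \mn\bA^{-1}\mn_1\|\bv\|_\infty$ wait — I need $\mn\bA^{-1}\mn_\infty$ for that; the statement uses $\mn\bA^{-1}\mn_1$, so I would instead bound $\|\bA^{-1}\bv\|_\infty \le \mn\bA^{-1}\mn_{\max} \cdot (\text{something})$... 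Actually the clean route: $\|\bA^{-1}\bv\|_\infty \le \mn (\bA^{-1})' \mn_\infty \|\bv\|_\infty$ and for $\bA$ symmetric these coincide; in general I would match the paper's convention by noting $\mn\bA^{-1}\mn_1 = \mn(\bA^{-1})'\mn_\infty$ and that $\bZ'\bZ$ is symmetric so one typically takes $\bA$ symmetric. Then $\|(\bA - \hbSigma_n)\hbu\|_\infty \le \mn\bA - \hbSigma_n\mn_{\max}\|\hbu\|_1 \le \mn\hbSigma_n - \bA\mn_{\max} \cdot 8s_0\lambda_n/a$ by the $\ell_1$ bound, $\|\frac1n\bZ'\bepsilon\|_\infty \le \lambda_n/2$, and $\lambda_n\|\hat z\|_\infty \le \lambda_n$, summing to $(\tfrac12 + 1 + 8\mn\hbSigma_n-\bA\mn_{\max}s_0/a)\lambda_n \cdot \mn\bA^{-1}\mn_1$; rounding $\tfrac32$ up to $4$ gives exactly the stated bound (with room to spare).

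The main obstacle, and the only genuinely delicate point, is the handling of matrix-norm conventions in the $\ell_\infty$ step — getting $\mn\bA^{-1}\mn_1$ (rather than $\mn\bA^{-1}\mn_\infty$) to be the correct operator norm to pull out, which relies on working with a symmetric $\bA$ or on transposing appropriately; everything else is a routine bookkeeping exercise with the cone inequality and the RE condition. Since these are cited as standard results (``well-known and provided here for convenient reference''), I would keep the exposition brief and point to the references rather than belabor constants.
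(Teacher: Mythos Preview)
Your proposal is correct and follows exactly the route the paper takes: the paper's proof simply cites Theorem 6.1 in B\"uhlmann--van de Geer for the $\ell_1$ bound, Theorem 7.13(a) in Wainwright for the $\ell_2$ bound, and Lemma 4.1 in van de Geer (2016) combined with the $\ell_1$ bound for the $\ell_\infty$ bound --- and your basic-inequality/cone argument together with the KKT identity $\bA\hbu = (\bA-\hbSigma_n)\hbu + \tfrac1n\bZ'\bepsilon - \lambda_n\hat z$ is precisely what those references do. Your concern about $\mn\bA^{-1}\mn_1$ versus $\mn\bA^{-1}\mn_\infty$ is valid but harmless here, since every application in the paper takes $\bA$ symmetric (namely $\bSigma$ or $\bSigma^{(k)}$), so the two norms coincide.
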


\begin{proof}
The $\ell_1$ bound follows from Theorem 6.1 in \cite{buhlmann2011statistics}, while the $\ell_2$ bound from Theorem 7.13 (a) in \cite{wainwright2019high}. The $\ell_{\infty}$ bound
is derived from Lemma 4.1 in \cite{van2016estimation}, combined with the $\ell_1$ bound above.
\end{proof}

Recall the definition of $\pS_n:= \{j \in [d]: |(\ltheta_n)_j| > \hdlambda_n\}$ in Definition \ref{def: OPT-Lasso}. Define the following set
$$S_n^{\textup{str}}:=\{j\in [d]: |\btheta_j| > 2\hdlambda_n\},$$
which contains the indices of components of $\btheta$ whose magnitudes exceed $2\hdlambda_n$.


\begin{lemma}
\label{lemma: threshold Lasso_ell_infty bound bandit}
If $\|\ltheta_n-\btheta\|_{\infty} \le \hdlambda_n$, then $S_n^{\textup{str}}\subseteq \hdS_n\subseteq S$.
\end{lemma}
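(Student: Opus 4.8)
The statement is an elementary consequence of the pointwise $\ell_\infty$ control $\|\ltheta_n-\btheta\|_\infty \le \hdlambda_n$, so the plan is simply to chase the two inclusions by the (reverse) triangle inequality, being careful about strict versus non-strict inequalities in the definitions of $\hdS_n$ and $S_n^{\textup{str}}$.

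For the inclusion $\hdS_n \subseteq S$, I would argue by contraposition. Suppose $j \notin S$, i.e.\ $\btheta_j = 0$. Then $|(\ltheta_n)_j| = |(\ltheta_n)_j - \btheta_j| \le \|\ltheta_n-\btheta\|_\infty \le \hdlambda_n$, so $j$ fails the strict inequality $|(\ltheta_n)_j| > \hdlambda_n$ and hence $j \notin \hdS_n$. Equivalently, any $j \in \hdS_n$ has $|\btheta_j| \ge |(\ltheta_n)_j| - |(\ltheta_n)_j - \btheta_j| > \hdlambda_n - \hdlambda_n = 0$, so $j \in S$.

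For the inclusion $S_n^{\textup{str}} \subseteq \hdS_n$, take $j \in S_n^{\textup{str}}$, i.e.\ $|\btheta_j| > 2\hdlambda_n$. Then
$$
|(\ltheta_n)_j| \;\ge\; |\btheta_j| - |(\ltheta_n)_j - \btheta_j| \;\ge\; |\btheta_j| - \|\ltheta_n-\btheta\|_\infty \;>\; 2\hdlambda_n - \hdlambda_n \;=\; \hdlambda_n,
$$
where the strict inequality is inherited from $|\btheta_j| > 2\hdlambda_n$. Hence $j \in \hdS_n$, completing the chain $S_n^{\textup{str}} \subseteq \hdS_n \subseteq S$.

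There is no real obstacle here; the only point requiring a little care is matching the strict/non-strict conventions—$\hdS_n$ and $S_n^{\textup{str}}$ are both defined with strict inequalities, which is exactly what makes both inclusions go through cleanly with the bound $\hdlambda_n$ (rather than needing a slightly smaller or larger threshold).
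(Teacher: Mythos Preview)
Your proof is correct and essentially identical to the paper's: both inclusions are established by the reverse triangle inequality exactly as you wrote, and the paper's proof is just a terser version of the same two computations.
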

\begin{proof}
Let $j \in  S_n^{\textup{str}}$ and $k \in \hdS_n$. Since $\|\ltheta_n-\btheta\|_{\infty} \le \hdlambda_n$, by triangle inequality and definition, we have
$$
|(\ltheta_n)_j| \geq |\btheta_j| - \hdlambda_n > \hdlambda_n, \qquad
|\btheta_k| \geq  |(\ltheta_n)_k| - \hdlambda_n > 0,
$$
which implies $j \in \hdS_n$ and $k \in S$. The proof is complete.
\end{proof}


Next, we provide an instance-specific upper bound on the $\ell_2$ estimation error of the OPT-Lasso estimator $\ptheta_n$.

\begin{theorem} \label{thm: ols post l2 estimation error}
Assume $\|\ltheta_n-\btheta\|_{\infty}\le \hdlambda_n$.
\begin{enumerate}[label=\roman*)]
    \item If the support $S$ of $\btheta$ is an empty set, then
$\|\ptheta_n-\btheta \|_2 = 0$;
\item If the support $S$ of $\btheta$ is not an empty set, assume further that for some $0< a<b$,
$$a\le \lmin{(\hbSigma_n)_{S,S}} \le \lmax{(\hbSigma_n)_{S,S}} \le b.
$$
Then we have 
$$\|\ptheta_n-\btheta \|_2^2 \le (2b/a+1)\sum_{j\in S} \btheta_j^2 \idf\left\{|\btheta_j| \le 2\hdlambda_n \right\} + (2/a^2)\|\bZ_S'\bepsilon/n\|_2^2.$$
\end{enumerate}
\end{theorem}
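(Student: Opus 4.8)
The plan is to exploit the variable-selection sandwich from Lemma~\ref{lemma: threshold Lasso_ell_infty bound bandit}: the hypothesis $\|\ltheta_n-\btheta\|_{\infty}\le\hdlambda_n$ gives $S_n^{\textup{str}}\subseteq\hdS_n\subseteq S$. Part~(i) is then immediate: if $S=\emptyset$ then $\hdS_n=\emptyset$, so by Definition~\ref{def: OPT-Lasso} the estimator $\ptheta_n$ is the zero vector $=\btheta$, whence $\|\ptheta_n-\btheta\|_2=0$. For part~(ii), set $\bd{\delta}:=\ptheta_n-\btheta$, which is supported on $\hdS_n\cup S=S$. Partition $S=T_1\sqcup T_2$ with $T_1:=\hdS_n$ and $T_2:=S\setminus\hdS_n$, so $\|\bd{\delta}\|_2^2=\|\bd{\delta}_{T_1}\|_2^2+\|\bd{\delta}_{T_2}\|_2^2$. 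Since $\ptheta_n$ vanishes off $\hdS_n$, $\bd{\delta}_{T_2}=-\btheta_{T_2}$; and because $S_n^{\textup{str}}\subseteq\hdS_n$ forces $T_2\subseteq\{j\in S:|\btheta_j|\le 2\hdlambda_n\}$, we obtain $\|\bd{\delta}_{T_2}\|_2^2=\|\btheta_{T_2}\|_2^2\le\sum_{j\in S}\btheta_j^2\idf\{|\btheta_j|\le 2\hdlambda_n\}$. If $\hdS_n=\emptyset$ this already gives the claim (the noise term being vacuous), so assume $\hdS_n\neq\emptyset$.

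The core is to control $\|\bd{\delta}_{T_1}\|_2$. Write $M:=(\hbSigma_n)_{S,S}$, invertible since $\lmin{M}\ge a>0$, and let $M_{ij}$ be its $T_i\times T_j$ sub-block. The normal equations defining the OLS step, $\bZ_{\hdS_n}'\bigl(\bY-\bZ_{\hdS_n}(\ptheta_n)_{\hdS_n}\bigr)=\bd{0}$, together with $\bY=\bZ_S\btheta_S+\bepsilon$ and $\bd{\delta}_{T_2}=-\btheta_{T_2}$, yield $M_{11}\bd{\delta}_{T_1}=M_{12}\btheta_{T_2}+\bZ_{T_1}'\bepsilon/n$, hence $\bd{\delta}_{T_1}=M_{11}^{-1}M_{12}\btheta_{T_2}+M_{11}^{-1}\bZ_{T_1}'\bepsilon/n$. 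For the noise term, $M_{11}$ is a principal submatrix of $M$, so $\lmin{M_{11}}\ge a$, and $\bZ_{T_1}'\bepsilon$ is a sub-vector of $\bZ_S'\bepsilon$; thus $\|M_{11}^{-1}\bZ_{T_1}'\bepsilon/n\|_2\le a^{-1}\|\bZ_S'\bepsilon/n\|_2$.

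The delicate point --- and the main obstacle to obtaining the stated constant $2b/a$ rather than the cruder $2b^2/a^2$ --- is the bias term $M_{11}^{-1}M_{12}\btheta_{T_2}$. I would split $M_{11}^{-1}=M_{11}^{-1/2}M_{11}^{-1/2}$ and invoke the Schur-complement inequality $M_{21}M_{11}^{-1}M_{12}\preceq M_{22}$, valid because $M\succeq 0$. This gives $\|M_{11}^{-1/2}M_{12}\btheta_{T_2}\|_2^2=\btheta_{T_2}'M_{21}M_{11}^{-1}M_{12}\btheta_{T_2}\le\btheta_{T_2}'M_{22}\btheta_{T_2}\le\lmax{M}\|\btheta_{T_2}\|_2^2\le b\|\btheta_{T_2}\|_2^2$, while $\|M_{11}^{-1/2}\|_{\textup{op}}=\lmin{M_{11}}^{-1/2}\le a^{-1/2}$, so $\|M_{11}^{-1}M_{12}\btheta_{T_2}\|_2\le\sqrt{b/a}\,\|\btheta_{T_2}\|_2$. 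Combining the two estimates via $(x+y)^2\le 2x^2+2y^2$ gives $\|\bd{\delta}_{T_1}\|_2^2\le (2b/a)\|\btheta_{T_2}\|_2^2+(2/a^2)\|\bZ_S'\bepsilon/n\|_2^2$; adding $\|\bd{\delta}_{T_2}\|_2^2=\|\btheta_{T_2}\|_2^2$ and the bound $\|\btheta_{T_2}\|_2^2\le\sum_{j\in S}\btheta_j^2\idf\{|\btheta_j|\le 2\hdlambda_n\}$ from the first paragraph yields the asserted inequality. Apart from the Schur-complement step, the argument is bookkeeping with principal submatrices and the support sandwich.
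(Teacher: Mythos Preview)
Your proof is correct and follows the same overall architecture as the paper: the support sandwich $S_n^{\textup{str}}\subseteq\hdS_n\subseteq S$, the split $\|\bd{\delta}\|_2^2=\|\btheta_{T_2}\|_2^2+\|\bd{\delta}_{T_1}\|_2^2$, and the same final constants. The only genuine difference is the language for the bias term. The paper works in the data space $\mR^n$: it writes $\bZ_{T_1}\bd{\delta}_{T_1}=\mathcal{P}_{T_1}(\bZ_{T_2}\btheta_{T_2}+\bepsilon)$ with $\mathcal{P}_{T_1}=\bZ_{T_1}(\bZ_{T_1}'\bZ_{T_1})^{-1}\bZ_{T_1}'$, then uses that projections contract to get $\|\mathcal{P}_{T_1}\bZ_{T_2}\btheta_{T_2}\|_2^2\le\|\bZ_{T_2}\btheta_{T_2}\|_2^2\le nb\|\btheta_{T_2}\|_2^2$ and $\|\mathcal{P}_{T_1}\bepsilon\|_2^2\le\|\mathcal{P}_S\bepsilon\|_2^2$, before dividing through by $na=n\lmin{M_{11}}$. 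You work in the parameter space $\mR^{|S|}$ with the Gram-matrix blocks and invoke the Schur-complement inequality $M_{21}M_{11}^{-1}M_{12}\preceq M_{22}$. These are the same linear-algebra fact: expanding $\|\mathcal{P}_{T_1}\bZ_{T_2}\btheta_{T_2}\|_2^2$ gives exactly $n\,\btheta_{T_2}'M_{21}M_{11}^{-1}M_{12}\btheta_{T_2}$, and the projection contraction $\|\mathcal{P}_{T_1}\bZ_{T_2}\btheta_{T_2}\|_2^2\le\|\bZ_{T_2}\btheta_{T_2}\|_2^2=n\,\btheta_{T_2}'M_{22}\btheta_{T_2}$ is precisely the Schur-complement bound. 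Your formulation is a bit cleaner in that it never leaves $\mR^{|S|}$ and makes the role of the condition number $b/a$ explicit; the paper's projection-matrix formulation is perhaps more geometric. Either way the argument is complete.
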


\begin{proof}
By Lemma \ref{lemma: threshold Lasso_ell_infty bound bandit}, we have $    S_n^{\textup{str}}\subseteq \hdS_n\subseteq S$. By definition, we have
\begin{equation}\label{eq: proof of estimation error in stage 3 3}
    \left\|\ptheta_n-\btheta \right\|_2^2  = \left\|(\ptheta_n-\btheta)_S \right\|_2^2 = \left\|\btheta_{S\backslash\hdS_n} \right\|_2^2+ \left\|(\ptheta_n-\btheta)_{\hdS_n} \right\|_2^2
\end{equation}

If $S$ is an empty set, then $\ptheta_n = \btheta = \boldsymbol{0}_d$.  

If $S$ is not empty and $\hdS_n$ is an empty set, then $S_n^{\textup{str}}$ must be empty, which implies that $|\btheta_j| \leq 2 \hdlambda_n$ for all $j \in [d]$. Thus by \eqref{eq: proof of estimation error in stage 3 3}, we have
\begin{equation*}
    \left\|\ptheta_n-\btheta \right\|_2^2  = \|\btheta_{S}\|_2^2 = \sum_{j\in S} \btheta_j^2 \idf\left\{\left|\btheta_j\right| \le 2\hdlambda_n \right\}.
\end{equation*}

Now, we focus on the case that neither $S$ nor $\hdS_n$ is empty, and we bound the two terms on the right-hand side of \eqref{eq: proof of estimation error in stage 3 3}. For the first term in \eqref{eq: proof of estimation error in stage 3 3}, by definition, we have 
\begin{equation}\label{eq: proof of estimation error in stage 3 4}
    \|\btheta_{S\backslash\hdS_n}\|_2^2 \le \|\btheta_{S\backslash S_n^{\textup{str}}}\|_2^2 = \sum_{j\in S} \btheta_j^2 \idf\left\{\left|\btheta_j\right| \le 2\hdlambda_n \right\}.
\end{equation}
Next, we bound the second term in \eqref{eq: proof of estimation error in stage 3 3}. To simplify notations, denote $\pS_n$ by $\hat{S}_n$. Since
$\hat{S}_n \subseteq S$, we have
$$\lmin{(\hbSigma_n)_{\hat{S}_n,\hat{S}_n}}\ge \lmin{(\hbSigma_n)_{S,S}} \geq a > 0,$$
which in particular implies that $\bZ_{\hat{S}_n}' \bZ_{\hat{S}_n}$ is invertible. By the definition of OPT-Lasso (see Definition \ref{def: OPT-Lasso}),  we have
\begin{align*}
    (\ptheta_n)_{\hat{S}_n} &= \left(\bZ_{\hat{S}_n}' \bZ_{\hat{S}_n} \right)^{-1}\bZ_{\hat{S}_n}'\bY
    = \left(\bZ_{\hat{S}_n}' \bZ_{\hat{S}_n} \right)^{-1}\bZ_{\hat{S}_n}'\left(\bZ_{\hat{S}_n}\btheta_{\hat{S}_n} + \bZ_{S\backslash\hat{S}_n}\btheta_{S\backslash\hat{S}_n} + \bepsilon \right)
    \\&= \btheta_{\hat{S}_n} + \left(\bZ_{\hat{S}_n}' \bZ_{\hat{S}_n} \right)^{-1}\bZ_{\hat{S}_n}'\left( \bZ_{S\backslash\hat{S}_n}\btheta_{S\backslash\hat{S}_n} + \bepsilon \right),
\end{align*}
which implies that
\begin{equation} 
\label{eq: proof of estimation error in stage 3 1}
    \left\|\bZ_{\hat{S}_n}\left((\hbtheta_n-\btheta)_{\hat{S}_n} \right)\right\|_2^2 = \left\|\mathcal{P}_{\hat{S}_n} \left( \bZ_{S\backslash\hat{S}_n}\btheta_{S\backslash
    \hat{S}_n} + \bepsilon \right)\right\|_2^2,
\end{equation}
where for a non-empty set $A \subseteq [S]$, we define
$\mathcal{P}_{A}:=\bZ_A\left(\bZ_A'\bZ_A \right)^{-1}\bZ_A'$.

For the left-hand side of \eqref{eq: proof of estimation error in stage 3 1}, we have
\begin{equation}\label{eq: proof of estimation error in stage 3 5}
\begin{split}
\left\|\bZ_{\hat{S}_n}\left((\hbtheta_n-\btheta)_{\hat{S}_n} \right)\right\|_2^2 
&\geq    n\lmin{(\hbSigma_n)_{\hat{S}_n,\hat{S}_n}}\cdot \left\|(\ptheta_n-\btheta)_{\hat{S}_n} \right\|_2^2 \\
    &\geq n a \left\|(\ptheta_n-\btheta)_{\hat{S}_n} \right\|_2^2.
\end{split}
\end{equation}

For the right-hand side of \eqref{eq: proof of estimation error in stage 3 1}, we first note that $\mathcal{P}_A$ is the projection matrix onto the column space of $Z_A$.
Since $\hat{S}_n\subseteq S$,  we have 
\begin{equation}\label{eq: proof of estimation error in stage 3 7}
\begin{aligned}
    &\left\|\mathcal{P}_{\hat{S}_n}\left( \bZ_{S\backslash\hat{S}_n}\btheta_{S\backslash\hat{S}_n} + \bepsilon \right)\right\|_2^2 \le 2\left\|\mathcal{P}_{\hat{S}_n}\bZ_{S\backslash\hat{S}_n}\btheta_{S\backslash\hat{S}_n}\right\|_2^2 + 2\left\| \mathcal{P}_{\hat{S}_n}\bepsilon\right\|_2^2
    \\&\le 2\left\|\bZ_{S\backslash\hat{S}_n}\btheta_{S\backslash\hat{S}_n}\right\|_2^2 +2 \left\| \mathcal{P}_{S}\bepsilon\right\|_2^2
    \\&\le 2n\lmax{(\hbSigma_n)_{S\backslash\hat{S}_n, S\backslash\hat{S}_n}} \left\|\btheta_{S\backslash\hat{S}_n} \right\|_2^2 + 2n[\lmin{(\hbSigma_n)_{S,S}}]^{-1}\|\bZ_S'\bepsilon/n\|_2^2.
\end{aligned}    
\end{equation}
where the first inequality is due to the triangle inequality, the second inequality follows from the properties of projection matrices, and the final inequality follows from the definition of the eigenvalues.

Note that if $S \backslash \hat{S}_n$ is empty, the first term above is understood to be zero by convention. If  $S \backslash \hat{S}_n$ is empty, 
we have
$\lmax{(\hbSigma_n)_{S\backslash\hat{S}_n, S\backslash\hat{S}_n}}\le \lmax{(\hbSigma_n)_{S,S}} \leq b$. 
Therefore by \eqref{eq: proof of estimation error in stage 3 1}, \eqref{eq: proof of estimation error in stage 3 5}, and \eqref{eq: proof of estimation error in stage 3 7}, we have 
\begin{equation*}
\begin{aligned}
    \left\|(\ptheta_n-\btheta)_{\hat{S}_n} \right\|_2^2 &\le \frac{2b}{a} \left\|\btheta_{S\backslash\hat{S}_n} \right\|_2^2 + \frac{2}{a^2}\|\bZ_S'\bepsilon/n\|_2^2.
\end{aligned}
\end{equation*}
Then the proof is complete due to \eqref{eq: proof of estimation error in stage 3 4} and \eqref{eq: proof of estimation error in stage 3 3}.
\end{proof}

\section{Lower Bounds on Bayes Risks}
In this appendix, we consider the following linear model $\bY = \bZ\btheta +\bepsilon$ within the Bayesian framework, where $\bZ = (\bZ_1,\cdots,\bZ_n)'\in \mR^{n\times d}$ is a random design matrix with i.i.d. rows, $\bepsilon = (\epsilon_1, \cdots, \epsilon_n)'\in \mR^n$ is a noise vector with i.i.d. entries distributed as $\mathcal{N}(0,\sigma^2)$, and $\btheta = (\btheta_1,\cdots, \btheta_d) \in \mathbb{R}^{d}$ is a random vector with some prior distribution to be specified.

Our goal is to derive lower bounds on the Bayes risks for estimating certain functionals of $\btheta$ based on  $\bY$ and $\bZ$, under \textit{two} types of prior distributions on $\btheta$. Note that each estimator belongs to $\bd{\Theta}_{n,d}$, the set of all measurable functions from $\mathbb{R}^{d} \times \mathbb{R}^{n \times d}$ to $\mathbb{R}^{d}$.

\begin{assumption}\label{assumption: bounded cov mat 2}
For some constant $L>1$, we have $L^{-1}\le \lmin{\bSigma}\le\lmax{\bSigma}\le L$. 
\end{assumption}

First, we define a family of prior distributions for $\btheta$, under which the support of $\btheta$ is known. For $s\ge 2$ and $r > 0$, define  a probability density function (pdf) with respect to the Lebesgue measure on $\mathbb{R}^{s}$ as follows: for $\bu \in \mathbb{R}^{s}$,
$$\rho_{s,r}(\bu) := \frac{\tilde{\rho}_r(\|\bu\|_2)}{A_{s}\|\bu\|_2^{s-1}}, \quad \text{ where }  \;\tilde{\rho}_r(\tau) := 4r^{-1}\sin^2(2\pi r^{-1} \tau)\idf\{r/2 \le \tau\le r\},$$
where $A_{s}$ is the Lebesgue area of the $s$ dimensional unit sphere $\mS^{s-1}$.




\begin{definition} \label{def: omega_1}
Let $2 \leq s\leq  d$ and $r > 0$. Define $\omega_1(s,r)$ as the distribution of the random vector $\btheta \in \bR^{d}$  given by the following construction: $\btheta_j = 0$ for $j \in \{s+1,\ldots,d\}$, and $\btheta_{[s]}$ has the Lebesgue density $\rho_{s,r}$.
\end{definition}

\begin{lemma}\label{lemma: K known}
Let $2 \leq s\leq  d$ and $r > 0$. 
Suppose that Assumption \ref{assumption: bounded cov mat 2} holds, and that $\btheta$ has the prior distribution $\omega_1(s,r)$. Then, for some constant $C>0$, depending only on $r$, we have
\begin{align*}
    &\inf_{\hbtheta_n \in \bd{\Theta}_{n,d}}\; \Exp_{\btheta\sim \omega_1(s,r)}  \left(\left\|\hbtheta_n-\btheta \right\|_2^2 \right) \ge \frac{\sigma^2 s}{C(n+\sigma^2 s)},\\
    &\inf_{\hbtheta_n \in \bd{\Theta}_{n,d}}\; \Exp_{\btheta\sim \omega_1(s,r)}  \left(\left\|\hbtheta_n-\frac{\btheta}{\|\btheta\|_2} \right\|_2^2 \right) \ge \frac{\sigma^2 s}{C(n+\sigma^2 s)}.
\end{align*}
\end{lemma}
\begin{proof}
Under the prior distribution  $\omega_1(s,r)$, $\btheta$ is supported on $[s]$ with probability $1$, which implies the same property holds for the posterior means: $\Exp[\btheta|Y,Z]$ and 
$\Exp[\btheta/\|\btheta\|_2\;|Y,Z]$. Then, the results follow from Theorem 31 in \cite{song2022truncated}.
\end{proof}

Next, we define the second family of prior distributions for $\btheta \in \bR^{d}$, which is supported on finitely many $\bR^{d}$ vectors. We begin with a lemma.

\begin{lemma} \label{lemma: packing set}
Let $3 \leq s \leq d$ and $r,\delta > 0$. There exists $M \in \bN$ and vectors $\balpha_1,\cdots, \balpha_M\in \mR^d$ such that for any indexes $i\neq j \in [M]$, we have 
\begin{equation}\label{eq: sparse V-G}
\begin{aligned}
   \left\|\balpha_i\right\|_0 = s, \quad \|\balpha_i\|_2^2 = r^2+2\delta^2, \quad \delta^2 \le \|\balpha_i-\balpha_j\|_2^2\le 8\delta^2,
    \quad \log M\ge \mathcal{L}_{d,s}.
\end{aligned}
\end{equation}
where $\mathcal{L}_{d,s} := \frac{s-1}{2}\log\frac{d-s}{(s-1)/2}$.
\end{lemma}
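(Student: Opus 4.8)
The plan is to build each $\balpha_i$ from a common template: a fixed entry of size $r$ on one designated coordinate, carrying the $r^2$ part of the squared norm, together with $s-1$ equal small entries on a varying support, carrying the remaining $2\delta^2$. The separation condition will then reduce to a purely combinatorial packing requirement on the supports, which I would settle with a sparse Varshamov–Gilbert type bound.

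Concretely, for each $(s-1)$-subset $T \subseteq \{2,\dots,d\}$ define $\balpha^{(T)} \in \bR^d$ by $\balpha^{(T)}_1 := r$, $\balpha^{(T)}_j := \delta\sqrt{2/(s-1)}$ for $j\in T$, and $\balpha^{(T)}_j := 0$ otherwise. Since $r,\delta>0$ and $s\ge 3$, one checks at once that $\|\balpha^{(T)}\|_0 = 1+(s-1) = s$ and $\|\balpha^{(T)}\|_2^2 = r^2 + (s-1)\cdot\tfrac{2\delta^2}{s-1} = r^2+2\delta^2$, so the first two requirements in \eqref{eq: sparse V-G} hold for \emph{any} choice of supports. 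For distinct $T,T'$ of this form the first coordinate and every coordinate in $T\cap T'$ cancel in the difference, so
$$
\|\balpha^{(T)}-\balpha^{(T')}\|_2^2 = \frac{2\delta^2}{s-1}\,|T\triangle T'| .
$$
Because $|T\triangle T'|\le |T|+|T'| = 2(s-1)$ always holds, the upper bound $\|\balpha^{(T)}-\balpha^{(T')}\|_2^2 \le 4\delta^2 \le 8\delta^2$ is automatic, and the remaining lower bound $\ge \delta^2$ is \emph{equivalent} to $|T\triangle T'|\ge (s-1)/2$.

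It therefore remains to exhibit a family $T_1,\dots,T_M$ of $(s-1)$-element subsets of the $(d-1)$-element set $\{2,\dots,d\}$ with $|T_i\triangle T_j|\ge (s-1)/2$ for $i\ne j$ and $\log M \ge \frac{s-1}{2}\log\frac{d-s}{(s-1)/2}$; then $\balpha_i := \balpha^{(T_i)}$ finishes the proof, the degenerate range $d-s < (s-1)/2$ being handled trivially by taking $M=1$. Identifying such subsets with weight-$(s-1)$ vectors in $\{0,1\}^{d-1}$, I would take a \emph{maximal} collection with pairwise symmetric difference at least $(s-1)/2$: by maximality the Hamming balls of radius $(s-1)/2$ about the chosen subsets cover all $\binom{d-1}{s-1}$ weight-$(s-1)$ vectors, so $M$ is at least $\binom{d-1}{s-1}$ divided by the number $B$ of weight-$(s-1)$ vectors within Hamming distance $(s-1)/2$ of a fixed one. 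Writing such a vector as keeping $j\ge 3(s-1)/4$ of the $s-1$ ``on'' coordinates and taking $s-1-j$ new ones from the $d-s$ available coordinates gives $B = \sum_{i\le (s-1)/4}\binom{s-1}{i}\binom{d-s}{i}$, and bounding this from above together with $\binom{d-1}{s-1}\ge\big(\tfrac{d-1}{s-1}\big)^{s-1}$ yields the claimed bound on $\log M$ after routine simplification.

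The main obstacle is precisely this last estimate: a crude greedy argument delivers $\log M$ of the correct order $\Theta\!\big(s\log(d/s)\big)$ but with a loose constant, whereas the statement pins the exponent down to exactly $\frac{s-1}{2}\log\frac{d-s}{(s-1)/2}$. Getting the constant right requires a careful bound on the ball size $B$ — splitting according to how many ``on'' coordinates are retained and estimating the products of binomials sharply (using $\binom{n}{k}\ge (n/k)^k$ from below and $\binom{n}{k}\le (en/k)^k$ from above) — or, alternatively, directly invoking the packing set underlying Theorem~1(b) of \cite{raskutti2011minimax}, which already provides sparse binary vectors with exactly these parameters.
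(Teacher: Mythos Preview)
Your approach is essentially the same as the paper's: both build each $\balpha_i$ as a fixed first coordinate of size $r$ concatenated with a rescaled $(s-1)$-sparse vector in $\mR^{d-1}$, and both defer the packing bound $\log M\ge \mathcal{L}_{d,s}$ to the sparse Varshamov--Gilbert construction in \cite{raskutti2011minimax} (their Lemma~5). The only cosmetic difference is that the paper uses the $\{-1,0,1\}^{d-1}$ vectors from that lemma directly---Hamming distance rather than symmetric difference of supports---which is why the upper bound in the statement is $8\delta^2$ rather than the $4\delta^2$ you obtain with $\{0,1\}$ indicators; either way the lemma holds. One small caution: the cited lemma is stated for signed ternary vectors, so if you stay with $\{0,1\}$ supports you cannot literally quote it (two ternary vectors at large Hamming distance can share a support); it is cleanest to just adopt the $\{-1,0,1\}$ vectors verbatim, as the paper does, and accept the looser $8\delta^2$.
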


\begin{proof} By
Lemma 5 in \cite{raskutti2011minimax},  there exists 
$M \in \bN$ and  $\tilde{\balpha}_1,\cdots, \tilde{\balpha}_M \in \{-1,0,1\}^{d-1}$ such that for any indexes $i\neq j \in [M]$ it satisfies
$$\left\|\tilde{\balpha}_i\right\|_0 = s-1, \quad \rho(\tilde{\balpha}_i, \tilde{\balpha}_j)\ge \frac{s-1}{2}, \quad \log M\ge \frac{s-1}{2}\log\frac{d-s}{(s-1)/2} = \mathcal{L}_{d,s},$$
where $\rho(\bu,\bv) = \sum_{k=1}^d\idf\{u_k\neq v_k\}$ for $\bu,\bv\in\mR^d$ is the Hamming distance. In particular, for $i\neq j \in [M]$, we have
\begin{align*}
\|\tilde{\balpha}_i\|_2 = \sqrt{s-1},\quad \text{ and } \quad
\sqrt{\frac{s-1}{2}}  \le \|\tilde{\balpha}_i-\tilde{\balpha}_j\|_2\le   2\sqrt{s-1}.
\end{align*}

Finally, for each $i\in[M]$, define the following vector:
$$\balpha_i = (r,\sqrt{\frac{2}{s-1}}\delta\tilde{\balpha}_i')' \in \mR^d.$$
It is elementary to see that $\balpha_1,\cdots, \balpha_M$ satisfy the desired property. The proof is complete.
\end{proof}

\begin{definition}\label{def: omega_2}
Let $3 \leq s \leq d$ and $r,\delta > 0$.  Let $\mathcal{A}(s,r,\delta):=\{\balpha_1,\cdots, \balpha_M\} \subseteq \bR^{d}$ be any set of vectors such that \eqref{eq: sparse V-G} holds. Define $\omega_2(s,r,\delta)$ as the discrete uniform distribution over the set $\mathcal{A}(s,r,\delta)$.
\end{definition}

\begin{lemma}\label{lemma: k unknown}
Let $3 \leq s \leq (d+2)/3$, $r > 0$, and $N \geq n$. Suppose that Assumption \ref{assumption: bounded cov mat 2} holds, and that $\btheta$ has the prior distribution $\omega_2(s,r,\delta)$, where
\begin{align}\label{def:w2_delta}
    \delta^2 := \frac{\sigma^2}{16LN} \mathcal{L}_{d,s},
\end{align}
where $L$ appears in Assumption \ref{assumption: bounded cov mat 2} and $\mathcal{L}_{d,s}$ in Lemma \ref{lemma: packing set}.  Then there exist  constants $C > 0$, depending only on $L$, such that
\begin{align*}
    &\inf_{\hbtheta_n \in \bd{\Theta}_{n,d}}\; \Exp_{\btheta\sim \omega_2} \left(\left\|\hbtheta_n-\btheta \right\|_2^2\right) \ge \frac{\sigma^2}{CN} s\log \frac{d}{s}, \\
  &\inf_{\hbtheta_n\in \bd{\Theta}_{n,d}}\; \Exp_{\btheta\sim \omega_2} \left(\left\|\hbtheta_n-\frac{\btheta}{\|\btheta\|_2} \right\|_2^2 \right)  \ge \frac{1}{r^2 + 2\delta^2} \frac{\sigma^2}{CN} s\log \frac{d}{s}.  
\end{align*}
\end{lemma}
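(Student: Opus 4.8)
The plan is to invoke the standard reduction from estimation to multiple hypothesis testing and then apply Fano's inequality in the form of \cite[Theorem 4.10]{rigollet2023highdimensionalstatistics}. Write $\mathcal{A}(s,r,\delta) = \{\balpha_1,\ldots,\balpha_M\}$ for the packing set underlying the prior $\omega_2(s,r,\delta)$, and recall from Lemma \ref{lemma: packing set} that $\log M \ge \mathcal{L}_{d,s}$, that $\|\balpha_i\|_2^2 = r^2 + 2\delta^2$ for every $i$, and that $\delta^2 \le \|\balpha_i - \balpha_j\|_2^2 \le 8\delta^2$ whenever $i \neq j$. First I would fix an arbitrary estimator $\hbtheta_n \in \bd{\Theta}_{n,d}$ and form the induced test $\hat{\psi} := \argmin_{j\in[M]}\|\hbtheta_n - \balpha_j\|_2$. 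Because the $\balpha_i$'s are $\delta$-separated in $\ell_2$, the event $\{\|\hbtheta_n - \balpha_i\|_2 < \delta/2\}$ forces $\hat{\psi}=i$, so $\idf\{\hat{\psi}\neq i\}\le \idf\{\|\hbtheta_n - \balpha_i\|_2\ge \delta/2\}\le 4\delta^{-2}\|\hbtheta_n-\balpha_i\|_2^2$; averaging over $i$ drawn uniformly gives
\[
\Exp_{\btheta\sim\omega_2}\bigl[\|\hbtheta_n-\btheta\|_2^2\bigr] \;\ge\; \frac{\delta^2}{4}\,\Pro(\hat{\psi}\neq\btheta).
\]

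The next step is to control the mutual information $I(\btheta;(\bY,\bZ))$. Since $\bZ$ is independent of $\btheta$, $I(\btheta;(\bY,\bZ)) = I(\btheta;\bY\mid\bZ)$, and conditionally on $\bZ$ and $\btheta=\balpha_i$ the response is $\bY\sim\mathcal{N}(\bZ\balpha_i,\sigma^2\bI_n)$. Using the convexity bound $I(\btheta;\bY\mid\bZ=z)\le M^{-2}\sum_{i,j}\mathrm{KL}\bigl(\mathcal{N}(z\balpha_i,\sigma^2\bI_n)\,\|\,\mathcal{N}(z\balpha_j,\sigma^2\bI_n)\bigr) = M^{-2}\sum_{i,j}\|z(\balpha_i-\balpha_j)\|_2^2/(2\sigma^2)$, then taking expectation over $\bZ$ (whose rows are i.i.d.\ with covariance $\bSigma$, so $\Exp[\bZ'\bZ]=n\bSigma$) and using $\lmax{\bSigma}\le L$ together with $\|\balpha_i-\balpha_j\|_2^2\le 8\delta^2$, I obtain $I(\btheta;(\bY,\bZ))\le 4nL\delta^2/\sigma^2$. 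Substituting the prescribed value $\delta^2 = \sigma^2\mathcal{L}_{d,s}/(16LN)$ and using $n\le N$ gives $I(\btheta;(\bY,\bZ))\le \mathcal{L}_{d,s}/4 \le (\log M)/4$, so Fano's inequality yields $\Pro(\hat{\psi}\neq\btheta)\ge 1 - (I+\log 2)/\log M \ge 3/4 - (\log 2)/\mathcal{L}_{d,s}$.

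To conclude I would use the hypothesis $3\le s\le (d+2)/3$, i.e.\ $d\ge 3s-2\ge 2s$, which gives $d-s\ge 2(s-1)$ and $d-s\ge d/2$, and hence both $\mathcal{L}_{d,s} = \tfrac{s-1}{2}\log\tfrac{2(d-s)}{s-1}\ge (s-1)\log 2\ge 2\log 2$ (so that $(\log 2)/\mathcal{L}_{d,s}\le 1/2$ and therefore $\Pro(\hat{\psi}\neq\btheta)\ge 1/4$) and $\mathcal{L}_{d,s}\ge \tfrac{s-1}{2}\log\tfrac{d}{s}\ge \tfrac{s}{3}\log\tfrac{d}{s}$. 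Combining the three displays gives $\inf_{\hbtheta_n}\Exp_{\btheta\sim\omega_2}[\|\hbtheta_n-\btheta\|_2^2]\ge \delta^2/16 \ge \sigma^2 s\log(d/s)/(768LN)$, which is the first claim with $C=768L$. For the second claim I would exploit that $\|\balpha_i\|_2 = \sqrt{r^2+2\delta^2}$ is common to all $i$, so that $\|\balpha_i/\|\balpha_i\|_2 - \balpha_j/\|\balpha_j\|_2\|_2 = \|\balpha_i-\balpha_j\|_2/\sqrt{r^2+2\delta^2}\ge \delta/\sqrt{r^2+2\delta^2}$; rerunning the same reduction with this separation, while keeping the identical mutual-information bound (which does not see the functional being estimated), replaces $\delta^2$ by $\delta^2/(r^2+2\delta^2)$ in the final estimate and gives the stated bound. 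The only genuinely delicate point is the mutual-information computation with a \emph{random} design: one must condition on $\bZ$ before applying the Gaussian KL formula and only then average, using $\Exp[\bZ'\bZ]=n\bSigma$ and $\lmax{\bSigma}\le L$; everything else is bookkeeping of absolute constants and the elementary inequalities relating $\mathcal{L}_{d,s}$ to $s\log(d/s)$ under $s\le(d+2)/3$.
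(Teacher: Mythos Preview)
Your proposal is correct and follows essentially the same route as the paper: reduce estimation to testing via the $\argmin$ rule over the packing set, apply Fano's inequality with the average pairwise KL bound (the paper writes $M^{-2}\sum_{i,j}D(\Pro_{\balpha_i}\|\Pro_{\balpha_j})$ directly rather than passing through $I(\btheta;(\bY,\bZ))$, but the two are identical after your convexity step), compute the Gaussian KL by conditioning on $\bZ$ and averaging, substitute the prescribed $\delta^2$, and use $3\le s\le (d+2)/3$ to convert $\mathcal{L}_{d,s}$ into $s\log(d/s)$ up to constants. The only cosmetic differences are in the arithmetic for the constants (the paper uses $\tfrac{2d}{3s}\ge (d/s)^{1/3}$ where you use $\mathcal{L}_{d,s}\ge \tfrac{s}{3}\log\tfrac{d}{s}$), and your treatment of the second claim via the common norm $\sqrt{r^2+2\delta^2}$ matches the paper exactly.
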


\begin{proof} Denote by $\cF_{M}$ the collection of all measurable function from $\bR^{d}\times\ \bR^{d\times n}$ to $[M]$, where $M$ is the size of the support of the prior distribution  $\omega_2(s,r,\delta)$.

\medskip

\noindent  
\underline{Step 1: reduction to a hypothesis testing problem.} Let $\hbtheta_n$ be any estimator of $\btheta$. Define the following procedure $\hat{\psi} \in \cF_M$:
$$
\hat{\psi}(\bY,\bZ) := \argmin_{j\in [M]}\|\hbtheta_n-\balpha_j\|_2,
$$
with ties resolved in a fixed, deterministic manner. For $j \in [M]$, on the event $\{\hat{\psi}(\bY,\bZ) \neq j\}$, by definition, there exits some $j' \neq j \in [M]$ such that
\begin{align*}
   \|\hbtheta_n -\balpha_j \|_2 \geq  
      \|\hbtheta_n -\balpha_{j'} \|_2,
\end{align*}
which, by the triangle inequality and \eqref{eq: sparse V-G}, implies that
\begin{align*}
     \|\hbtheta_n -\balpha_j \|_2 \geq  
        \|\balpha_{j} -\balpha_{j'} \|_2
-\|\hbtheta_n -\balpha_j \|_2 \geq \delta -\|\hbtheta_n -\balpha_j \|_2.
\end{align*}
Thus, on the event $\{\hat{\psi}(\bY,\bZ) \neq j\}$, we have  $\|\hbtheta_n -\balpha_j \|_2^2 \geq \delta^2/4$. Therefore,
\begin{equation}\label{eq: aux1}
\begin{aligned}
    \Exp_{\btheta\sim \omega_2} (\|\hbtheta_n -\btheta \|_2^2 ) &\ge \frac{\delta^2}{4} \inf_{\psi \in \cF_M} \frac{1}{M}\sum_{i=1}^M \Pro_{\balpha_i}(\psi(\bY,\bZ) \neq i),
\end{aligned}
\end{equation}
where $\Pro_{\balpha_i}$ denotes the conditional distribution of $(\bY,\bZ)$ on the event $\{\btheta = \balpha_i\}$.

By the same argument, we also have
\begin{equation}\label{eq:aux2}
\begin{aligned}
\Exp_{\btheta\sim \omega_2} \left(\left\|\hbtheta_n -\frac{\btheta}{\|\btheta\|_2} \right\|_2^2 \right) &\ge \frac{\delta^2}{4(r^2+2\delta^2)} \inf_{\psi \in \cF_M} \frac{1}{M}\sum_{i=1}^M \Pro_{\balpha_i}(\psi(\bY,\bZ) \neq i).
\end{aligned}
\end{equation}


\medskip

\noindent  
\underline{Step 2: lower bound the average testing error.}  
By Fano's inequality (see \cite[Theorem 4.10]{rigollet2023highdimensionalstatistics}), we have
$$\inf_{\psi \in \cF_M} \frac{1}{M}\sum_{i=1}^M \Pro_{\balpha_i}(\psi(\bY,\bZ) \neq i) \ge 1-\frac{\frac{1}{M^2}\sum_{i,j\in[M]}D(\Pro_{\balpha_i}|| \Pro_{\balpha_j})+\log2}{\log M},$$
where $D(P||Q)=\int \log(dP/dQ) dP$ is the Kullback-Leibler (KL) divergence between two distributions $P$ and $Q$. For $i \neq j \in [M]$, since $\epsilon_1,\ldots, \epsilon_n$ are i.i.d.~distributed as $\mathcal{N}(0,\sigma^2)$,
\begin{align*}
\log\frac{d \Pro_{\balpha_i}}{d\Pro_{\balpha_j}}(\bY,\bZ) = \frac{1}{2\sigma^2} \left[2\bY'\bZ(\balpha_i - \balpha_j) - \|\bZ \balpha_i\|_2^2 + \|\bZ \balpha_j\|_2^2 
\right],
\end{align*}
which implies that $D(\Pro_{\balpha_i}||\Pro_{\balpha_j}) = (2\sigma^2)^{-1}\Exp_{\balpha_i}\left[\|\bZ(\balpha_i-\balpha_j)\|_2^2\right]$. Then, due to Assumption \ref{assumption: bounded cov mat 2} and \eqref{eq: sparse V-G}, we have
\begin{align*}
  \frac{1}{M^2}\sum_{i,j\in[M]}D(\Pro_{\balpha_i}||\Pro_{\balpha_j}) \leq   \frac{1}{M^2}\sum_{i,j\in[M]} \frac{1}{2\sigma^2}8\delta^2 n L = \frac{4 \delta^2 n L}{\sigma^2}.
\end{align*}
Then due to the definition of $\delta^2$ in \eqref{def:w2_delta} and since $N > n$ and $\log M \geq \mathcal{L}_{d,s}$ (see \eqref{eq: sparse V-G}), 
$$
\frac{1}{M^2}\sum_{i,j\in[M]}D(\Pro_{\balpha_i}||\Pro_{\balpha_j}) \leq \frac{1}{4} \mathcal{L}_{d,s} \leq \frac{1}{4}\log M.
$$
Since $3 \leq s \leq (d+2)/3$, we have $\log(M) \geq \mathcal{L}_{d,s} \geq 2\log(2)$, and thus
$$
\inf_{\psi \in \cF_M} \frac{1}{M}\sum_{i=1}^M \Pro_{\balpha_i}(\psi(\bY,\bZ) \neq i) \ge 1 - \frac{1}{4} - \frac{1}{2} = \frac{1}{4}.
$$
Since $3 \leq s \leq (d+2)/3$, we further have
$$
\delta^2 \geq  \frac{\sigma^2}{16 LN} \frac{s}{4} \log\left(\frac{2 d}{ 3 s}\right), \quad \text{ and } \quad \frac{2d}{3s} \geq \left(\frac{d}{s}\right)^{1/3}.
$$
Then the proof is complete in view of \eqref{eq: aux1} and \eqref{eq:aux2}.
\end{proof}

\section{Proofs for the Sequential Estimation Problem}\label{sec: bounds on sequential estimation}
\subsection{Proof of Theorem \ref{thm: seq multi linear, upper bound}} \label{sec: proof of seq multi linear, upper bound}
We begin with a Lemma that provides a high probability upper bound on the $\ell_{\infty}$ estimation error of Lasso estimators. 

\begin{lemma}
\label{lemma: improved_ell_infty}
Suppose that Assumption \ref{assumption: sequential estimation} holds, and that $d \ge (2L_0+1)s_0 + 2$. There exist constants $C_0, \kappa_1>0$ depending only on $L_0$ and a constant $C >0$ depending only on $C_0, L_0$, such that if we set the regularization parameter $\lambda_t$  as follows:
$$
\lambda_t = C_0 \sigma \sqrt{\frac{\log(dt)}{t}},
$$
then for $t\ge \kappa_1s_0\log d$, with probability at least $1 - {C}de^{-t/{C}} - {C}/t$, we have
$$\|\ltheta_t -\btheta \|_{\infty} \le C\lambda_t.$$
\end{lemma}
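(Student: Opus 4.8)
The plan is to deduce Lemma~\ref{lemma: improved_ell_infty} from the sharp $\ell_\infty$ bound for the Lasso of \cite{bellec2022biasing} (their Theorem~5.1), after checking that the hypotheses of that result hold on an event of probability at least $1 - Cde^{-t/C} - C/t$. We are in the fixed-sample model $\bY_{[t]} = \bX_{[t]}\btheta + \bepsilon_{[t]}$ with $n = t$, the rows of $\bX_{[t]}$ i.i.d.\ $\mathcal{N}(\mathbf{0}_d,\bSigma)$ and the entries of $\bepsilon_{[t]}$ i.i.d.\ $\mathcal{N}(0,\sigma^2)$ and independent of $\bX_{[t]}$; write $\hbSigma_t := t^{-1}\bX_{[t]}'\bX_{[t]}$. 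The deterministic ingredients needed are: (i) a sample sparse Riesz condition, namely that the eigenvalues of $(\hbSigma_t)_{A,A}$ lie in a fixed interval $[c_1,c_2]$ (constants depending only on $L_0$) for every $A\subseteq[d]$ with $|A\setminus S|\le L_0 s_0+1$ --- the empirical analogue of the population statement guaranteed by Assumptions~\ref{assumption: zhang}--\ref{assumption: bounded cov mat 1} (where, since the diagonal entries of $\bSigma$ are at most $1$, the condition-number bound there in fact forces constant upper and lower eigenvalue bounds); (ii) domination of the noise, $\|\bX_{[t]}'\bepsilon_{[t]}/t\|_\infty \le \lambda_t/2$; together with the deterministic bound $\op{\bSigma^{-1}}_\infty \le L_0$ from Assumption~\ref{assumption: constraint on the covariance matrix}. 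On an event where (i) and (ii) hold, Theorem~5.1 of \cite{bellec2022biasing} delivers $\|\ltheta_t-\btheta\|_\infty \le C\lambda_t$ for a constant $C$ depending only on $L_0$ and $C_0$.

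To establish (ii): conditionally on $\bX_{[t]}$, the $j$-th coordinate of $\bX_{[t]}'\bepsilon_{[t]}/t$ is $\mathcal{N}(0,\sigma^2(\hbSigma_t)_{j,j}/t)$; on the event $\{\max_j(\hbSigma_t)_{j,j}\le 2\}$ --- which, using $\bSigma_{j,j}\le 1$ (Assumption~\ref{assumption: bounded variance}) together with a Bernstein bound for $t^{-1}\sum_s X_{s,j}^2$ and a union bound over $j\in[d]$, fails with probability at most $de^{-ct}$ --- a Gaussian maximal inequality and a union bound over $[d]$ give $\|\bX_{[t]}'\bepsilon_{[t]}/t\|_\infty \le C_0\sigma\sqrt{\log(dt)/t}/2 = \lambda_t/2$ outside an event of probability at most $2d(dt)^{-C_0^2/16}\le 2/t$, once $C_0$ is chosen large enough (in terms of $L_0$) that $C_0^2/16\ge 1$. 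To establish (i): Assumptions~\ref{assumption: zhang}--\ref{assumption: bounded cov mat 1} pin the eigenvalues of $\bSigma_{A,A}$ to a constant range for all $A$ with $|A\setminus S|\le L_0 s_0+1$, and for such $A$ one has $|A|\le (L_0+1)s_0+1$; standard concentration for extreme eigenvalues of Gaussian Gram submatrices (see, e.g., \cite{wainwright2019high}) gives $\|(\hbSigma_t)_{A,A}-\bSigma_{A,A}\|_{\mathrm{op}}\le\varepsilon$ for a prescribed small constant $\varepsilon$ with probability at least $1-2e^{-ct}$ for each such fixed $A$; since the number of these subsets is at most $2^{s_0}d^{L_0 s_0+2}$, a union bound shows that whenever $t\ge\kappa_1 s_0\log d$ with $\kappa_1$ chosen large enough depending on $L_0$, all the sample submatrices are well-conditioned outside an event of probability at most $d^{O(s_0)}e^{-ct}\le de^{-t/C}$.

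Intersecting the events from (i) and (ii), applying Theorem~5.1 of \cite{bellec2022biasing}, and taking a final union bound over the three failure modes yields $\|\ltheta_t-\btheta\|_\infty\le C\lambda_t$ with probability at least $1-Cde^{-t/C}-C/t$, as claimed. The main obstacle is step (i): the classical path to an $\ell_\infty$ bound --- e.g.\ the third estimate in Lemma~\ref{lemma: bounds of lasso}, following \cite{van2016estimation} --- pays a factor $\op{\hbSigma_t-\bSigma}_{\max}\,s_0 \asymp s_0\sqrt{\log d/t}$, which is $O(1)$ only when $t=\Omega(s_0^2\log d)$ and hence cannot reach the threshold $t=\Omega(s_0\log d)$ asserted here; removing this $s_0$ overhead is exactly the contribution of \cite{bellec2022biasing}, so the real content of the proof is to confirm that the random-design, random-noise hypotheses of that theorem --- above all the sample sparse Riesz condition --- already transfer from their population forms at sample size $\kappa_1 s_0\log d$, which is precisely where the union bound over the $d^{O(s_0)}$ candidate subsets has to be balanced against the exponential concentration $e^{-ct}$ one gets from that many Gaussian observations.
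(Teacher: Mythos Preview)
Your overall plan---reduce to Theorem~5.1 of \cite{bellec2022biasing}---is exactly what the paper does, but you misread the structure of that theorem and thereby add an unnecessary (and somewhat misconceived) layer. The assumptions of Theorem~5.1 there (their Assumption~3.1, conditions (3.5)--(3.8)) are stated at the \emph{population} level---they involve $\bSigma$, not $\hbSigma_t$---together with purely numerical constraints on $t,d,s_0$; the theorem is itself a random-design result for i.i.d.\ Gaussian rows, and its conclusion is already a high-probability statement. Accordingly, the paper's proof just plugs in concrete parameter values ($k=1$, $m=L_0s_0$, $\eta_2=\epsilon_1=\epsilon_2=0.05$, $\epsilon_3=\epsilon_4=\epsilon_2^2/16$), checks that under Assumption~\ref{assumption: sequential estimation} and $d\ge(2L_0+1)s_0+2$ those population conditions are met once $t\ge\kappa_1 s_0\log d$, and invokes the theorem. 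The only modification is to replace $L_k=\sqrt{2\log(d/k)}$ by $L_k=\sqrt{2(1+4L_0^2)\log(dt)}$ in their Proposition~A.6, so that the noise events $\Omega_{\mathrm{noise}}^{(1)}\cap\Omega_{\mathrm{noise}}^{(2)}$ hold with probability at least $1-t^{-1}$; this is the source of the $C/t$ term, while the $Cde^{-t/C}$ term comes from design events already handled inside the theorem.

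Your step (i)---a separate union bound over subsets $A$ with $|A\setminus S|\le L_0s_0+1$ to obtain a sample sparse Riesz condition---is therefore not something the user needs to supply. More importantly, the sentence ``on an event where (i) and (ii) hold, Theorem~5.1 delivers the bound'' treats the theorem as if its deterministic core were: sample sparse Riesz $+$ noise domination $+$ $\op{\bSigma^{-1}}_\infty\le L_0$ $\Rightarrow$ $\ell_\infty$ bound. Those three ingredients are precisely what feed the \emph{classical} route (Lemma~\ref{lemma: bounds of lasso}, third inequality, following \cite{van2016estimation}), which as you yourself note reaches only $t=\Omega(s_0^2\log d)$. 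The improvement in \cite{bellec2022biasing} rests on a different decomposition of the KKT conditions and on design events that are not the ones you isolate; so while the arithmetic of your union bound is internally consistent, it neither matches the actual events in the cited proof nor, on its own, would suffice for the claimed $\ell_\infty$ control at $t=\Omega(s_0\log d)$.
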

\begin{proof}
This result follows essentially from Theorem 5.1 in \cite{bellec2022biasing}, with a minor modification that we outline below. Specifically, let 
  \begin{align*}
    &k=1, \quad m=L_0 s_0, \quad \rho_* = L_0^{-1}, \quad \eta_2=0.05, \quad  \epsilon_1=\epsilon_2=0.05, \quad \epsilon_3=\epsilon_4=\epsilon_2^2/16. \end{align*}
Then, if Assumption \ref{assumption: sequential estimation} holds and $d \ge (2L_0+1)s_0 + 2$, there exists a large enough constant $\kappa_1 > 0$ depending only on $L_0$ such that for all $t \geq \kappa_1 s_0 \log(d)$,
\begin{enumerate}[label=\roman*)]
    \item $\bSigma$ is invertible, and for $j \in [d]$, $\bSigma_{j,j} \leq 1$;
    \item with $\tau_* = (1-\epsilon_1-\epsilon_2)^2, \tau^* = (1+\epsilon_1+\epsilon_2)^2$, we have 
    $$
    s_0+k <  \frac{(1-\eta_2)^2 2m}{(1+\eta_2)^2 \left((\tau^*/\tau_*) \phi_{\text{cond}}(m+k,S,\bSigma)-1\right)},
    $$
    where $\phi_{\text{cond}}(m+k,S,\bSigma) := \max_{A\subseteq [d]: |A\backslash S|\le 1\vee (L_0s_0+1)} \frac{\lmax{\bSigma_{A,A}}}{\lmin{\bSigma_{A,A}}}$;

    \item $\rho_* \leq \min_{A\subseteq[d]: |A\backslash S|=L_0s_0+1} \lmin{\bSigma_{A,A}}$ and $\lambda_0\sqrt{s_*}\le 1$, where
    $$\lambda_0 = \sqrt{2(1+4L_0^2)\frac{\log (8dt)}{t}}, \quad \text{ and } \quad s_* = s_0+m+k;
    $$

\item finally,
\begin{align*}
    &2(m+k)+s_0+1\le (t-1)\wedge (d+1), \quad
    \epsilon_1+\epsilon_2<1, \quad\epsilon_3+\epsilon_4=\epsilon_2^2/8, \\
    & s_0+m+k+1\le (d+1)\wedge (\epsilon_1^2 t/2), \quad \log\binom{d-s_0}{m+k}\le \epsilon_3 t.
\end{align*}
\end{enumerate}
That is, the conditions (3.5)-(3.8) in Assumption 3.1 of \cite{bellec2022biasing} hold with a modified $\lambda_0$ defined above. 

Then, following the comment after Proposition A.6 of \cite{bellec2022biasing}, if we replace $L_k = \sqrt{2\log (d/k)}$ by $L_k = \sqrt{2(1+4L_0^2) \log (dt)}$ in Proposition A.6 of \cite{bellec2022biasing}, we have $\Pro\left(\Omega_{noise}^{(1)}\cap \Omega_{noise}^{(2)}\right) \geq 1- t^{-1}$. The remainder of the argument closely follows the proof of Theorem 5.1 in \cite{bellec2022biasing} and is therefore omitted.
\end{proof}

\begin{lemma} \label{lemma: lower eigenvalue whp}
Suppose that Assumption \ref{assumption: bounded cov mat 1} holds, and that the support $S$ of $\btheta$ is non-empty.  Let $C = (16L_0^2)^2$.
Then, for each $t \in [T]$, if $t \ge  C s_0$, then with probability at least $1-2e^{-t/(2C^2)}$, 
$$
\frac{L_0^{-1}}{2} \le \lmin{\frac{\bX_{[t],S}'\bX_{[t],S}}{t}} \le \lmax{\frac{\bX_{[t],S}'\bX_{[t],S}}{t}}\le \frac{3L_0}{2},
$$
where $\bX_{[t],S}$ is the submatrix of $\bX_{[t]}$ with columns indexed by $S$. 
\end{lemma}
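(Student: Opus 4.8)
The plan is to reduce the statement to a standard deviation bound for Gaussian sample covariance matrices. Since Assumption~\ref{assumption: bounded cov mat 1} gives $\lmin{\bSigma_{S,S}}\ge L_0^{-1}>0$, the matrix $\bSigma_{S,S}$ is positive definite, and I would write each row of $\bX_{[t],S}$ as $\bX_{s,S}=\bSigma_{S,S}^{1/2}\bd{g}_s$ for $s\in[t]$, where $\bd{g}_1,\dots,\bd{g}_t$ are i.i.d.\ $\mathcal{N}(\bd{0}_{s_0},\bI_{s_0})$ and $\bSigma_{S,S}^{1/2}$ is the (invertible) positive-definite square root. Setting $\bd{G}:=(\bd{g}_1,\dots,\bd{g}_t)'\in\mathbb{R}^{t\times s_0}$ and $\hbSigma_G:=t^{-1}\bd{G}'\bd{G}$, we get $t^{-1}\bX_{[t],S}'\bX_{[t],S}=\bSigma_{S,S}^{1/2}\hbSigma_G\bSigma_{S,S}^{1/2}$. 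For any $\bd{v}\neq\bd{0}_{s_0}$, writing $\bd{w}:=\bSigma_{S,S}^{1/2}\bd{v}$ one has $\bd{v}'\bSigma_{S,S}^{1/2}\hbSigma_G\bSigma_{S,S}^{1/2}\bd{v}=\bd{w}'\hbSigma_G\bd{w}$, and since $\bd{v}'\bSigma_{S,S}\bd{v}=\|\bd{w}\|_2^2$ is sandwiched between $L_0^{-1}\|\bd{v}\|_2^2$ and $L_0\|\bd{v}\|_2^2$ by Assumption~\ref{assumption: bounded cov mat 1}, bounding the Rayleigh quotient of $\hbSigma_G$ yields
\[
L_0^{-1}\,\lmin{\hbSigma_G}\;\le\;\lmin{t^{-1}\bX_{[t],S}'\bX_{[t],S}}\;\le\;\lmax{t^{-1}\bX_{[t],S}'\bX_{[t],S}}\;\le\;L_0\,\lmax{\hbSigma_G}.
\]
Hence it suffices to show $\lmin{\hbSigma_G}\ge 1/2$ and $\lmax{\hbSigma_G}\le 3/2$ on an event of probability at least $1-2e^{-t/(2C^2)}$.

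For this I would invoke the Gaussian singular-value deviation inequality (Davidson--Szarek; see e.g.\ Theorem~4.6.1 in Vershynin's book): for every $u>0$, with probability at least $1-2e^{-u^2/2}$,
\[
\sqrt{t}-\sqrt{s_0}-u\;\le\;\sigma_{\min}(\bd{G})\;\le\;\sigma_{\max}(\bd{G})\;\le\;\sqrt{t}+\sqrt{s_0}+u,
\]
which on that event gives $\lmax{\hbSigma_G}\le(1+\sqrt{s_0/t}+u/\sqrt{t})^2$ and, whenever the base is nonnegative, $\lmin{\hbSigma_G}\ge(1-\sqrt{s_0/t}-u/\sqrt{t})^2$. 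I would then take $u=\sqrt{t}/(16L_0^2)$; since $t\ge Cs_0=(16L_0^2)^2 s_0$ and $L_0\ge 1$ (forced by Assumption~\ref{assumption: bounded cov mat 1}), we have $\sqrt{s_0/t}\le 1/(16L_0^2)$ and $u/\sqrt{t}=1/(16L_0^2)\le 1/16$, so $\sqrt{s_0/t}+u/\sqrt{t}\le 1/8$. Consequently $\lmin{\hbSigma_G}\ge(7/8)^2>1/2$ and $\lmax{\hbSigma_G}\le(9/8)^2<3/2$ on this event, whose probability is at least $1-2e^{-u^2/2}=1-2e^{-t/(2C)}\ge 1-2e^{-t/(2C^2)}$ because $C\ge 1$. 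Combining with the display from the previous paragraph gives the stated two-sided eigenvalue bounds.

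If a self-contained argument is preferred over citing Davidson--Szarek, I would instead use an $\varepsilon$-net: for a fixed unit vector $\bd{v}\in\mathbb{R}^{s_0}$ one has $t\,\bd{v}'\hbSigma_G\bd{v}=\sum_{s=1}^t(\bd{g}_s'\bd{v})^2\sim\chi^2_t$, which is controlled by the Laurent--Massart tail bounds; a $(1/4)$-net of the unit sphere $\mathbb{S}^{s_0-1}$ has cardinality at most $9^{s_0}$, and a union bound followed by the standard passage from the net to the whole sphere gives the same control, with the condition $t\ge Cs_0$ being exactly what absorbs the entropy term $s_0\log 9$. In either route the work is essentially bookkeeping; the only mildly delicate point---and the closest thing to an obstacle---is calibrating the deviation parameter $u$ (equivalently, the net radius and per-point tail level) so that the resulting constants match the stated threshold $C=(16L_0^2)^2$ and the stated probability $1-2e^{-t/(2C^2)}$. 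I do not anticipate a genuine difficulty here.
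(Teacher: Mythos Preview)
Your proposal is correct and is essentially the same approach as the paper's: both rest on the standard Gaussian singular-value concentration inequality. The paper cites Example~6.3 in \cite{wainwright2019high} to bound $\op{\bX_{[t],S}'\bX_{[t],S}/t-\bSigma_{S,S}}_{\textup{OP}}$ directly and then invokes the variational characterization of eigenvalues, whereas you whiten first and apply Davidson--Szarek to the isotropic matrix; the underlying concentration statement and the constants are the same.
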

\begin{proof} Let $\bZ := \bX_{[t],S}$. 
Note that $\Exp[\bZ'\bZ/ t] =  \bSigma_{S,S}$. By Example 6.3 in \cite{wainwright2019high}, with probability at least $1-2e^{-t\delta^2/2}$,
$$\op{\frac{\bZ'\bZ}{t}-\bSigma_{S,S}}_{\textup{OP}} \le \lmax{\bSigma_{S,S}} \left(2\sqrt{\frac{s_0}{t}}+2\delta+\left(\sqrt{\frac{s_0}{t}}+\delta\right)^2\right),$$
where $\op{\cdot}_{\textup{OP}}$ denotes the operator norm of a matrix. 

Let $C =  (16L_0^2)^2$ and $\delta = 1/C$ . Then, if $t \geq Cs_0$, due to Assumption \ref{assumption: bounded cov mat 1}, with probability at least $1-2e^{-t/(2C^2)}$,
$\op{{\bZ'\bZ}/{t}-\bSigma_{S,S}}_{\textup{OP}} \leq 1/(2L_0)$. Then the proof is complete due to Assumption \ref{assumption: bounded cov mat 1} and the variational characterization of eigenvalues.
\end{proof}

Finally, we prove Theorem \ref{thm: seq multi linear, upper bound}.

\begin{proof}[Proof of Theorem \ref{thm: seq multi linear, upper bound}]
For $t \in [T]$, denote by $\bX_{[t],S}$  the submatrix of $\bX_{[t]}$ with columns indexed by $S$. 
By Lemma \ref{lemma: improved_ell_infty} and Lemma  \ref{lemma: lower eigenvalue whp}, there exist constants $\kappa_1, C_0, C_0^{\textup{hard}}>0$ depending only on $L_0$, and a large enough constant $C>0$ depending only on $ L_0, C_0$, such that for all $t\ge \kappa_1 s_0\log d$, the events
\begin{align*}
    \mathcal{A}_{1,t} &:=\left\{\frac{L_0^{-1}}{2} \le \lmin{\frac{\bX_{[t],S}'\bX_{[t],S}}{t}} \le \lmax{\frac{\bX_{[t],S}'\bX_{[t],S}}{t}}\le \frac{3L_0}{2} \right\}, \\
    \mathcal{A}_{2,t} &:=\left\{\left\|\ltheta_t -\btheta \right\|_{\infty} \le \hdlambda_t \right\},
\end{align*}
satisfy the following:
$\Pro\left((\mathcal{A}_{1,t} \cap \mathcal{A}_{2,t})^c\right)\le Cde^{-t/C} + C/t$.

First, we fix some $t\ge \kappa_1 s_0\log d$ and bound the $\ell_2$ estimation error of $\ptheta_t$ on the event $\mathcal{A}_{1,t}\cap\mathcal{A}_{2,t}$. By Theorem \ref{thm: ols post l2 estimation error}, on the event $\mathcal{A}_{1,t}\cap\mathcal{A}_{2,t}$, there exists a constant $C>0$ depending only on $L_0$ such that 
\begin{equation*}
\Exp\left[\left\|\ptheta_t-\btheta \right\|_2^2\idf{\{\cA_{1,t}\cap \cA_{2,t}\}}\right] \le C\sum_{j\in S} \btheta_j^2 \idf\left\{|\btheta_j| \le 2\hdlambda_t  \right\} + C\Exp\left[\|\bX_{[t],S}'\bepsilon_{[t]}\|_2^2\right],
\end{equation*} 
where $\bepsilon_{[t]} = (\epsilon_1,\ldots,\epsilon_t)'$. 
Due to Assumption \ref{assumption: sequential estimation},  we have $\bSigma_{j,j} \leq 1$ for $j \in [d]$, and thus 
$$\Exp\left[\|\bX_{[t],S}'\bepsilon_{[t]}\|_2^2\right] = \text{trace}(\Exp[\bX_{[t],S}'\bepsilon_{[t]}\bepsilon_{[t]}'\bX_{[t],S}]) = \sigma^2 t \times \text{trace}(\bSigma_{S,S}) \leq \sigma^2 s_0 t.$$
Therefore, for $t\ge \kappa_1 s_0\log d$ we have
\begin{equation*} 
\begin{aligned}
&\Exp\left[\left\|\ptheta_t-\btheta \right\|_2^2 \wedge \xi \right]^2 \le \Exp\left[\left\|\ptheta_t-\btheta \right\|_2^2 \idf{\{\mathcal{A}_{1,t} \cap \mathcal{A}_{2,t}\}} \right] + \xi \Pro\left((\mathcal{A}_{1,t} \cap \mathcal{A}_{2,t})^c\right)
\\&\le C \sum_{j\in S} \btheta_j^2 \idf\left\{|\btheta_j| \le 2\hdlambda_t  \right\} + C\sigma^2 s_0/t + \xi (Cde^{-t/C} + C/t),
\end{aligned}
\end{equation*}
which completes the proof for the first claim.

Next, we focus on the cumulative estimation error. Let $\widetilde{C} > 0$ be a constant to be specified, and define $t_0 = \lceil(\widetilde{C}\vee\kappa_1) s_0\log d\rceil$.  For $1 \leq t \leq t_0$, we bound each estimation error by $\xi$:
\begin{equation*} 
    \sum_{t=1}^{t_0} \Exp\left[\|\ptheta_t-\btheta \|_2^2\wedge \xi\right]  \le \xi (\widetilde{C}\vee\kappa_1) s_0\log d.
\end{equation*}

For $t > t_0$, we use the first claim: for some constant $C > 0$ depending on $\kappa_1, L_0, C_0, C_0^{\textup{hard}}$, but not on $\widetilde{C}$, we have
\begin{equation*} 
\begin{aligned}
    \sum_{t=t_0+1}^T \Exp\left[\|\ptheta_t-\btheta \|_2^2 \wedge \xi\right] &\le C \sum_{t=1}^T \sum_{j\in S} \btheta_j^2 \idf\left\{|\btheta_j| \le \sqrt{\frac{C\sigma^2\log (dt)}{t}} \right\} + C\sigma^2 \sum_{t=2}^{T}s_0/t
    \\&\quad + \xi \sum_{t=t_0+1}^T Cde^{-t/C} + \xi \sum_{t=2}^T C/t. 
\end{aligned}
\end{equation*}
For the first term on the right-hand side, by Lemma \ref{lemma: instance sum} with $b = C \sigma^2\log(dT)$,
\begin{equation*}
    \sum_{t=1}^T \sum_{j\in S} \btheta_j^2 \idf\left\{|\btheta_j| \le \sqrt{\frac{C\sigma^2\log (dt)}{t}} \right\} \le C\sigma^2 s_0(\log d + \log T).
\end{equation*}
For the second and fourth terms, note that $\sum_{t=2}^T 1/t \le \int_1^T (1/x) dx\le \log T$. Finally, for the third term, if we choose $\widetilde{C} = C$, we have
$$
\sum_{t=t_0+1}^T de^{-t/C} \le \int_{C\log d}^{+\infty} e^{-(t-C\log d)/C} dt = C.
$$ 
Then the proof is complete.

\end{proof}

\subsection{Proof of Theorem \ref{thm: seq multi linear, lower bound}} \label{sec: proof of seq multi linear, lower bound}

Recall the two families of distributions on $\bR^{d}$: $\omega_1(\cdot)$ in Definition \ref{def: omega_1} and $\omega_2(\cdot)$ in Definition \ref{def: omega_2}.

\begin{proof}[Proof of Theorem \ref{thm: seq multi linear, lower bound}]
Let $\{\hbtheta_t, t\in [T]\}$ be any sequence of permissible estimators.

\medskip
\noindent \underline{Step 1. Lower bound the supremum risk by the average risk.}  Let $\omega$ be a distribution on $\bR^{d}$ defined as an equally weighted mixture of the following two distributions:
\begin{align*}
    \omega_1(s_0, \sqrt{\xi/16}) \quad \text{ and } \quad 
    \omega_2(s_0, \sqrt{\xi/32}, 
    \tilde{\delta}) \;\; \text{ with } 
    \tilde{\delta}^2 := \frac{\sigma^2}{16 L_0 T} \frac{s_0-1}{2} \log\left( \frac{d-s_0}{(s_0-1)/2}\right).
\end{align*}
Importantly, $\omega$ does not depend on $t$, and both $\omega_1$ and $\omega_2$ are supported on $\bTheta_d[s_0]$, where we omit the arguments of $\omega_1(\cdot)$ and $\omega_2(\cdot)$ for simplicity. Thus,
\begin{align*}
   &2 \sup_{\btheta\in \bTheta_d[s_0]} \Exp_{\btheta} \left(\sum_{t=1}^{T}  \|\hbtheta_t-\btheta\|_2^2\wedge \xi \right) \geq 
    2\Exp_{\btheta \sim \omega}\left( \sum_{t=1}^{T}   \|\hbtheta_t-\btheta\|_2^2\wedge \xi  \right) \\
    &\qquad =\sum_{t=1}^{T}      \Exp_{\btheta \sim \omega_1}\left(  \|\hbtheta_t-\btheta\|_2^2\wedge \xi  \right) + 
   \sum_{t=1}^{T} \Exp_{\btheta \sim \omega_2}\left(    \|{\hbtheta}_t-\btheta\|_2^2\wedge \xi  \right),
\end{align*}
where $\Exp_{\btheta \sim \omega}$ means that the parameter $\btheta$ is treated as a random vector with the prior distribution $\omega$.

\medskip
\noindent \underline{Step 2. Remove the impact of $\xi$.} Define the following estimators: for $t \in [T]$,
\begin{align*}
    \widetilde{\btheta}_t := \begin{cases}
        \hbtheta_t, & \text{ if } \|\hbtheta_t\|_2 \leq \sqrt{\xi/4}, \\
        0, & \text{ if } \|\hbtheta_t\|_2 > \sqrt{\xi/4}. 
    \end{cases}
\end{align*}
If $\|\btheta\|_2 \leq \sqrt{\xi/16}$, we have the following two cases:
\begin{enumerate}[label=\roman*)]
    \item If $\|\hbtheta_t\|_2 \leq \sqrt{\xi/4}$, then $\|\widetilde{\btheta}_t - \btheta\|_2 =   \|\hbtheta_t- \btheta\|_2$;
    \item If $\|\hbtheta_t\|_2 > \sqrt{\xi/4}$, then 
    $\|\widetilde{\btheta}_t - \btheta\|_2 \leq \sqrt{\xi/16} =    \sqrt{\xi/4} - \sqrt{\xi/16} \leq \|\hbtheta_t- \btheta\|_2$.
\end{enumerate}
That is, if $\|\btheta\|_2 \leq \sqrt{\xi/16}$, we always have $\|\widetilde{\btheta}_t - \btheta\|_2 \leq   \|\hbtheta_t- \btheta\|_2$.

By Definition \ref{def: omega_1}, under the prior $\omega_1(s_0, \sqrt{\xi/16})$, we have $\|\btheta\|_2 \leq \sqrt{\xi/16}$ with probability 1. By Definition \ref{def: omega_2}, under the prior $\omega_2(s_0, \sqrt{\xi/32}, \tilde{\delta})$, we have $\|\btheta\|_2 = \sqrt{\xi/32 + 2\tilde{\delta}^2}\le \sqrt{\xi/16}$ with probability 1, where the last inequality is because $\tilde{\delta}^2 \leq \xi/64$ since $T\ge 2\sigma^2 \xi^{-1}s_0\log d$ and $L_0 > 1$. As a result, we have
$$\Exp_{\btheta \sim \omega_\ell}\left(  \|\hbtheta_t-\btheta\|_2^2\wedge \xi  \right)
\geq \Exp_{\btheta \sim \omega_\ell}\left(  \|\widetilde{\btheta}_t-\btheta\|_2^2\wedge \xi  \right)
, \text{ for } t \in [T],\; \ell \in \{1,2\}.
$$
In addition, for $\|\btheta\|_2 \leq \sqrt{\xi/16}$, we also have $
\|\widetilde{\btheta}_t-\btheta\|_2^2 \leq 2 ( \xi/4 + \xi/16) \leq \xi$, which implies
$$\Exp_{\btheta \sim \omega_\ell}\left(  \|\hbtheta_t-\btheta\|_2^2\wedge \xi  \right)
\geq \Exp_{\btheta \sim \omega_\ell}\left(  \|\widetilde{\btheta}_t-\btheta\|_2^2  \right)
, \text{ for } t \in [T],\; \ell \in \{1,2\}.
$$

\medskip
\noindent \underline{Step 3. Lower bound the Bayes risks.} By Lemma \ref{lemma: K known} and Lemma \ref{lemma: k unknown},  for some constant $C > 0$, depending only on $\xi$ and $L_1$, we have
\begin{equation*} 
\begin{aligned}
\sum_{t=1}^{T}\sum_{\ell=1}^{2}   \Exp_{\btheta \sim \omega_\ell}\left(  \|\widetilde{\btheta}_t-\btheta\|_2^2  \right) 
\geq&  C^{-1} \sum_{t=1}^{T} \left(\frac{\sigma^2 s_0}{t+\sigma^2s_0} + \frac{\sigma^2}{T}s_0\log\frac{d}{s_0} \right) \\
\geq&  C^{-1} \sigma^2 s_0 \left(\log\left( \frac{T+\sigma^2 s_0}{2+\sigma^2s_0} \right)+ \log\frac{d}{s_0} \right).
\end{aligned}
\end{equation*}
The proof is complete.
\end{proof}

\subsection{Proof of Theorem \ref{thm: lower bound of lasso}} \label{sec: proof of thm: lower bound of lasso}
Let $t \in [T]$ and $j \in [d]$. Recall that $\bX_{[t],S}$ (resp.~$\bX_{[t],j}$) is the submatrix of $\bX_{[t]}$ with columns indexed by $S$ (resp.~$\{j\}$). Further, $\ltheta_t$ is the Lasso estimator at time $t$, and denote by $\hat{S}_t$ the support of $\ltheta_t$.

Define the following events:
\begin{equation}\label{def:lasso_lower_events}
\begin{aligned}
    &\mathcal{A}_{1,t}=\left\{\frac{L_0^{-1}}{2}\le \lmin{\frac{\bX_{[t],S}'\bX_{[t],S}}{t}} \le \lmax{\frac{\bX_{[t],S}'\bX_{[t],S}}{t}} \le \frac{3L_0}{2}  \right\},
    \\&
    \mathcal{A}_{2,t}=\left\{\hat{S}_{t} \subseteq S \right\},
    \\&
    \mathcal{A}_{3,t}=\left\{\|\ltheta_t - \btheta\|_{\infty} \leq \left( 
    \sqrt{L_0} + 2 L_0 s_0
    \right) \lambda_t\right\},
\end{aligned}
\end{equation}
where recall that $L_0$ and $L_2$ appears in Assumptions \ref{assumption: bounded cov mat 1} and \ref{assumption: multi linear cov matrix}.

\begin{lemma}\label{lemma:lasso_lower_events}
Suppose that Assumptions \ref{assumption: bounded variance},  \ref{assumption: bounded cov mat 1}, and  \ref{assumption: multi linear cov matrix} hold. Further, assume $d \geq 8$ and for some constant $C_0 > 0$, $\lambda_t$ satisfies the following:
\begin{align}
    \label{lasso:lower_lambda}
   1 \geq  \lambda_t \geq \frac{16\sigma}{1 - L_2^{-1}} \sqrt{\frac{2\log(d)}{t}}, \quad \text{ for } \; t \geq C_0 s_0^2 \log d,
\end{align}
Then there exists some constant $C_T \geq C_0$, that only depends on $L_0,L_2$, such that for all $t \geq C_T s_0^2 \log(d)$, we have $\Pro\left(\bigcap_{i=1}^{3} \cA_{i,t} \right) \geq 1/5$.
\end{lemma}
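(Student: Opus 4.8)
The plan is to handle $\mathcal{A}_{1,t}$ directly through Lemma~\ref{lemma: lower eigenvalue whp} (which, under Assumption~\ref{assumption: bounded cov mat 1}, gives $\Pro(\mathcal{A}_{1,t}^c)\le 2e^{-t/(2(16L_0^2)^4)}$ once $t\ge(16L_0^2)^2 s_0$, hence $o(1)$ on the range $t\ge C_T s_0^2\log d$), to obtain the screening property $\mathcal{A}_{2,t}=\{\hat S_t\subseteq S\}$—the crux—by a primal--dual witness argument, and to derive the $\ell_\infty$ bound $\mathcal{A}_{3,t}$ from the stationarity (KKT) conditions restricted to $\mathcal{A}_{1,t}\cap\mathcal{A}_{2,t}$. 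Concretely, I would exhibit four ``bad'' events $B_1,\dots,B_4$—failure of the eigenvalue bound; failure of a \emph{sample} irrepresentable condition; failure of noise control off the support; failure of noise control on the support—each of probability $o(1)$ when $t\ge C_T s_0^2\log d$, and such that all three $\mathcal{A}_{i,t}$ hold on $(B_1\cup\cdots\cup B_4)^c$. A union bound then gives $\Pro(\bigcap_i\mathcal{A}_{i,t})\ge 1-\sum_k\Pro(B_k)\ge 1/5$ once $C_T$ (depending only on $L_0,L_2$, and with $C_T\ge C_0$ so that the lower bound \eqref{lasso:lower_lambda} on $\lambda_t$ is in force) is chosen large enough. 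Throughout I use that the rows of $\bX_{[t]}$ are i.i.d.\ $\mathcal{N}(\bd{0}_d,\bSigma)$ with $\bSigma_{j,j}\le1$ (Assumption~\ref{assumption: bounded variance}).

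For $\mathcal{A}_{2,t}$ I would run the primal--dual witness construction (see, e.g., \cite[Ch.~7]{wainwright2019high} and \cite{zhao2006model}). On $\mathcal{A}_{1,t}$ the block $\bX_{[t],S}'\bX_{[t],S}$ is invertible; let $\tbtheta$ be the Lasso solution restricted to vectors supported on $S$, with subgradient $\tilde z_S\in\partial\|\tbtheta_S\|_1$ fixed by the stationarity identity $t^{-1}\bX_{[t],S}'\bX_{[t],S}(\tbtheta-\btheta)_S=t^{-1}\bX_{[t],S}'\bepsilon_{[t]}-\lambda_t\tilde z_S$. By the standard argument, $\tbtheta$ is the (unique) Lasso solution, i.e.\ $\tbtheta=\ltheta_t$, and $\hat S_t\subseteq S$, provided strict dual feasibility holds: for every $j\in S^c$,
\[
\Bigl|\bX_{[t],j}'\bigl(\bX_{[t],S}(\tbtheta-\btheta)_S-\bepsilon_{[t]}\bigr)/(t\lambda_t)\Bigr|<1 .
\]
Substituting the stationarity identity and writing $\hat\gamma^{(j)}:=(\bX_{[t],S}'\bX_{[t],S})^{-1}\bX_{[t],S}'\bX_{[t],j}$, the left side is at most $\|\hat\gamma^{(j)}\|_1$ (since $\|\tilde z_S\|_\infty\le1$) plus $\lambda_t^{-1}\bigl|(\bX_{[t],j}-\mathcal{P}_S\bX_{[t],j})'\bepsilon_{[t]}/t\bigr|$, with $\mathcal{P}_S$ the orthogonal projection onto the column space of $\bX_{[t],S}$. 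For the first term I would compare $\hat\gamma^{(j)}$ with the population regression vector $\gamma^{(j)}_*:=\bSigma_{S,S}^{-1}\bSigma_{S,\{j\}}$, which satisfies $\|\gamma^{(j)}_*\|_1\le L_2^{-1}$ by Assumption~\ref{assumption: multi linear cov matrix}. Writing $\bX_{[t],j}=\bX_{[t],S}\gamma^{(j)}_*+\bd\eta^{(j)}$ with $\bd\eta^{(j)}$ the population regression residual (Gaussian, independent of $\bX_{[t],S}$, coordinatewise variance $\le1$), one gets $\hat\gamma^{(j)}-\gamma^{(j)}_*=(\bX_{[t],S}'\bX_{[t],S})^{-1}\bX_{[t],S}'\bd\eta^{(j)}$, whose $\ell_2$ norm is, conditionally on $\bX_{[t],S}$, at most $t^{-1/2}\lmin{t^{-1}\bX_{[t],S}'\bX_{[t],S}}^{-1/2}\|g^{(j)}\|_2$ for a standard Gaussian $g^{(j)}\in\mR^{s_0}$; on $\mathcal{A}_{1,t}$ and off a $\chi^2$-concentration event this is $\lesssim\sqrt{(s_0+\log d)/t}$ uniformly in $j\in S^c$, whence $\max_{j\in S^c}\|\hat\gamma^{(j)}-\gamma^{(j)}_*\|_1\le\sqrt{s_0}\max_j\|\hat\gamma^{(j)}-\gamma^{(j)}_*\|_2\lesssim s_0\sqrt{\log d/t}\le\tfrac14(1-L_2^{-1})$ once $t\ge C_T s_0^2\log d$—\emph{this is where the $s_0^2\log d$ threshold enters}. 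For the noise term, conditionally on the design it is centered Gaussian with variance $\le\sigma^2\|\bX_{[t],j}\|_2^2/t^2\le2\sigma^2/t$ on the high-probability event $\{\max_{j}\|\bX_{[t],j}\|_2^2\le2t\}$, so a Gaussian maximal inequality over $j\in S^c$ bounds it, after division by $\lambda_t$, by $\tfrac14(1-L_2^{-1})$ with probability $1-o(1)$, using precisely $\lambda_t\ge\frac{16\sigma}{1-L_2^{-1}}\sqrt{2\log d/t}$ from \eqref{lasso:lower_lambda}. Adding the two pieces yields a bound $\le L_2^{-1}+\tfrac12(1-L_2^{-1})<1$, so $\mathcal{A}_{2,t}$ holds on $B_1^c\cap B_2^c\cap B_3^c$, where $B_2,B_3$ collect the two ``$1-o(1)$'' events above.

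On $\mathcal{A}_{1,t}\cap\mathcal{A}_{2,t}$ the Lasso estimator is supported on $S$, so $(\ltheta_t-\btheta)_{S^c}=\bd{0}$ and, by stationarity, $(\ltheta_t-\btheta)_S=(\bX_{[t],S}'\bX_{[t],S})^{-1}\bX_{[t],S}'\bepsilon_{[t]}-\lambda_t(t^{-1}\bX_{[t],S}'\bX_{[t],S})^{-1}\hat z_S$ with $\|\hat z_S\|_\infty\le1$. On $\mathcal{A}_{1,t}$ the second summand has $\ell_\infty$ norm at most $\lambda_t\,\op{(t^{-1}\bX_{[t],S}'\bX_{[t],S})^{-1}}_\infty\le\lambda_t\,s_0\,\lmin{t^{-1}\bX_{[t],S}'\bX_{[t],S}}^{-1}\le2L_0 s_0\lambda_t$, while the first summand is the OLS error on $S$, which conditionally on the design is $\mathcal{N}(\bd{0},\sigma^2(\bX_{[t],S}'\bX_{[t],S})^{-1})$; each coordinate then has variance $\le2L_0\sigma^2/t$ on $\mathcal{A}_{1,t}$, and a Gaussian maximal inequality over the $\le s_0\le d$ coordinates bounds its $\ell_\infty$ norm by $\sqrt{L_0}\,\lambda_t$ with probability $1-o(1)$—the fourth bad event $B_4$—again using the lower bound on $\lambda_t$. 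Summing, $\|\ltheta_t-\btheta\|_\infty\le(\sqrt{L_0}+2L_0 s_0)\lambda_t$, i.e.\ $\mathcal{A}_{3,t}$. It then remains to collect the four tail estimates and choose $C_T$ large (in terms of $L_0,L_2$) so that $\sum_{k=1}^4\Pro(B_k)\le4/5$ for every $t\ge C_T s_0^2\log d$.

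The main obstacle is the sample irrepresentable bound: propagating the population bound of Assumption~\ref{assumption: multi linear cov matrix} to its empirical version requires controlling the regression-residual term at the \emph{constant} scale, and since $\|\cdot\|_1\le\sqrt{s_0}\|\cdot\|_2$ while the $\ell_2$ error is of order $\sqrt{s_0/t}$, this forces $t=\Omega(s_0^2\log d)$—precisely the $s_0^2\log d$ factor that later propagates into the suboptimality statement of Theorem~\ref{thm: lower bound of lasso}. Everything else is routine Gaussian concentration (Bernstein and $\chi^2$ tails, Gaussian maximal inequalities, and union bounds).
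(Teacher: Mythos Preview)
Your proposal is correct and follows essentially the same route as the paper. The paper introduces two auxiliary design events---a sample irrepresentable condition $\mathcal{A}_{4,t}=\{\max_{j\in S^c}\|(\bX_{[t],S}'\bX_{[t],S})^{-1}\bX_{[t],S}'\bX_{[t],j}\|_1\le\tfrac12(1+L_2^{-1})\}$ and a column normalization event $\mathcal{A}_{5,t}$---then invokes \cite[Corollary~7.22]{wainwright2019high} to get $\Pro(\mathcal{A}_{2,t}\cap\mathcal{A}_{3,t}\mid\mathcal{A}_{1,t}\cap\mathcal{A}_{4,t}\cap\mathcal{A}_{5,t})\ge1-4/d$, and finally bounds $\Pro(\mathcal{A}_{i,t}^c)\le1/5$ for $i\in\{1,4,5\}$ using exactly your Gaussian-residual decomposition $\bX_{[t],j}=\bX_{[t],S}\gamma_*^{(j)}+\bd\eta^{(j)}$ and $\chi^2$-concentration on $\|(\bX_{[t],S}'\bX_{[t],S})^{-1/2}\bW\|_2^2$; the $\|\cdot\|_1\le\sqrt{s_0}\|\cdot\|_2$ step and the resulting $t=\Omega(s_0^2\log d)$ threshold appear identically. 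The only cosmetic difference is that you unpack the primal--dual witness and the $\ell_\infty$ KKT bound by hand, whereas the paper cites the corollary that contains them.
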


The proof of Lemma \ref{lemma:lasso_lower_events} is presented after the proof of Theorem  \ref{thm: lower bound of lasso}.

\begin{proof}[Proof of Theorem \ref{thm: lower bound of lasso}]
Recall the events defined in \eqref{def:lasso_lower_events}. Assume $d \geq 8$.  By Lemma \ref{lemma:lasso_lower_events}, there exists some constant $C_T \geq C_0$, that only depends on $L_0,L_2$, such that for all $t \geq C_T s_0^2 \log(d)$, we have $\Pro\left(\cA_t\right) \geq 1/5$, where $\cA_t := \bigcap_{i=1}^{3} \cA_{i,t}$. Without loss of generality, assume the subset $S$ in Assumption \ref{assumption: multi linear cov matrix} is $[s_0]$.

For an integer $\ell \geq 1$, let $\mathbf{0}_{\ell}$  and $\mathbf{1}_{\ell}$  denote the all-zeros and all-ones vector of length $\ell$ respectively. Define
$$
\btheta^*=(2\left( 
    \sqrt{L_0} + 2 L_0 s_0
    \right)  \mathbf{1}_{s_0}',\; \mathbf{0}_{d-s_0}')',
$$
and we focus on the case $\btheta = \btheta^*$ and $t \geq C_T s_0^2\log(d)$.

By the definition of $\cA_{2,t}$ and $\cA_{3,t}$ and since $\lambda_t \leq 1$, when $\btheta = \btheta^*$, we have that on the event $\cA_t$, 
$$
(\ltheta_t)_j > 0 \;\;\text{ for } j \in S,\quad \text{ so that }\quad
\hat{S}_t = S,$$
which implies that 
$$(\ltheta_t)_S = \argmin_{\bbeta \in \bR^{s_0}} \left\{\frac{1}{2t}\|\bY-\bX_{[t],S}\bbeta\|_2^2 + \lambda_t\|\bbeta\|_1 \right\}.$$
As a result, on the event $\cA_t$, we have
$$\frac{\bX_{[t],S}'\bX_{[t],S}}{t}[(\ltheta_t-\btheta^*)_S]=\frac{\bX_{[t],S}'\bepsilon_{[t]}}{t}-\lambda_t\bd{1}_{s_0},
$$
where $\bepsilon_{[t]}:=(\epsilon_1,\ldots,\epsilon_{t})'$. Thus, $\left\|\ltheta_t-\btheta^*\right\|_2^2 \wedge \xi  \ge I_t - II_{t}$, where
\begin{align*}
I_t:=\frac{\lambda_t^2}{2} \left\|\left(\frac{\bX_{[t],S}'\bX_{[t],S}}{t}\right)^{-1}\bd{1}_{s_0}\right\|_2^2 \wedge \xi,\quad
II_{t}:= \|(\bX_{[t],S}'\bX_{[t],S})^{-1}\bX_{[t],S}'\bepsilon_{[t]}\|_2^2.
\end{align*}
On the event $\cA_t$ (in particular, $\cA_{1,t}$) and due to \eqref{lasso:lower_lambda}, we have
\begin{align*}
    I_{t} \ge \frac{\lambda_t^2\|\bd{1}_{s_0}\|_2^2}{2\left[\lmax{\bX_{[t],S}'\bX_{[t],S}/t}\right]^2}\wedge \xi
    \geq \frac{s_0}{2(3L_0/2)^2} \left(\frac{16\sigma}{1-L_2^{-1}}\right)^2 \frac{2\log(d)}{t} \wedge \xi.
\end{align*}
Thus, there exists some $C_T' \geq C_T$   depending on $L_0,L_2, \sigma^2, \xi$ and some $C' > 0$ depending only on $L_0,L_2$, such that for $t \geq C_T' s_0^2\log(d)$, on the event $\cA_t$, we have $I_{t} \geq (C')^{-1}\sigma^2 s_0 \log(d)/t$.

Further, note that $\cA_{1,t}$ only depend on $\bX_{[t],S}$ and thus is dependent from $\bepsilon_{[t]}$. Thus,
\begin{align*}
\Exp_{\btheta^*}[II_t; \cA_{1,t}] = &\Exp_{\btheta^*}\left[
\text{trace}\left((\bX_{[t],S}'\bX_{[t],S})^{-1}\bX_{[t],S}'\bepsilon_{[t]} \bepsilon_{[t]}'\bX_{[t],S}(\bX_{[t],S}'\bX_{[t],S})^{-1} \right); \cA_{1,t}
\right]\\
= &\sigma^2\Exp_{\btheta^*}\left[
\text{trace}\left((\bX_{[t],S}'\bX_{[t],S})^{-1}\right); \cA_{1,t}
\right] \leq \sigma^2 \frac{s_0}{t} 2L_0.
\end{align*}
If $\log(d) \geq 20L_0 C'$, then $\Exp[II_t; \cA_{1,t}] \leq (10C')^{-1} \sigma^2 s_0\log(d)/t$.

To sum up, if $\log(d) \geq 20L_0 C'$, for $t \geq C_{T}'s_0^2 \log(d)$, we have
\begin{align*}
   & \Exp_{\btheta^*}\left(\left\|\ltheta_t-\btheta^*\right\|_2^2 \wedge \xi \right)\ge \Exp_{\btheta^*}\left[I_t - II_t ; \cA_t \right] \\
    \geq & (C')^{-1} \sigma^2 \frac{s_0\log(d)}{t} \times \Pro_{\btheta^*}(\cA_t) - (10C')^{-1} \frac{\sigma^2 s_0\log(d)}{t} \geq (10C')^{-1} \frac{\sigma^2 s_0\log(d)}{t},
\end{align*}
where for the last inequality, we use the fact that $\Pro_{\btheta^*}(\cA_t) \geq 1/5$.

Thus if $T \geq (C_{T}'s_0^2 \log(d))^{\kappa} := t_0^{\kappa}$ for some $k > 1$, we have
\begin{align*}
&\sum_{t=1}^T \Exp_{\btheta^*} (\|\ltheta_t-\btheta^*\|_2^2  \wedge \xi) \ge \Exp_{\btheta^*} \sum_{t=t_0}^T (\|\ltheta_t-\btheta^*\|_2^2 \wedge \xi) \\
\ge &(10C')^{-1}\sigma^2 s_0\log d\log \frac{T}{t_0}
\ge (10C')^{-1}\sigma^2 (1-1/\kappa)s_0 \log d\log T.
\end{align*}
The proof is complete.
\end{proof}

Now, we prove Lemma \ref{lemma:lasso_lower_events}. Recall the events $\cA_{i,t}$ for $i \in [3]$ in \eqref{def:lasso_lower_events}.

\begin{proof}[Proof of Lemma \ref{lemma:lasso_lower_events}]
 For $t \in [T]$, we define \begin{equation*}
\begin{aligned}
& \mathcal{A}_{4,t}=\left\{ \max_{j\in{S^c}}\left\| \left(\bX_{[t],S}'\bX_{[t],S}\right)^{-1} \bX_{[t],S}' \bX_{[t],j}\right\|_1    \le\frac{1}{2}(1+ L_2^{-1})    \right\},
\\&\mathcal{A}_{5,t}=\left\{\max_{j\in [d]}\ t^{-1}\|\bX_{[t],j}\|_2^2\le 2\right\}.
\end{aligned}
\end{equation*}

On the event $\cA_{1,t}$, we have
$$
\op{\left( \frac{\bX_{[t],S}'\bX_{[t],S}}{t}\right)^{-1}}_{\infty} \leq 2L_0 s_0.
$$
Due to \eqref{lasso:lower_lambda}, we have
$$
\delta := \frac{1-L_2^{-1}}{8 \sigma} \lambda_t - \sqrt{\frac{2\log(d-s_0)}{t}} \geq  \sqrt{\frac{2\log(d)}{t}}.
$$
Since the events $\cA_{1,t}, \cA_{4,t}$ and $ {\cA_{5,t}}$ only depend on $\bX_{[t]}$,
 by Corollary 7.22 in \cite{wainwright2019high}, 
\begin{align*}
    \Pro\left(\cA_{2,t} \cap \cA_{3,t} \;\vert\; \cA_{1,t} \cap \cA_{4,t} \cap {\cA_{5,t}} \right) \geq 1 - 4 \exp(- t\delta^2/2) \geq 1 - 4/d.
\end{align*}
Since $d \geq 8$, we have
\begin{align*}
    \Pro\left(\cA_{1,t} \cap \cA_{2,t} \cap \cA_{3,t}\right)
\geq (1-4/d) \times     \Pro\left(\cA_{1,t} \cap \cA_{4,t} \cap {\cA_{5,t}} \right) \geq 1/2 \times \Pro\left(\cA_{1,t} \cap \cA_{4,t} \cap {\cA_{5,t}} \right).
\end{align*}
In Steps 1-3, we show that there exists some constant $C_T > 0$, depending on $L_0,L_2$, such that if $t \geq C_T s_0^2\log(d)$, then
$$
\Pro\left(\cA_{i,t}^c \right) \leq 1/5, \quad \text{ for } i = 1,4,5.
$$
Then the proof is complete since this would imply $\Pro\left(\cA_{1,t} \cap \cA_{4,t} \cap {\cA_{5,t}} \right) \geq 2/5$.

\medskip
\noindent \underline{Step 1: the event $\cA_{1,t}$.} By Lemma \ref{lemma: lower eigenvalue whp}, there exists $C > 0$ depending on $L_0$ such that for all $t \geq C s_0$, we have $\Pro(\cA_{1,t}^c) \leq 1/10$.

\medskip
\noindent \underline{Step 2: the event $\cA_{5,t}$.} Let $j \in [d]$. Without loss of generality, assume $\bSigma_{j,j} > 0$. Due to Assumption \ref{assumption: bounded variance}, 
$$\Pro\left(t^{-1} {\|\bX_{[t],j}\|_2^2} > 2 \right) \le \Pro\left(
(t \bSigma_{j,j})^{-1} \|\bX_{[t],j}\|_2^2 - 1 \ge 1
\right).$$
Since the entries of $(\bSigma_{j,j})^{-1/2} \bX_{[t],j}$ are i.i.d. standard Gaussians $\mathcal{N}(0,1)$, by Example 2.11 in \cite{wainwright2019high} and the union bound, we have
$$
\Pro\left(\cA_{5,t}^c\right) \leq d e^{-t/8}.
$$
Thus if $t \geq 8\log(5d)$, then $\Pro\left(\cA_{5,t}^c\right)  \leq 1/5$.


\medskip
\noindent \underline{Step 3: the event $\cA_{4,t}$.}  For $t \in [T]$ and $j \in S^c$, we define the following $t$-dimensional vector:
$$
\bZ_{[t]}^{(j)} = \bX_{[t],j} - \bX_{[t],S} \bSigma_{S,S}^{-1} \bSigma_{S,j}.
$$
Since $\bX_1,\ldots,\bX_t$ are i.i.d. distributed as $\mathcal{N}(0,\bSigma)$, we have that
$\bZ_{[t]}^{(j)} = (Z_1^{(j)},\ldots,Z_t^{(j)})'$ are independent from $\bX_{[t],S}$, and further that $Z_1^{(j)},\ldots,Z^{(j)}_t$ are i.i.d. distributed as $\mathcal{N}(0,b_j^2)$, where 
$b_j^2 := \bSigma_{j,j}-\bSigma_{j,S}(\bSigma_{S,S})^{-1}\bSigma_{S,j}$. 

Fix some $j \in S^c$. Note that
\begin{align*}
    \left(\bX_{[t],S}'\bX_{[t],S}\right)^{-1}
    \bX_{[t],S}' \bX_{[t],j} = \bSigma_{S,S}^{-1} \bSigma_{S,j} + \left(\bX_{[t],S}'\bX_{[t],S}\right)^{-1}
    \bX_{[t],S}' \bZ_{[t]}^{(j)}.
\end{align*}
Let $\alpha := 2^{-1}(1-L_2^{-1})$. Due to Assumption \ref{assumption: multi linear cov matrix} and $\|\bv\|_{1} \leq \sqrt{s_0} \|\bv\|_2$ for any $\bv \in \bR^{s_0}$, we have $\widetilde{\cA}_{4,t} \subseteq \cA_{4,t}$, where
\begin{align*}
    \widetilde{\cA}_{4,t} := \bigcap_{j \in S^{c}} \widetilde{\cA}_{4,t,j}, \quad \text{ and } \quad \widetilde{\cA}_{4,t,j}:=   \left\{\left\|\left(\bX_{[t],S}'\bX_{[t],S}\right)^{-1}
    \bX_{[t],S}' \bZ_{[t]}^{(j)}\right\|_2^2 \leq \frac{1}{s_0} \alpha^2
    \right\}.
\end{align*}

For each $j \in S^c$, given $\bX_{[t], S}$, the conditional distribution of $ (\bX_{[t],S}'\bX_{[t],S})^{-1} \bX_{[t],S}' \bZ_{[t]}^{(j)}$ is the same as $b_j (\bX_{[t],S}'\bX_{[t],S})^{-1/2} \bW$, where $\bW$ follows the standard $s_0$-dimensional normal distribution $\mathcal{N}(0,  \mathbbm{I}_{s_0})$ and is independent from any other random variables. Since $\cA_{1,t}$ only involves $\bX_{[t],S}$, we have
\begin{align*}
&\Pro\left(\widetilde{\cA}_{4,t,j}^c \cap \mathcal{A}_{1,t}\right)
=\Exp\left[\Pro\left(
\left\|(\bX_{[t],S}'\bX_{[t],S})^{-1/2} \bW\right\|_2^2 > \frac{\alpha^2}{b_j^2 s_0} \; \vert \;
    \bX_{[t],S}
\right) ; \mathcal{A}_{1,t}\right] \\
\leq &\Exp\left[\Pro\left(
\left\|\bW\right\|_2^2 > \frac{\alpha^2 t}{2 L_0 b_j^2 s_0} \; \vert \;
    \bX_{[t],S}
\right) ; \mathcal{A}_{1,t}\right]
\leq \Pro\left(
\frac{1}{s_0}\left\|\bW\right\|_2^2 > \frac{\alpha^2 t}{2 L_0 b_j^2 s_0^2}
\right).
\end{align*}  
Since $b_j \leq 1$, by Equation (2.18) and Example 2.11 in \cite{wainwright2019high}, there exists some $C > 0$, depending only on $L_0, L_2$, such that if $t \geq C s_0^2 \log(d)$, we have
$$\Pro\left(\frac{1}{s_0}\left\|\bW\right\|_2^2-1 > \frac{\alpha^2 t-2 L_0 b_j^2 s_0^2}{2 L_0 b_j^2 s_0^2}\right)\leq 1/(10d).$$ 
Then by a union bound and Step 1, we have
\begin{align*}
   \Pro\left({\cA}_{4,t}^c \right) \leq \sum_{j \in S^c} \Pro\left(\widetilde{\cA}_{4,t,j}^c \cap \mathcal{A}_{1,t}\right) + \Pro\left(  \mathcal{A}_{1,t}^c\right) \leq 1/5.
\end{align*}
\end{proof}

\section{Proofs for the Bandit Problem - Upper Bounds}
In this Appendix, for $t \in [T]$, define
\begin{equation}
    \label{def:bepsilon}
\bepsilon_{[t]}:=(\epsilon_1,\ldots,\epsilon_{t})'.
\end{equation}
Recall that for each time $t \in [T]$ and arm $k \in [K]$, we define
\begin{equation}\label{eq: bandit design matrix}
\begin{aligned}
    &\bX_{[t]}^{(k)} := \left(\bX_1 \mathbb{I}\{A_1 = k\},\; \cdots,\; \bX_t \mathbb{I}\{A_t = k\}\right)', \\
    &\bY_{[t]}^{(k)} := \left(Y_1 \mathbb{I}\{A_1 = k\},\; \cdots,\; Y_t \mathbb{I}\{A_t = k\}\right)'.
\end{aligned}
\end{equation}
For each arm $k\in [K]$ and $h>0$, let 
\begin{equation}\label{eq: U_h_k}
    \mathcal{U}_h^{(k)} := \{\bd{x}\in \mR^d: (\btheta^{(k)})'\bd{x}> \max_{j\ne k}(\btheta^{(j)})'\bd{x}+h\}.
\end{equation}
which is the collection of contexts for which the potential reward of $k$-th arm exceeds that of all other arms by more than $h$. For $t \in [T]$ and $k \in [K]$, define
\begin{equation}\label{eq: hbSigma}
\begin{split}
\bSigma^{(k)} := \Exp\left[\bX_t\bX_t'\idf\{\bX_{t} \in \mathcal{U}_{0}^{(k)}\}\right], \quad 
    \hbSigma_t^{(k)}:=\frac{1}{t}\sum_{s=1}^{t}\bX_s'\bX_s\idf\{A_s=k\},
\end{split}
\end{equation}
where $\mathcal{U}_{0}^{(k)}$ is the collection of contexts for which the potential reward of $k$-th arm exceeds that of all other arms.

Further, recall that we set the regularization and threshold parameters as
\begin{equation} \label{app_eq: bandit_paras_est}
\lambda_t = 6m_X\sigma\sqrt{\frac{\log (dT)}{t}},\quad \text{ and } \quad \hdlambda_t = 28L_3\lambda_t,
\end{equation}
and for some integers $C_{\gamma_1}$ and $C_{\gamma_2}$, set the end times of Stage 1 and 2, respectively, as
\begin{equation} \label{app_eq: end_times}
    \gamma_1 = C_{\gamma_1} \lceil(\sigma^2\vee 1)\rceil s_0 \lceil \log (dT) \rceil, \quad \text{ and } \quad \gamma_2 = C_{\gamma_2} s_0^4 \gamma_1.
\end{equation}

Recall that in Stage 1 (i.e., from time $1$ to $\gamma_1$), arms are selected uniformly at random from $[K]$. By  Lemma \ref{lemma: Stage 1_restricted}, Lemma \ref{lemma: arm optimality sample}, Lemma \ref{lemma: prediction error controlled sample},  Lemma \ref{lemma: ell_2 bound of x epsilon} and Lemma \ref{lemma: margin condition}, there exist constants
\begin{equation}
    \label{def:star_constants}
    C^* \geq 1,  \text{ and } a^*, h^*, \ell_0^*, \ell_1^* > 0 \;\text{ depending only on }\;  K,  m_X, \alpha_X,  L_3, m_{\theta}, L_4,
\end{equation}
such that the following statements hold for $\gamma_1 \geq C^*s_0\log(dT)$ and $n \geq C^* s_0\log(dT)$:
\begin{enumerate}
    \item (Lemma \ref{lemma: Stage 1_restricted}) with probability  at least $1-2K/T^4$, for each arm $k \in [K]$, 
    \begin{align}\label{stage 2:aux_eq1}
    \bv'\left(\frac{1}{\gamma_1}\sum_{t=1}^{\gamma_1} \bX_t\bX_t'  \idf\{A_t = k\}\right) \bv \ge \frac{1}{4KL_3}\|\bv\|_2^2,\quad \text{ for all } 
\bv\in \mathcal{C}(s_0,3). 
\end{align} 
    \item (Lemma \ref{lemma: arm optimality sample}) with probability at least $1-2K/T^4$,  for each arm $k \in [K]$, 
    \begin{equation}\label{stage 2:aux_eq2}
\bv'\left(\frac{1}{n}\sum_{t=1}^{n} \bX_t\bX_t'\idf\{\bX_t\in \mathcal{U}_{h^*}^{(k)}\} \right) \bv \ge \ell_0^*\|\bv\|_2^2,\quad \text{ for all } 
\bv\in \mathcal{C}(s_0,3),    
\end{equation}
where we recall the definition of $\mathcal{U}_h^{(k)}$ in \eqref{eq: U_h_k}.

    \item (Lemma \ref{lemma: prediction error controlled sample}) 
    for any $\bu \in \bR^d$ with $\|\bu\|_2 \leq \ell_1^*$,   
    with probability at least $1-2/T^4$, 
        \begin{equation}\label{stage 2:aux_eq3}
\bv'\left(\frac{1}{n}\sum_{t=1}^{n} \bX_t\bX_t'\idf\{|\bu'\bX_t|\ge h^*/2\} \right) \bv \le \frac{K-1}{K} a^* \|\bv\|_2^2,\quad \text{ for all } 
\bv\in \mathcal{C}(s_0,3)   
\end{equation}
where we define
\begin{equation}\label{eq: a}
    a^*= \min\{(\ell_0^*/K),\; 1/(4K L_3)\}.  
\end{equation} 
\item (Lemma \ref{lemma: ell_2 bound of x epsilon}) for any $t \in [T]$, with probability  at least $1-K/T^2$, for each arm $k \in [K]$
\begin{align}\label{eq: x_bepsilon_S_k}
 \|(\bX_{[t]}^{(k)})_{S_k}'\bepsilon_{[t]}/t\|_2 \le C^*  m_X\sigma\sqrt{\frac{3s_0\log (dT)}{t}}.
\end{align}
where $(\bX_{[t]}^{(k)})_{S_k}'\bepsilon_{[t]} = 0$ if $S_k$ is empty.
\item   (Lemma \ref{lemma: margin condition}) for any $\bu \in \bR^{d}$ such that $\bu \neq \bd{0}_d$,
\begin{align}
    \label{eq: margin}
    \Pro(|\bu'\bX_1|\le \tau)\le 
    \frac{C^*}{\|\bu\|_2 } \tau, \text{ for all } \tau>0.
\end{align}

\end{enumerate}

Note that the events in \eqref{stage 2:aux_eq2}, \eqref{stage 2:aux_eq3}, and \eqref{eq: margin} depend only on the properties of the i.i.d.\ random vectors $\{\bX_t: t \in [T]\}$, and not on the arm selection mechanism. Moreover, the fourth property above, concerning the event in \eqref{eq: x_bepsilon_S_k}, holds regardless of the arm selection mechanism, while the first property pertains specifically to the first stage, in which arms are selected at random. In this Appendix, we fix the constants in \eqref{def:star_constants}.

\subsection{Overall strategy for proving Theorem \ref{thm:bandit_est_accuracy}} \label{subsec:discussion_bandit_est}
Theorem \ref{thm:bandit_est_accuracy} follows directly from Lemma \ref{lemma: RE and estimation error of stage 2} and Lemma \ref{stage3: main lemma} ahead. Here, we briefly outline the strategy behind the proofs of these two lemmas.

For Lemma \ref{lemma: RE and estimation error of stage 2} concerning Stage 2, we use induction to show that for each $m \in \{1, 2, \dots, \lfloor \gamma_2 / \gamma_1 \rfloor\}$, with probability at least $1 - (6m - 2)K / T^4$, the following hold:  
(i) for each $k \in [K]$, the matrix $\bX_{[m\gamma_1]}^{(k)}$ satisfies the restricted eigenvalue (R.E.) condition in Definition \ref{def: RE} with appropriate constants (denoted by the event $\mE_m^{\textup{RE}}$); and  
(ii) for each $k \in [K]$,  
\[
\| \widehat{\btheta}_{m\gamma_1}^{(k)} - \btheta^{(k)} \|_2^2 \leq C \frac{\sigma^2 s_0 \log(dT)}{m \gamma_1}
\]  
for some constant $C > 0$ (denoted by the event  $\mE_m^{\btheta}$). Note that R.E.-type conditions are commonly used to derive $\ell_2$-estimation error bounds for Lasso estimators \cite{wainwright2019high}.

Specifically, for the base case $m = 1$, the claim holds if $C_{\gamma_1}$ in \eqref{app_eq: end_times} is chosen sufficiently large. In this case, since arms are selected uniformly at random during Stage 1, the analysis involves i.i.d.~observations. 

Next, we discuss the inductive step. On the event $\mE_m^{\textup{RE}} \cap \mE_m^{\btheta}$, we have control over the restricted eigenvalues of $\bX_{[m\gamma_1]}^{(k)}$, $k \in [K]$, as well as the estimation error of $\widehat{\btheta}_{m\gamma_1}^{(k)}$, $k \in [K]$. For $t \in (m\gamma_1, (m+1)\gamma_1]$, by Definition \ref{def:three_stages}, the estimators at time $m\gamma_1$ are used in arm selections.  Thus, the control over estimation errors allows us to show that if the covariate vectors $\{\bX_t : t \in (m\gamma_1, (m+1)\gamma_1]\}$ do not fall ``too close'' to the boundaries, that is, $\cup_{i \neq j \in [K]} \{ \bx \in \bR^d : (\btheta^{(i)})'\bx = (\btheta^{(j)})'\bx \}$, then the selected arm $A_t$ is equal to the optimal one $A_t^* := \argmax_{k \in [K]} \bX_t'\btheta^{(k)}$. Note that the optimal arms $\{A_t^* : t \in [T]\}$ are i.i.d.~random variables. Further, as discussed following Assumption \ref{assumption: covariates_bandit}, we use the assumption that $\bX_t$ has a log-concave density and bounded eigenvalues to show that the probability that $\{\bX_t : t \in (m\gamma_1, (m+1)\gamma_1]\}$ are ``close'' to the boundaries is small.  
These points allow us to show that on the event $\mE_m^{\textup{RE}} \cap \mE_m^{\btheta}$, $\mE_{m+1}^{\textup{RE}}$ holds with high probability. Finally, on $\mE_{m+1}^{\textup{RE}}$, due to the choice of regularization parameter $\lambda_{(m+1)\gamma_1}$ in \eqref{app_eq: bandit_paras_est}, we can establish upper bound on the $\ell_2$ estimation error of $\hbtheta^{(k)}_{(m+1)\gamma_1}, k \in [K]$, and thus $\mE_{m+1}^{\btheta}$ holds with high probability. Detailed arguments are provided in Appendix \ref{sec: proof of thm: regret in stage 2}.

Finally, we discuss Lemma \ref{stage3: main lemma} concerning Stage 3. As in Lemma \ref{lemma: RE and estimation error of stage 2}, we use an inductive argument with similar intuitions as before. The key difference is that our goal is to establish \textit{instance-specific} upper bounds on the $\ell_2$ estimation errors of the OPT-Lasso estimators, which, in turn, depend on upper bounds for the $\ell_{\infty}$ estimation error of the initial Lasso estimators.  For the sequential estimation problem in Section~\ref{sec: multi linear}, the observations $\{(\bX_t, Y_t): t \in [T]\}$ are i.i.d., and $\bX_t$ follows a multivariate normal distribution, for which the recent $\ell_{\infty}$ bounds in \cite{bellec2022biasing} apply when $t = \Omega(s_0 \log(dT))$.  However, for the bandit problem, due to the dependency among observations, we resort to the classical $\ell_{\infty}$ bounds in Lemma 4.1 of \cite{van2016estimation} (see also Lemma~\ref{lemma: bounds of lasso}), and can only establish the required conditions for $t = \Omega(s_0^5 \log(dT))$. This explains the gap between Stage 1 and Stage 3. 
Moreover, this requires us to include additional events in the inductive argument, such as the $\ell_{\infty}$ distance between the design matrix for the $k$-th arm and $\bSigma^{(k)}$ in \eqref{eq: hbSigma}, for $k \in [K]$, as detailed in Appendix~\ref{sec: proof of thm: regret in stage 3}.

\subsection{Proof of Theorem \ref{thm:bandit_est_accuracy}, Part (i): Stage 2} \label{sec: proof of thm: regret in stage 2}
In this subsection, we focus on Stage 2 of the procedure defined in Definition \ref{def:three_stages} and establish part (i) of Theorem \ref{thm:bandit_est_accuracy}, which follows immediately from Lemma \ref{lemma: RE and estimation error of stage 2}. 

Since this subsection focuses on Stage 2, the end time $\gamma_2$ of Stage 2 can be arbitrary, provided that $\gamma_2 \geq \gamma_1$. Recall the definition of the restricted eigenvalue condition, as well as the set $\mathcal{C}(s,\kappa)$, in Definition \ref{def: RE}.

Recall the fixed constants  $C^* \geq 1$ and $a^*, h^*, \ell_0^*, \ell_1^* > 0$  in \eqref{def:star_constants}, which only depend only on $K,  m_X, \alpha_X,  L_3, m_{\theta}, L_4$, such that \eqref{stage 2:aux_eq1}-\eqref{eq: margin} hold.

\begin{lemma} \label{lemma: RE and estimation error of stage 2}
Suppose that Assumptions \ref{assumption: context}, \ref{assumption: bounded cov mat} and \ref{assumption:arm_parameters} hold. We set the regularization parameter $\lambda_t$ as in \eqref{app_eq: bandit_paras_est}, and the end time $\gamma_1$ of Stage 1 as in \eqref{app_eq: end_times}.
If the integer $C_{\gamma_1}$ in \eqref{app_eq: end_times} satisfies the following:
\begin{align}
    \label{eq: lambda_gamma_1}
     C_{\gamma_1} \ge \max\left\{C^*,\; \left(\frac{18m_X}{\ell_1^* a^*}\right)^2\right\},
\end{align}
then for each  $m \in \{1,2,\cdots,\lfloor \gamma_2/\gamma_1 \rfloor\}$, with probability at least $1-(6m-2)K/T^4$, we have
\begin{enumerate}[label=\roman*)]
    \item for each $k \in [K]$, $\bX_{[m\gamma_1]}^{(k)}$ satisfies $\textup{RE}(s_0,3,a^*)$ ;
    \item for each $k \in [K]$,
    $$
    \|\hbtheta_{m\gamma_1}^{(k)}-\btheta^{(k)}\|_2^2\le   \frac{(18m_{X})^2}{(a^*)^2}  \frac{\sigma^2 s_0\log (dT)}{m\gamma_1}.
    $$
\end{enumerate}
\end{lemma}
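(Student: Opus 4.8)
\emph{Plan.} I would prove (i) and (ii) together by induction on $m$, letting $\mathcal{E}_m$ denote the event that (i) and (ii) hold for every arm at time $m\gamma_1$; recall $g_1=\gamma_1$ by \eqref{parameters_for_analysis}, so throughout the block $(m\gamma_1,(m+1)\gamma_1]$ the selection rule uses $\hbtheta^{(k)}_{m\gamma_1}$. The main tool is the $\ell_2$ bound in Lemma \ref{lemma: bounds of lasso}: since $\bY^{(k)}_{[n]}=\bX^{(k)}_{[n]}\btheta^{(k)}+(\epsilon_t\idf\{A_t=k\})_{t\le n}$, whenever $\bX^{(k)}_{[n]}$ satisfies $\textup{RE}(s_0,3,a^*)$ and $\lambda_n\ge 2\|(\bX^{(k)}_{[n]})'\bepsilon_{[n]}/n\|_\infty$ we get $\|\hbtheta^{(k)}_n-\btheta^{(k)}\|_2\le 3\sqrt{s_0}\lambda_n/a^*$, which squared is precisely the bound in (ii) at time $n$ by \eqref{app_eq: bandit_paras_est}. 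The noise event $\{\lambda_n\ge 2\|(\bX^{(k)}_{[n]})'\bepsilon_{[n]}/n\|_\infty\ \forall k\}$ holds with probability $\ge 1-2K/T^4$ by a sub-Gaussian martingale tail bound for $\sum_{t\le n}(\bX_t)_j\epsilon_t\idf\{A_t=k\}$ (using $\|\bX_t\|_\infty\le m_X$, the $\sigma$-sub-Gaussianity of $\epsilon_t$, the $\cF_{t-1}$-measurability of $A_t$, and a union bound over $[d]\times[K]$, with the constant $6$ in \eqref{app_eq: bandit_paras_est} chosen to make the exponent large enough).

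\emph{Base case $m=1$.} Arms are uniform in Stage 1, so $(\bX_t,Y_t,A_t)_{t\le\gamma_1}$ are i.i.d.; since $C_{\gamma_1}\ge C^*$ we have $\gamma_1\ge C^*s_0\log(dT)$, so \eqref{stage 2:aux_eq1} gives, with probability $\ge 1-2K/T^4$, that $\bv'\hbSigma^{(k)}_{\gamma_1}\bv\ge(4KL_3)^{-1}\|\bv\|_2^2\ge a^*\|\bv\|_2^2$ on $\mathcal{C}(s_0,3)$ for all $k$, i.e. $\bX^{(k)}_{[\gamma_1]}$ satisfies $\textup{RE}(s_0,3,a^*)$; together with the noise event this yields (i) and (ii) at $\gamma_1$, so $\Pro(\mathcal{E}_1^c)\le 4K/T^4=(6\cdot1-2)K/T^4$.

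\emph{Inductive step.} Assume $\Pro(\mathcal{E}_m^c)\le(6m-2)K/T^4$. On $\mathcal{E}_m$, (ii) together with $\gamma_1\ge C_{\gamma_1}(\sigma^2\vee1)s_0\log(dT)$ and \eqref{eq: lambda_gamma_1} force $\|\hbtheta^{(j)}_{m\gamma_1}-\btheta^{(j)}\|_2\le\ell_1^*$ for all $j$. The key deterministic fact is: for $t\in(m\gamma_1,(m+1)\gamma_1]$, if $\bX_t\in\mathcal{U}^{(k)}_{h^*}$ and $|(\hbtheta^{(j)}_{m\gamma_1}-\btheta^{(j)})'\bX_t|<h^*/2$ for every $j\in[K]$, then arm $k$ is the strict maximizer of the estimated rewards, hence $A_t=k$; therefore
\begin{equation*}
\idf\{A_t=k\}\ \ge\ \idf\{\bX_t\in\mathcal{U}^{(k)}_{h^*}\}\ -\ \sum_{j\in[K]}\idf\{|(\hbtheta^{(j)}_{m\gamma_1}-\btheta^{(j)})'\bX_t|\ge h^*/2\}.
\end{equation*}
Multiplying by $(\bv'\bX_t)^2\ge0$ and summing over the block $(m\gamma_1,(m+1)\gamma_1]$ — an i.i.d. block independent of $\cF_{m\gamma_1}$ — I bound the first term below by $\ell_0^*\gamma_1\|\bv\|_2^2$ via \eqref{stage 2:aux_eq2}, and each of the $K$ subtracted terms above by $\tfrac{K-1}{K}a^*\gamma_1\|\bv\|_2^2$ via \eqref{stage 2:aux_eq3} applied conditionally on $\cF_{m\gamma_1}$ with the $\cF_{m\gamma_1}$-measurable $\bu=\hbtheta^{(j)}_{m\gamma_1}-\btheta^{(j)}$, which on $\mathcal{E}_m$ has $\|\bu\|_2\le\ell_1^*$. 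Since $a^*\le\ell_0^*/K$ by \eqref{eq: a}, this yields $\sum_{t=m\gamma_1+1}^{(m+1)\gamma_1}(\bv'\bX_t)^2\idf\{A_t=k\}\ge(\ell_0^*-(K-1)a^*)\gamma_1\|\bv\|_2^2\ge a^*\gamma_1\|\bv\|_2^2$ on $\mathcal{C}(s_0,3)$, off a set that, intersected with $\mathcal{E}_m$, has probability $\le 4K/T^4$. Adding the contribution $\ge a^*m\gamma_1\|\bv\|_2^2$ from (i) of $\mathcal{E}_m$ gives $\bv'\hbSigma^{(k)}_{(m+1)\gamma_1}\bv\ge a^*\|\bv\|_2^2$, i.e. (i) at $(m+1)\gamma_1$; the noise event at $(m+1)\gamma_1$ then gives (ii) via Lemma \ref{lemma: bounds of lasso}. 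Accounting for at most $6K/T^4$ of new failure probability yields $\Pro(\mathcal{E}_{m+1}^c)\le(6m+4)K/T^4=(6(m+1)-2)K/T^4$, completing the induction.

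\emph{Main obstacle.} The crux is that the restricted-eigenvalue property must be \emph{self-propagating}: the design $\bX^{(k)}_{[(m+1)\gamma_1]}$ blends Stage-1 data with a fresh block in which selection is governed by the imperfect estimator $\hbtheta^{(k)}_{m\gamma_1}$, and one must show it still accrues enough mass in the region $\mathcal{U}^{(k)}_{h^*}$ where arm $k$ is optimal to keep $\textup{RE}(s_0,3,a^*)$. This forces the specific choice $a^*=\min\{\ell_0^*/K,(4KL_3)^{-1}\}$ so that the misclassification error $(K-1)a^*\gamma_1$ is absorbed by the ``good'' mass $\ell_0^*\gamma_1$, and requires handling the randomness of $\bu=\hbtheta^{(j)}_{m\gamma_1}-\btheta^{(j)}$ in \eqref{stage 2:aux_eq3} by conditioning on $\cF_{m\gamma_1}$ and exploiting independence of the new block. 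The remaining ingredients (the martingale tail bound and the bookkeeping to the exact $(6m-2)K/T^4$ rate) are routine.
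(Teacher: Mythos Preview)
Your proposal is correct and follows essentially the same approach as the paper's proof: induction on $m$, with the base case handled by the i.i.d.\ structure of Stage~1 via \eqref{stage 2:aux_eq1}, and the inductive step propagating the $\textup{RE}(s_0,3,a^*)$ property through the indicator inequality $\idf\{A_t=k\}\ge\idf\{\bX_t\in\mathcal{U}^{(k)}_{h^*}\}-\sum_j\idf\{|(\hbtheta^{(j)}_{m\gamma_1}-\btheta^{(j)})'\bX_t|\ge h^*/2\}$, then invoking \eqref{stage 2:aux_eq2}, \eqref{stage 2:aux_eq3} conditionally on $\cF_{m\gamma_1}$, and the martingale noise bound (the paper's Lemma~\ref{lemma:ell_infty bound of x times epsilon}); the paper packages the same indicator inequality as the event $\mathcal{B}_t^{(k)}$ and names the resulting quadratic-form pieces $I_{m+1}^{(k)}(\bv)$ and $II_{m+1}^{(k)}(\bv)$, but the decomposition, the constants, and the $(6m-2)K/T^4$ bookkeeping are identical to yours.
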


\begin{proof}
For integer $m=1, \cdots,\lfloor \gamma_2/\gamma_1\rfloor$, we define the  following events:  with $t = m\gamma_1$, 
\begin{align}\label{def:stage2_events}
\begin{split}
&  \mE_m^{\textup{RE}} := \bigcap_{k = 1}^{K} \left\{\bX_{[t]}^{(k)} \textup{ satisfies } \textup{RE}(s_0,3, a^*)\right\}, \\
&\mE_m^{\btheta} = \bigcap_{k = 1}^{K}\left\{\|\hbtheta_t^{(k)}-\btheta^{(k)}\|_2^2 \le \frac{9}{(a^*)^2}s_0\lambda_t^2\right\},    \\
&\mE_m^{\bX\bepsilon} = \bigcap_{k\in [K]} \left\{\|{(\bX_{[t]}^{(k)}})' \bepsilon_{[t]}/t\|_{\infty} 
\le \frac{\lambda_t}{2} \right\}.
\end{split}
\end{align}

The proof proceeds by induction to show that $\Pro(\mE_m^{\textup{RE}} \cap \mE_m^{\btheta}) \geq 1-(6m-2)K/T^4$, for $m=1,\ldots,\lfloor \gamma_2/\gamma_1\rfloor$. 

\medskip \noindent
\textbf{Base case: m = 1.} Due to \eqref{stage 2:aux_eq1} and the definition of $a^*$ in \eqref{eq: a}, we have $\Pro\left(\mE_1^{\textup{RE}}\right) \geq 1 - 2K/T^4$. Further, due to Lemma \ref{lemma:ell_infty bound of x times epsilon} and the definition of $\lambda_{m\gamma_1}$ in \eqref{app_eq: bandit_paras_est}, 
 we have $\Pro\left(\mE_1^{\bX\bepsilon}\right) \geq 1 - 2K/T^4$. Finally, on the event $\mE_1^{\textup{RE}} \cap \mE_1^{\bX\bepsilon}$,
due to the second inequality in Lemma \ref{lemma: bounds of lasso}, we have that the event $\mE_1^{\btheta}$ holds. Thus, by the union bound,
\begin{align*}
   \Pro(\mE_1^{\btheta}\cap\mE_1^{\textup{RE}})  &\ge \Pro( \mE_1^{\textup{RE}}\cap\mE_1^{\bX\bepsilon})   \ge 1- \frac{4K}{T^4}.
\end{align*}
The base case is established.

\medskip \noindent
\textbf{Inductive step.}  Now, we assume that for $1 \leq m < \lfloor \gamma_2/\gamma_1\rfloor$,  
$\Pro(\mE_m^{\btheta}\cap\mE_m^{\textup{RE}}) \ge 1- \frac{(6m-2)K}{T^4}$. Our goal is to show that 
$\Pro(\mE_{m+1}^{\btheta}\cap\mE_{m+1}^{\textup{RE}}) \ge 1- \frac{(6m+4)K}{T^4}$.
 
On the event $\mE_{m+1}^{\textup{RE}} \cap \mE_{m+1}^{\bX\bepsilon}$,
due to the second inequality in Lemma \ref{lemma: bounds of lasso}, we have that the event $\mE_{m+1}^{\btheta}$ holds. Thus, by the union bound,
\begin{align*}
\Pro(\mE_{m+1}^{\btheta}\cap\mE_{m+1}^{\textup{RE}})  \ge \Pro( \mE_{m+1}^{\textup{RE}}\cap\mE_{m+1}^{\bX\bepsilon})   \ge 1- \Pro( (\mE_{m+1}^{\textup{RE}})^c) - \Pro((\mE_{m+1}^{\bX\bepsilon})^c). 
\end{align*}
By Lemma \ref{lemma:ell_infty bound of x times epsilon} and the definition of $\lambda_{(m+1)\gamma_1}$ in \eqref{app_eq: bandit_paras_est}, 
 we have $\Pro\left((\mE_{m+1}^{\bX\bepsilon})^c\right) \leq 2K/T^4$. Thus, it suffices to show that 
 $\Pro( (\mE_{m+1}^{\textup{RE}})^c) \leq (6m+2)K/T^4$, on which we now focus. 

\medskip
For $m\gamma_1<t\le (m+1)\gamma_1$ and $k \in [K]$, define the event 
\begin{equation}\label{eq: stage 2 RE 0}
    \mathcal{B}_t^{(k)}= \left(\bigcap_{i\in [K]}\left\{ |(\hbtheta_{m\gamma_1}^{(i)}-\btheta^{(i)})'\bX_t|\le {h^*}/{2} \right\} \right) \bigcap \left\{\bX_t\in \mathcal{U}_{h^*}^{(k)} \right\}.
\end{equation}
Due to the definition of $\mathcal{U}_h^{(k)}$ in \eqref{eq: U_h_k}, for any $j \neq k \in [K]$,    on the event $\mathcal{B}_t^{(k)}$, 
\begin{align*}
     (\hbtheta_{m\gamma_1}^{(k)}-\hbtheta_{m\gamma_1}^{(j)})'\bX_t &\ge (\btheta^{(k)}- \btheta^{(j)})'\bX_t - |(\hbtheta_{m\gamma_1}^{(k)}-\btheta^{(k)})'\bX_t| - |(\hbtheta_{m\gamma_1}^{(j)}-\btheta^{(j)})'\bX_t|
     \\&> h^*-h^*/2-h^*/2=0.
\end{align*}
 which implies that $\mathcal{B}_t^{(k)}\subseteq \{A_t=k\}$, and thus for each $\bv\in \mC(s_0,3)$,
 \begin{align*}
     \frac{\|\bX_{[(m+1)\gamma_1]}^{(k)} \bv\|_2^2}{(m+1)\gamma_1} \geq \frac{m\gamma_1}{(m+1)\gamma_1}\frac{\|\bX_{[m\gamma_1]}^{(k)} \bv\|_2^2}{m\gamma_1} + \frac{\gamma_1}{(m+1)\gamma_1} \bv'\left(\frac{1}{\gamma_1}\sum_{t=m\gamma_1+1}^{(m+1)\gamma_1} \bX_t\bX_t'\idf{\{\mathcal{B}_t^{(k)}\}} \right) \bv.
 \end{align*}

Due to the definition of $\mE_{m}^{\textup{RE}}$ in \eqref{def:stage2_events} and $\mathcal{B}_t^{(k)}$ in \eqref{eq: stage 2 RE 0}, on the event $\mE_{m}^{\textup{RE}}$, we have that for each $k \in [K]$ and $\bv\in \mC(s_0,3)$,
\begin{equation}\label{eq: stage 2 RE 1}
\begin{aligned}
   \frac{\|\bX_{[(m+1)\gamma_1]}^{(k)} \bv\|_2^2}{(m+1)\gamma_1} \geq \frac{m}{m+1} a^*\|\bv\|_2^2 + \frac{1}{m+1} \left(I_{m+1}^{(k)}(\bv) - \sum_{i \in [K]} II_{m+1}^{(i)}(\bv) \right), 
\end{aligned}
\end{equation}
where we define 
\begin{equation*} 
\begin{aligned}
& \;\; I_{m+1}^{(k)}(\bv)  :=  \bv'\left(\frac{1}{\gamma_1}\sum_{t=m\gamma_1+1}^{(m+1)\gamma_1} \bX_t\bX_t'\idf\{\bX_t\in \mathcal{U}_{h^*}^{(k)}\} \right) \bv, \\
    &II_{m+1}^{(k)}(\bv) :=   \bv'\left(\frac{1}{\gamma_1}\sum_{t=m\gamma_1+1}^{(m+1)\gamma_1} \bX_t\bX_t'\idf\{|(\hbtheta_{m\gamma_1}^{(k)}-\btheta^{(k)})'\bX_t|\ge h^*/2\} \right) \bv.
\end{aligned}
\end{equation*}

Further, we define the following two events
\begin{align*}
 &\mathcal{A}_{m+1} := \bigcap_{k \in [K]} \left\{ I_{m+1}^{(k)}(\bv) \geq \ell_0^* \|\bv\|_2^2 \; \text{ for all } 
\bv\in \mathcal{C}(s_0,3)\right\},\\
& \mathcal{A}'_{m+1}=\bigcap_{k = 1}^{K}\left\{ II_{m+1}^{(k)}(\bv) \leq  \frac{K-1}{K} a^*\|v\|_2^2\; \text{ for all } 
\bv\in \mathcal{C}(s_0,3)
\right\}.
\end{align*}
By the definition of $a^*$ in \eqref{eq: a}, $\ell_0^* \geq a^* K$. Thus, due to \eqref{eq: stage 2 RE 1} and the definition $\mE_{m+1}^{\textup{RE}}$ in \eqref{def:stage2_events},
$$\mE_{m}^{\textup{RE}} \bigcap
\mathcal{A}_{m+1} \bigcap \mathcal{A}'_{m+1}
\;\subseteq\; \mE_{m+1}^{\textup{RE}} 
$$

Finally, due to \eqref{stage 2:aux_eq2}, we have that $\Pro\left(\mathcal{A}_{m+1}\right) \geq 1 -2K/T^4$. Further, on the event $\mE_m^{\btheta}$ defined in \eqref{def:stage2_events} and $\lambda_{m\gamma_1}$ in \eqref{app_eq: bandit_paras_est}, due to \eqref{eq: lambda_gamma_1}, for each $k \in [K]$,
\begin{align*}
  \|\hbtheta_{m\gamma_1}^{(k)}-\btheta^{(k)}\|_2^2\le \frac{9}{(a^*)^2} s_0 (6m_X \sigma)^2 \frac{\log(dT)}{m \gamma_1} \leq \frac{9}{(a^*)^2} \frac{(6m_X)^2}{C_{\gamma_1}} \leq (\ell_1^*)^2.
\end{align*}
Thus, since $\{\bX_t: m\gamma_1 < t \leq (m+1)\gamma_1\}$ are independent from $\cF_{m\gamma_1}$ and $\mE_m^{\btheta} \in \cF_{m\gamma_1}$, due to \eqref{stage 2:aux_eq3} with $\bu =\hbtheta_{m\gamma_1}^{(k)}-\btheta^{(k)}$, we have that almost surely
\begin{align*}
    \Pro\left(\mathcal{A}'_{m+1} \vert \cF_{m\gamma_1} \right) \idf\{\mE_m^{\btheta}\}  \geq (1 - 2K/T^4)\idf\{\mE_m^{\btheta}\}.
\end{align*}
Combining the above results, by the union bound, we have
\begin{align*}
    \Pro\left((\mE_{m+1}^{\textup{RE}})^c\right) 
    \leq &\Pro\left( (\mE_{m}^{\textup{RE}} \cap \mE_{m}^{\btheta})^c\right)   
    +\Pro\left((\mathcal{A}_{m+1})^c\right) 
    +\Pro\left((\mathcal{A}'_{m+1})^c\; \vert    \mE_{m}^{\btheta} \cap \mE_{m}^{\textup{RE}}\right)   \\
    \leq & (6m - 2)K/T^4 + 2K/T^4 + 2K/T^4 = (6m+2)K/T^4, 
\end{align*}
where we use the inductive hypothesis on $ \Pro\left(\mE_{m}^{\textup{RE}} \cap \mE_{m}^{\btheta}\right)$ in the second step. Then the inductive step is finished, and the proof is complete.
\end{proof}

\subsection{Additional results for Stage 2}
The following lemma concerns the property of 
$\bX_{[t]}^{(k)}$ in \eqref{eq: bandit design matrix}, the Lasso estimator $\hbtheta_{t}^{(k)}$ in Subsection \ref{subsec:three_stage_algo}, and $\hbSigma^{(k)}_{t}$ in \eqref{eq: hbSigma} at the end of Stage 2, that is, for $t = \gamma_2$. Recall the definition of $\mathcal{U}_{h}^{(k)}$ in \eqref{eq: U_h_k} and $\bSigma^{(k)}$ in \eqref{eq: hbSigma}.

Recall the fixed constants  $C^* \geq 1$ and $a^*, h^*, \ell_0^*, \ell_1^* > 0$  in \eqref{def:star_constants}, which only depend only on $K,  m_X, \alpha_X,  L_3, m_{\theta}, L_4$, such that \eqref{stage 2:aux_eq1}-\eqref{eq: margin} hold.
\begin{lemma}  \label{stage3: base case}
Suppose that Assumptions \ref{assumption: covariates_bandit} and \ref{assumption:arm_parameters} hold. We set the regularization parameter $\lambda_t$ as in \eqref{app_eq: bandit_paras_est}, and the end time $\gamma_1$ and $\gamma_2$ of Stage 1 and 2, respectively, as in \eqref{app_eq: end_times}.
If the integers $C_{\gamma_1}$ and $C_{\gamma_2}$ in \eqref{app_eq: end_times} satisfy the following:
\begin{align}
    \label{eq: aux_lambda_gamma_12}
     C_{\gamma_1} \ge \max\left\{C^*,\; \left(\frac{18m_X}{\ell_1^* a^*}\right)^2,2\right\}, \;\;
     C_{\gamma_2} \ge \max\left\{4, \frac{32 m_{X}^4}{(a^*)^2}, \left(\frac{192K L_4 C^* m_X^4}{(a^*)^2} \right)^4 \right\}.
\end{align}
then, with probability at least $1-17K/T^2$, the event $\tilde{\mE}_1^{\textup{RE}} \cap \tilde{\mE}_1^{\bSigma} \cap  \tilde{\mE}_1^{\textup{Lasso}}$ occurs, where
\begin{align*}
\begin{split}
&\tilde{\mE}_1^{\textup{RE}} := \bigcap_{k=1}^{K}\left\{\bX_{[\gamma_2]}^{(k)} \textup{ satisfies } \textup{RE}(s_0, 3, a^*)\right\},\\ 
&\tilde{\mE}_1^{\bSigma} := \bigcap_{k=1}^{K}\left\{\mn\hbSigma_{\gamma_2}^{(k)}-\bSigma^{(k)}\mn_{\textup{max}} \le  \frac{3 a^*}{s_0} \right\},\\ 
&\tilde{\mE}_1^{\textup{Lasso}} = \bigcap_{k = 1}^{K}\left\{\|\hbtheta_{\gamma_2}^{(k)} - \btheta^{(k)}\|_{\infty} \leq  28L_3 \lambda_{\gamma_2}\right\}.
\end{split}
\end{align*}
\end{lemma}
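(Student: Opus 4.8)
The plan is to prove Lemma~\ref{stage3: base case} as the analogue, at the end of Stage~2, of the base case in Lemma~\ref{lemma: RE and estimation error of stage 2}: take the Stage~2 guarantees from that lemma essentially for free, add a concentration argument for the arm‑specific sample covariance matrices $\hbSigma^{(k)}_{\gamma_2}$, and then feed everything into the deterministic $\ell_\infty$ bound for Lasso. Concretely, applying Lemma~\ref{lemma: RE and estimation error of stage 2} with $m=\gamma_2/\gamma_1$ immediately gives $\tilde\mE_1^{\textup{RE}}$ (each $\bX_{[\gamma_2]}^{(k)}$ satisfies $\textup{RE}(s_0,3,a^*)$) and the $\ell_2$‑error bound $\|\hbtheta^{(k)}_{\gamma_2}-\btheta^{(k)}\|_2^2\le (18m_X)^2(a^*)^{-2}\sigma^2 s_0\log(dT)/\gamma_2$, on an event of probability at least $1-(6\gamma_2/\gamma_1-2)K/T^4$; since $\gamma_2\le T$ (otherwise Stage~3 is empty), this is $O(K/T^3)$. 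I would also retain, from the events \eqref{def:stage2_events} inside the proof of Lemma~\ref{lemma: RE and estimation error of stage 2}, the per‑block error bounds $\|\hbtheta^{(k)}_{m\gamma_1}-\btheta^{(k)}\|_2\le 18m_X(a^*)^{-1}\sqrt{\sigma^2 s_0\log(dT)/(m\gamma_1)}$ for all $1\le m\le\gamma_2/\gamma_1$ and $k\in[K]$, holding on a common high‑probability event; these drive the $\hbSigma^{(k)}$ estimate below.

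For $\tilde\mE_1^{\bSigma}$, I would use the exact decomposition (recall $\{\bX_s\in\mathcal{U}_0^{(k)}\}=\{A_s^*=k\}$ up to a null set and $\bSigma^{(k)}=\Exp[\bX_s\bX_s'\idf\{\bX_s\in\mathcal{U}_0^{(k)}\}]$)
\begin{align*}
\hbSigma^{(k)}_{\gamma_2} - \bSigma^{(k)}
&= \frac{1}{\gamma_2}\sum_{s\le\gamma_1}\bX_s\bX_s'\idf\{A_s=k\} - \frac{\gamma_1}{\gamma_2}\bSigma^{(k)} + \frac{1}{\gamma_2}\sum_{\gamma_1<s\le\gamma_2}\bX_s\bX_s'\bigl(\idf\{A_s=k\}-\idf\{\bX_s\in\mathcal{U}_0^{(k)}\}\bigr) \\
&\quad + \Bigl(\frac{1}{\gamma_2}\sum_{\gamma_1<s\le\gamma_2}\bX_s\bX_s'\idf\{\bX_s\in\mathcal{U}_0^{(k)}\} - \frac{\gamma_2-\gamma_1}{\gamma_2}\bSigma^{(k)}\Bigr).
\end{align*}
The Stage~1 term and $(\gamma_1/\gamma_2)\bSigma^{(k)}$ have $\|\cdot\|_{\max}\le m_X^2\gamma_1/\gamma_2=m_X^2/(C_{\gamma_2}s_0^4)$. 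The last bracket is a normalized sum of i.i.d.\ centered matrices with entries in $[-m_X^2,m_X^2]$, so Hoeffding's inequality entrywise with a union bound over the $d^2$ entries and the $K$ arms gives $\|\cdot\|_{\max}\le Cm_X^2\sqrt{\log(dT)/\gamma_2}$ with probability $\ge 1-2K/T^2$. The middle ("mismatch") term is the delicate one: on the retained per‑block error event, in block $m$ the greedy arm $A_s$ coincides with $A_s^*$ unless $\bX_s$ lies within distance proportional to $\max_i\|\hbtheta^{(i)}_{m\gamma_1}-\btheta^{(i)}\|_2$ of one of the $\binom K2$ hyperplanes $\{(\btheta^{(i)}-\btheta^{(j)})'\bx=0\}$; conditioning on $\cF_{m\gamma_1}$ (on which the block's estimators are fixed) and using the margin bound \eqref{eq: margin} together with Lemma~\ref{lemma: estErr_to_regret} gives $\Pro(A_s\ne A_s^*\mid\cF_{m\gamma_1})\le C\max_i\|\hbtheta^{(i)}_{m\gamma_1}-\btheta^{(i)}\|_2\le C'(a^*)^{-1}\sqrt{s_0\log(dT)/(m\gamma_1)}$. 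A Freedman/Bernstein bound for the martingale $\sum_{\gamma_1<s\le\gamma_2}(\idf\{A_s\ne A_s^*\}-\Pro(A_s\ne A_s^*\mid\cF_{m(s)\gamma_1}))$, whose predictable quadratic variation is controlled by $\sum_{m=1}^{\gamma_2/\gamma_1}\gamma_1\cdot C'(a^*)^{-1}\sqrt{s_0\log(dT)/(m\gamma_1)}\le 2C'(a^*)^{-1}\sqrt{\gamma_2 s_0\log(dT)}$, yields that the number of mismatch rounds is $O((a^*)^{-1}\sqrt{\gamma_2 s_0\log(dT)})$ with probability $\ge 1-2K/T^2$, hence the middle term has $\|\cdot\|_{\max}\le Cm_X^2(a^*)^{-1}\sqrt{s_0\log(dT)/\gamma_2}$. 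Plugging $\gamma_2\ge C_{\gamma_2}s_0^5\log(dT)$ from \eqref{app_eq: end_times} into all three bounds makes each at most $Cm_X^2(a^*)^{-1}/(\sqrt{C_{\gamma_2}}\,s_0^2)$, which is $\le 3a^*/s_0$ once $C_{\gamma_2}$ is as large as in \eqref{eq: aux_lambda_gamma_12}; this gives $\tilde\mE_1^{\bSigma}$ off an event of probability $O(K/T^2)$.

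Finally, $\tilde\mE_1^{\textup{Lasso}}$ follows from the third inequality of Lemma~\ref{lemma: bounds of lasso}, applied with $n=\gamma_2$, design $\bX_{[\gamma_2]}^{(k)}$, restricted‑eigenvalue constant $a=a^*$ (valid on $\tilde\mE_1^{\textup{RE}}$), and $\bA=\bSigma^{(k)}$. The noise condition $\lambda_{\gamma_2}\ge 2\|(\bX_{[\gamma_2]}^{(k)})'\bepsilon_{[\gamma_2]}/\gamma_2\|_\infty$ for all $k$ holds with probability $\ge 1-2K/T^4$ by Lemma~\ref{lemma:ell_infty bound of x times epsilon} and the choice $\lambda_{\gamma_2}=6m_X\sigma\sqrt{\log(dT)/\gamma_2}$ in \eqref{app_eq: bandit_paras_est}; on $\tilde\mE_1^{\bSigma}$ we have $8\|\hbSigma^{(k)}_{\gamma_2}-\bSigma^{(k)}\|_{\max}s_0/a^*\le 24$; and $\|(\bSigma^{(k)})^{-1}\|_1=\|(\bSigma^{(k)})^{-1}\|_\infty\le L_3$ by Assumption~\ref{assumption: constraint on the covariance matrix 1} since $\bSigma^{(k)}$ is symmetric. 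Hence $\|\hbtheta^{(k)}_{\gamma_2}-\btheta^{(k)}\|_\infty\le L_3(4+24)\lambda_{\gamma_2}=28L_3\lambda_{\gamma_2}$, i.e.\ $\tilde\mE_1^{\textup{Lasso}}$. Adding up the failure probabilities — $O(K/T^3)$ for the Stage~2 guarantees, $2K/T^2$ for the i.i.d.\ matrix part, $2K/T^2$ for the mismatch count, $2K/T^4$ for the noise event, plus the part already counted in the retained per‑block event — and bounding crudely yields the claimed $1-17K/T^2$.

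The main obstacle is the middle "mismatch" term: quantifying how often the greedy arm differs from the optimal arm over all of Stage~2. This is exactly where the per‑block $\ell_2$‑estimation‑error control from Lemma~\ref{lemma: RE and estimation error of stage 2}, the margin condition, and a block‑wise martingale concentration must be combined, and the telescoping $\sum_{m\le\gamma_2/\gamma_1} m^{-1/2}\asymp\sqrt{\gamma_2/\gamma_1}$ is precisely what forces both the $s_0^4$ gap between $\gamma_1$ and $\gamma_2$ and the large lower bound on $C_{\gamma_2}$ in \eqref{eq: aux_lambda_gamma_12}. A secondary, purely organizational, difficulty is keeping all the high‑probability events mutually compatible; this is handled, as in Lemma~\ref{lemma: RE and estimation error of stage 2}, by exploiting that the estimators are piecewise constant on blocks so that one may condition cleanly on $\cF_{m\gamma_1}$.
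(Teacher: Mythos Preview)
Your approach is correct but the paper takes a different route for the mismatch term. Rather than summing the per-block mismatch probabilities over all of Stage~2 and invoking Freedman, the paper introduces an intermediate time $\tgamma_1:=\lceil\sqrt{C_{\gamma_2}}\rceil s_0^3\gamma_1$, bounds the contribution of the first $\tgamma_1$ rounds to $\hbSigma^{(k)}_{\gamma_2}$ trivially (since $\tgamma_1/\gamma_2\lesssim a^*/(m_X^2 s_0)$), and on the remaining rounds uses the $\ell_1$ Lasso bound from Lemma~\ref{lemma: bounds of lasso} to control the inner-product error \emph{deterministically}: $|(\hbtheta^{(i)}_{m\gamma_1}-\btheta^{(i)})'\bX_t|\le m_X\|\hbtheta^{(i)}_{m\gamma_1}-\btheta^{(i)}\|_1\le (8m_X/a^*)s_0\lambda_{\tgamma_1}$ for every $m\ge\tgamma_1/\gamma_1$. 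This collapses the mismatch indicator to $\idf\{(\mathcal B_t^{(k)})^c\}$ for a single fixed-threshold event depending only on $\bX_t$, so the mismatch count is handled by i.i.d.\ Bernoulli concentration (Lemma~\ref{lemma: concentration inequality of bernoulli}) rather than a martingale argument; the fourth-power lower bound on $C_{\gamma_2}$ in \eqref{eq: aux_lambda_gamma_12} is exactly what makes $s_0\lambda_{\tgamma_1}\lesssim a^*/(m_X^2 s_0)$. Your telescoping-plus-Freedman route is more direct and could in principle yield a smaller power on $C_{\gamma_2}$, while the paper's intermediate-time trick avoids martingale machinery entirely. One small gap in your argument: Lemma~\ref{lemma: estErr_to_regret} bounds $\Exp[|\bu'\bX|\idf\{\textup{sgn}\ne\textup{sgn}\}]$, not $\Pro(A_s\ne A_s^*)$; you need one extra step (e.g.\ split on $\{|\bu'\bX|\le h\}$, use \eqref{eq: margin} and Markov, then take $h\asymp\|\bu-\hat\bu\|_2$) to obtain the linear-in-$\|\Delta\|_2$ mismatch-probability bound you invoke --- the paper's $\ell_1$ route sidesteps this issue.
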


\begin{proof}
Recall the events $\mE_m^{\textup{RE}}$ and $\mE_m^{\bX\bepsilon}$ in \eqref{def:stage2_events} for $m = 1,\ldots,\gamma_2/\gamma_1$, where $\gamma_2$ is now a multiple of $\gamma_1$. Define the following times and ratios
\begin{align}\label{aux:times_ratios}
    \tgamma_1 := \lceil \sqrt{C_{\gamma_2}} \rceil s_0^3\gamma_1, \quad r := \frac{\tgamma_1}{\gamma_1}, \quad R:= \frac{\gamma_2}{\gamma_1}.
\end{align}
By Lemma \ref{lemma: RE and estimation error of stage 2} and 
Lemma \ref{lemma:ell_infty bound of x times epsilon}
for each $m=1,2,\cdots, R$,   we have
\begin{align}\label{aux:from_stage2}
\begin{split}
&\Pro\left(\bigcap_{k = 1}^{K}\left\{\bX_{[m\gamma_1]}^{(k)} \textup{ satisfies } \textup{RE}(s_0,3,a^*) \right\}\right) \ge 1-(6m-2)K/T^4,\\
&\Pro\left(\bigcap_{k = 1}^{K}\left\{\|(\bX_{[m\gamma_1]}^{(k)})'\bepsilon_{[m\gamma_1]}^{(k)}/(m\gamma_1)\|_{\infty} \leq \lambda_{m\gamma_1}/2\right\}\right)\ge 1-2K/T^4.
\end{split}
\end{align}

Note that $\tilde{\mE}_1^{\textup{RE}} = \mE_{R}^{\textup{RE}}$, and thus $ \Pro\left(\tilde{\mE}_1^{\textup{RE}}\right) \geq 1 - 6K/T^2.$ Further, applying Lemma \ref{lemma: bounds of lasso}
with $\bA = \bSigma^{(k)}$, and due to Assumption \ref{assumption: constraint on the covariance matrix 1}, on the event $\mE_{R}^{\bX\bepsilon}  \cap \tilde{\mE}_1^{\textup{RE}} \cap \tilde{\mE}_1^{\bSigma}$, for each $k \in [K]$,
$$
\|\hbtheta_{\gamma_2}^{(k)} - \btheta^{(k)}\|_{\infty} \leq L_3\left(4 + \frac{8s_0}{a^*}  \times \frac{3a^*}{s_0}  \right) \lambda_{\gamma_2} = 28 L_3 \lambda_{\gamma_2}.
$$
That is, $\mE_{R}^{\bX\bepsilon} \cap \tilde{\mE}_1^{\textup{RE}} \cap \tilde{\mE}_1^{\bSigma}\; \subseteq \;  \tilde{\mE}_1^{\textup{Lasso}}$.
Thus, by the union bound, we have
\begin{align*}
   \Pro\left( \tilde{\mE}_1^{\textup{RE}} \cap \tilde{\mE}_1^{\bSigma} \cap  \tilde{\mE}_1^{\textup{Lasso}}\right) \geq 1 - \Pro((\mE_{R}^{\bX\bepsilon})^c) - \Pro((\tilde{\mE}_1^{\textup{RE}})^c) - \Pro((\tilde{\mE}_1^{\bSigma})^c) \geq 1 - \frac{8K}{T^2} - \Pro((\tilde{\mE}_1^{\bSigma})^c).
\end{align*}
As a result, it suffices to show that $\Pro((\tilde{\mE}_1^{\bSigma})^c) \leq 9K/T^2$, on which we now focus.

\medskip

Recall the definition of $\hbSigma_{\gamma_2}^{(k)}$ and $\bSigma^{(k)}$ in \eqref{eq: hbSigma}, and $\mathcal{U}_h^{(k)}$ in \eqref{eq: U_h_k}. By the triangle inequality,
\begin{align}\label{aux:based_case_strategy}
    \op{\hbSigma_{\gamma_2}^{(k)}-\bSigma^{(k)}}_{\textup{max}}&\le  I^{(k)} + II^{(k)} + III^{(k)},\;\; \text{ for } k \in [K],
\end{align}
where we define
\begin{align*}
&I^{(k)} := \frac{\tgamma_1}{\gamma_2} \op{\hbSigma_{\tgamma_1}^{(k)} -\bSigma^{(k)} }_{\textup{max}},\;\;
    II^{(k)} :=\op{\frac{1}{\gamma_2}\sum_{s=\tgamma_1+1}^{\gamma_2} \left(\bX_s\bX_s'\idf\{\bX_s \in\mathcal{U}_0^{(k)}\}-\bSigma^{(k)}\right)}_{\textup{max}},\\
    &III^{(k)} := \op{\frac{1}{\gamma_2}\sum_{s=\tgamma_1+1}^{\gamma_2}\bX_s\bX_s'\left(\idf\{A_s=k\}-\idf\{\bX_s \in \mathcal{U}_0^{(k)}\}\right)}_{\textup{max}}.
\end{align*}

For the term $I^{(k)}$, due to \eqref{eq: aux_lambda_gamma_12}, 
$\tgamma_1/\gamma_2 \leq a^*/(2m_X^2s_0)$, and thus,
by Assumption \ref{assumption: context},  
\begin{equation}\label{eq: gamma2_aux1}
I^{(k)} 
\le \frac{2\tgamma_1 m_X^2}{\gamma_2} \le \frac{a^*}{s_0},\;\; \text{ for } k \in [K].
\end{equation}

Next, we consider the term $II^{(k)}$. Due to \eqref{eq: aux_lambda_gamma_12}, we have $$
\gamma_2 - \tgamma_1 \geq \gamma_2/2 \geq (16m_X^4/(a^*)^2) s_0^5 \log(dT).
$$ 
Thus,  by  Lemma \ref{lemma: event D_2},
\begin{equation}\label{eq: gamma2_aux2}
   \Pro\left( \bigcap_{k \in [K]} \left\{ II^{(k)}  \le \frac{a^*}{s_0}\right\} \right) \leq 1 -\frac{2K}{T^2}.
\end{equation}

Furthermore, we focus on the term $III^{(k)}$. For each $t \in [T]$ and  $k\in[K]$, define
\begin{align*}
    &Z_t^{(k)}:=|\idf\{A_t=k\}-\idf\{(\btheta^{(k)})'\bX_t>\max_{\ell \neq k}((\btheta^{(\ell)})'\bX_t)\}|\in \{0,1\}, \\
        & \mathcal{B}_t^{(k)} = \bigcap_{\ell \neq k}\left\{|(\btheta^{(k)}-\btheta^{(\ell)})'\bX_t| > \frac{16m_X}{a^*} s_0\lambda_{\tgamma_1}\right\},
\end{align*}
and further
\begin{equation*}
    \mathcal{A} = \bigcap_{m=r}^{R}\bigcap_{k = 1}^{K}\biggl\{|(\hbtheta_{m\gamma_1}^{(k)}-\btheta^{(k)})'\bX_t|\le \frac{8m_X}{a^*}s_0\lambda_{m\gamma_1} \text{ for all } m\gamma_1< t \le (m+1)\gamma_1 \biggr\}.
\end{equation*}

On the event $\mathcal{A}$, for each $k \in[K]$, $m \in \{r,\cdots,R\}$ and $t \in (m\gamma_1, (m+1)\gamma_1]$, we have that for $\ell \in [K]$ and $\ell \neq k$,
\begin{align*}
&|(\hbtheta_{m\gamma_1}^{(k)}-\hbtheta_{m\gamma_1}^{(\ell)})'\bX_t-(\btheta^{(k)}-\btheta^{(\ell)})'\bX_t|
 \le \frac{16m_X}{a^*}s_0\lambda_{\tgamma_1},
\end{align*}
which implies that on the event $\mathcal{A}\cap \mathcal{B}_t^{(k)}$,
$$
\{A_t = k \} \quad \text{ if and only if} \quad
\bX_t'\btheta^{(k)}>\max_{\ell \neq k}(\bX_t'\btheta^{(\ell)}),
$$
and thus
$$
Z_t^{(k)} \idf\{\cA\}\; = \;  Z_t^{(k)} \idf\{\cA\} \idf\{(\mathcal{B}_t^{(k)})^c\} \; \leq \; \idf\{(\mathcal{B}_t^{(k)})^c\}.
$$
Therefore, due to Assumption \ref{assumption: context},
\begin{align}\label{aux:III_gamma2}
    III^{(k)} \idf\{\cA\} \leq  m_X^2 \frac{1}{\gamma_2} \sum_{t=1}^{\gamma_2} \idf\{(\mathcal{B}_t^{(k)})^c\}.
\end{align}

By Lemma \ref{lemma: bounds of lasso}, for each $m=1,\cdots,R$, on the event $\mE_m^{\textup{RE}} \cap \mE_m^{\bX\bepsilon}$,  we have that 
\begin{equation*}
    \|\hbtheta_{m\gamma_1}^{(k)}-\btheta^{(k)}\|_1\le \frac{8s_0\lambda_{m \gamma_1}}{a^*}, \quad \text{ for each } k \in [K].
\end{equation*}
Since $|(\hbtheta_{m\gamma_1}^{(k)}-\btheta^{(k)})'\bX_t|\le \|\bX_{t}\|_{\infty}\|\hbtheta_{m\gamma_1}^{(k)}-\btheta^{(k)}\|_1$, due to Assumption \ref{assumption: context}, 
\begin{align*}
    \bigcap_{m=r}^{R} \left(\mE_m^{\textup{RE}} \cap \mE_m^{\bX\bepsilon} \right) \; \subseteq \; \mathcal{A},
\end{align*}
which, in view of \eqref{aux:from_stage2}  and due to the union bound, implies that
$$\Pro\left(\mathcal{A}\right) \geq
1 - \sum_{m=r}^{R} \Pro\left( (\mE_m^{\textup{RE}} \cap \mE_m^{\bX\bepsilon})^c \right) \geq 
1 - \frac{6K}{T^2}.
$$

Further, due to Assumption \ref{assumption: linear_independent}, $\|\btheta^{(k)}-\btheta^{(\ell)}\|_2 \geq L_4^{-1}$. 
Due to \eqref{eq: margin} and the union bound, we have that for $t \in [T]$ and $k \in [K]$
\begin{equation*}  
\Pro((\mathcal{B}_t^{(k)})^{c})\le K \times C^* L_4 \frac{16m_X}{a^*} s_0  \lambda_{\tgamma_1}.
 \end{equation*}
 By the definition of $\lambda_{\tgamma_1}$ in \eqref{app_eq: bandit_paras_est} and $\tgamma_1$ in \eqref{aux:times_ratios}, 
 $\lambda_{\tgamma_1} \leq 6m_X/(C_{\gamma_2}^{1/4} s_0^2)$. Then due to \eqref{eq: aux_lambda_gamma_12}, we have
 $\Pro((\mathcal{B}_t^{(k)})^{c}) \leq a^*/(2m_X^2s_0)$ for $t \in [T]$ and $k \in [K]$. By Lemma \ref{lemma: concentration inequality of bernoulli} with $\delta = a^*/(2m_X^2s_0)$, we have that for $k \in [K]$,
\begin{equation*}
\Pro\left( \frac{1}{\gamma_2}\sum_{t=1}^{\gamma_2}\idf\{(\mathcal{B}_t^{(k)})^{c}\}\ge \frac{a^*}{m_X^2 s_0}   \right)\leq \exp\left(- \frac{3a^*}{16m_X^2s_0} \gamma_2\right) \leq \frac{1}{T^2},
\end{equation*}
where the last inequality is due to \eqref{eq: aux_lambda_gamma_12}. As a result, due to \eqref{aux:III_gamma2}, since $\Pro(\cA) \geq 1-6K/T^2$,  
\begin{align}\label{eq: gamma2_aux3}
    \Pro\left(\bigcap_{k \in [K]}  \left\{III^{(k)} \leq \frac{a^*}{s_0} \right\}\right) \geq 1-\frac{7K}{T^2}.
\end{align}

Finally, combining inequalities in \eqref{eq: gamma2_aux1}, \eqref{eq: gamma2_aux2}, and \eqref{eq: gamma2_aux3} on the three terms, in view of \eqref{aux:based_case_strategy},  
\begin{align*}
    \Pro\left(\bigcap_{k \in [K]}  \left\{\op{\hbSigma_{\gamma_2}^{(k)}-\bSigma^{(k)}}_{\textup{max}} \le \frac{3a^*}{s_0} \right\} \right) \geq 1 - \frac{9K}{T^2}.
\end{align*}
The proof is complete.
\end{proof}

\subsection{Proof of Theorem \ref{thm:bandit_est_accuracy}, Part (ii): Stage 3} \label{sec: proof of thm: regret in stage 3}
In this subsection, we focus on Stage 3 of the procedure defined in Definition \ref{def:three_stages} and establish part (ii) of Theorem \ref{thm:bandit_est_accuracy}, which follows immediately from Lemma \ref{stage3: main lemma}.

Recall the definition of $\bepsilon_{[t]}$ in \eqref{def:bepsilon} and $\mathcal{U}_h^{(k)}$ in \eqref{eq: U_h_k}.  Recall 
$\bX_{[t]}^{(k)}$ and $\bY_{[t]}^{(k)}$ in \eqref{eq: bandit design matrix}, the Lasso and OPT-Lasso estimators $\hbtheta_{t}^{(k)}$ and $\tbtheta_t^{(k)}$ in Subsection \ref{subsec:three_stage_algo}, and $\hbSigma^{(k)}_{t}$ and $\bSigma^{(k)}$ in \eqref{eq: hbSigma}.

Recall the fixed constants  $C^* \geq 1$ and $a^*, h^*, \ell_0^*, \ell_1^* > 0$  in \eqref{def:star_constants}, which only depend only on $K,  m_X, \alpha_X,  L_3, m_{\theta}, L_4$, such that \eqref{stage 2:aux_eq1}-\eqref{eq: margin} hold.
\begin{lemma}  \label{stage3: main lemma}
Suppose that Assumptions \ref{assumption: covariates_bandit} and \ref{assumption:arm_parameters} hold. We set the regularization parameter $\lambda_t,\hdlambda_t$ as in \eqref{app_eq: bandit_paras_est}, and the end times $\gamma_1$ and $\gamma_2$ of Stage 1 and 2, respectively, as in \eqref{app_eq: end_times}.
If the integers $C_{\gamma_1}$ and $C_{\gamma_2}$ in \eqref{app_eq: end_times} satisfy conditions \eqref{eq: aux_lambda_gamma_12} and \eqref{eq: aux_lambda_gamma_12_generalm}.
Then, for each $m = 1,2,\ldots, \lfloor T/\gamma_2 \rfloor$, there exists an event $\mathcal{A}_{m\gamma_2}\in \cF_{m\gamma_2}$ such that $\Pro(\mathcal{A}_{m\gamma_2})\ge 1-17K/T$, and
\begin{equation*}
    \Exp\left[\|\tbtheta_{m\gamma_2}^{(k)}-\btheta^{(k)} \|_2^2 \idf{\{\mathcal{A}_{m\gamma_2}\}} \right] \le 7L_3^2 \sum_{j\in S_k} (\btheta^{(k)}_j)^2 \idf\left\{|\btheta^{(k)}_j| \le 2\hdlambda_{m\gamma_2} \right\} + 32L_3^2 m_X^2 \sigma^2 \frac{ s_0}{m\gamma_2}.
\end{equation*}
\end{lemma}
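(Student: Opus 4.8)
The plan is to prove Lemma~\ref{stage3: main lemma} by an inductive argument over $m \in \{1,\ldots,\lfloor T/\gamma_2\rfloor\}$, in parallel with the inductive structure used in Stage~2 (Lemma~\ref{lemma: RE and estimation error of stage 2}), but now carrying \emph{three} events through the induction instead of two: for each such $m$ we will build an event $\cA_{m\gamma_2}\in\cF_{m\gamma_2}$ on which, for every arm $k\in[K]$, (i) $\bX_{[m\gamma_2]}^{(k)}$ satisfies $\textup{RE}(s_0,3,a^*)$; (ii) $\op{\hbSigma^{(k)}_{m\gamma_2}-\bSigma^{(k)}}_{\textup{max}} \le 3a^*/s_0$; and (iii) the $\ell_\infty$ control $\|\hbtheta^{(k)}_{m\gamma_2}-\btheta^{(k)}\|_\infty \le 28L_3\lambda_{m\gamma_2}$ on the \emph{initial Lasso} estimator that feeds into OPT-Lasso. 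The base case $m=1$ is exactly Lemma~\ref{stage3: base case} (the event there is $\tilde{\mE}_1^{\textup{RE}}\cap\tilde{\mE}_1^{\bSigma}\cap\tilde{\mE}_1^{\textup{Lasso}}$, with probability $\ge 1-17K/T^2 \ge 1-17K/T$ for the relevant range), so the real work is the inductive step and the conversion of (i)--(iii) into the claimed instance-specific $\ell_2$ bound.

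First, I would show how (i)--(iii) at time $t=m\gamma_2$ yield the stated bound. Given (iii), the hypothesis $\|\hbtheta^{(k)}_t-\btheta^{(k)}\|_\infty \le \hdlambda_t$ of Theorem~\ref{thm: ols post l2 estimation error} holds because $\hdlambda_t = 28L_3\lambda_t$ by \eqref{app_eq: bandit_paras_est}; given (i), the submatrix eigenvalue hypothesis there holds with $a = a^*$ and some $b$ controlled via (ii) (since $\op{\hbSigma^{(k)}_t - \bSigma^{(k)}}_{\textup{max}}\le 3a^*/s_0$ and Assumption~\ref{assumption: bounded cov mat} bound $\lmax{(\hbSigma_t^{(k)})_{S_k,S_k}}$ by something like $L_3 + 3a^*$, while $\lmin \ge a^*$ from the RE condition restricted to $S_k$). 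Theorem~\ref{thm: ols post l2 estimation error}(ii) then gives, on $\cA_{m\gamma_2}$,
\[
\|\tbtheta_{m\gamma_2}^{(k)}-\btheta^{(k)}\|_2^2 \le (2b/a^*+1)\sum_{j\in S_k}(\btheta^{(k)}_j)^2\idf\{|\btheta^{(k)}_j|\le 2\hdlambda_{m\gamma_2}\} + \frac{2}{(a^*)^2}\|(\bX_{[m\gamma_2]}^{(k)})_{S_k}'\bepsilon_{[m\gamma_2]}/(m\gamma_2)\|_2^2.
\]
Taking expectations and inserting the tail bound \eqref{eq: x_bepsilon_S_k} (which holds with probability $\ge 1-K/T^2$ regardless of the arm-selection mechanism, and whose complement contributes at most $O(\xi \cdot K/T^2)$ — but here there is no cap, so I would instead fold the event from \eqref{eq: x_bepsilon_S_k} into $\cA_{m\gamma_2}$ and use $\Exp[\cdot\,\idf\{\cA_{m\gamma_2}\}]$, so that on $\cA_{m\gamma_2}$ the $\ell_2$-norm squared is at most $(C^*m_X\sigma)^2 \cdot 3s_0\log(dT)/(m\gamma_2)$, and then absorb the $\log(dT)$ into the constant using $\gamma_2 \gg s_0^4 \log(dT)$, which turns $3(C^*)^2m_X^2\sigma^2 s_0\log(dT)/(m\gamma_2)$ into $\le 32L_3^2 m_X^2\sigma^2 s_0/(m\gamma_2)$ after also noting $m\gamma_2 = m C_{\gamma_2}s_0^4\gamma_1 \ge m C_{\gamma_2}s_0^4 \lceil\log(dT)\rceil$, for $C_{\gamma_2}$ large). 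Matching the $(2b/a^*+1)$ prefactor to $7L_3^2$ is a matter of choosing $C_{\gamma_2}$ (hence $a^*$-independent slack) large enough; this is routine bookkeeping.

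Second, the inductive step: assume $\cA_{m\gamma_2}$ has the three properties with probability $\ge 1-17K/T$, and propagate to $\cA_{(m+1)\gamma_2}$. This mirrors the Stage~2 argument. On $\cA_{m\gamma_2}$ the Lasso estimators $\hbtheta^{(k)}_{m\gamma_2}$ have controlled $\ell_1$ error ($\le 8s_0\lambda_{m\gamma_2}/a^*$ via Lemma~\ref{lemma: bounds of lasso}), hence controlled $|(\hbtheta^{(k)}_{m\gamma_2}-\btheta^{(k)})'\bX_t|$ for the fresh covariates $t\in(m\gamma_2,(m+1)\gamma_2]$, which in turn forces the greedily selected arm $A_t$ to equal the oracle-optimal arm whenever $\bX_t$ is not within $O(s_0\lambda_{m\gamma_2})$ of a decision boundary; the margin bound \eqref{eq: margin} plus Lemma~\ref{lemma: concentration inequality of bernoulli} controls the measure of the exceptional set. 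This delivers the RE condition for $\bX_{[(m+1)\gamma_2]}^{(k)}$ (exactly as in \eqref{eq: stage 2 RE 1}, using \eqref{stage 2:aux_eq2}--\eqref{stage 2:aux_eq3} applied conditionally on $\cF_{m\gamma_2}$, since $\hbtheta^{(k)}_{m\gamma_2}-\btheta^{(k)}$ is $\cF_{m\gamma_2}$-measurable and has norm $\le \ell_1^*$ on $\cA_{m\gamma_2}$), the $\op{\cdot}_{\textup{max}}$ control on $\hbSigma^{(k)}_{(m+1)\gamma_2}-\bSigma^{(k)}$ (splitting as in \eqref{aux:based_case_strategy}–\eqref{aux:III_gamma2}, using that $m\gamma_2 \le \gamma_2$-fraction of the horizon, Lemma~\ref{lemma: event D_2}, and the same boundary argument), and finally the $\ell_\infty$-Lasso bound at $(m+1)\gamma_2$ via Lemma~\ref{lemma: bounds of lasso} with $\bA=\bSigma^{(k)}$, Assumption~\ref{assumption: constraint on the covariance matrix 1}, and Lemma~\ref{lemma:ell_infty bound of x times epsilon} for $\|(\bX^{(k)})'\bepsilon/\cdot\|_\infty \le \lambda/2$. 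A union bound over the $O(1)$ new failure events and over $k\in[K]$, each of size $O(K/T^2)$ or $O(K/T^4)$, keeps the cumulative failure probability below $17K/T$ since there are at most $T/\gamma_2 = O(T/(s_0^5\log(dT)))$ steps.

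The main obstacle I anticipate is \emph{not} any single estimate but the need to keep the $\op{\hbSigma^{(k)}_t-\bSigma^{(k)}}_{\textup{max}}\le 3a^*/s_0$ invariant: this $1/s_0$-accuracy on a max-norm is exactly why Stage~3 needs $t=\Omega(s_0^5\log(dT))$ rather than $\Omega(s_0\log(dT))$, and it is the reason Lemma~\ref{lemma: bounds of lasso}'s $\ell_\infty$ bound (with its $\mn\hbSigma_n-\bA\mn_{\textup{max}}s_0/a$ term) closes at $28L_3\lambda$ rather than blowing up. Propagating it through the inductive step requires that the "misclassification" term $III^{(k)}$ — the discrepancy between $\idf\{A_t=k\}$ and $\idf\{\bX_t\in\mathcal{U}_0^{(k)}\}$ — be pushed below $a^*/s_0$, which needs the boundary-proximity probability $\le a^*/(2m_X^2 s_0)$ uniformly, hence $\lambda_{m\gamma_2}\lesssim s_0^{-2}$ up to constants, i.e. $\gamma_2 \gtrsim s_0^4\log(dT)$ with a further factor built into $C_{\gamma_2}$ large enough that the concentration exponent $\gamma_2 a^*/(16m_X^2 s_0)$ dominates $2\log T$. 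I would state the precise numerical requirement on $C_{\gamma_1},C_{\gamma_2}$ as a condition \eqref{eq: aux_lambda_gamma_12_generalm} (strengthening \eqref{eq: aux_lambda_gamma_12}) and verify it suffices; everything else is a faithful re-run of the Stage~2 bookkeeping with the extra $\bSigma$-event carried along.
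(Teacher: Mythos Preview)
Your overall inductive strategy is right and matches the paper's (which packages the three-event induction into a separate Lemma~\ref{stage3: main lemma_second}), but there are two concrete gaps in how you convert events (i)--(iii) into the claimed bound.

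First, your handling of the noise term $\|(\bX_{[m\gamma_2]}^{(k)})_{S_k}'\bepsilon_{[m\gamma_2]}/(m\gamma_2)\|_2^2$ does not work: folding in the high-probability event~\eqref{eq: x_bepsilon_S_k} gives a bound of order $s_0\log(dT)/(m\gamma_2)$, and the $\log(dT)$ factor \emph{cannot} be absorbed into a constant --- your claim that $3(C^*)^2m_X^2\sigma^2 s_0\log(dT)/(m\gamma_2) \le 32L_3^2 m_X^2\sigma^2 s_0/(m\gamma_2)$ would require $\log(dT)$ to be bounded uniformly in $d,T$. The correct route, used by the paper, is that on $\cA_{m\gamma_2}$ Theorem~\ref{thm: ols post l2 estimation error} gives a \emph{deterministic} inequality whose residual term is exactly $\|(\bX_{[t]}^{(k)})_{S_k}'\bepsilon_{[t]}/t\|_2^2$; one then bounds $\Exp[\,\cdot\,\idf\{\cA_{m\gamma_2}\}] \le \Exp[\,\cdot\,]$ and invokes the unconditional second-moment bound $\Exp[\|(\bX_{[t]}^{(k)})_{S_k}'\bepsilon_{[t]}/t\|_2^2] \le 4(m_X\sigma)^2 s_0/t$ from the first part of Lemma~\ref{lemma: ell_2 bound of x epsilon} (a martingale computation with no $\log$ factor), yielding precisely $8L_3^2 \cdot 4(m_X\sigma)^2 s_0/t = 32L_3^2 m_X^2\sigma^2 s_0/(m\gamma_2)$.

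Second, using the RE constant $a^*$ as the lower eigenvalue in Theorem~\ref{thm: ols post l2 estimation error} gives prefactors $(2b/a^*+1)$ and $2/(a^*)^2$; since $a^* \le 1/(4KL_3)$ these are at least $16KL_3^2$ and $32K^2L_3^2$, not $7L_3^2$ and $8L_3^2$, and enlarging $C_{\gamma_2}$ cannot help because $a^*$ does not depend on it. The paper instead extracts the sharper bounds $\tfrac{1}{2}L_3^{-1}\le\lambda_{\min}((\hbSigma_t^{(k)})_{S_k,S_k})\le\lambda_{\max}((\hbSigma_t^{(k)})_{S_k,S_k})\le \tfrac{3}{2}L_3$ \emph{from event~(ii)} via Lemma~\ref{lemma: subgaussian lower eigenvalue whp} (using $3a^*\le 1/(2L_3)$), and then $2b/a+1=6L_3^2+1\le 7L_3^2$ and $2/a^2=8L_3^2$ fall out exactly. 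A smaller correction: in Stage~3 the greedy selection at $t\in(m\gamma_2,(m+1)\gamma_2]$ uses the OPT-Lasso estimators $\tbtheta^{(k)}_{m\gamma_2}$, not $\hbtheta^{(k)}_{m\gamma_2}$, so the boundary-proximity argument in the inductive step must be run with $\tbtheta$; this is why the paper introduces the auxiliary event $\tilde{\mE}_m^{\textup{OPT}}$ controlling $\|\tbtheta^{(k)}_{m\gamma_2}-\btheta^{(k)}\|_2$ together with the support inclusion $\tS^{(k)}_{m\gamma_2}\subseteq S_k$.
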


\begin{proof}
Fix some $m \in \{1,\ldots, \lfloor T/\gamma_2 \rfloor\}$, and denote by $t := m\gamma_2$. Define
\begin{align*}
\mathcal{A}_{t} :=  \bigcap_{k=1}^{K}\left\{\mn\hbSigma_{t}^{(k)}-\bSigma^{(k)}\mn_{\textup{max}} \le  \frac{3 a^*}{s_0} \right\}  \bigcap \bigcap_{k = 1}^{K}\left\{\|\hbtheta_{t}^{(k)} - \btheta^{(k)}\|_{\infty} \leq  \hdlambda_{t}\right\}.
\end{align*}
By Lemma \ref{stage3: main lemma_second}, we have  $\Pro(\mathcal{A}_{t})\ge 1-17K/T$ since $m \leq T$ in that context.

Since $3a^* \leq 1/(2L_3)$ due to \eqref{eq: a}, by Lemma \ref{lemma: subgaussian lower eigenvalue whp} and Assumption \ref{assumption: bounded cov mat}, on the event $\mathcal{A}_{t}$, the following holds:
\begin{equation*}
 \frac{L_3^{-1}}{2} \le \lmin{(\hbSigma_{t}^{(k)})_{S_k, S_k}} \le \lmax{(\hbSigma_{t}^{(k)})_{S_k, S_k}} \le \frac{3L_3}{2} \; \text{ for } k \in [K]
\end{equation*}
 where the convention is that $\lmin{(\hbSigma_{t}^{(k)})_{S_k, S_k}} = \lmax{(\hbSigma_{t}^{(k)})_{S_k, S_k}} = 1$ if $S_k$ is empty. As a result, due to Lemma \ref{thm: ols post l2 estimation error}, on the event $\mathcal{A}_{t}$, for $k \in [K]$,
\begin{align*}
    \|\tbtheta_{t}^{(k)}-\btheta^{(k)}\|_2^2 \leq \left(6L_3^2  + 1\right) \sum_{j\in S_k} (\btheta^{(k)}_j)^2 \idf\left\{|\btheta^{(k)}_j| \le 2\hdlambda_t \right\} + 8L_3^2 \left\|\frac{(\bX_{[t]}^{(k)})_{S_k}'\bepsilon_{[t]}}{t}\right\|_2^2,
 \end{align*}
 where $(\bX_{[t]}^{(k)})_{S_k}'\bepsilon_{[t]}$ is zero if $S_k$ is empty. By Lemma \ref{lemma: ell_2 bound of x epsilon}, we have
 \begin{align*}
\Exp\left[\left\|\frac{1}{t}(\bX_{[t]}^{(k)})_{S_k}'\bepsilon_{[t]}\right\|_2^2\right] \leq \frac{4(m_X\sigma)^2 s_0}{t}.
 \end{align*}
 Then the proof is complete.
\end{proof}

\begin{lemma}  \label{stage3: main lemma_second}
Suppose that Assumptions \ref{assumption: covariates_bandit} and \ref{assumption:arm_parameters} hold. We set the regularization parameter $\lambda_t,\hdlambda_t$ as in \eqref{app_eq: bandit_paras_est}, and the end time $\gamma_1$ and $\gamma_2$ of Stage 1 and 2, respectively, as in \eqref{app_eq: end_times}.
If the integers $C_{\gamma_1}$ and $C_{\gamma_2}$ in \eqref{app_eq: end_times} satisfy condition \eqref{eq: aux_lambda_gamma_12} and
\begin{align}
    \label{eq: aux_lambda_gamma_12_generalm}
     C_{\gamma_2} \geq \max\left\{\left(\frac{4000KL_3^2L_4 m_X^4(C^*)^2}{a^*}\right)^2, \; \left( \frac{30L_3^2 m_X C^*}{\ell_1^*}\right)^2 \right\}.
\end{align}
then, for each $m = 1,2,\ldots, \lfloor T/\gamma_2 \rfloor$,  with probability at least $1-17mK/T^2$, the event $\tilde{\mE}_m^{\textup{RE}} \cap \tilde{\mE}_m^{\bSigma} \cap  \tilde{\mE}_m^{\textup{Lasso}}$ occurs, where
\begin{align} \label{eq:aux_gamma2_events_generalm}
\begin{split}
&\tilde{\mE}_m^{\textup{RE}} := \bigcap_{k=1}^{K}\left\{\bX_{[m\gamma_2]}^{(k)} \textup{ satisfies } \textup{RE}(s_0, 3, a^*)\right\},\\ 
&\tilde{\mE}_m^{\bSigma} := \bigcap_{k=1}^{K}\left\{\mn\hbSigma_{m\gamma_2}^{(k)}-\bSigma^{(k)}\mn_{\textup{max}} \le  \frac{3 a^*}{s_0} \right\},\\ 
&\tilde{\mE}_m^{\textup{Lasso}} = \bigcap_{k = 1}^{K}\left\{\|\hbtheta_{m\gamma_2}^{(k)} - \btheta^{(k)}\|_{\infty} \leq  \hdlambda_{m\gamma_2}\right\}.
\end{split}
\end{align}
\end{lemma}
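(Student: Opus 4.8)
\textbf{Proof plan for Lemma \ref{stage3: main lemma_second}.}
The plan is to mimic the inductive structure used for Stage 2 (Lemma \ref{lemma: RE and estimation error of stage 2}), but now carrying three families of events across the induction: the restricted eigenvalue events $\tilde{\mE}_m^{\textup{RE}}$, the covariance concentration events $\tilde{\mE}_m^{\bSigma}$ controlling $\mn\hbSigma_{m\gamma_2}^{(k)} - \bSigma^{(k)}\mn_{\textup{max}}$, and the $\ell_\infty$-Lasso events $\tilde{\mE}_m^{\textup{Lasso}}$. The base case $m=1$ is already done: it is exactly the content of Lemma \ref{stage3: base case}, whose hypotheses \eqref{eq: aux_lambda_gamma_12} are assumed, giving $\Pro(\tilde{\mE}_1^{\textup{RE}} \cap \tilde{\mE}_1^{\bSigma} \cap \tilde{\mE}_1^{\textup{Lasso}}) \ge 1 - 17K/T^2$. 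So the work is entirely in the inductive step: assuming the event holds at level $m$ with probability at least $1 - 17mK/T^2$, show it holds at level $m+1$ with probability at least $1 - 17(m+1)K/T^2$, i.e.\ the added failure probability per step is at most $17K/T^2$.

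First I would unpack what the induction hypothesis buys us. On $\tilde{\mE}_m^{\textup{RE}} \cap \tilde{\mE}_m^{\bSigma}$, just as in the proof of Lemma \ref{stage3: main lemma}, the sparse-eigenvalue condition (Lemma \ref{lemma: subgaussian lower eigenvalue whp}) gives well-conditioned $(\hbSigma_{m\gamma_2}^{(k)})_{S_k,S_k}$, and combining with $\tilde{\mE}_m^{\textup{Lasso}}$ (an $\ell_\infty$ Lasso bound) through Lemma \ref{lemma: bounds of lasso} (the $\ell_1$ bound) and Theorem \ref{thm: ols post l2 estimation error}, one gets an $\ell_2$ (indeed $\ell_1$) control of the OPT-Lasso estimators $\tbtheta_{m\gamma_2}^{(k)}$ used for arm selection during $(m\gamma_2, (m+1)\gamma_2]$. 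Concretely $\|\tbtheta_{m\gamma_2}^{(k)} - \btheta^{(k)}\|_2$ is of order $\sqrt{s_0/(m\gamma_2)}$ up to constants, and $\|\tbtheta_{m\gamma_2}^{(k)} - \btheta^{(k)}\|_1$ of order $s_0\lambda_{m\gamma_2}$ or a bias term; the condition \eqref{eq: aux_lambda_gamma_12_generalm} on $C_{\gamma_2}$ is what makes this bound small enough (below $\ell_1^*$ for the relevant quantity, and small enough to beat the margin threshold $h^*$). From here the argument is a near-verbatim repeat of the Stage 2 inductive step: define, for $t \in (m\gamma_2,(m+1)\gamma_2]$, the good event $\mathcal{B}_t^{(k)}$ that $\bX_t \in \mathcal{U}_{h^*}^{(k)}$ and all estimation errors $|(\tbtheta_{m\gamma_2}^{(i)} - \btheta^{(i)})'\bX_t| \le h^*/2$; on $\mathcal{B}_t^{(k)}$ the selected arm $A_t = k$, so the accumulated design matrix $\bX_{[(m+1)\gamma_2]}^{(k)}$ picks up the contribution $\sum_t \bX_t\bX_t'\idf\{\mathcal{B}_t^{(k)}\}$. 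Using \eqref{stage 2:aux_eq2} (lower bound on the restricted quadratic form over $\mathcal{U}_{h^*}^{(k)}$) and \eqref{stage 2:aux_eq3} (upper bound on the ``bad margin'' quadratic form, valid since the estimation error vector has norm $\le \ell_1^*$), conditioning on $\cF_{m\gamma_2}$, we get $\tilde{\mE}_{m+1}^{\textup{RE}}$ with the stated probability loss. Then $\tilde{\mE}_{m+1}^{\bX\bepsilon}$ (the $\ell_\infty$ bound on $(\bX^{(k)})'\bepsilon/t$) from Lemma \ref{lemma:ell_infty bound of x times epsilon}, combined with $\tilde{\mE}_{m+1}^{\textup{RE}}$ and $\tilde{\mE}_{m+1}^{\bSigma}$ via Lemma \ref{lemma: bounds of lasso} (the $\ell_\infty$ Lasso bound with $\bA = \bSigma^{(k)}$ and Assumption \ref{assumption: constraint on the covariance matrix 1}), yields $\tilde{\mE}_{m+1}^{\textup{Lasso}}$ exactly as in Lemma \ref{stage3: base case}.

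The one genuinely new piece — and the main obstacle — is propagating $\tilde{\mE}_{m}^{\bSigma} \to \tilde{\mE}_{m+1}^{\bSigma}$, i.e.\ controlling $\mn\hbSigma_{(m+1)\gamma_2}^{(k)} - \bSigma^{(k)}\mn_{\textup{max}}$. The decomposition is the same as \eqref{aux:based_case_strategy}: write $\hbSigma_{(m+1)\gamma_2}^{(k)}$ as the weighted average $\frac{m\gamma_2}{(m+1)\gamma_2}\hbSigma_{m\gamma_2}^{(k)} + \frac{\gamma_2}{(m+1)\gamma_2}(\text{new block})$, bound the first term using $\tilde{\mE}_m^{\bSigma}$ (it is already $\le 3a^*/s_0$ and gets a damping factor $m/(m+1)$, so it contributes at most, say, $\tfrac{m}{m+1}\cdot\tfrac{3a^*}{s_0}$ plus a small gain needed from the other two terms being strictly smaller), and bound the new block by splitting into the oracle-arm term $\sum \bX_t\bX_t'\idf\{\bX_t \in \mathcal{U}_0^{(k)}\}$ (controlled by Lemma \ref{lemma: event D_2}, valid since $\gamma_2$ is large) and the arm-misclassification term $\sum \bX_t\bX_t'(\idf\{A_t=k\} - \idf\{\bX_t\in\mathcal{U}_0^{(k)}\})$ (controlled, on the event where the estimation errors at $m\gamma_2$ are small and the contexts avoid the $\tfrac{16m_X}{a^*}s_0\lambda$-neighborhood of the decision boundaries, by the margin bound \eqref{eq: margin} plus the Bernoulli concentration Lemma \ref{lemma: concentration inequality of bernoulli}). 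The subtlety is the bookkeeping: one must arrange the constants so that the two new-block terms are each strictly below $\frac{a^*}{s_0}$ (with the damping factor on the old term, $\tfrac{m}{m+1}\cdot 3 + 1 + 1 \cdot \tfrac{1}{m+1} \le 3$ type arithmetic shows the total stays $\le 3a^*/s_0$); this is precisely where the stronger condition \eqref{eq: aux_lambda_gamma_12_generalm} on $C_{\gamma_2}$ is invoked — it forces $\lambda_{m\gamma_2} \le \lambda_{\gamma_2}$ small enough that the margin-neighborhood has width making $\Pro((\mathcal{B}_t^{(k)})^c)$ at most $a^*/(2m_X^2 s_0)$. I would then collect all failure probabilities across the step — induction hypothesis loss $17mK/T^2$, plus $O(K/T^2)$ from each of the RE event \eqref{stage 2:aux_eq2}, the bad-margin event \eqref{stage 2:aux_eq3}, the $\bX\bepsilon$ event, the $D_2$ event, and the margin/Bernoulli event — and check the constants are chosen so the total added loss is $\le 17K/T^2$, closing the induction.
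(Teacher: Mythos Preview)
Your plan is correct and matches the paper's proof essentially step for step: induction with base case Lemma \ref{stage3: base case}; on the inductive hypothesis, deduce an $\ell_2$ bound on the OPT-Lasso estimators (via Theorem \ref{thm: ols post l2 estimation error} together with an $\bX'\bepsilon$ event); then propagate $\tilde{\mE}^{\textup{RE}}$ by the same good/bad-margin split as in Stage 2 using \eqref{stage 2:aux_eq2}--\eqref{stage 2:aux_eq3}, propagate $\tilde{\mE}^{\bSigma}$ by exactly the three-term decomposition you describe (damped old block, Lemma \ref{lemma: event D_2} for the oracle-arm piece, margin bound \eqref{eq: margin} plus Lemma \ref{lemma: concentration inequality of bernoulli} for arm misclassification), and recover $\tilde{\mE}^{\textup{Lasso}}$ via the $\ell_\infty$ part of Lemma \ref{lemma: bounds of lasso} with $\bA=\bSigma^{(k)}$. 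The one point you gloss over that the paper makes explicit is that the auxiliary OPT-Lasso event also carries the support containment $\{j:|(\hbtheta_{m\gamma_2}^{(k)})_j|>\hdlambda_{m\gamma_2}\}\subseteq S_k$ (immediate from Lemma \ref{lemma: threshold Lasso_ell_infty bound bandit} on $\tilde{\mE}_m^{\textup{Lasso}}$), which is what allows $|(\tbtheta_{m\gamma_2}^{(k)}-\btheta^{(k)})'\bX_t|\le \sqrt{s_0}\,m_X\,\|\tbtheta_{m\gamma_2}^{(k)}-\btheta^{(k)}\|_2$ with a $\sqrt{s_0}$ rather than $\sqrt{d}$ factor---without this your arm-misclassification bound in the $\tilde{\mE}^{\bSigma}$ step would not close.
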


\begin{proof}
For $k \in [K]$ and $t \in [T]$, define
\begin{align*} 
    \tS_t^{(k)} := \{j \in [d]: |(\hbtheta_{t}^{(k)})_j| > \hdlambda_n\},
\end{align*}
which is the estimated support by thresholding the Lasso estimator; see Definition \ref{def: OPT-Lasso}. 
For $m=1,2,\cdots, \lfloor T/\gamma_2 \rfloor$, we define the following events: with $t = m\gamma_2$,
\begin{align}\label{def:stage3_more_events}
\begin{split}
       & \tilde{\mE}_m^{\bX\bepsilon} := \bigcap_{k\in [K]} \left\{\frac{\left\|({\bX_{[t]}^{(k)}})' \bepsilon_{[t]}\right\|_{\infty}}{t} \le \frac{\lambda_t}{2},\;\;
\frac{\left\|(\bX_{[t]}^{(k)})_{S_k}'\bepsilon_{[t]}\right\|_2 }{t}\le C^*m_X\sigma\sqrt{\frac{3s_0\log (dT)}{t}}\right\},  \\
   & \tilde{\mE}_m^{\textup{OPT}} := \bigcap_{k = 1}^{K}\left\{\|\tbtheta_t^{(k)}-\btheta^{(k)}\|_2 \le \frac{a^*}{4KC^*L_4 m_X^3 s_0^2} \wedge \ell_1^*, \;\;\; \tS_t^{(k)}\subseteq S_k \right\},
      \end{split} 
\end{align}
where $(\bX_{[t]}^{(k)})_{S_k}'\bepsilon_{[t]} = 0$ if $S_k$ is empty.

By Lemma \ref{lemma:ell_infty bound of x times epsilon} and \ref{lemma: ell_2 bound of x epsilon} (see also \eqref{eq: x_bepsilon_S_k}), due to the union bound, for $m = 1,\ldots,\lfloor T/\gamma_2 \rfloor$,
\begin{equation}\label{eq: stage 3 E infty prob}
    \Pro(\tilde{\mE}_{m}^{\bX\bepsilon})\ge 1- \frac{2K}{T^4} - \frac{K}{T^2} \geq 1- \frac{3K}{T^2}.
\end{equation}

We proceed by induction. The base case is established in Lemma  \ref{stage3: base case}. Now,   assume that for some $1\leq m < \lfloor T/\gamma_2 \rfloor$, we have 
\begin{align}
    \label{stage 3:inductive_hypothesis}
\Pro\left(\tilde{\mE}_m^{\textup{RE}} \cap \tilde{\mE}_m^{\bSigma} \cap  \tilde{\mE}_m^{\textup{Lasso}}\right) \geq 1-\frac{17mK}{T^2},
\end{align}
and we show that $\Pro\left(\tilde{\mE}_{m+1}^{\textup{RE}} \cap \tilde{\mE}_{m+1}^{\bSigma} \cap  \tilde{\mE}_{m+1}^{\textup{Lasso}}\right) \geq 1-{17(m+1)K}/{T^2}$.

\medskip
\noindent \underline{\textbf{Step 1:} we deal with the event 
$\tilde{\mE}_{m}^{\textup{OPT}}$.}

On the event $\tilde{\mE}_{m}^{\bSigma}$, due to Lemma \ref{lemma: subgaussian lower eigenvalue whp} and since $3a^* \leq 1/(2L_3)$ due to \eqref{eq: a}, we have  that $\tilde{\mE}_{m}^{\textup{eigen}}$ holds, where
\begin{equation*}
    \   \tilde{\mE}_m^{\textup{eigen}} := \bigcap_{k = 1}^{K}\left\{\frac{L_3^{-1}}{2} \le \lmin{(\hbSigma_{m\gamma_2}^{(k)})_{S_k, S_k}} \le \lmax{(\hbSigma_{m\gamma_2}^{(k)})_{S_k, S_k}} \le \frac{3L_3}{2}\right\}, 
\end{equation*}
 and the convention is that $\lmin{(\hbSigma_{m\gamma_2}^{(k)})_{S_k, S_k}} = \lmax{(\hbSigma_{m\gamma_2}^{(k)})_{S_k, S_k}} = 1$ if $S_k$ is empty.

By Lemma \ref{lemma: threshold Lasso_ell_infty bound bandit}, on the event $\tilde{\mE}_{m}^{\textup{Lasso}}$, we have
\begin{align*}
\tS_{m\gamma_2}^{(k)}  \subseteq S_k, \text{ for each } k \in [K].
\end{align*}
Further, on the event $\tilde{\mE}_{m}^{\textup{Lasso}} \cap \tilde{\mE}_{m}^{\textup{eigen}}$, by Lemma \ref{thm: ols post l2 estimation error}, for each $k \in [K]$, we have
\begin{align*}
    \|\tbtheta_{m\gamma_2}^{(k)}-\btheta^{(k)}\|_2^2 \leq \left(6L_3^2  + 1\right) s_0 \left(2\hdlambda_{m\gamma_2}\right)^2 + 8L_3^2 \left\|\frac{(\bX_{[m\gamma_2]}^{(k)})_{S_k}'\bepsilon_{[m\gamma_2]}}{m\gamma_2}\right\|_2^2.
 \end{align*}

Due to the definition of $\hdlambda_t$ in \eqref{app_eq: bandit_paras_est}, on the event $\tilde{\mE}_{m}^{\textup{Lasso}} \cap \tilde{\mE}_{m}^{\textup{eigen}} \cap \tilde{\mE}_{m}^{\bX\bepsilon}$, for each $k \in [K]$,
\begin{align*}
    \|\tbtheta_{m\gamma_2}^{(k)}-\btheta^{(k)}\|_2^2 &\leq 28L_3^2 s_0 (28L_3)^2 (6m_X \sigma)^2 \frac{\log(dT)}{C_{\gamma_2} \sigma^2 s_0^5  \log(dT)}  \\
    &+ 8L_3^2  (C^* m_X \sigma)^2 \frac{3s_0\log(dT)}{ C_{\gamma_2} \sigma^2 s_0^5  \log(dT)},
 \end{align*}
 which, due to \eqref{eq: aux_lambda_gamma_12_generalm}, implies that
 \begin{align*}
    \|\tbtheta_{m\gamma_2}^{(k)}-\btheta^{(k)}\|_2  \leq  \frac{a^*}{4KC^*L_4 m_X^3 s_0^2} \wedge  \ell_1^*, \text{ for each } k \in[K].
 \end{align*}
 Since $\tilde{\mE}_{m}^{\bSigma} \subseteq \tilde{\mE}_{m}^{\textup{eigen}}$, we have
\begin{align}\label{aux:stage3_OPT}
   \tilde{\mE}_{m}^{\textup{Lasso}} \cap \tilde{\mE}_{m}^{\bSigma}  \cap \tilde{\mE}_{m}^{\bX\bepsilon} \subseteq \tilde{\mE}_{m}^{\textup{OPT}}.
\end{align}

\medskip

\noindent \underline{\textbf{Step 2:} we deal with the events 
$\tilde{\mE}_{m+1}^{\textup{Lasso}}$.}

Applying Lemma \ref{lemma: bounds of lasso}
with $\bA = \bSigma^{(k)}$, and due to Assumption \ref{assumption: constraint on the covariance matrix 1}, on the event $\tilde{\mE}_{m+1}^{\textup{RE}} \cap \tilde{\mE}_{m+1}^{\bSigma} \cap \tilde{\mE}_{m+1}^{\bX\bepsilon}$, for each $k \in [K]$,
$$
\|\hbtheta_{(m+1)\gamma_2}^{(k)} - \btheta^{(k)}\|_{\infty} \leq L_3\left(4 + \frac{8s_0}{a^*}  \times \frac{3a^*}{s_0}  \right) \lambda_{(m+1)\gamma_2} = \hdlambda_{m\gamma_2},
$$
which implies that
\begin{align}
    \label{aux:stage3_Lasso_event}
\tilde{\mE}_{m+1}^{\textup{RE}} \cap \tilde{\mE}_{m+1}^{\bSigma} \cap \tilde{\mE}_{m+1}^{\bX\bepsilon} \; \subseteq \;
      \tilde{\mE}_{m+1}^{\textup{Lasso}}.
\end{align}

\medskip
\noindent \underline{\textbf{Step 3:} we deal with the events 
$\tilde{\mE}_{m+1}^{\textup{RE}}$.} 

By an argument similar to that in the proof of Lemma \ref{lemma: RE and estimation error of stage 2} (see \eqref{eq: stage 2 RE 1}), with $\gamma_1$ and $\hbtheta_t^{(k)}$ replaced by $\gamma_2$ and $\tbtheta_t^{(k)}$, respectively, we have that on the event $\tilde{\mE}_{m}^{\textup{RE}}$, for each $k \in [K]$ and $\bv\in \mC(s_0,3)$,
\begin{equation*}
\begin{aligned}
   \frac{\|\bX_{[(m+1)\gamma_2]}^{(k)} \bv\|_2^2}{(m+1)\gamma_2} \geq \frac{m}{m+1} a^*\|\bv\|_2^2 + \frac{1}{m+1} \left(\widetilde{I}_{m+1}^{(k)}(\bv) - \sum_{i \in [K]} \widetilde{II}_{m+1}^{(i)}(\bv) \right), 
\end{aligned}
\end{equation*}
where we define 
\begin{equation*} 
\begin{aligned}
\begin{aligned}
& \;\; \widetilde{I}_{m+1}^{(k)}(\bv)  :=  \bv'\left(\frac{1}{\gamma_2}\sum_{t=m\gamma_2+1}^{(m+1)\gamma_2} \bX_t\bX_t'\idf\{\bX_t\in \mathcal{U}_{h^*}^{(k)}\} \right) \bv, \\
    &\widetilde{II}_{m+1}^{(k)}(\bv) :=   \bv'\left(\frac{1}{\gamma_2}\sum_{t=m\gamma_2+1}^{(m+1)\gamma_2} \bX_t\bX_t'\idf\{|(\tbtheta_{m\gamma_2}^{(k)}-\btheta^{(k)})'\bX_t|\ge h^*/2\} \right) \bv.
\end{aligned}
\end{aligned}
\end{equation*}

Further, we define the following two events
\begin{align*}
 &\widetilde{\mathcal{A}}_{m+1} := \bigcap_{k \in [K]} \left\{ \widetilde{I}_{m+1}^{(k)}(\bv) \geq \ell_0^* \|\bv\|_2^2 \; \text{ for all } 
\bv\in \mathcal{C}(s_0,3)\right\},\\
& \widetilde{\mathcal{A}}'_{m+1}=\bigcap_{k = 1}^{K}\left\{ \widetilde{II}_{m+1}^{(k)}(\bv) \leq  \frac{K-1}{K} a^*\|v\|_2^2\; \text{ for all } 
\bv\in \mathcal{C}(s_0,3)
\right\}.
\end{align*}

Due to \eqref{stage 2:aux_eq2} and since $\gamma_2 \geq \gamma_1$, we have that $\Pro\left(\tilde{\mathcal{A}}_{m+1}\right) \geq 1 -2K/T^4$. Further, on the event $\tilde{\mE}_{m}^{\textup{OPT}}$ defined in \eqref{def:stage3_more_events}, for each $k \in [K]$, $\|\tbtheta_{m\gamma_2}^{(k)}-\btheta^{(k)}\|_2 \leq  \ell_1^*$. Thus, since $\{\bX_t: m\gamma_2 < t \leq (m+1)\gamma_2\}$ are independent from $\cF_{m\gamma_2}$ and $\tilde{\mE}_m^{\textup{OPT}} \in \cF_{m\gamma_2}$, due to \eqref{stage 2:aux_eq3} with $\bu =\tbtheta_{m\gamma_2}^{(k)}-\btheta^{(k)}$, we have that almost surely,
\begin{align*}
    \Pro\left(\tilde{\mathcal{A}}'_{m+1} \vert \cF_{m\gamma_2} \right) \idf\{\tilde{\mE}_{m}^{\textup{OPT}}\}  \geq (1 - 2K/T^4)\idf\{\tilde{\mE}_{m}^{\textup{OPT}}\}.
\end{align*}
By the definition of $a^*$ in \eqref{eq: a}, $\ell_0^* \geq a^* K$. Thus, due to \eqref{eq: stage 2 RE 1} and the definition $\tilde{\mE}_{m+1}^{\textup{RE}}$ in \eqref{def:stage2_events},
\begin{equation}
    \label{aux:stage3_RE}
\tilde{\mE}_{m}^{\textup{RE}} \bigcap
\tilde{\mathcal{A}}_{m+1} \bigcap \tilde{\mathcal{A}}'_{m+1}
\;\subseteq\; \tilde{\mE}_{m+1}^{\textup{RE}},
\end{equation}
which, due to \eqref{aux:stage3_OPT} and the union bound, implies that
\begin{align}
        \label{aux:stage3_step3}
        \begin{split}
            &\Pro\left(( \tilde{\mE}_{m+1}^{\textup{RE}})^c\; \cap \; (\tilde{\mE}_{m}^{\textup{RE}} \cap 
            \tilde{\mE}_{m}^{\bSigma} \cap
            \tilde{\mE}_{m}^{\textup{Lasso}} \cap    \tilde{\mE}_{m}^{\bX\bepsilon})\right) \\
\leq & \Pro\left(\tilde{\mathcal{A}}_{m+1}^c\right) + \Pro\left((\tilde{\mathcal{A}}'_{m+1})^c\; \cap \;{\tilde{\mE}_{m}^{\textup{OPT}}} \right) \leq 4K/T^2.
        \end{split}
\end{align}

\medskip
\noindent \underline{\textbf{Step 4:} we deal with the events 
$\tilde{\mE}_{m+1}^{\bSigma}$.}  

Recall the definition of $\hbSigma_{t}^{(k)}$ and $\bSigma^{(k)}$ in \eqref{eq: hbSigma}, and $\mathcal{U}_h^{(k)}$ in \eqref{eq: U_h_k}. By the triangle inequality,
\begin{align}\label{aux:stage3_case_strategy}
    \op{\hbSigma_{(m+1)\gamma_2}^{(k)}-\bSigma^{(k)}}_{\textup{max}}&\le  \overline{I}^{(k)} + \overline{II}^{(k)} + \overline{III}^{(k)},\;\; \text{ for } k \in [K],
\end{align}
where we define $\overline{I}^{(k)} := \frac{m}{m+1} \op{\hbSigma_{m\gamma_2}^{(k)} -\bSigma^{(k)} }_{\textup{max}}$, and
\begin{align*}
&
    \overline{II}^{(k)} :=\frac{1}{m+1}\op{\frac{1}{\gamma_2}\sum_{s=m \gamma_2+1}^{(m+1)\gamma_2} \left(\bX_s\bX_s'\idf\{\bX_s \in\mathcal{U}_0^{(k)}\}-\bSigma^{(k)}\right)}_{\textup{max}},\\
    &\overline{III}^{(k)} := \frac{1}{m+1} \op{\frac{1}{\gamma_2}\sum_{s=m \gamma_2+1}^{(m+1)\gamma_2}\bX_s\bX_s'\left(\idf\{A_s=k\}-\idf\{\bX_s \in \mathcal{U}_0^{(k)}\}\right)}_{\textup{max}}.
\end{align*}

On the event $\tilde{\mE}_m^{\bSigma}$ defined in \eqref{eq:aux_gamma2_events_generalm}, for each, $k\in[K]$
\begin{equation} \label{aux:stage3_I}
  \overline{I}^{(k)} := \frac{m}{m+1} \op{\hbSigma_{m\gamma_2}^{(k)} -\bSigma^{(k)} }_{\textup{max}} \leq   \frac{m}{m+1}  \frac{3 a^*}{s_0}, \quad \text{ for } k \in [K].
\end{equation}

Next, we consider the term $\overline{II}^{(k)}$. Due to \eqref{eq: aux_lambda_gamma_12}, $\gamma_2  \geq (16m_X^4/(a^*)^2) s_0^5 \log(dT)$. Thus,  by  Lemma \ref{lemma: event D_2},
\begin{equation}\label{aux:stage3_II}
   \Pro\left( \bigcap_{k \in [K]} \left\{ \overline{II}^{(k)}  \le \frac{a^*}{s_0}\right\} \right) \geq 1 -\frac{2K}{T^2}.
\end{equation}

Furthermore, we focus on the term $\overline{III}^{(k)}$. For each $t \in [T]$ and  $k\in[K]$, define
\begin{align*}
    &Z_t^{(k)}:=|\idf\{A_t=k\}-\idf\{(\btheta^{(k)})'\bX_t>\max_{\ell \neq k}((\btheta^{(\ell)})'\bX_t)\}|\in \{0,1\},\\
     & \tilde{\mathcal{B}}_t^{(k)} := \bigcap_{\ell \neq k}\left\{|(\btheta^{(k)}-\btheta^{(\ell)})'\bX_t| > \frac{a^*}{ 2KC^*L_4 m_X^2 s_0}\right\},\\
&     \tilde{\mathcal{A}} := \bigcap_{k\in [K]} \bigcap_{t = m\gamma_2 + 1}^{(m+1)\gamma_2}\left\{|(\tbtheta_{m\gamma_2}^{(k)}-\btheta^{(k)})'\bX_t| \le \frac{a^*}{ 4KC^*L_4 m_X^2 s_0}\right\},
\end{align*}
On the event $\tilde{\mathcal{A}}$, for $t \in (m\gamma_2+1,(m+1)\gamma_2]$ and $k \in [K]$, we have 
\begin{align*}
&|(\hbtheta_{m\gamma_1}^{(k)}-\hbtheta_{m\gamma_1}^{(\ell)})'\bX_t-(\btheta^{(k)}-\btheta^{(\ell)})'\bX_t|
 \le  \frac{a^*}{ 2KC^*L_4 m_X^2 s_0}.
\end{align*}
Therefore on the event $\tilde{\mathcal{A}}\cap \tilde{\mathcal{B}}_t^{(k)}$, $\{A_t = k\}$ if and only if
$(\btheta^{(k)})'\bX_t>\max_{\ell \neq k}((\btheta^{(\ell)})'\bX_t$, which implies that
\begin{equation}\label{aux:stage3_Z_t}
    Z_t^{(k)} \idf\{\tilde{\cA}\} = Z_t^{(k)} \idf\{\tilde{\cA}\} \idf\{(\tilde{\mathcal{B}}_t^{(k)})^c\} \leq \idf\{(\tilde{\mathcal{B}}_t^{(k)})^c\}.
\end{equation}

Further, due to Assumption \ref{assumption: linear_independent}, $\|\btheta^{(k)}-\btheta^{(\ell)}\|_2 \geq L_4^{-1}$. 
Due to \eqref{eq: margin} and the union bound, we have
\begin{equation*}  
\Pro((\tilde{\mathcal{B}}_t^{(k)})^c)\le K \times C^* L_4  \frac{a^*}{ 2KC^*L_4 m_X^2 s_0} = \frac{a^*}{2m_X^2 s_0}.
 \end{equation*}
 By Lemma \ref{lemma: concentration inequality of bernoulli} with $\delta = a^*/(2m_X^2s_0)$ and due to \eqref{eq: aux_lambda_gamma_12}, we have
\begin{equation*}
\Pro\left( \frac{1}{\gamma_2}\sum_{s=m \gamma_2+1}^{(m+1)\gamma_2} \idf{\{(\tilde{\mathcal{B}}_t^{(k)})^c\}}\ge \frac{a^*}{m_X^2 s_0}   \right)\leq \exp\left(- \frac{3a^*}{16m_X^2s_0} \gamma_2\right) \leq \frac{1}{T^2}.
\end{equation*}
Thus, due to \eqref{aux:stage3_Z_t} and Assumption \ref{assumption: context}, for each $k \in [K]$,
\begin{equation}\label{aux:stage3_III}
    \Pro\left( \overline{III}^{(k)} \idf\{\tilde{\cA}\} \ge \frac{a^*}{ s_0} \right) \leq \Pro\left( \frac{1}{\gamma_2}\sum_{s=m \gamma_2+1}^{(m+1)\gamma_2} \idf{\{(\tilde{\mathcal{B}}_t^{(k)})^c\}}\ge \frac{a^*}{m_X^2 s_0}   \right)\leq   \frac{1}{T^2}. 
\end{equation}

Finally, we consider the event $\tilde{\cA}$.
For $k\in[K]$ and $m\gamma_2< t \le (m+1)\gamma_2$,  due to Assumption \ref{assumption: context}, on the event $\tilde{\mE}_m^{\textup{OPT}}$ defined in \eqref{def:stage3_more_events} (in particular, $\tS_{m\gamma_2}^{(k)}\subseteq S_k$ ), we have
\begin{align*}
   & |(\tbtheta_{m\gamma_2}^{(k)}-\btheta^{(k)})'\bX_t| = |(\tbtheta_{m\gamma_2}^{(k)}-\btheta^{(k)})_{S_k}' (\bX_t)_{S_k}| \le \|(\bX_t)_{S_k}\|_2 \|(\tbtheta_{m\gamma_2}^{(k)}-\btheta^{(k)})_{S_k} \|_2
    \\&= \|(\bX_t)_{S_k}\|_2 \|\tbtheta_{m\gamma_2}^{(k)}-\btheta^{(k)} \|_2
    \le \sqrt{s_0} m_X \frac{a^*}{4KC^*L_4 m_X^3 s_0^2}\le   \frac{a^*}{4KC^*L_4 m_X^2 s_0}.
\end{align*}
Thus,   $ \tilde{\mE}_m^{\textup{OPT}} \subseteq \tilde{\mathcal{A}}$.  Due to \eqref{aux:stage3_OPT}, we have
$$
\tilde{\mE}_{m}^{\textup{Lasso}} \cap \tilde{\mE}_{m}^{\bSigma}  \cap \tilde{\mE}_{m}^{\bX\bepsilon}  \subseteq \tilde{\cA}
$$

Due to \eqref{aux:stage3_I},  \eqref{aux:stage3_II} and  \eqref{aux:stage3_III}. in view of \eqref{aux:stage3_case_strategy}, we have
\begin{align}\label{aux:stage3_step4}
\begin{split}
        &\Pro\left( (\tilde{\mE}_{m+1}^{\bSigma})^c\; \cap \;  (\tilde{\mE}_{m}^{\textup{Lasso}} \cap \tilde{\mE}_{m}^{\bSigma}  \cap \tilde{\mE}_{m}^{\bX\bepsilon} )  \right) \\
    \leq
    &\Pro\left( \bigcup_{k \in [K]} \left\{ \overline{II}^{(k)}  \geq \frac{a^*}{s_0}\right\} \right) + \Pro\left(\bigcup_{k \in [K]} \left\{\overline{III}^{(k)} \idf\{\tilde{\cA}\} \geq\frac{a^*}{ s_0}\right\}\right) \leq \frac{3K}{T^2}.
    \end{split}
 \end{align}

\medskip
\noindent
\underline{\textbf{Final step:} combining the above steps.}
Due to \eqref{aux:stage3_Lasso_event}, we have
\begin{align*}
(\tilde{\mE}_{m+1}^{\textup{RE}}\cap\tilde{\mE}_{m+1}^{\bSigma}\cap \tilde{\mE}_{m+1}^{\textup{Lasso}})^c\;    \subseteq  \;(\tilde{\mE}_{m+1}^{\textup{RE}})^c\; \cup \; (\tilde{\mE}_{m+1}^{\bSigma})^c\; \cup\; (\tilde{\mE}_{m+1}^{\bX\bepsilon})^c.
\end{align*}
By the union bound, $\Pro\left((\tilde{\mE}_{m+1}^{\textup{RE}}\cap\tilde{\mE}_{m+1}^{\bSigma}\cap \tilde{\mE}_{m+1}^{\textup{Lasso}})^c\right)$ is upper bounded by
\begin{align*}
&    \Pro\left((\tilde{\mE}_{m}^{\textup{RE}}\cap\tilde{\mE}_{m}^{\bSigma}\cap \tilde{\mE}_{m}^{\textup{Lasso}})^c\right) + \Pro\left((\tilde{\mE}_{m}^{\bX\bepsilon})^c\right)+
    \Pro\left((\tilde{\mE}_{m+1}^{\bX\bepsilon})^c\right)+ \\
    & \Pro\left((\tilde{\mE}_{m+1}^{\textup{RE}})^c\; \cap \; (\tilde{\mE}_{m}^{\textup{RE}}\cap\tilde{\mE}_{m}^{\bSigma}\cap \tilde{\mE}_{m}^{\textup{Lasso}} \cap \tilde{\mE}_{m}^{\bX\bepsilon}) \right)
    +  \Pro\left((\tilde{\mE}_{m+1}^{\bSigma})^c\; \cap \; (\tilde{\mE}_{m}^{\bSigma}\cap \tilde{\mE}_{m}^{\textup{Lasso}}\cap \tilde{\mE}_{m}^{\bX\bepsilon}) \right).
\end{align*}
By the inductive hypothesis in \eqref{stage 3:inductive_hypothesis}, and inequalities in \eqref{eq: stage 3 E infty prob}, \eqref{aux:stage3_step3} and \eqref{aux:stage3_step4}, we have
\begin{align*}
\Pro\left((\tilde{\mE}_{m+1}^{\textup{RE}}\cap\tilde{\mE}_{m+1}^{\bSigma}\cap \tilde{\mE}_{m+1}^{\btheta})^c\right)
    \leq \frac{17mK}{T^2} +     \frac{6K}{T^2} + \frac{4K}{T^2} + \frac{3K}{T^2} \leq \frac{17(m+1)K}{T^2},
\end{align*}
which finishes the inductive step, and the proof is complete.
\end{proof}

\subsection{Supporting lemmas}
Recall that for each $k\in[K]$, $S_k$ is the support of $\btheta^{(k)}$, and $s_0 = \max_{k \in [K]} |S_k| \geq 1$. Further, recall the notations in Subsection \ref{subsec:bandit_minimax}, and the definition of $ \mathcal{C}(s, \kappa)$ in Definition \ref{def: RE}. 
The following lemmas either do not depend on the arm selection mechanism or hold for any such mechanism.

\begin{lemma} \label{lemma: Stage 1_restricted}
Let $\{B_t, t \in [n]\}$ be a sequence of i.i.d.~Bernoulli random variables,  independent from any other random variables, with $\Exp[B_1] = 1/K$.
Suppose that Assumptions \ref{assumption: context}  and \ref{assumption: bounded cov mat} hold.   Then, there exists a constant  $C >0$ depending only on $\alpha_X, K,  L_3$, such that for any $n\ge Cs_0\log (dT)$, with probability at least $1-2/T^4$, we have
\begin{align*}
    \bv'\left(\frac{1}{n}\sum_{t=1}^{n} B_t\bX_t\bX_t'  \right) \bv \ge \frac{1}{4KL_3}\|\bv\|_2^2,\quad \text{ for all } 
\bv\in \mathcal{C}(s_0,3). 
\end{align*} 
\end{lemma}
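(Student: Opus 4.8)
The plan is to establish a restricted eigenvalue (RE) lower bound for the weighted sample covariance matrix $n^{-1}\sum_{t=1}^{n} B_t \bX_t\bX_t'$ by a standard two-step argument: first control the population-level quantity uniformly over the cone $\mathcal{C}(s_0,3)$, then show that the empirical version concentrates around it uniformly over the same cone. Since $\{B_t\}$ are i.i.d.\ Bernoulli$(1/K)$ and independent of $\{\bX_t\}$, we have $\Exp[B_t\bX_t\bX_t'] = K^{-1}\bSigma$, and by Assumption~\ref{assumption: bounded cov mat}, $\bv' (K^{-1}\bSigma) \bv \ge (K L_3)^{-1}\|\bv\|_2^2$ for every $\bv$. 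So the target constant $1/(4KL_3)$ leaves a comfortable factor-of-$4$ slack to absorb the empirical fluctuation.

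For the concentration step, first I would reduce the cone to a bounded-sparsity set. A standard fact (see, e.g., Lemma 12 in Loh--Wainwright or the discussion around Definition~\ref{def: RE}) is that it suffices to verify the deviation bound on vectors $\bv$ with $\|\bv\|_2 = 1$ and $\|\bv\|_1 \le 4\sqrt{s_0}$ — i.e.\ vectors in $\mathcal{C}(s_0,3)$ of unit norm satisfy $\|\bv\|_1 \le (1+3)\sqrt{s_0}\|\bv\|_2$. Then I would invoke a uniform concentration result for sums of independent sub-exponential rank-one terms $B_t(\bu'\bX_t)^2$. Here the random variables $B_t\bX_t$ are sub-Gaussian: for a unit vector $\bu$, $\|B_t \bu'\bX_t\|_{\psi_2} \le \|\bu'\bX_t\|_{\psi_2} \le \alpha_X$ by Assumption~\ref{assumption: context}. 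The appropriate tool is a one-sided matrix deviation / RE-type bound (e.g.\ Theorem~7.16 or the one-sided version of the arguments in Chapter~7 of \cite{wainwright2019high}, or the results of Rudelson--Zhou): there is a universal constant $c$ such that with probability at least $1-2\exp(-c n /\alpha_X^4)$ — provided $n \ge C\alpha_X^4 s_0 \log d$ — one has
$$
\sup_{\bv \in \mathcal{C}(s_0,3),\, \|\bv\|_2 = 1} \left| \frac{1}{n}\sum_{t=1}^n B_t (\bv'\bX_t)^2 - \Exp[B_1(\bv'\bX_1)^2] \right| \le \frac{1}{4 K L_3}.
$$
Choosing $n \ge C s_0 \log(dT)$ for a suitable $C = C(\alpha_X, K, L_3)$ makes the failure probability at most $2/T^4$ (since $\exp(-cn/\alpha_X^4) \le \exp(-c' s_0 \log(dT)) \le T^{-4}$ when $C$ is large enough and $s_0 \ge 1$). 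Combining the population lower bound with this uniform deviation via the triangle inequality yields $\bv'(n^{-1}\sum_t B_t\bX_t\bX_t')\bv \ge (KL_3)^{-1} - (4KL_3)^{-1} \ge (4KL_3)^{-1}$ for all unit $\bv$ in the cone, and homogeneity extends it to all $\bv \in \mathcal{C}(s_0,3)$.

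The main obstacle is selecting and correctly citing the right off-the-shelf uniform concentration inequality and making the parameter-matching precise: one needs a version that handles the \emph{weighted} design $B_t\bX_t$ (straightforward, since $B_t\bX_t$ is still sub-Gaussian with the same $\psi_2$ constant up to a factor), that gives a \emph{one-sided} lower bound over the cone rather than a two-sided operator-norm bound over all sparse vectors, and that produces a deviation probability decaying like $\exp(-cn)$ so that the $T^{-4}$ target is attainable. If no single cited theorem covers this directly, the fallback is a discretization argument: cover the set $\{\bv : \|\bv\|_2 = 1, \|\bv\|_1 \le 4\sqrt{s_0}\}$ — which by a standard volumetric bound has metric entropy $O(s_0\log d)$ at scale $\epsilon$ — take a union bound over the net using Bernstein's inequality for the sub-exponential variables $B_t(\bv'\bX_t)^2$ (whose $\psi_1$ norm is $\le \alpha_X^2$), and then pass from the net to the whole set by a Lipschitz/perturbation argument controlling $|(\bv'\bX_t)^2 - (\bv''\bX_t)^2|$ via $\|\bX_t\|_\infty \le m_X$ and the $\ell_1$ bound. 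This self-contained route is more laborious but routine, and it is the approach I would write out in the appendix if a clean citation is unavailable.
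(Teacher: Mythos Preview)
Your proposal is correct and takes essentially the same approach as the paper: the paper observes that $B_t\bX_t$ is sub-Gaussian with $\psi_2$ constant $\alpha_X$, computes $\Exp[B_t\bX_t\bX_t']=\bSigma/K$ with eigenvalues in $[1/(KL_3),\,L_3/K]$, and then applies its auxiliary Lemma~\ref{lemma: RE between population and design} (itself a wrapper around Theorem~15 of Rudelson--Zhou) directly to the i.i.d.\ rows $B_t\bX_t$, which outputs exactly the constant $a/4=1/(4KL_3)$ and failure probability $2e^{-n/C}$. Your fallback discretization argument is unnecessary since Rudelson--Zhou already covers it.
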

\begin{proof} 
Due to Assumption \ref{assumption: context}, for any $\bu \in \bR^{d}$ with $\|\bu\|_2=1$, we have $\|\bu' \bX_t B_t\|_{\psi_2} \leq \alpha_X$ for $t \in [n]$. Further, due to independence,
we have $\Exp\left[\bX_t\bX_t'B_t\right] = \Exp[\bX_t\bX_t'] \Exp[B_t] = \bSigma/K,
$
which, due to Assumption \ref{assumption: bounded cov mat}, implies that
$$
\frac{1}{K L_3}\le \lmin{\Exp[\bX_t\bX_t'B_t]}\le \lmax{\Exp[\bX_t\bX_t'B_t]} \le \frac{L_3}{K}
$$ Applying 
Lemma \ref{lemma: RE between population and design}  to the random matrix $(B_1\bX_1,\ldots, B_t\bX_t)'$, the proof is complete.
\end{proof}

Recall the definition of $\mathcal{U}_h^{(k)}$ in \eqref{eq: U_h_k}.

\begin{lemma}\label{lemma: arm optimality sample} 
Suppose that Assumptions \ref{assumption: context}, \ref{assumption: bounded cov mat} and \ref{assumption:arm_parameters} hold.   Then there exist constants $h,\ell_0>0$ depending only on $K,  L_3, m_{\theta}, L_4$ and $C >0$ depending only on $\alpha_X, K,  L_3, m_{\theta}, L_4$, such that for any $n\ge Cs_0\log d$, with probability at least $1-2Ke^{-n/C}$, we have
\begin{equation*}
\bv'\left(\frac{1}{n}\sum_{t=1}^{n} \bX_t\bX_t'\idf\{\bX_t\in \mathcal{U}_{h}^{(k)}\} \right) \bv \ge \ell_0\|\bv\|_2^2,\quad \text{ for all } 
\bv\in \mathcal{C}(s_0,3) \text{ and } k \in [K].    
\end{equation*}

\end{lemma}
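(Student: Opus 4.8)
The plan is to reduce the statement to two ingredients: (i) a lower bound, uniform over $k\in[K]$, on the smallest eigenvalue of the population matrix $\bSigma_h^{(k)}:=\Exp[\bX_t\bX_t'\idf\{\bX_t\in\mathcal{U}_h^{(k)}\}]$ for a suitable constant $h>0$; and (ii) the restricted-eigenvalue concentration argument already invoked for Lemma~\ref{lemma: Stage 1_restricted}, which lifts such a population bound to the empirical sum restricted to the cone $\mathcal{C}(s_0,3)$ defined in Definition~\ref{def: RE}.

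For (i), I would argue as follows. Fix $k$ and set $\bu_{k,j}:=\btheta^{(k)}-\btheta^{(j)}$. By Assumption~\ref{assumption: linear_independent} the vectors $\{\bu_{k,j}:j\neq k\}$ are linearly independent, so the open system $(\bu_{k,j})'\bx>0$ ($j\neq k$) has a solution $\bx_0$ with $\min_{j\neq k}(\bu_{k,j})'\bx_0\ge c>0$; since $\mathcal{U}_0^{(k)}$ is a cone, $t\bx_0\in\mathcal{U}_0^{(k)}$ for every $t>0$, and since $\Exp[\bX_t]=\bd{0}_d$ and $\lmin{\bSigma}>0$ the log-concave density of $\bX_t$ is positive near the origin (the mean lies in the interior of the full-dimensional support). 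Hence for small $t>0$ a ball around $t\bx_0$ lies in both the support of $\bX_t$ and in $\mathcal{U}_{h_k}^{(k)}$ for some $h_k>0$, giving $p_k:=\Pro(\bX_t\in\mathcal{U}_{h_k}^{(k)})>0$. For any unit vector $\bu$, $\bu'\bSigma_{h_k}^{(k)}\bu=\Exp[(\bu'\bX_t)^2\idf\{\bX_t\in\mathcal{U}_{h_k}^{(k)}\}]>0$, since $\{(\bu'\bX_t)^2=0\}$ has $\bX_t$-probability zero while $\{\bX_t\in\mathcal{U}_{h_k}^{(k)}\}$ has probability $p_k$; continuity of $\bu\mapsto\bu'\bSigma_{h_k}^{(k)}\bu$ and compactness of the unit sphere then yield $\lmin{\bSigma_{h_k}^{(k)}}\ge 2\ell_0^{(k)}>0$. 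Taking $h:=\min_{k}h_k$ and $2\ell_0:=\min_{k}\ell_0^{(k)}$ and using $\mathcal{U}_h^{(k)}\supseteq\mathcal{U}_{h_k}^{(k)}$ (valid since $h\le h_k$) gives $\lmin{\bSigma_h^{(k)}}\ge 2\ell_0$ for all $k$. That $h,\ell_0$ depend only on $K,L_3,m_\theta,L_4$ would follow from a uniformity argument over the admissible family of configurations, or from the quantitative log-concavity estimates collected in Appendix~\ref{sec: log-concave}.

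For (ii), fix $k$ and put $\bZ_t:=\bX_t\idf\{\bX_t\in\mathcal{U}_h^{(k)}\}$: these are i.i.d.\ with $\Exp[\bZ_t\bZ_t']=\bSigma_h^{(k)}$, so $\lmin{\bSigma_h^{(k)}}\ge 2\ell_0$ and $\lmax{\bSigma_h^{(k)}}\le\lmax{\bSigma}\le L_3$ by Assumption~\ref{assumption: bounded cov mat}, while $|\bu'\bZ_t|\le|\bu'\bX_t|$ pointwise gives $\|\bu'\bZ_t\|_{\psi_{2}}\le\|\bu'\bX_t\|_{\psi_{2}}\le\alpha_X$ for every unit $\bu$ by Assumption~\ref{assumption: context}. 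Applying the restricted-eigenvalue transfer lemma (Lemma~\ref{lemma: RE between population and design}, exactly as in the proof of Lemma~\ref{lemma: Stage 1_restricted}) to the matrix with rows $\bZ_1,\dots,\bZ_n$ yields a constant $C>0$ depending only on $\alpha_X,K,L_3,m_\theta,L_4$ such that for $n\ge Cs_0\log d$, with probability at least $1-2e^{-n/C}$, $\frac1n\sum_{t=1}^n\bv'\bZ_t\bZ_t'\bv\ge\ell_0\|\bv\|_2^2$ for all $\bv\in\mathcal{C}(s_0,3)$; a union bound over $k\in[K]$ then gives the probability $1-2Ke^{-n/C}$ after relabelling $\ell_0$ and $C$. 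I expect the main obstacle to be step (i) --- obtaining a lower bound on $\Pro(\bX_t\in\mathcal{U}_h^{(k)})$ for a common $h>0$ from the separation/linear-independence hypothesis, and converting that positive-probability convex event into a quantitative floor on the restricted second-moment matrix via log-concavity; the transfer step (ii) is then a routine application of the restricted-eigenvalue machinery already developed in the paper.
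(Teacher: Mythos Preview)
Your two–step plan is exactly the paper's: first bound $\lmin{\bSigma_h^{(k)}}$ from below for a common $h$, then apply Lemma~\ref{lemma: RE between population and design} to the i.i.d.\ rows $\bZ_t=\bX_t\idf\{\bX_t\in\mathcal{U}_h^{(k)}\}$ and union-bound over $k$. Step~(ii) is correct and matches the paper verbatim.

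The gap is in step~(i). Your compactness argument on the unit sphere of $\bR^d$ establishes $\lmin{\bSigma_{h_k}^{(k)}}>0$ for each \emph{fixed} instance $(d,\bSigma,\btheta^{(1)},\dots,\btheta^{(K)})$, but does \emph{not} give an $\ell_0$ depending only on $K,L_3,m_\theta,L_4$. The infimum over the $d$-dimensional sphere can, a priori, decay with $d$; likewise, your ``ball near $t\bx_0$'' argument for $\Pro(\bX_t\in\mathcal{U}_{h_k}^{(k)})>0$ gives no quantitative lower bound independent of $d$ and of the particular $\bSigma$. The sentence ``would follow from a uniformity argument over the admissible family of configurations'' hides the entire content of the lemma. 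This dimension-independence is essential here, since $d$ may be much larger than $T$.

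The paper supplies exactly this missing step via Lemma~\ref{lemma: arm optimality}. The key idea---which your proposal does not articulate---is to reduce to a fixed-dimensional problem: for a given unit $\bv$, apply Gram--Schmidt to $\{\bu_{K,1},\dots,\bu_{K,K-1},\bv\}$ (Assumption~\ref{assumption: linear_independent} makes this nondegenerate with quantitative control on the coefficients), so that both the event $\{\bZ\in\mathcal{U}_h^{(K)}\}$ and the quantity $(\bv'\bZ)^2$ are expressed through the $K$-dimensional projection $(\tbu_1'\bZ,\dots,\tbu_{K-1}'\bZ,\bd{\omega}'\bZ)$. One then invokes Lemma~\ref{lemma:density_props}, which gives a density lower bound for any $K$-dimensional orthonormal projection of $\bZ$ depending only on $K$ and $L_3$, and integrates explicitly over a box to obtain a lower bound on $\Exp[(\bv'\bZ)^2\idf\{\bZ\in\mathcal{U}_h^{(K)}\}]$ that is uniform in $\bv$ and $d$. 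Without this reduction, your soft argument cannot close.
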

\begin{proof}
By Lemma \ref{lemma: arm optimality} and due to Assumption \ref{assumption: bounded cov mat}, there exist constants $h,\ell_0>0$, which depend only on $K, L_3, m_{\theta}, L_4$, such that for $t \in [T]$ and $k \in [K]$,
$$
\ell_0 \leq 
\lmin{\Exp\left[\bX_t\bX_t'\idf\{\bX_t\in\mathcal{U}_{h}^{(k)}\} \right]} \leq 
\lmax{\Exp\left[\bX_t\bX_t'\idf\{\bX_t\in\mathcal{U}_{h}^{(k)}\} \right]} \leq L_3.
$$
The proof is then complete by applying Lemma \ref{lemma: RE between population and design} to
the random matrix $(\bX_1\idf\{\bX_1\in \mathcal{U}_{h}^{(k)}\},\ldots, \bX_t\idf\{\bX_t\in \mathcal{U}_{h}^{(k)}\})'$ for each $k \in [K]$ and the union bound.
\end{proof}

\begin{lemma}\label{lemma: prediction error controlled sample}
Suppose that Assumptions \ref{assumption: context}, \ref{assumption: bounded cov mat} and \ref{assumption:arm_parameters} hold.  Then for any constants $h, \ell_2>0$, there exist a constant $\ell_1>0$ depending only on $h,\ell_2,L_3$ and a constant $C>0$ depending only on $\ell_2, \alpha_X,L_3$, such that for all $n\ge Cs_0\log d$ and  $\bu\in \mR^d$ with $\|\bu\|_2\le \ell_1$,  with probability at least $1-2e^{-n/C}$, we have  
$$
\bv'\left(\frac{1}{n}\sum_{t=1}^{n} \bX_t\bX_t'\idf\{|\bu'\bX_t|\ge h/2\} \right) \bv \le \ell_2\|\bv\|_2^2,\quad \text{ for all } 
\bv\in \mathcal{C}(s_0,3).  
$$
\end{lemma}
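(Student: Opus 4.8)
The plan is to decouple a deterministic population bound from a stochastic concentration bound. For a fixed $\bu$ with $\|\bu\|_2 \le \ell_1$ (the radius $\ell_1$ to be chosen), set $\Gamma^{(\bu)} := \Exp[\bX_1\bX_1'\idf\{|\bu'\bX_1|\ge h/2\}]$; the first goal is to show that $\lmax{\Gamma^{(\bu)}} \le \ell_2/2$ once $\ell_1$ is small enough. For any unit vector $\bv$, Cauchy--Schwarz gives $\bv'\Gamma^{(\bu)}\bv = \Exp[(\bv'\bX_1)^2\idf\{|\bu'\bX_1|\ge h/2\}] \le \Exp[(\bv'\bX_1)^4]^{1/2}\,\Pro(|\bu'\bX_1|\ge h/2)^{1/2}$. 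Since $\bX_1$ has a log-concave Lebesgue density (Assumption~\ref{assumption: context}) and $\lmax{\bSigma}\le L_3$ (Assumption~\ref{assumption: bounded cov mat}), every one-dimensional marginal $\bd{w}'\bX_1$ with $\|\bd{w}\|_2=1$ is a centered log-concave random variable with variance at most $L_3$; the standard moment and tail estimates for such variables (collected in Appendix~\ref{sec: log-concave}) yield $\Exp[(\bv'\bX_1)^4]^{1/2}\le c\,L_3$ and, after writing $\bu'\bX_1 = \|\bu\|_2\,(\bu/\|\bu\|_2)'\bX_1$, $\Pro(|\bu'\bX_1|\ge h/2)\le 2\exp\!\big(-c' h/(\|\bu\|_2\sqrt{L_3})\big)$ for universal constants $c,c'>0$. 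Taking $\ell_1$ of order $h/(\sqrt{L_3}\,(1+\log(L_3/\ell_2)))$ makes the right-hand side at most $\ell_2/2$, and this $\ell_1$ depends only on $h,\ell_2,L_3$; note that when $\bu=\bd{0}_d$ the indicator vanishes and the conclusion is trivial.

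For the stochastic part I would apply a deviation bound for restricted quadratic forms of sub-Gaussian designs to the thinned matrix $\bZ^{(\bu)}$ whose $t$-th row is $\bX_t\idf\{|\bu'\bX_t|\ge h/2\}$. Its rows are i.i.d. with second-moment matrix $\Gamma^{(\bu)}$, and since $|\bd{w}'\bX_t\idf\{|\bu'\bX_t|\ge h/2\}|\le |\bd{w}'\bX_t|$ they are sub-Gaussian with $\psi_2$-parameter at most $\alpha_X$ by Assumption~\ref{assumption: context}. The two-sided version of the concentration tool underlying Lemma~\ref{lemma: RE between population and design} then gives, with probability at least $1-2e^{-n/C}$, the deviation bound $|n^{-1}\|\bZ^{(\bu)}\bv\|_2^2 - \bv'\Gamma^{(\bu)}\bv| \le (\ell_2/8)\|\bv\|_2^2 + c\,\alpha_X^2\,(\log d/n)\,\|\bv\|_1^2$ uniformly in $\bv$, with $C$ depending on $\ell_2,\alpha_X,L_3$. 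For $\bv\in\mathcal{C}(s_0,3)$ one has $\|\bv\|_1\le 4\sqrt{s_0}\,\|\bv\|_2$, so the last term is at most $16 c\,\alpha_X^2\,(s_0\log d/n)\|\bv\|_2^2 \le (\ell_2/8)\|\bv\|_2^2$ as soon as $n\ge C s_0\log d$ for $C$ enlarged appropriately. Combining this with $\lmax{\Gamma^{(\bu)}}\le \ell_2/2$ from the first step gives $n^{-1}\|\bZ^{(\bu)}\bv\|_2^2 \le (\ell_2/2 + \ell_2/8 + \ell_2/8)\|\bv\|_2^2 \le \ell_2\|\bv\|_2^2$ for all $\bv\in\mathcal{C}(s_0,3)$, which is exactly the claim since $n^{-1}\|\bZ^{(\bu)}\bv\|_2^2 = \bv'\big(n^{-1}\sum_{t=1}^n\bX_t\bX_t'\idf\{|\bu'\bX_t|\ge h/2\}\big)\bv$. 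In the applications one simply replaces $d$ by $dT$ throughout, as in~\eqref{stage 2:aux_eq3}.

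The step I expect to be the main obstacle is obtaining the uniform bound over the cone $\mathcal{C}(s_0,3)$ at the announced sample size $n\gtrsim s_0\log d$ with $\ell_1$ free of $s_0$ and of $m_X$. A naive union bound over the $d^2$ entries of $n^{-1}\sum_t \bX_t\bX_t'\idf\{|\bu'\bX_t|\ge h/2\} - \Gamma^{(\bu)}$ controls its max-norm only at rate $\sqrt{\log d/n}$; combined with $\|\bv\|_1^2\le 16 s_0\|\bv\|_2^2$ this would require $n\gtrsim s_0^2\log d$, and forcing the Bernstein variance term below the bounded term would additionally make $\ell_1$ shrink (logarithmically) in $s_0$ and depend on $m_X$---both incompatible with the statement. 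Securing the sharper $(\log d/n)\|\bv\|_1^2$ form of the deviation, so that only $\lmax{\Gamma^{(\bu)}}$ (and not $m_X$ or $s_0$) governs $\ell_1$, is precisely what forces the use of the sub-Gaussian chaining/peeling argument behind Lemma~\ref{lemma: RE between population and design}; the remaining care is simply checking that this tool applies to $\bZ^{(\bu)}$, whose rows are no longer centered but are still sub-Gaussian with parameter $\alpha_X$.
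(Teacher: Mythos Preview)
Your route differs from the paper's, and there is a genuine gap in the concentration step that the paper's argument is designed precisely to sidestep.

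On the population bound both approaches agree: you and the paper both show $\lmax{\Gamma^{(\bu)}}\le\tilde\ell_2$ for $\|\bu\|_2$ small enough (the paper packages this as Lemma~\ref{lemma: log_concave_eig_max}; your explicit Cauchy--Schwarz plus log-concave tail argument is a valid alternative).

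The gap is in invoking Lemma~\ref{lemma: RE between population and design}. That lemma (built on Theorem~15 of Rudelson--Zhou) requires $a\le\lmin{\bA}\le\lmax{\bA}\le b$ with the sample-size constant $C$ depending on $a$: the proof whitens by $\bA^{-1/2}$ to reduce to the isotropic case, so a fixed positive lower eigenvalue is essential. But your thinned covariance $\Gamma^{(\bu)}$ satisfies $\lmin{\Gamma^{(\bu)}}\le L_3\,\Pro(|\bu'\bX_1|\ge h/2)$, and you are driving this probability to zero in order to make $\lmax{\Gamma^{(\bu)}}$ small. Hence any direct application of the lemma to $\bZ^{(\bu)}$ produces a constant $C$ depending on the (vanishing) $\lmin{\Gamma^{(\bu)}}$, not just on $\ell_2,\alpha_X,L_3$. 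The additive form you write, $(\ell_2/8)\|\bv\|_2^2+c\,\alpha_X^2(\log d/n)\|\bv\|_1^2$, is not what Rudelson--Zhou yields: their conclusion is multiplicative in $\|\bA^{1/2}\bv\|_2^2$, and turning that into an $\|\bv\|_2^2$ term uses exactly the lower eigenvalue. The issue you flag in your final paragraph (lack of centering) is not the obstruction; the degeneracy of $\Gamma^{(\bu)}$ is.

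The paper's device is to avoid a new deviation lemma altogether by a shift: set $\bZ_{t,\bu}:=\bX_t\bigl(\idf\{|\bu'\bX_t|\ge h/2\}+\tau\bigr)$ for a small constant $\tau>0$, so that
\[
\bZ_{t,\bu}\bZ_{t,\bu}'=(1+2\tau)\,\bX_t\bX_t'\idf\{|\bu'\bX_t|\ge h/2\}+\tau^2\,\bX_t\bX_t'.
\]
The additive $\tau^2\bX_t\bX_t'$ forces $\lmin{\Exp[\bZ_{t,\bu}\bZ_{t,\bu}']}\ge\tau^2 L_3^{-1}$, a fixed positive constant, so Lemma~\ref{lemma: RE between population and design} applies directly with $C$ depending only on $\ell_2,\alpha_X,L_3$. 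The upper-bound conclusion for $\bZ_{t,\bu}$ then dominates the original quadratic form since $\bX_t\bX_t'\idf\{\cdot\}\preceq\bZ_{t,\bu}\bZ_{t,\bu}'$, and $\tau$ is chosen so that the resulting constant equals $\ell_2$. Your approach could be rescued with an additive uniform deviation lemma for sub-Gaussian designs that does \emph{not} require a lower eigenvalue (such results exist in the literature), but that is not the tool you cited; the paper's shift trick reuses the existing lemma without any such detour.
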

\begin{proof} 
Let $\tilde{\ell}_2 > 0$ be the unique real number such that $\ell_2 = (9/4)[\tilde{\ell}_2 + 3(\tilde{\ell}_2)^2]$, and define $\tau := \tilde{\ell}_2/\sqrt{L_3}$. For $t \in [n]$ and $\bu \in \bR^{d}$, define $\bZ_{t,\bu} := \bX_t(\idf\{|\bu'\bX_t|\ge h/2\}+\tau)$. Then
\begin{equation}\label{eq: arm optimality sample 1} 
    \bZ_{t,\bu}\bZ_{t,\bu}' = (1+2\tau)\bX_t\bX_t'\idf\{|\bu'\bX_t|\ge h/2\} + \tau^2\bX_t\bX_t'.
\end{equation}
By Assumption \ref{assumption: bounded cov mat} we have
$$\lmin{\Exp[\bZ_{t,\bu}\bZ_{t,\bu}']}\ge \tau^2\lmin{\Exp[\bX_t\bX_t']}\ge \tau^2 L_3^{-1}.$$
By Lemma \ref{lemma: log_concave_eig_max}, there exists a constant $\ell_1 > 0$ depending only on $\ell_2,h,L_3$ such that
\begin{align*}
    \lmax{\Exp[\bX_t\bX_t'\idf\left\{|\bu'\bX_t|\ge h/2 \right\}]} \le \tilde{\ell}_2,\; \text{ for all } \bu \in \mR^d \text{ such that } \|\bu\|_2\le \ell_1,
\end{align*}
which, together with Assumption \ref{assumption: bounded cov mat}, implies that for $\bu \in \mR^d \text{ such that } \|\bu\|_2\le \ell_1$, we have
\begin{align*}
    \lmax{\Exp[\bZ_{t,\bu}\bZ_{t,\bu}']}\le  (1+2\tau) \tilde{\ell}_2 +\tau^2 L_3 \leq (4/9)\ell_2.
\end{align*}

Due to Assumption \ref{assumption: context},
$\|\bd{\omega}'\bZ_{t,\bu}\|_{\psi_2} \leq (1+\tau)\alpha_X$ for any $\bd{\omega} \in \bR^d$ with $\|\bd{\omega}\|_2 = 1$. Then, by Lemma \ref{lemma: RE between population and design}, there exists some constant $C > 0$, depending only on $\alpha_X, \ell_2, L_3$, such that for all $n\ge Cs_0\log d$ and  $\bu\in \mR^d$ with $\|\bu\|_2\le \ell_1$, with probability at least $1-2e^{-n/C}$,  we have
$$
\bv'\left(\frac{1}{n}\sum_{t=1}^n\bZ_{t,\bu}\bZ_{t,\bu}'\right)\bv \le \frac{9}{4}\frac{4\ell_2}{9}\|\bv\|_2^2 = \ell_2\|\bv\|_2^2, \;\;\text{ for any }\bv\in\mathcal{C}(s_0,3).
$$
The proof is then finished in view of \eqref{eq: arm optimality sample 1}.
\end{proof}

Recall the definition of $\bSigma^{(k)}$ in \eqref{eq: hbSigma}.

\begin{lemma}\label{lemma: event D_2}
Let $\tau > 0$ be arbitrary. Suppose  Assumption \ref{assumption: context} holds. If 
$$n \geq \frac{16m_X^4}{\tau^2} s_0^2 \log(dT),$$ 
then with probability at least $1-2K/T^2$, for each $k \in [K]$, 
$$\op{\frac{1}{n}\sum_{t=1}^{n} \left(\bX_t\bX_t'\idf\{(\btheta^{(k)})'\bX_t>\max_{\ell \neq k}((\btheta^{(\ell)})'\bX_t)\}-\bSigma^{(k)}\right) }_{\textup{max}} \le \frac{\tau}{s_0}.$$
\end{lemma}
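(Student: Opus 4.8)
\textbf{Approach.} The claim is a uniform (over arms and matrix entries) concentration bound for an i.i.d.\ average of bounded random matrices, so the plan is to reduce it to a collection of scalar Hoeffding bounds, one for each entry $(i,j) \in [d]^2$ and each arm $k \in [K]$, and then combine them by a union bound. First I would fix $k \in [K]$ and observe that the event $\{(\btheta^{(k)})'\bX_t > \max_{\ell\neq k}(\btheta^{(\ell)})'\bX_t\}$ is exactly $\{\bX_t \in \mathcal{U}_0^{(k)}\}$ in the notation of \eqref{eq: U_h_k}, i.e.\ a measurable function of $\bX_t$ alone (in particular it does not depend on any arm-selection history). Hence, for fixed $i,j$, the variables
\[
W_t^{(i,j,k)} := (\bX_t)_i (\bX_t)_j \,\idf\{\bX_t \in \mathcal{U}_0^{(k)}\}, \qquad t \in [n],
\]
are i.i.d., and by Assumption \ref{assumption: context} (which gives $\|\bX_t\|_\infty \le m_X$) they satisfy $|W_t^{(i,j,k)}| \le m_X^2$ almost surely, while by the definition of $\bSigma^{(k)}$ in \eqref{eq: hbSigma} we have $\Exp[W_1^{(i,j,k)}] = (\bSigma^{(k)})_{i,j}$.

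\textbf{Key steps.} Next I would apply Hoeffding's inequality to the centered i.i.d.\ sum $n^{-1}\sum_{t=1}^n(W_t^{(i,j,k)} - (\bSigma^{(k)})_{i,j})$, whose summands lie in $[-2m_X^2, 2m_X^2]$; at deviation level $\tau/s_0$ this yields
\[
\Pro\!\left( \Big| \frac{1}{n}\sum_{t=1}^n W_t^{(i,j,k)} - (\bSigma^{(k)})_{i,j} \Big| \ge \frac{\tau}{s_0} \right) \le 2\exp\!\left( -\frac{n\tau^2}{8 m_X^4 s_0^2}\right).
\]
A union bound over the $d^2$ pairs $(i,j)$ and the $K$ arms then shows that, with probability at least $1 - 2K d^2 \exp(-n\tau^2/(8 m_X^4 s_0^2))$, for every $k \in [K]$ one has $\op{\,n^{-1}\sum_{t=1}^n(\bX_t\bX_t'\idf\{\bX_t\in\mathcal{U}_0^{(k)}\} - \bSigma^{(k)})\,}_{\textup{max}} \le \tau/s_0$. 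Finally, the hypothesis $n \ge 16 m_X^4 s_0^2 \tau^{-2}\log(dT)$ gives $n\tau^2/(8 m_X^4 s_0^2) \ge 2\log(dT)$, hence $d^2\exp(-n\tau^2/(8 m_X^4 s_0^2)) \le d^2 (dT)^{-2} = T^{-2}$, and the failure probability is at most $2K/T^2$, which is the assertion.

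\textbf{Main obstacle.} There is essentially no analytic difficulty here; the argument is elementary. The only points that need care are (i) noting that the indicator $\idf\{\bX_t\in\mathcal{U}_0^{(k)}\}$ is a deterministic function of $\bX_t$, so the $W_t^{(i,j,k)}$ remain i.i.d.\ and bounded by $m_X^2$ irrespective of how arms are pulled (this is what makes the bound hold ``for any arm-selection mechanism''), and (ii) tracking constants so that the stated sample-size threshold matches the Hoeffding exponent, which it does with the bound $|W_t^{(i,j,k)} - (\bSigma^{(k)})_{i,j}| \le 2m_X^2$ and the form $2\exp(-n t^2/(2R^2))$ of Hoeffding's inequality for i.i.d.\ summands of magnitude at most $R = 2m_X^2$.
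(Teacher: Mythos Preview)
Your proposal is correct and essentially identical to the paper's proof: the paper bounds each entry by $2m_X^2$ and invokes its Lemma~\ref{lemma: max inequality of bounded mat} (an entrywise Hoeffding bound plus union bound over the $d^2$ entries), then union-bounds over $k\in[K]$, which is precisely what you do directly. The constants and the final failure probability $2K/T^2$ match.
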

\begin{proof}
Note that by Assumption \ref{assumption: context} we have 
$$\op{\bX_t\bX_t'\idf\{(\btheta^{(k)})'\bX_t>\max_{\ell \neq k}((\btheta^{(\ell)})'\bX_t)\}-\bSigma^{(k)}}_{\textup{max}} \le 2m_X^2.$$
Therefore, by Lemma \ref{lemma: max inequality of bounded mat}, if $n \geq (16m_X^4/\tau^2) s_0^2 \log(dT)$, then  
\begin{equation*}
   \Pro\left(  \op{\frac{1}{n}\sum_{t=1}^{n} (\bX_t\bX_t'\idf\{(\btheta^{(k)})'\bX_t>\max_{\ell \neq k}((\btheta^{(\ell)})'\bX_t)\}-\bSigma^{(k)}) }_{\textup{max}} 
     \ge \frac{\tau}{s_0} \right) \leq \frac{2}{T^2}.
\end{equation*}
Then the proof is complete by the union bound.
\end{proof}

Recall the definition of $\bX_{[t]}^{(k)}$ in \eqref{eq: bandit design matrix} and $\bepsilon_{[t]}$ in \eqref{def:bepsilon}. Recall that we say a random variable $Z$ is $\alpha$-sub-Gaussian if $\Exp[\exp(\tau Z)] \leq \exp(\alpha^2\tau^2/2)$ for $\tau \in \bR$, and that $\epsilon_1$ is $\sigma$-sub-Gaussian. 

\begin{lemma} \label{lemma:ell_infty bound of x times epsilon}
Suppose that $\|\bX_1\|_{\infty}\le m_X$ and let $t \in [T]$. Then for each arm $k\in[K]$ and any $\delta > 0$,
$$\Pro\left(\|({\bX_{[t]}^{(k)}})' \bepsilon_{[t]}/t\|_{\infty} \ge \delta \right) \le 2d\exp\left\{-\frac{t\delta^2}{2(m_X\sigma)^2} \right\}.$$
Consequently,
\begin{equation*}
    \Pro\left(\bigcap_{k\in [K]}\left\{\|{(\bX_{[t]}^{(k)}})' \bepsilon_{[t]}/t\|_{\infty} 
    \le 3m_X\sigma\sqrt{\frac{\log (dT)}{t}} \right\} \right) \ge 1-\frac{2K}{T^4}.
\end{equation*}
\end{lemma}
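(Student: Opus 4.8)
The plan is to bound, for a fixed arm $k$ and a fixed time $t$, the quantity $\|(\bX_{[t]}^{(k)})'\bepsilon_{[t]}/t\|_\infty = \max_{j\in[d]}|t^{-1}\sum_{s=1}^t (\bX_s)_j \idf\{A_s=k\}\epsilon_s|$ by a union bound over the $d$ coordinates, controlling each coordinate with a martingale concentration argument. The key point is that $A_s$ is $\cF_{s-1}$-measurable together with $\bX_s$ being part of $\cF_s$, whereas $\epsilon_s$ is independent of $\cF_{s-1}$ and of $\bX_s$; more precisely, letting $\mathcal{G}_s := \sigma(\cF_{s-1}, \bX_s, A_s)$, the variable $(\bX_s)_j\idf\{A_s=k\}$ is $\mathcal{G}_s$-measurable while $\epsilon_s$ is independent of $\mathcal{G}_s$ and $\sigma$-sub-Gaussian. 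Hence $M_r := \sum_{s=1}^r (\bX_s)_j\idf\{A_s=k\}\epsilon_s$ is a martingale with respect to $\{\mathcal{G}_r\}$, and since $|(\bX_s)_j\idf\{A_s=k\}|\le m_X$ by the assumption $\|\bX_1\|_\infty\le m_X$, each increment is conditionally $(m_X\sigma)$-sub-Gaussian.

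First I would record the conditional moment-generating-function bound: for any $\tau\in\bR$,
$$
\Exp\!\left[\exp\!\Big(\tau (\bX_s)_j\idf\{A_s=k\}\epsilon_s\Big)\,\Big|\,\mathcal{G}_s\right]
\le \exp\!\Big(\tfrac{1}{2}\tau^2 (m_X\sigma)^2\Big),
$$
using that $\epsilon_s$ is $\sigma$-sub-Gaussian and independent of $\mathcal{G}_s$, plus $|(\bX_s)_j\idf\{A_s=k\}|\le m_X$. Then I would iterate expectations to get $\Exp[\exp(\tau M_t)]\le \exp(\tfrac12 \tau^2 t (m_X\sigma)^2)$, apply the standard Chernoff bound to both $M_t$ and $-M_t$, and optimize over $\tau$ to obtain
$$
\Pro\big(|M_t|/t \ge \delta\big) \le 2\exp\!\Big(-\tfrac{t\delta^2}{2(m_X\sigma)^2}\Big).
$$
A union bound over $j\in[d]$ then yields the first displayed inequality of the lemma.

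For the ``consequently'' part, I would set $\delta = 3 m_X\sigma\sqrt{\log(dT)/t}$, so that $2d\exp(-t\delta^2/2(m_X\sigma)^2) = 2d\exp(-\tfrac92\log(dT)) = 2d (dT)^{-9/2} \le 2 T^{-9/2}/d^{7/2} \le 2/T^4$ (using $d,T\ge 2$ comfortably, since $T^{-9/2}\le T^{-4}\cdot T^{-1/2}\le T^{-4}$), and then union bound over the $K$ arms to get the probability at least $1-2K/T^4$.

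I do not expect a serious obstacle here; the only thing requiring care is the precise choice of the conditioning filtration so that $(\bX_s)_j\idf\{A_s=k\}$ is treated as a ``known coefficient'' multiplying the fresh noise $\epsilon_s$ — this is exactly where the permissibility of the arm-selection rule (Definition in Section~\ref{sec: problem formulation}, i.e.\ $A_s$ depending only on $\bX_s$ and the past) is used, and it is what makes the martingale increments conditionally sub-Gaussian despite the data-dependent design. Everything else is a routine Chernoff/union-bound computation.
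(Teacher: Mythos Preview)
Your proposal is correct and follows essentially the same approach as the paper: fix a coordinate, show the increments $(\bX_s)_j\idf\{A_s=k\}\epsilon_s$ are conditionally $(m_X\sigma)$-sub-Gaussian given $\sigma(\cF_{s-1},\bX_s,A_s)$, apply Azuma--Hoeffding (the paper cites Theorem~2.19 in \cite{wainwright2019high} rather than writing out the Chernoff iteration explicitly), and union-bound over $j\in[d]$ and then over $k\in[K]$. One minor wording issue: $M_r$ is not $\mathcal{G}_r$-measurable (since $\epsilon_r\notin\mathcal{G}_r$), so it is cleaner to say the increments form a martingale difference sequence with respect to $\{\cF_s\}$, as the paper does; your MGF iteration is nonetheless valid because $M_{r-1}\in\cF_{r-1}\subseteq\mathcal{G}_r$.
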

\begin{proof}
Fix $k \in [K]$, $j \in [d]$, and $t \in [T]$. By definition,
$$
(({\bX_{[t]}^{(k)}})' \bepsilon_{[t]})_j = \sum_{s=1}^t (\bX_s)_j\idf\{A_s=k\}\epsilon_s.
$$ 
Clearly, $(({\bX_{[t]}^{(k)}})' \bepsilon_{[t]})_j \in \cF_{t}$. Further, for $s \in [1,t]$, since $\epsilon_s$ is independent from $\bX_s$ and $\cF_{s-1}$, and $\epsilon_s$  is $\sigma$-sub-Gaussian, we have that,  for any $\tau > 0$, almost surely, 
\begin{align*}
&
\Exp\left[\epsilon_s \mid \cF_{s-1}\cup\{\bX_s\}\right] = 0. \\
&
\Exp\left[\exp\left(\tau(\bX_s)_j\idf\{A_s=k\}\epsilon_s\right)\mid \cF_{s-1} \cup \{\bX_s\}\right]
\leq 
\exp\left\{\frac{1}{2}\left(\tau(\bX_s)_j\idf\{A_s=k\}\right)^2\sigma^2 \right\}.
\end{align*}
Thus, $\{(\bX_s)_j\idf\{A_s=k\}\epsilon_s: 1\le s\le t\}$ is an $\{\cF_s\}$-martingale difference sequence. Further, since $\|\bX\|_{\infty} \leq m_X$, we have that for $\tau > 0$,
\begin{align*}
    \Exp\left[\exp\left(\tau(\bX_s)_j\idf\{A_s=k\}\epsilon_s\right) \mid \cF_{s-1}\right]  \le e^{\tau^2 m_X^2 \sigma^2/2}.
\end{align*}

Finally, by Theorem 2.19 in \cite{wainwright2019high}, 
\begin{align*}
\Pro\left(|(({\bX_{[t]}^{(k)}})' \bepsilon_{[t]})_j| \ge t\delta\right) \leq 2\exp\left\{-\frac{t\delta^2}{2(m_X\sigma)^2} \right\}.
\end{align*}
Then the first claim follows due to the union bound. The second claim follows immediately from the first claim and the union bound.
\end{proof}

\begin{lemma} \label{lemma: ell_2 bound of x epsilon}
Suppose that $\|\bX_1\|_{\infty}\le m_X$ and let $t \in [T]$. Fix $k \in [K]$, and assume $S_k$ is non-empty. Then 
\begin{align*}
     \Exp\left[ \|(\bX_{[t]}^{(k)})_{S_k}'\bepsilon_{[t]}/t \|_2^2 \right] \le  4(m_X\sigma)^2s_0/t.
\end{align*}
Further, there exists an absolute constant $C>0$, such that for any $\tau>0$,
\begin{align*}
\Pro\left(\|(\bX_{[t]}^{(k)})_{S_k}'\bepsilon_{[t]}/t\|_2 \ge C  m_X\sigma\sqrt{\frac{s_0\log (s_0) + s_0\tau}{t}} \right) \le 2e^{-\tau}.
\end{align*}
In particular, if $S_k$ is non-empty for all $k \in [K]$, then
$$\Pro\left(\bigcap_{k = 1}^{K}\left\{\|(\bX_{[t]}^{(k)})_{S_k}'\bepsilon_{[t]}/t\|_2 \le C  m_X\sigma\sqrt{\frac{3s_0\log (dT)}{t}} \right\} \right)\geq 1- \frac{K}{T^2}.$$
\end{lemma}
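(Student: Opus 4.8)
Fix $k \in [K]$ and $t \in [T]$, and abbreviate $\bd{g} := (\bX_{[t]}^{(k)})_{S_k}'\bepsilon_{[t]} \in \bR^{|S_k|}$, so that for each $j \in S_k$ one has $g_j = \sum_{s=1}^{t} (\bX_s)_j \idf\{A_s = k\}\,\epsilon_s$. The plan is to view $g_j$ as the terminal value of a scalar martingale adapted to $\{\cF_s\}$ and argue coordinatewise. Since $A_s = \pi_s(\bX_s, \cF_{s-1}, U_s)$ and $\epsilon_s$ is independent of $\sigma(\bX_s, A_s, \cF_{s-1})$, each summand $(\bX_s)_j \idf\{A_s = k\}\,\epsilon_s$ is $\cF_s$-measurable with conditional mean zero; and because $|(\bX_s)_j| \le m_X$ and $\epsilon_s$ is $\sigma$-sub-Gaussian, it is conditionally $m_X\sigma$-sub-Gaussian given $\cF_{s-1}$. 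This is the same martingale structure used in the proof of Lemma \ref{lemma:ell_infty bound of x times epsilon}.

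For the second-moment bound I would expand $\Exp[g_j^2]$: the off-diagonal terms vanish by the martingale-difference property, while each diagonal term obeys $\Exp[(\bX_s)_j^2 \idf\{A_s=k\}\,\epsilon_s^2] \le m_X^2\,\Exp[\epsilon_s^2] \le m_X^2\sigma^2$, using that a mean-zero $\sigma$-sub-Gaussian variable has second moment at most $\sigma^2$. Hence $\Exp[g_j^2] \le t\,m_X^2\sigma^2$; summing over the at most $s_0$ indices $j \in S_k$ and dividing by $t^2$ gives $\Exp[\|\bd g/t\|_2^2] \le s_0\,m_X^2\sigma^2/t \le 4(m_X\sigma)^2 s_0/t$.

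For the high-probability bound I would apply the sub-Gaussian Azuma inequality (Theorem 2.19 of \cite{wainwright2019high}, exactly as in Lemma \ref{lemma:ell_infty bound of x times epsilon}) with cumulative variance proxy $\sum_{s=1}^{t} m_X^2\sigma^2 = t\,m_X^2\sigma^2$ to get, for each $j \in S_k$, $\Pro(|g_j| \ge u) \le 2\exp\!\big(-u^2/(2t m_X^2\sigma^2)\big)$. A union bound over the at most $s_0$ coordinates yields $\Pro(\max_{j\in S_k}|g_j| \ge u) \le 2 s_0 \exp\!\big(-u^2/(2 t m_X^2\sigma^2)\big)$, and choosing $u^2 = 2 t m_X^2\sigma^2(\log s_0 + \tau)$ makes this equal to $2e^{-\tau}$. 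Since $\|\bd g\|_2 \le \sqrt{|S_k|}\,\max_{j\in S_k}|g_j| \le \sqrt{s_0}\,u$, we obtain $\|\bd g/t\|_2 \le \sqrt 2\, m_X\sigma\sqrt{(s_0\log s_0 + s_0\tau)/t}$ on an event of probability at least $1 - 2e^{-\tau}$, i.e.\ the claimed estimate with the absolute constant $C = \sqrt 2$. For the ``in particular'' assertion I would take $\tau = 2\log(dT)$: then $\log s_0 \le \log d \le \log(dT)$ gives $s_0\log s_0 + s_0\tau \le 3 s_0\log(dT)$, while $2e^{-\tau} = 2(dT)^{-2} \le (2T^2)^{-1}$ because $d \ge 2$; a union bound over $k \in [K]$ then loses at most $K/(2T^2) \le K/T^2$.

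The second-moment computation and the scalar martingale tail are entirely routine. The one step that must be set up deliberately is the coordinate-wise union bound: a naive $1/2$-net over the unit sphere of $\bR^{|S_k|}$ would inflate the per-direction sub-Gaussian parameter by a factor $\sqrt{|S_k|}$ (since $|\langle(\bX_s)_{S_k},\bd v\rangle|\le\sqrt{|S_k|}\,m_X$ for a unit vector $\bd v$) and would leave $s_0^2$, rather than $s_0\log s_0$, inside the square root. Union-bounding over coordinates avoids this because the per-coordinate parameter is only $\sqrt t\,m_X\sigma$; this, rather than any deep estimate, is the crux of getting the stated dependence on $s_0$.
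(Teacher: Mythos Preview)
Your proof is correct. The expectation bound is essentially the paper's argument done coordinatewise (the paper phrases it as a vector martingale and uses the looser $\Exp[\epsilon_1^2]\le 4\sigma^2$ rather than your $\le \sigma^2$, but both suffice for the stated bound).

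For the high-probability bound you take a genuinely different route. The paper observes that each increment $(\bX_s)_{S_k}\idf\{A_s=k\}\epsilon_s$ is norm-sub-Gaussian with parameter $m_X\sigma\sqrt{s_0}$ conditionally on $\cF_{s-1}$, and then invokes a vector-valued martingale concentration (Corollary~7 of \cite{jin2019short}) directly. You instead run scalar Azuma on each of the $|S_k|\le s_0$ coordinates, union-bound over coordinates, and pass from $\ell_\infty$ to $\ell_2$ via $\|\cdot\|_2 \le \sqrt{s_0}\,\|\cdot\|_\infty$. Both arrive at the same $s_0\log s_0 + s_0\tau$ rate. Your argument is more elementary and self-contained (explicit $C=\sqrt 2$, no external vector concentration needed); the paper's route is more compact once the cited result is in hand. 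Your remark that a naive $\epsilon$-net over the sphere would inflate the parameter by $\sqrt{s_0}$ and give $s_0^2$ inside the root is exactly right, and is precisely why the coordinatewise union bound (with per-coordinate parameter only $m_X\sigma$) is the correct elementary substitute.
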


\begin{proof}
Note that
$(\bX_{[t]}^{(k)})_{S_k}'\bepsilon_{[t]} = \sum_{s=1}^{t}(\bX_s)_{S_k}\idf\{A_s=k\}\epsilon_s$.

\medskip
\noindent\textbf{Step 1. Expectation bound.} Applying the same reasoning as in the proof of Lemma \ref{lemma:ell_infty bound of x times epsilon}, we have that the sequence $\{(\bX_s)_{S_k}\idf\{A_s=k\}\epsilon_s: s\in [t]\}$ forms a vector-valued $\{\cF_s\}$-martingale difference sequence. Thus, we have
\begin{align*}
   \Exp\left[\|(\bX_{[t]}^{(k)})_{S_k}'\bepsilon_{[t]} \|_2^2\right] \leq   \Exp[\epsilon_1^2] \sum_{s\in [t]}\Exp[(\bX_s)_{S_k}'(\bX_s)_{S_k}].
\end{align*}
Since $\epsilon_1$ is $\sigma$-sub-Gaussian, we have $\Exp[\epsilon_1^2] \leq 4\sigma^2$.
The proof then is complete since $\|\bX_1\|_{\infty} \leq m_X$.

\medskip
\noindent
\textbf{Step 2. Probability bounds.} Applying the same reasoning as in the proof of Lemma \ref{lemma:ell_infty bound of x times epsilon}, we have that the sequence $\{(\bX_s)_{S_k}\idf\{A_s=k\}\epsilon_s: s\in [t]\}$ forms a vector-valued $\{\cF_s\}$-martingale difference sequence, and that
for any $\tau>0$, 
\begin{align*}
    \Pro(\|(\bX_s)_{S_k}\idf\{A_s=k\}\epsilon_s\|_2 \ge \tau \mid \cF_{s-1}) \le 2\exp\left\{-\frac{\tau^2}{2(m_X \sigma \sqrt{s_0})^2} \right\}.
\end{align*}
Then, the first probability bound follows immediately from Corollary 7 in \cite{jin2019short} with $\delta = 2e^{-\tau}$.
The second probability bound is finished by setting $\tau = 2\log (dT)$ and the union bound.
\end{proof}

\section{Proofs for the Bandit Problem - Lower Bounds}

\subsection{Proof of Theorem \ref{thm: regret lower bound}} \label{sec: proof of cumulative regret, lower bound}
In this subsection, we prove Theorem \ref{thm: regret lower bound}, establishing a lower bound on the cumulative regret over all permissible rules for the case $K = 2$. 

For technical analysis, we consider the following equivalent problem formulation. For $t \in [T]$, define
\begin{align}\label{def:eqvilent}
    Y_{t}^{(k)} = \bX_{t}'\btheta^{(k)}  + \epsilon_{t}^{(k)},
\end{align}
where $\{\epsilon_{t}^{(k)}:t \in [T], k \in [2]\}$ are i.i.d. with the normal distribution $\mathcal{N}(0,\sigma^2)$. We may view $Y_{t}^{(1)}$ and $Y_{t}^{(2)}$ as the potential rewards, and when an arm $A_t \in [K]$ is selected, then $Y_t = Y_t^{(A_t)}$ is observed.

Compared to the formulation in Section \ref{sec: problem formulation}, the model in \eqref{def:eqvilent} assigns independent noise to each arm. However, since only one arm is pulled at each time, the two formulations are equivalent: for any admissible policy $\{\pi_t : t \in [T]\}$ as defined in Section \ref{sec: problem formulation}, the joint distribution of the observations $\{(\bX_t, \bY_t, A_t) : t \in [T]\}$, as well as the regret sequence $\{r_t : t \in [T]\}$, are identical under both formulations.

Recall the two families of distributions on $\bR^{d}$: $\omega_1(\cdot)$ in Definition \ref{def: omega_1} and $\omega_2(\cdot)$ in Definition \ref{def: omega_2}, and  the definition of $\widetilde{\bTheta}_{d}(s_0,L)$ prior to Theorem \ref{thm: regret lower bound}. The proof strategy follows a similar approach to that in Section 6 of \cite{song2022truncated}, but with different prior distributions.

Recall that in Theorem \ref{thm: regret lower bound}, we assume that for some constant $\kappa > 1$, we have
\begin{equation}\label{app_eq: regret lower bound 1}
3 \leq s_0 \leq (d+2)/3, \quad  d \geq s_0^{\kappa}, \quad 
T \geq \left( 2 (C_{*} \vee \sigma^2 \vee 1) s_0^5 \log(dT)\right)^{\kappa}.
\end{equation}

\begin{proof}[Proof of Theorem \ref{thm: regret lower bound}]
In this proof, we consider the equivalent formulation given in \eqref{def:eqvilent}. Fix any permissible rule and denote its arm selections by $\{A_t \in [2] : t \in [T]\}$ and its instantaneous regrets by $\{r_t: t \in [T]\}$.

\medskip
\noindent \underline{Step 1. Lower bound the supremum risk by the average risk.} 
Let $\tilde{\omega}_1$ and $\tilde{\omega}_2$ be two distributions on $\bR^{d+d}$, both supported on $\widetilde{\bTheta}_{d}(s_0,L)$, defined as follows.

\begin{enumerate}[label=\roman*)]
    \item If a random vector 
 $(\btheta^{(1)}, \btheta^{(2)}) \in \bR^{d+d}$ has the distribution given by $\tilde{\omega}_1$, then 
\begin{align}
    &\btheta^{(1)}=\bd{0}_d, \quad \btheta^{(2)} \text{ has the distribution } \omega_1(s_0, 2/L) \text{ in Definition } \ref{def: omega_1}. \label{eq: instance 1}
\end{align}
By Definition \ref{def: omega_1}, under $\tilde{\omega}_1$, almost surely (a.s.), $\|\btheta^{(1)} - \btheta^{(2)}\|_2 \geq 1/L$ and $\|\btheta^{(2)}\|_2 \leq 2/L \leq 2L$.
Thus, $\tilde{\omega}_1$ is supported on $\widetilde{\bTheta}_{d}(s_0,L)$, which is defined prior to Theorem \ref{thm: regret lower bound}.

\item If a random vector $(\btheta^{(1)}, \btheta^{(2)}) \in \bR^{d+d}$ has the distribution given by $\tilde{\omega}_2$, then 
\begin{align}
    \btheta^{(1)}=\bd{0}_d, \quad \btheta^{(2)} \text{ has the distribution } \omega_2(s_0,1/L,\delta_{*}) \text{ in Definition } \ref{def: omega_2}, \label{eq: instance 2}
\end{align}
where we define
\begin{align*}
  \delta_{*} := \frac{\sigma^2}{16 L_3 T} \frac{s_0-1}{2} \log\left(\frac{d-s_0}{(s_0-1)/2}  \right).
\end{align*}
Due to Lemma \ref{lemma: packing set}, under $\tilde{\omega}_2$, almost surely (a.s.), $\|\btheta^{(1)}-\btheta^{(2)}\|_2 \geq 1/L$ and $\|\btheta^{(2)}\|_2 \leq \sqrt{1/L^2 + 2 \delta_*^2}\le 2L$, where the last inequality is because we have $2\delta_*^2\le 3L^2$ due to \eqref{app_eq: regret lower bound 1}. Thus,  $\tilde{\omega}_2$ is supported $\widetilde{\bTheta}_{d}(s_0,L)$.
\end{enumerate}

Now, let $\tilde{\omega}$ be the mixture of $\tilde{\omega}_1$ and $\tilde{\omega}_2$ with equal weights. This mixture is also supported on $\widetilde{\bTheta}_{d}(s_0,L)$. It is important to note that $\tilde{\omega}$ does not depend on $t$. Then,
\begin{align*}
   & 2\sup_{(\btheta^{(1)}, \btheta^{(2)}) \in \widetilde{\bTheta}_{d}(s_0,L)}  \Exp_{\btheta^{(1)}, \btheta^{(2)}}\left[\sum_{t=\tilde{\gamma}}^T r_t \right] \geq 2\Exp_{(\btheta^{(1)}, \btheta^{(2)}) \sim \tilde{\omega}}\left[\sum_{t=\tilde{\gamma}}^{T}  r_t  \right] \\
   \geq & \Exp_{(\btheta^{(1)}, \btheta^{(2)}) \sim \tilde{\omega}_1}\left[\sum_{t=\tilde{\gamma}}^{T}  r_t  \right]  + \Exp_{(\btheta^{(1)}, \btheta^{(2)}) \sim \tilde{\omega}_2}\left[\sum_{t=\tilde{\gamma}}^{T}  r_t  \right],
\end{align*}
where $\Exp_{(\btheta^{(1)}, \btheta^{(2)}) \sim \tilde{\omega}}$ means that the parameters $(\btheta^{(1)}, \btheta^{(2)})$ is treated as a random vector with the prior distribution $\tilde{\omega}$, and for simplicity, we write it as $\Exp_{\tilde{\omega}}$ below."

\medskip
\noindent \underline{Step 2. Lower bound  expected regrets by $\ell_2$-estimation error.} Let $(\btheta^{(1)}, \btheta^{(2)})$ be a random vector
with the prior distribution $\tilde{\omega}$. Note that a.s., $\btheta^{(1)} = \bd{0}_d$. Then, for $t \in [T]$, since $r_t = \max_{k \in [2]} \bX_t'\left(\btheta^{(k)} - \btheta^{(A_t)}\right)$,  a.s.,
\begin{align*}
    r_t = (\btheta^{(2)})'\bX_t\idf\{(\btheta^{(2)})'\bX_t > 0,\; A_t = 1\} - (\btheta^{(2)})'\bX_t\idf\{(\btheta^{(2)})'\bX_t < 0,\; A_t = 2\}.
\end{align*}
Recall that $\cF_t = \sigma(\{(\bX_s, A_s, Y_s), s\le t\})$ is the $\sigma$-algebra generated by the observations up to time $t$. Then, the optimal decision rule $A_t^*$, which minimizes the expected regret $\Exp_{\tilde{\omega}}[r_t]$, can be achieved by the following: $A_t^* = 1$ if and only if
\begin{align*}
    &\Exp[(\btheta^{(2)})'\bX_t\idf\{(\btheta^{(2)})'\bX_t \ge 0\}\mid \cF_{t-1}\cup\bX_t]\\
    \leq \; & -\Exp[(\btheta^{(2)})'\bX_t\idf\{(\btheta^{(2)})'\bX_t < 0\}\mid \cF_{t-1}\cup\bX_t],
\end{align*}
or equivalently, $A_t^* = 1$ if and only if $\Exp\left[\btheta^{(2)}\mid \cF_{t-1}\cup\bX_t\right]'\bX_t\le 0$. Since $\bX_{t}$ and $\btheta^{(2)}$ are independent, we have
 \begin{align*}
     A_t^* = \begin{cases}
         1, \; &\text{ if } (\hbTheta_{t-1}^{(2)})'\bX_t \leq 0 \\
         2, \; & \text{ otherwise } \\  
     \end{cases}, \quad \text{ where }\;\; \hbTheta_{t-1}^{(2)} :=  \Exp\left[\btheta^{(2)}\mid \cF_{t-1}\right].
 \end{align*}

Thus, for any fixed rule $\{A_t: t \in [T]\}$, by the law of total expectation we have for $t \in [T]$
\begin{align}\label{aux:bandit_lower_rt}
\begin{split}
\Exp_{\tilde{\omega}}[r_t] &\geq   \Exp_{\tilde{\omega}}[(\btheta^{(2)})'\bX_t\idf\{(\btheta^{(2)})'\bX_t > 0\;,\;(\hbTheta_{t-1}^{(2)})'\bX_t\le 0\}
\\&\;\;\qquad -(\btheta^{(2)})'\bX_t\idf\{(\btheta^{(2)})'\bX_t \leq 0\;,\;(\hbTheta_{t-1}^{(2)})'\bX_t > 0\}]
\\&=\Exp_{\tilde{\omega}}\left[|(\btheta^{(2)})'\bX_t|\idf\{\textup{sgn}((\btheta^{(2)})'\bX_t)\neq\textup{sgn}((\hbTheta_{t-1}^{(2)})'\bX_t)\}\right],
\end{split}
 \end{align}
where for $\tau \in \bR$, we define $\text{sgn}(\tau) := \idf\{\tau > 0\} - \idf\{\tau < 0\}$.

For $t \in [T]$, applying Lemma \ref{lemma: estErr_to_regret} to $\bX_t$ conditional on $\cF_{t-1}$, and due to Assumptions \ref{assumption: context} and  \ref{assumption: bounded cov mat}, we have that there exists a constant $C>0$ depending on $L_3$, such that, a.s.,
\begin{align*}
&\Exp_{\tilde{\omega}}\left[|(\btheta^{(2)})'\bX_t|\idf\{\textup{sgn}((\btheta^{(2)})'\bX_t)\neq\textup{sgn}((\hbTheta_{t-1}^{(2)})'\bX_t)\} \; \vert \cF_{t-1} \right] \ge \\
&C^{-1}\|\btheta^{(2)}\|_2\left\|\frac{\btheta^{(2)}}{\|\btheta^{(2)}\|_2}- \frac{\hbTheta_{t-1}^{(2)}}{\|\hbTheta_{t-1}^{(2)}\|_2}\right\|_2^2 \ge \frac{1}{C L}\left\|\frac{\btheta^{(2)}}{\|\btheta^{(2)}\|_2}- \frac{\hbTheta_{t-1}^{(2)}}{\|\hbTheta_{t-1}^{(2)}\|_2}\right\|_2^2,
\end{align*}
where we use  the fact that $\|\btheta^{(2)}\|_2 \geq 1/L$ a.s. under $\tilde{\omega}$ and that $\hbTheta_{t-1}^{(2)}/\|\hbTheta_{t-1}^{(2)}\|_2$ is interpreted as zero if the denominator is zero .

\medskip
\noindent \underline{Step 3. Lower bound the $\ell_2$ estimation errors.} 
For $t \in [T]$, we denote by 
$$
\mathcal{H}_t := \sigma(\{(\bX_s, Y_s^{(1)}, Y_s^{(2)}: s \in [t]\})
$$ 
all potential observations up to time $t$. Note that $\cF_{t-1} \subseteq {\mathcal{H}}_{t-1}$, and thus
 $\hbTheta_{t-1}^{(2)}/\|\hbTheta_{t-1}^{(2)}\|_2 \in \cF_{t-1} \subset {\mathcal{H}}_{t-1}$, which, in view of \eqref{aux:bandit_lower_rt}, implies that 
$$
\Exp_{\tilde{\omega}}\left[r_t \right] \ge \frac{1}{CL} \inf_{\hat{\psi}_{t-1} \in  {\mathcal{H}}_{t-1}} \Exp_{\tilde{\omega}}\left(\left\|\hat{\psi}_{t-1}-\frac{\btheta^{(2)}}{\|\btheta^{(2)}\|_2} \right\|_2^2 \right),
$$
where the infimum is taken over all $\mathbb{R}^d$-valued, $\mathcal{H}_{t-1}$-measurable random vectors.
Further, for $t \in [T]$,   since $Y_s^{(1)} = \epsilon_s^{(1)}$ for $s \in [t]$, we have $\{Y_s^{(1)}:  s \in [t]\}$ are independent from $\tilde{\mathcal{H}}_t := \sigma(\{(\bX_s,   Y_s^{(2)}:  s \in [t]\})$ and $\btheta^{(2)}$. Thus, the above infimum can be taken 
over all $\mathbb{R}^d$-valued, \emph{$\tilde{\mathcal{H}}_{t-1}$}-measurable random vectors.

Finally, by Lemma \ref{lemma: K known} and Lemma \ref{lemma: k unknown} ,  for some constant $C > 0$, depending only on $L$ and $L_3$, we have
\begin{align*}
    \inf_{\hat{\psi}_{t-1} \in  {\tilde{\mathcal{H}}}_{t-1}} \Exp_{\tilde{\omega}}\left(\left\|\hat{\psi}_{t-1}-\frac{\btheta^{(2)}}{\|\btheta^{(2)}\|_2} \right\|_2^2 \right) \geq C^{-1}  \left(\frac{\sigma^2 s_0}{t+\sigma^2s_0} + \frac{\sigma^2}{T}s_0\log\frac{d}{s_0} \right),
\end{align*}
which implies that
\begin{align*}
\Exp_{\tilde{\omega}}\left[\sum_{t=\tilde{\gamma}}^{T} r_t \right] \geq C^{-1} \sigma^2 s_0 \left(\log\left( \frac{T + \sigma^2 s_0}{ \tilde{\gamma} +1+ \sigma^2 s_0} \right) + \frac{T - \tilde{\gamma}}{T} \log\left( \frac{d}{s_0}\right) \right).
\end{align*}
The proof is then complete due to \eqref{app_eq: regret lower bound 1}.
\end{proof}

\subsection{Proof of Theorem \ref{thm: bandit lower bound of lasso}}\label{sec: proof of thm: bandit lower bound of lasso}
In Theorem \ref{thm: bandit lower bound of lasso}, $K = 2$, and   for $(\btheta^{(1)}, \btheta^{(2)}) \in \widetilde{\bTheta}_{d}(s_0,L)$, Assumption \ref{assumption:arm_parameters} holds with $m_{\theta} = 2L$ and $L_4 = L$. Further, we consider the specific case
$$\bSigma^{(1)} = \bSigma^{(2)} = 2^{-1} \mathbb{I}_{d \times d},$$
where $\mathbb{I}_{d \times d}$ is the $d$-by-$d$ identity matrix. Then,
 Assumptions \ref{assumption: bounded cov mat} and \ref{assumption: constraint on the covariance matrix 1} hold with $L_3=2$.

Recall the fixed constants  $C^* \geq 1$ and $a^*, h^*, \ell_0^*, \ell_1^* > 0$  in \eqref{def:star_constants}, which only depend only on $K=2,  m_X, \alpha_X,  L_3 = 2, m_{\theta} = 2L, L_4 = L$, such that \eqref{stage 2:aux_eq1}-\eqref{eq: margin} hold. 

\begin{lemma}  \label{lemma:LassoBandit}
Consider the case $K =2$ and set $\gamma_2 = T$ in Definition \ref{def:three_stages}, that is, without Stage 3. 
Suppose that Assumption  \ref{assumption: context}  holds, that $\bSigma^{(1)} = \bSigma^{(2)} = 2^{-1} \mathbb{I}_{d \times d}$, and that  $(\btheta^{(1)}, \btheta^{(2)}) \in \widetilde{\bTheta}_{d}(s_0,L)$ for some $L > 1$ and the supports $S_1$ and $S_2$ are both non-empty. Set the regularization parameter $\lambda_t$   and the end time $\gamma_1$ of Stage 1 
as follows:
\begin{align}
    \label{modified:choice}
\gamma_1 = C_{\gamma_1} \lceil(\sigma^2\vee 1)\rceil s_0 \lceil \log (dT) \rceil, \quad
\lambda_t = 48 m_X\sigma\sqrt{\frac{\log (dT)}{t}}.
\end{align}
Suppose that the integer   $C_{\gamma_1}$ in \eqref{modified:choice} satisfies the following:
\begin{align}
    \label{eq: lassobandit_lambda_gamma_12}
     C_{\gamma_1} \ge \max\left\{C^*,\; \left(\frac{18m_X}{\ell_1^* a^*}\right)^2,2\right\}.
\end{align}
Let $M_*$ be the smallest integer satisfying:  
\begin{align}\label{def: M_star}
     M_* \geq  \max\left\{4, \frac{32 m_{X}^4}{(a^*)^2}, \left(\frac{3072 L  C^* m_X^4}{(a^*)^2} \right)^4 \right\}.
\end{align}
Then,  for $m = M_* s_0^4, \ldots, \lfloor T/\gamma_1\rfloor$, with probability at least $1-38/T^2$,  the event $\tL_m^{\textup{RE}} \cap \tL_m^{\bSigma} \cap  \tL_m^{\textup{Lasso}} \cap \tL_m^{\textup{Inc}}$ occurs, where, denoting $t = m\gamma_1$,
\begin{align} \label{LassoBandit:events}
\begin{split}
&\tL_m^{\textup{RE}} := \bigcap_{k=1}^{2}\left\{\bX_{[t]}^{(k)} \textup{ satisfies } \textup{RE}(s_0, 3, a^*)\right\}, \\
& \tL_m^{\bSigma} := \bigcap_{k=1}^{2}\left\{\mn\hbSigma_{t}^{(k)}-\bSigma^{(k)}\mn_{\textup{max}} \le  \frac{3 a^*}{s_0} \right\},\\ 
&\tL_m^{\textup{Lasso}} = \bigcap_{k = 1}^{2}\left\{\|\hbtheta_{t}^{(k)} - \btheta^{(k)}\|_{\infty} \leq  56\lambda_{t}\right\},\\
&\tL_m^{\textup{Inc}} =  \bigcap_{k=1}^{2}\left\{ \hat{S}_t^{(k)} \; \subseteq \; S_k 
\right\},
\end{split}
\end{align}
and $\hat{S}_t^{(k)} := \{j \in [d]: (\hbtheta_{t}^{(k)})_j \neq 0\}$ is the support of the Lasso estimator $\hbtheta_{t}^{(k)}$
\end{lemma}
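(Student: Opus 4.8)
The plan is to argue by induction on $m$ over $\{m_0,m_0+1,\dots,\lfloor T/\gamma_1\rfloor\}$ with $m_0:=M_*s_0^4$, following the inductive machinery of Lemma~\ref{lemma: RE and estimation error of stage 2}, Lemma~\ref{stage3: base case} and Lemma~\ref{stage3: main lemma_second}, with OPT-Lasso replaced everywhere by Lasso (recall $\gamma_2=T$, so the two-stage policy just refreshes a Lasso estimator every $\gamma_1$ rounds). For $(\btheta^{(1)},\btheta^{(2)})\in\widetilde{\bTheta}_d(s_0,L)$ with $\bSigma^{(1)}=\bSigma^{(2)}=2^{-1}\mathbb{I}_{d\times d}$, Assumptions~\ref{assumption: covariates_bandit} and~\ref{assumption:arm_parameters} hold with $L_3=2$, $m_\theta=2L$, $L_4=L$, so the estimates \eqref{stage 2:aux_eq1}--\eqref{eq: margin} and the fixed constants introduced in \eqref{def:star_constants} are available. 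The events $\tL_m^{\textup{RE}}$, $\tL_m^{\bSigma}$, $\tL_m^{\textup{Lasso}}$ are the analogues of $\tilde{\mE}_m^{\textup{RE}}$, $\tilde{\mE}_m^{\bSigma}$, $\tilde{\mE}_m^{\textup{Lasso}}$; here $28L_3=56$, and the constant $56$ in $\tL_m^{\textup{Lasso}}$ is produced by the $\ell_\infty$ bound of Lemma~\ref{lemma: bounds of lasso} with $\bA=\bSigma^{(k)}$: $\mn(\bSigma^{(k)})^{-1}\mn_1=2$ and, on $\tL_m^{\bSigma}$, $8\,\mn\hbSigma_{m\gamma_1}^{(k)}-\bSigma^{(k)}\mn_{\textup{max}}\,s_0/a^*\le 24$, whence $\|\hbtheta_{m\gamma_1}^{(k)}-\btheta^{(k)}\|_\infty\le 2(4+24)\lambda_{m\gamma_1}=56\lambda_{m\gamma_1}$ once $\tL_m^{\textup{RE}}$ and the noise event $\|(\bX_{[m\gamma_1]}^{(k)})'\bepsilon_{[m\gamma_1]}/(m\gamma_1)\|_\infty\le\lambda_{m\gamma_1}/2$ of Lemma~\ref{lemma:ell_infty bound of x times epsilon} hold (the latter since $\lambda_t=48m_X\sigma\sqrt{\log(dT)/t}$ dominates $6m_X\sigma\sqrt{\log(dT)/t}$). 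Thus $\tL_m^{\textup{RE}}$ and $\tL_m^{\bSigma}$ form the primary coupled pair, and $\tL_m^{\textup{Lasso}}$ is downstream.

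The one genuinely new event is $\tL_m^{\textup{Inc}}$, obtained from $\tL_m^{\bSigma}$ by the primal--dual witness method exactly as in the proof of Lemma~\ref{lemma:lasso_lower_events}. Since $\bSigma^{(k)}=2^{-1}\mathbb{I}_{d\times d}$ has $(\bSigma^{(k)})_{S_k,j}=\bd{0}$ for $j\notin S_k$, on $\tL_m^{\bSigma}$ one has $\|(\hbSigma_{m\gamma_1}^{(k)})_{S_k,j}\|_1\le s_0\cdot 3a^*/s_0=3a^*$, while a Neumann-series estimate for the symmetric matrix $(\hbSigma_{m\gamma_1}^{(k)})_{S_k,S_k}=2^{-1}\mathbb{I}+E$ with $\mn E\mn_\infty\le 3a^*\le 3/16$ gives $\mn(\hbSigma_{m\gamma_1}^{(k)})_{S_k,S_k}^{-1}\mn_1<4$; hence $\max_{j\in S_k^c}\|(\hbSigma_{m\gamma_1}^{(k)})_{S_k,S_k}^{-1}(\hbSigma_{m\gamma_1}^{(k)})_{S_k,j}\|_1<12a^*\le 3/4<1$. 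Column normalization is automatic from $\|\bX_t\|_\infty\le m_X$, the restricted-support eigenvalues are controlled on $\tL_m^{\bSigma}$ by Lemma~\ref{lemma: subgaussian lower eigenvalue whp}, and $\lambda_t$ exceeds $8\cdot 3m_X\sigma\sqrt{\log(dT)/t}$, providing the slack $2/(1-3/4)$ over $\|(\bX_{[m\gamma_1]}^{(k)})'\bepsilon_{[m\gamma_1]}/(m\gamma_1)\|_\infty$ required by Corollary~7.22 of \cite{wainwright2019high}, whose conclusion is $\hat S_{m\gamma_1}^{(k)}\subseteq S_k$, i.e.\ $\tL_m^{\textup{Inc}}$. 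As this is driven purely by $\tL_m^{\bSigma}$ and the noise event, $\tL_m^{\textup{Inc}}$ need not be propagated inside the induction.

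For the base case $m=m_0$, the objects $\hbSigma_{m_0\gamma_1}^{(k)}$, $\bX_{[m_0\gamma_1]}^{(k)}$, $\hbtheta_{m_0\gamma_1}^{(k)}$ are built from a Stage-1 (uniformly random) phase of length $\gamma_1$ followed by Stage-2 Lasso updates over $(\gamma_1,m_0\gamma_1]$, which is structurally identical to $\hbSigma_{\gamma_2}^{(k)},\bX_{[\gamma_2]}^{(k)},\hbtheta_{\gamma_2}^{(k)}$ in Lemma~\ref{stage3: base case} under the substitution $\gamma_2\mapsto m_0\gamma_1$, $C_{\gamma_2}\mapsto M_*$; the bounds \eqref{def: M_star} on $M_*$ are chosen to meet the counterparts of \eqref{eq: aux_lambda_gamma_12} with the enlarged $\lambda_t$ (all constants simply rescale by the factor $8$). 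So $\tL_{m_0}^{\textup{RE}},\tL_{m_0}^{\bSigma},\tL_{m_0}^{\textup{Lasso}}$ follow from (the proof of) Lemma~\ref{stage3: base case}, with Lemma~\ref{lemma: RE and estimation error of stage 2} supplying the intermediate Stage-2 blocks, and $\tL_{m_0}^{\textup{Inc}}$ follows as above. In the inductive step, $\tL_{m+1}^{\textup{RE}}$ is obtained verbatim from the inductive step of Lemma~\ref{lemma: RE and estimation error of stage 2} (the argument around \eqref{eq: stage 2 RE 0}--\eqref{eq: stage 2 RE 1}): on $\tL_m^{\textup{RE}}$ and the noise event, Lemma~\ref{lemma: bounds of lasso} gives $\|\hbtheta_{m\gamma_1}^{(k)}-\btheta^{(k)}\|_2\le 3\sqrt{s_0}\lambda_{m\gamma_1}/a^*\le\ell_1^*$ (using $m\ge m_0$ and \eqref{def: M_star}, \eqref{eq: lassobandit_lambda_gamma_12}), so every arm choice in block $m$ equals the optimal arm unless $\bX_t$ lies near a decision boundary, and \eqref{stage 2:aux_eq1}--\eqref{stage 2:aux_eq3} lift the restricted-eigenvalue lower bound from block $m$ to block $m+1$; then $\tL_{m+1}^{\textup{Lasso}}$ and $\tL_{m+1}^{\textup{Inc}}$ follow from $\tL_{m+1}^{\textup{RE}}\cap\tL_{m+1}^{\bSigma}$ and the noise event as in the first two paragraphs.

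The step I expect to be the main obstacle is $\tL_{m+1}^{\bSigma}$: here one update block has length only $\gamma_1\asymp s_0\log(dT)$ — too short to control a fresh i.i.d.\ average to accuracy $O(a^*/s_0)$, which needs $\asymp s_0^2\log(dT)$ samples — so the batch-wise weighted-average decomposition of Lemma~\ref{stage3: main lemma_second} cannot be applied directly. Instead I would write $\hbSigma_{(m+1)\gamma_1}^{(k)}-\bSigma^{(k)}$ as the sum of (i) the i.i.d.\ average $\frac{1}{(m+1)\gamma_1}\sum_{s=1}^{(m+1)\gamma_1}(\bX_s\bX_s'\idf\{\bX_s\in\mathcal{U}_0^{(k)}\}-\bSigma^{(k)})$ over the \emph{entire} horizon up to $(m+1)\gamma_1$, with $\mathcal{U}_0^{(k)}$ as in \eqref{eq: U_h_k}--\eqref{eq: hbSigma}, and (ii) the arm-misselection term $\frac{1}{(m+1)\gamma_1}\sum_{s=1}^{(m+1)\gamma_1}\bX_s\bX_s'(\idf\{A_s=k\}-\idf\{\bX_s\in\mathcal{U}_0^{(k)}\})$. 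Term (i) is bounded by $a^*/s_0$ in max-norm with high probability by (the concentration bound underlying) Lemma~\ref{lemma: event D_2}, applied with sample size $(m+1)\gamma_1\ge m_0\gamma_1\asymp M_*C_{\gamma_1}s_0^5\log(dT)$, far above its threshold. In term (ii), the Stage-1 rounds $s\le\gamma_1$ contribute at most $m_X^2\gamma_1/((m+1)\gamma_1)=m_X^2/(m+1)\le m_X^2/m_0\le a^*/s_0$ by \eqref{def: M_star}; on the Stage-2 rounds in block $m'$ ($1\le m'\le m$) the estimators $\hbtheta_{m'\gamma_1}^{(k)}$ are $\ell_1$-accurate on the good events from the inductive hypothesis (for $m_0\le m'\le m$) and from Lemma~\ref{lemma: RE and estimation error of stage 2} (for $m'<m_0$), so $A_s=k$ iff arm $k$ is optimal except when $\bX_s$ is within a margin $\asymp a^*/s_0$ of a boundary, and the margin condition \eqref{eq: margin} plus a Bernoulli concentration bound (Lemma~\ref{lemma: concentration inequality of bernoulli}) keep this contribution $\le a^*/s_0$. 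Summing the pieces gives $\mn\hbSigma_{(m+1)\gamma_1}^{(k)}-\bSigma^{(k)}\mn_{\textup{max}}\le 3a^*/s_0$. Finally, a union bound over the $O(1)$ new failure events at each level and over the at most $T/\gamma_1$ levels, together with the base-case contribution, yields the uniform probability $1-38/T^2$ (with $K=2$), exactly as in the bookkeeping of Lemma~\ref{stage3: main lemma_second}.
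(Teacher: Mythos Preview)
Your overall architecture is right and matches the paper's: the events $\tL_m^{\textup{RE}},\tL_m^{\bSigma},\tL_m^{\textup{Lasso}}$ come from the machinery of Lemma~\ref{stage3: base case}, and $\tL_m^{\textup{Inc}}$ is obtained separately from $\tL_m^{\bSigma}$ plus the noise event via the primal--dual witness (Theorem~7.21/Corollary~7.22 in \cite{wainwright2019high}), exactly as you outline. However, the paper's proof is considerably simpler because it does \emph{not} use induction on $m$. The point you are missing is that, since $\gamma_2=T$, the algorithm is in Stage~2 for the entire horizon after $\gamma_1$; hence Lemma~\ref{lemma: RE and estimation error of stage 2} already supplies the $\ell_1/\ell_2$ Lasso bounds at \emph{every} block $m'\in\{1,\dots,\lfloor T/\gamma_1\rfloor\}$, not just those with $m'<m_0$. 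Consequently, for each fixed $m\ge M_*s_0^4$ one can simply rerun the proof of Lemma~\ref{stage3: base case} verbatim under the substitution $\gamma_2\mapsto m\gamma_1$, $C_{\gamma_2}\mapsto M_*$, and obtain $\Pro(\tL_m^{\textup{RE}}\cap\tL_m^{\bSigma}\cap\tL_m^{\textup{Lasso}})\ge 1-34/T^2$ directly. There is nothing to propagate from $m$ to $m+1$; your ``inductive step'' ends up redoing a full-horizon decomposition anyway, so the induction scaffolding only complicates the bookkeeping (and makes the final $38/T^2$ awkward to reach).

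There is also one genuine imprecision in your handling of the misselection term (ii). You assert that on all Stage-2 blocks the selected arm agrees with the optimal arm ``except when $\bX_s$ is within a margin $\asymp a^*/s_0$ of a boundary''. That is false for early blocks: in block $m'$, the $\ell_1$ error is $\asymp s_0\lambda_{m'\gamma_1}/a^*$, so for $m'=1$ the required margin is of order $\sqrt{s_0}$, not $a^*/s_0$, and the margin-condition bound \eqref{eq: margin} cannot deliver the target fraction $a^*/(m_X^2 s_0)$. The paper resolves this by splitting not at $\gamma_1$ but at an intermediate time $\tilde\gamma_1=\lceil\sqrt{M_*}\rceil s_0^3\gamma_1$: the segment $[1,\tilde\gamma_1]$ is bounded trivially (its weight $\tilde\gamma_1/(m\gamma_1)\le 1/(\sqrt{M_*}s_0)\le a^*/(2m_X^2 s_0)$), and on $(\tilde\gamma_1,m\gamma_1]$ one uses the single margin $\frac{16m_X}{a^*}s_0\lambda_{\tilde\gamma_1}$, which is now small enough (since $\lambda_{\tilde\gamma_1}\lesssim 1/(M_*^{1/4}s_0^2)$) for \eqref{eq: margin} and Lemma~\ref{lemma: concentration inequality of bernoulli} to give the required $a^*/s_0$. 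Your split at $\gamma_1$ does not balance these two pieces; either move the split to $\tilde\gamma_1$ as in Lemma~\ref{stage3: base case}, or carry a block-dependent margin and sum $\sum_{m'}\lambda_{m'\gamma_1}\asymp\sqrt{m}\,\lambda_{\gamma_1}$ --- but at that point you are just reproducing the direct (non-inductive) argument of the paper.
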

\begin{proof}
By the same argument as in Lemma \ref{stage3: base case}, but with a slight different choice of $\lambda_t$, we have that for each $m = M_* s_0^4,\ldots, \lfloor T/\gamma_1 \rfloor$, with probability at least $1-34/T^2$, we have 
$\tL_m^{\textup{RE}} \cap \tL_m^{\bSigma} \cap  \tL_m^{\textup{Lasso}}$ holds. As a result, due to the union bound, it suffices to show that 
\begin{align*}
\Pro\left((\tL_m^{\textup{Inc}} )^c\; \cap \; \tL_m^{\bSigma}\right) \leq 4/T^2.
\end{align*}

Fix $m \in \{M_*s_0^4,\ldots,\lfloor T/\gamma_1 \rfloor\}$, and denote by $t = m \gamma_1$. Since $\bSigma^{(1)} = \bSigma^{(2)} = 2^{-1}\mathbb{I}_{d\times d} = 2^{-1} \bSigma$,  Assumptions  \ref{assumption: bounded cov mat} and \ref{assumption: constraint on the covariance matrix 1} hold with $L_3 = 2$. Thus, on the event $\tL_m^{\bSigma}$, due to Lemma \ref{lemma: subgaussian lower eigenvalue whp} and the definition of $a^*$ in \eqref{eq: a},  for $k \in [2]$,
\begin{align}\label{aux:eign}
   \frac{1}{4} \le \lmin{(\hbSigma_{t}^{(k)})_{S_k, S_k}} \le \lmax{(\hbSigma_{t}^{(k)})_{S_k, S_k}} \le 3,
\end{align}
which verifies condition (7.43a) in \cite{wainwright2019high}.

Thus, for $k \in [2]$ and  $j \not\in S_k$, on the event $\tL_m^{\bSigma}$,
\begin{align*}
&\left\|\left( (\bX_{[t], S_k}^{(k)})' \bX_{[t], S_k}^{(k)} \right)^{-1} (\bX_{[t], S_k}^{(k)})' \bX_{[t], j}^{(k)} \right\|_2
\leq 4 \left\|t^{-1}(\bX_{[t], S_k}^{(k)})' \bX_{[t], j}^{(k)} \right\|_2 \\
\leq &4 \sqrt{s_0 \times ((3a^*)/s_0)^2} = 12a^*/\sqrt{s_0}.
\end{align*}
Due to \eqref{eq: a}, $a^* \leq 1/16$. By Cauchy–Schwarz inequality, for $k \in [2]$, on the event $\tL_m^{\bSigma}$, 
\begin{align}\label{aux:mutual_inco}
  \max_{j \not\in S_k}  \left\|\left( (\bX_{[t], S_k}^{(k)})' \bX_{[t], S_k}^{(k)} \right)^{-1} (\bX_{[t], S_k}^{(k)})' \bX_{[t], j}^{(k)} \right\|_1 \leq \sqrt{s_0} \times 12a^*/\sqrt{s_0} \leq 3/4,
\end{align}
which verifies condition (7.43b) in \cite{wainwright2019high}, and also implies that 
\begin{align}\label{aux:wain}
\begin{split}
 &\max_{j \not\in S_k}  \left| ( \bX_{[t], j}^{(k)})'\bX_{[t], S_k}^{(k)} \left( (\bX_{[t], S_k}^{(k)})' \bX_{[t], S_k}^{(k)} \right)^{-1} (\bX_{[t], S_k}^{(k)})' \bepsilon_{[t]}/t  
\right| \\
\leq & 3/4 \times \|(\bX_{[t], S_k}^{(k)})' \bepsilon_{[t]}/t  \|_{\infty}.
\end{split}
\end{align}
Define $\mathcal{A}$ to be the following event:
\begin{align*}
\bigcap_{k \in [2]} & \left\{ \max_{j \not\in S_k}  \left| ( \bX_{[t], j}^{(k)})' \left(\mathbb{I}_{|S_k| \times |S_k|} - \bX_{[t], S_k}^{(k)} \left( (\bX_{[t], S_k}^{(k)})' \bX_{[t], S_k}^{(k)} \right)^{-1} (\bX_{[t], S_k}^{(k)})'\right) \bepsilon_{[t]}/t  
\right|  
    \right. \\
    &\left. \quad \leq 6m_X\sigma \sqrt{\frac{\log(dT)}{t}} = \lambda_t/8 \right\},
\end{align*}
where we recall the choice of $\lambda_t$ in \eqref{modified:choice}. Note that on the event $\cA$, condition (7.44) in \cite{wainwright2019high} holds. Further, by Lemma \ref{lemma:ell_infty bound of x times epsilon}, in view of \eqref{aux:wain}, we have
\begin{equation*}
    \Pro\left( \mathcal{A}^c \cap \tL_m^{\bSigma}\right) \leq  {4}/{T^4}.
\end{equation*}
 
By Theorem 7.21 in \cite{wainwright2019high}, when events \eqref{aux:eign}, \eqref{aux:mutual_inco}, and $\cA$ all hold, we have $\tL_m^{\textup{Inc}}$ holds. Thus,
$\Pro\left((\tL_m^{\textup{Inc}})^c\; \cap\; \tL_m^{\bSigma}\right)\le \Pro\left( \mathcal{A}^c \cap \tL_m^{\bSigma}\right) \leq 4/T^2$. The proof is then complete.
\end{proof}

Define the following function: for $z_1,z_2 \in (-1/12,1/12)$,
\begin{align}
    \label{def:elementary_func}
    f(z_1, z_2) = \left\|\frac{1}{\sqrt{1 + (-1/12)^2}}\begin{bmatrix}
        1  \\
        -\frac{1}{12} 
    \end{bmatrix} - \frac{1}{\sqrt{(1-z_1)^2 + (-1/12+z_2)^2}}\begin{bmatrix}
        1 - z_1 \\
        -\frac{1}{12} + z_2
    \end{bmatrix}  \right\|_2^2.
\end{align}

\begin{lemma}
\label{lemma:elementary}
There exists some absolute constant $c_* > 0$ such that for any $0 < \delta < c_*$, 
\begin{align*}
   \inf_{(z_1, z_2) \in [\delta/2, 5\delta]^2}\; f(z_1, z_2) \geq c_* \delta^2.
\end{align*}
\end{lemma}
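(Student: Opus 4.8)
The plan is to rewrite $f(z_1,z_2)$ as an explicit rational function of $(z_1,z_2)$ and then bound it crudely on the box $[\delta/2,5\delta]^2$. Set $v_0=(1,-1/12)$ and $v=(1-z_1,-1/12+z_2)$, viewed as vectors in $\mR^2$. Since $f(z_1,z_2)$ is precisely the squared Euclidean distance between the unit vectors $v_0/\|v_0\|_2$ and $v/\|v\|_2$, we have
\[
f(z_1,z_2)=2-\frac{2\langle v_0,v\rangle}{\|v_0\|_2\,\|v\|_2}=\frac{2\bigl(\|v_0\|_2\|v\|_2-\langle v_0,v\rangle\bigr)}{\|v_0\|_2\,\|v\|_2}.
\]

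The first step is to rationalize the numerator: multiplying and dividing by $\|v_0\|_2\|v\|_2+\langle v_0,v\rangle$ and using the two-dimensional Lagrange identity $\|v_0\|_2^2\|v\|_2^2-\langle v_0,v\rangle^2=(v_{0,1}v_2-v_{0,2}v_1)^2$ gives
\[
f(z_1,z_2)=\frac{2\,\bigl(z_2-z_1/12\bigr)^2}{\|v_0\|_2\|v\|_2\bigl(\|v_0\|_2\|v\|_2+\langle v_0,v\rangle\bigr)},
\]
since a direct computation yields $v_{0,1}v_2-v_{0,2}v_1=(-1/12+z_2)+\tfrac{1}{12}(1-z_1)=z_2-z_1/12$. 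This closed form is the key simplification.

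Next I would estimate the three factors on the box. Taking $\delta$ below an absolute constant chosen so that $5\delta<1/12$ guarantees that $(z_1,z_2)\in[\delta/2,5\delta]^2$ stays inside the domain $(-1/12,1/12)^2$ of $f$. On this box, $z_2-z_1/12\ge \delta/2-5\delta/12=\delta/12>0$, so the numerator is at least $2(\delta/12)^2=\delta^2/72$. For the denominator, crude bounds suffice: $\|v_0\|_2^2=145/144<2$, and since $0<z_1<1/12$ and $|{-1/12+z_2}|<1/12$ we get $\|v\|_2^2=(1-z_1)^2+(-1/12+z_2)^2<2$, hence $\|v_0\|_2\|v\|_2<2$; by Cauchy--Schwarz $\|v_0\|_2\|v\|_2+\langle v_0,v\rangle\le 2\|v_0\|_2\|v\|_2<4$, so the denominator is $<8$. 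Combining, $f(z_1,z_2)>\delta^2/576$ uniformly over the box. Since $1/576<1/60$, the value $c_*=1/576$ can serve simultaneously as the threshold on $\delta$ (ensuring $5\delta<1/12$) and as the constant in the lower bound, completing the argument.

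I do not anticipate a real obstacle: the only care needed is bookkeeping — keeping the box inside the domain of $f$ and choosing a single $c_*$ that plays both roles — together with the elementary algebra in the Lagrange-identity step. That reduction is what keeps things clean; the alternative of differentiating $f$ on the box would also work but is considerably more cumbersome.
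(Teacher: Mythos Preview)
Your argument is correct, and it takes a genuinely different route from the paper's. The paper proceeds by second-order Taylor expansion: it observes that $\nabla f(0,0)=0$, diagonalizes the Hessian $\nabla^2 f(0,0)$ (eigenvectors $(1,1/12)'$ with eigenvalue $0$ and $(-1/12,1)'$ with eigenvalue $2/(1+1/144)$), notes that on the box the component of $(z_1,z_2)$ along the nonzero-eigenvalue direction satisfies $-z_1/12+z_2\ge \delta/12$, and then invokes Taylor's theorem to conclude. Your approach instead produces an exact closed form via the Lagrange identity, $f(z_1,z_2)=2(z_2-z_1/12)^2/\bigl(\|v_0\|_2\|v\|_2(\|v_0\|_2\|v\|_2+\langle v_0,v\rangle)\bigr)$, and bounds it directly. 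It is not accidental that both arguments hinge on the same linear form $z_2-z_1/12$: your cross-product $v_{0,1}v_2-v_{0,2}v_1$ is exactly the projection of $(z_1,z_2)$ onto the paper's nonzero-eigenvalue eigendirection. What your route buys is explicitness --- a concrete $c_*=1/576$ with no appeal to a Taylor remainder --- while the paper's route is shorter to state once one accepts the unshown Hessian calculation.
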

\begin{proof}
By elementary calculation, the gradient vector of $f$ at $(0,0)$ is the zero vector. 

Let $\bu = (1,1/12)'$ and $\bv = (-1/12,1)'$. Again by elementary calculation,  $\bu$ and $\bv$ are the eigen-vectors of the Hessian matrix $ \nabla^2 f(0,0)$ corresponding to eigen-values $0$ and $2/(1+(1/12)^2)$, respectively.  
For $(z_1, z_2) \in [\delta/2, 5\delta]^2$, we have
$$
(z_1,z_2) \bv  \geq - \frac{5\delta}{12} + \frac{\delta}{2}  =\frac{\delta}{12} > 0.
$$
Then the proof is complete by Taylor's theorem.
 \end{proof}

In the following proof, denote by $c_*$ a constant such that Lemma \ref{lemma:elementary} holds.

\begin{proof}[Proof of Theorem \ref{thm: bandit lower bound of lasso}]
In this proof, we fix the following pair of arm parameters:
\begin{align*}
    \btheta^{(1)} := (1, \bd{0}_{d-1}')' \text{ with } S_1 = \{1\}, \quad \text{ and } \quad
    \btheta^{(2)} := (0, 1/12, \bd{0}_{d-2}')', \text{ with } S_2 = \{2\},
\end{align*}
where $\bd{0}_{\ell}$ is the all-zero vector with length $\ell$. Clearly, $(\btheta^{(1)},\btheta^{(2)}) \in  \widetilde{\bTheta}_{d}(s_0,L)$ for any $s_0 \geq 1$ and $L \geq 1$.

Recall that $M_*$ is defined in \eqref{def: M_star} and that $\gamma_1 = C_{\gamma_1}  \lceil(\sigma^2\vee 1)\rceil s_0 \lceil \log (dT) \rceil$ in \eqref{modified:choice}. We assume that $T \geq M_* s_0^4\gamma_1$ and that the integer $C_{\gamma_1}$ satisfies \eqref{eq: lassobandit_lambda_gamma_12}.

\medskip
\noindent \underline{\textbf{Step 1.} Control the estimation error.} In this step,
let $m \in \{M_* s_0^4,\ldots, \lfloor T/\gamma_1\rfloor\}$. 
By lemma \ref{lemma:LassoBandit},   since $T \geq 64$,   with probability at least $0.99$, $\tL_m^{\textup{RE}} \cap \tL_m^{\bSigma} \cap  \tL_m^{\textup{Lasso}} \cap \tL_m^{\textup{Inc}}$ holds. 
Further, applying Lemma \ref{lemma:ell_infty bound of x times epsilon} to the first two covariates, we have $\Pro\left( \tL_m^{\bX\bepsilon}
 \right) \geq 0.41$, where 
\begin{align*}
\tL_m^{\bX\bepsilon} := \bigcap_{j \in [2]}
 \left\{ 
 \left|(\bX_{[t],j})' \bepsilon_{[t]}/t \right|
 \leq \frac{2m_X\sigma}{\sqrt{t}}  \right\}, \quad \text{ and } t = m\gamma_1.
\end{align*}
Thus, by the union bound, we have
\begin{align}\label{aux:lasso_bandit_overall}
    \Pro\left(\tL_m^{\textup{all}} \right) \geq 0.4, \quad \text{ where }\;\;
    \tL_m^{\textup{all}} := \tL_m^{\textup{RE}} \cap \tL_m^{\bSigma} \cap  \tL_m^{\textup{Lasso}} \cap \tL_m^{\textup{Inc}} \cap \tL_m^{\bX\bepsilon}.
\end{align}
Due to the definition of $M_*$ in \eqref{def: M_star} and since $m \geq M_* s_0^4$, we have $56\lambda_{m\gamma_1} \leq 1/24$.
By a similar argument as in the proof of Theorem \ref{thm: lower bound of lasso}, on the event $\tL_m^{\textup{Lasso}} \cap \tL_m^{\textup{Inc}}$, we must have
\begin{align}\label{aux:support_recov_lassobandit}
    \hat{S}_{m\gamma_1}^{(1)} = S_1 = \{1\}, \quad    \text{ and }  \quad \hat{S}_{m\gamma_1}^{(2)} = S_2 = \{2\},
\end{align}
and further for $k \in [2]$,
\begin{align*}
\frac{(\bX_{[t],k}^{(k)})'\bX_{[t],k}^{(k)}}{t}[(\hbtheta_t^{(k)}-\btheta^{(k)})_k]=\frac{(\bX_{[t],k}^{(k)})'\bepsilon_{[t]}}{t} - \lambda_t, \quad \text{ where } t = m\gamma_1.
\end{align*}
For the left hand side, on the event $\tL_m^{\bSigma}$, since $a^* \leq 1/16$ due to \eqref{eq: a} and $\bSigma^{(1)} = \bSigma^{(2)} = 2^{-1} \mathbb{I}_{d \times d}$, we have
\begin{align*}
    5/16 \leq (\bX_{[t],k}^{(k)})'\bX_{[t],k}^{(k)}/t \leq  11/16,\quad \text{ for } k \in [2].
\end{align*}
For the right-hand side, on the event $\tL_m^{\bX\bepsilon}$, due to the definition of $\lambda_{t}$ in \eqref{modified:choice}, we have that  
\begin{align*}
    -\frac{3\lambda_t}{2}  \leq {(\bX_{[k],k}^{(k)})'\bepsilon_{[t]}}/{t} - \lambda_t  
\leq  -\frac{\lambda_t}{2},\quad \text{ for } k \in [2] \text{ and } t = m\gamma_1.
\end{align*}
Thus, on the event $\tL_m^{\textup{all}}$, we have that for $t = m\gamma_1$, 
\begin{align}\label{aux:lassobandit_est_error}
\begin{split}
    -\frac{24}{5} \lambda_t \leq \hbtheta_{t,1}^{(1)} - 1 \leq -\frac{8}{11} \lambda_t, \quad \text{ and } \quad       -\frac{24}{5} \lambda_t \leq \hbtheta_{t,2}^{(2)} - \frac{1}{12} \leq -\frac{8}{11} \lambda_t.
    \end{split}
\end{align}

\medskip
\noindent \underline{\textbf{Step 2.} Lower bound the instantaneous regret.} In this step, let $m \in \{1,\ldots, \lfloor T/\gamma_1\rfloor\}$. By arguments similar to those in the proof of Theorem \ref{thm: regret over horizon} (see Section \ref{sec: proof of thm: regret over horizon}), for  each $t \in (m\gamma_1, (m+1)\gamma_1]$, we have
\begin{align*}
    r_t = \left|\bu'\bX_t\right| \idf\{
    \textup{sgn}(\bu'\bX_t) \neq   \textup{sgn}(\hat{\bu}_m'\bX_t) 
    \}
\end{align*}
where  $\textup{sgn}(\tau) := \idf\{\tau > 0\} - \idf\{\tau < 0\}$ for $\tau \in \bR$, and
\begin{align*}
    \bu := \btheta^{(1)} - \btheta^{(2)}, \quad 
    \hat{\bu}_m := \hbtheta_{m\gamma_1}^{(1)} - \hbtheta_{m\gamma_1}^{(2)}.
\end{align*}
Due to \eqref{aux:support_recov_lassobandit}, we have that $\bu_j =  \hat{\bu}_{m,j} = 0$ for $j > 2$, and
\begin{align*}
\bu_{[2]} = (1,-1/12)', \quad
\hat{\bu}_{m,[2]} =  (\hbtheta_{m\gamma_1,1}^{(1)}, -\hbtheta_{m\gamma_1,2}^{(2)})'.
\end{align*}

Due to Assumption \ref{assumption: context} and since $\bSigma = \mathbb{I}_{d \times d}$, applying Lemma \ref{lemma: estErr_to_regret} to $\bX_{t}$ conditional on $\cF_{m\gamma_1}$, there exists an absolute constant $C >0$ such that that for each $t \in (m\gamma_1, (m+1)\gamma_1]$, almost surely (a.s.),
\begin{align*}
   \Exp\left[r_t \vert \cF_{m\gamma_1}\right]  
   \geq C^{-1} \|\bu_{[2]}\|_2 \left\|\frac{\bu_{[2]}}{\|\bu_{[2]}\|_2} - \frac{\hat{\bu}_{m,[2]}}{\|\hat{\bu}_{m,[2]}\|_2} \right\|_2^2,
\end{align*}
where $\hat{\bu}_m/{\|\hat{\bu}_m\|_2}$ is interpreted as zero if the denominator is zero.

Now, if
\begin{align}
    \label{def:m another requirement}
    m \geq \tilde{\gamma} := \max\left\{M_* s_0^4,  \left(\frac{48 m_X}{c_*}\right)^2\right\},
\end{align}
then $\lambda_{m\gamma_1} \leq c^*$. By the definition of $\btheta^{(1)},\btheta^{(2)}$, and due to \eqref{aux:support_recov_lassobandit}, for each $t \in (m\gamma_1, (m+1)\gamma_1]$, a.s.,
\begin{align*}
     \Exp\left[r_t \vert \cF_{m\gamma_1}\right]   \idf\{\tL_m^{\textup{all}}\}  
   \geq C^{-1} \frac{\sqrt{145}}{12}   f\left(1-\hbtheta_{m\gamma_1,1}^{(1)},\; 1/12-\hbtheta_{m\gamma_1,2}^{(2)}\right)  \idf\{\tL_m^{\textup{all}}\},
\end{align*}
where $f$ is defined in \eqref{def:elementary_func} and $C$ is an absolute constant. Since $\lambda_{m\gamma_1} \leq c^*$ and  \eqref{aux:lassobandit_est_error} holds, by Lemma \ref{lemma:elementary}, we have that for each $t \in (m\gamma_1, (m+1)\gamma_1]$, a.s.,
\begin{align}\label{aux:lassobandit_instant_regret}
     \Exp\left[r_t \vert \cF_{m\gamma_1}\right]   \idf\{\tL_m^{\textup{all}}\}  
   \geq C^{-1} \frac{\sqrt{145}}{12} c_*\lambda_{m\gamma_1}^2 \idf\{\tL_m^{\textup{all}}\},
\end{align}
which, due to \eqref{aux:lasso_bandit_overall}, implies that
\begin{align*}
  \Exp\left[r_t \right]  \geq 0.4 \frac{\sqrt{145}}{12} C^{-1}  c_* \lambda_{m\gamma_1}^2.
\end{align*}

\medskip
\noindent \underline{\textbf{Step 3.} Lower bound the cumulative regret.} Due to the above inequality, if $T \geq 2\tilde{\gamma} \gamma_1$ where $\tilde{\gamma}$ is defined in \eqref{def:m another requirement}, then
\begin{align*}
   \sum_{t = \tilde{\gamma} \gamma_1}^{T} \Exp\left[r_t \right]   = \sum_{m = \tilde{\gamma}}^{\lfloor T/\gamma_1 \rfloor} \sum_{t = m\gamma_1+1}^{(m+1) \gamma_1 \wedge T} \Exp\left[r_t \right]  \geq   C^{-1}   \sum_{m = \tilde{\gamma}}^{\lfloor T/\gamma_1 \rfloor - 1} 
   \gamma_1 \lambda_{m\gamma_1}^2,
\end{align*}
where $C > 0$ is an absolute constant. Then by the definition of $\lambda_{t}$ in \eqref{modified:choice}, we have
\begin{align*}
   \sum_{t = \tilde{\gamma} \gamma_1}^{T} \Exp\left[r_t \right]  \geq   C^{-1}  \sigma^2 \sum_{m = \tilde{\gamma}}^{\lfloor T/\gamma_1 \rfloor - 1} 
    \frac{\log(dT)}{m} \geq C^{-1} \sigma^2  \log\left(\frac{\lfloor T/\gamma_1 \rfloor}{\tilde{\gamma} + 1}\right)(\log(d) + \log(T)),
\end{align*}
where $C > 0$ is a constant that only depends on $m_X$. To summarize, there exists some constant $C^* > 0$ that only depends on $m_X$, such that if 
$$
C_{\gamma_1} \geq C^*, \quad \text{ and } \quad
T  \geq (2C_{\gamma_1} C^*  s_0^5\log(dT) )^\kappa,
$$
then 
\begin{align*}
   \sum_{t = C^* s_0^5 \log(dT)}^{T} \Exp\left[r_t \right]  \geq    C^{-1}  \sigma^2 (1-1/\kappa) \log\left(T\right)(\log(d) + \log(T)).
\end{align*}
\end{proof}

\section{Auxiliary Results on Log-concave Densities} \label{sec: log-concave}
This appendix collects several results concerning log-concave densities. Let $\bZ \in \bR^{d}$ be a random vector with a log-concave density $f_{\bZ}$. Assume that $\Exp[\bZ] = 0$, and let $\bSigma_{\bZ} := \textup{Cov}(\bZ) = \Exp[\bZ\bZ']$. 
Further, assume that for some constant $L > 1$,
\begin{align}\label{app:log_assumption}
    L^{-1}\le \lmin{\bSigma_{\bZ} }\le \lmax{\bSigma_{\bZ} }\le L.
\end{align}
For $\tau \in \bR$, define $\text{sgn}(\tau) := \idf\{\tau > 0\} - \idf\{\tau < 0\}$.

\begin{lemma}\label{lemma: estErr_to_regret}
Assume \eqref{app:log_assumption} holds. Let $\bu,\bv \in \bR^{d}$ be two vectors. Assume $\bu \neq \bd{0}_d$. Then there exists a constant $C>0$ depending only on $L$, such that
$$ \Exp\left[{|\bu'\bZ|} \idf\{\textup{sgn}(\bu'\bZ)\neq \textup{sgn}(\bv'\bZ) \} \right] \le \frac{C\|\bu-\bv\|_2^2}{\|\bu\|_2}.$$
If, in addition, $\bv \neq \bd{0}_d$, then
$$C^{-1}{\|\bu\|_2} \left\|\frac{\bu}{\|\bu\|_2}-\frac{\bv}{\|\bv\|_2} \right\|_2^2\le \Exp\left[{|\bu'\bZ|} \idf\{\textup{sgn}(\bu'\bZ)\neq \textup{sgn}(\bv'\bZ) \} \right].  $$
\end{lemma}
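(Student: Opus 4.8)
The plan is to reduce the statement to two dimensions and then argue geometrically. Both the event $A:=\{\textup{sgn}(\bu'\bZ)\neq\textup{sgn}(\bv'\bZ)\}$ and the random variable $|\bu'\bZ|$ depend on $\bZ$ only through its orthogonal projection onto $V:=\textup{span}(\bu,\bv)$. By Pr\'ekopa's theorem this projection is again log-concave, and by the Courant--Fischer min--max principle its covariance (in coordinates with respect to an orthonormal basis of $V$) has eigenvalues in $[L^{-1},L]$; so, after disposing of the degenerate case $\bv=\lambda\bu$ (where $A$ is either null or almost sure and both bounds are elementary), I may assume $d=2$, with $\bZ$ centered log-concave on $\mR^2$, covariance eigenvalues in $[L^{-1},L]$, and $\bu,\bv$ a basis of $\mR^2$. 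Writing $\alpha:=\angle(\bu,\bv)\in(0,\pi)$, a direct computation identifies $A$ with the bow-tie region between the lines $\ell_{\bu}=\{\bd x:\bu'\bd x=0\}$ and $\ell_{\bv}=\{\bd x:\bv'\bd x=0\}$, of opening angle $\alpha$. Throughout $C$ is a constant depending only on $L$; I will use the standard facts that a centered log-concave density on $\mR^2$ whose covariance has eigenvalues in $[L^{-1},L]$ is bounded by $C$, has an exponential tail ($f(\bd x)\le Ce^{-\|\bd x\|_2/C}$ for $\|\bd x\|_2\ge C$), and is bounded below by $C^{-1}$ on some disc of radius $C^{-1}$ about the origin.

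For the upper bound, if $\alpha$ is bounded away from $0$ (say $\alpha\ge\pi/4$) then $\bu'\bv\le\tfrac1{\sqrt2}\|\bu\|_2\|\bv\|_2$, so $\|\bu-\bv\|_2^2\ge\tfrac12\|\bu\|_2^2$, and since $\Exp[|\bu'\bZ|]\le(\bu'\bSigma_{\bZ}\bu)^{1/2}\le\sqrt L\,\|\bu\|_2$ the inequality is immediate. If $\alpha<\pi/4$, I would use that on $A$ the vector $\bZ$ lies within angular distance $\alpha$ of $\ell_{\bu}$, hence $|\bu'\bZ|\le\|\bu\|_2(\tan\alpha)\,|\bd w_0'\bZ|$ there, where $\bu_0:=\bu/\|\bu\|_2$ and $\bd w_0$ is a unit vector spanning $\ell_{\bu}$; this reduces matters to the bound $\Exp[\,|\bd w_0'\bZ|\,\idf\{|\bu_0'\bZ|\le\beta|\bd w_0'\bZ|\}\,]\le C\beta$ with $\beta=\tan\alpha$. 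I would prove the latter by Fubini: with $P=\bu_0'\bZ$, $Q=\bd w_0'\bZ$, joint density $f$, and $m(q):=\sup_p f(p,q)$, the inner $p$-integral over $[-\beta|q|,\beta|q|]$ is at most $2\beta|q|\,m(q)$, so the expectation is at most $2\beta\int q^2 m(q)\,dq\le C\beta$, the last step because $m\le\|f\|_\infty$ everywhere and $m$ inherits the exponential tail of $f$. The lemma's upper bound then follows from $\tan\alpha\le\sqrt2\sin\alpha$ and $\sin\alpha\le\|\bu-\bv\|_2/\|\bu\|_2$ (since $\|\bu\|_2\sin\alpha$ is the distance from $\bu$ to the line $\mR\bv$, which is $\le\|\bu-\bv\|_2$).

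For the lower bound, since $\bigl\|\bu/\|\bu\|_2-\bv/\|\bv\|_2\bigr\|_2^2=2(1-\cos\alpha)$ is comparable, up to absolute constants, to $\sin^2(\alpha'/2)$ with $\alpha':=\alpha\wedge(\pi/2)$, it suffices to produce a lower bound of order $\|\bu\|_2\sin^2(\alpha'/2)$. I would take the sub-region $W\subseteq A$ consisting of the points whose angular distance to $\ell_{\bu}$ lies in $[\alpha'/2,\alpha']$ on the side of $\ell_{\bu}$ facing $\ell_{\bv}$, together with its antipode; since $\alpha'\le\alpha$, $W$ lies between $\ell_{\bu}$ and $\ell_{\bv}$ and hence $W\subseteq A$. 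On $W$ one has $|\bu'\bZ|=\|\bu\|_2\|\bZ\|_2\,|\cos\angle(\bZ,\bu_0)|\ge\|\bu\|_2\|\bZ\|_2\sin(\alpha'/2)$, and intersecting $W$ with a fixed annulus about the origin on which $f_{\bZ}\ge C^{-1}$ yields a set of Lebesgue measure $\ge C^{-1}\alpha'$ on which also $\|\bZ\|_2\ge C^{-1}$. Hence $\Exp[|\bu'\bZ|\idf\{A\}]\ge\Exp[|\bu'\bZ|\idf\{\bZ\in W\cap\text{annulus}\}]\ge C^{-1}\|\bu\|_2\sin(\alpha'/2)\cdot\alpha'\ge C^{-1}\|\bu\|_2\sin^2(\alpha'/2)$, using $\alpha'\ge\sin(\alpha'/2)$.

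The main obstacle I anticipate is the Fubini estimate $\Exp[\,|\bd w_0'\bZ|\,\idf\{|\bu_0'\bZ|\le\beta|\bd w_0'\bZ|\}\,]\le C\beta$: a naive Cauchy--Schwarz gives only $O(\sqrt\beta)$, and controlling the conditional density of $\bu_0'\bZ$ given $\bd w_0'\bZ$ fails because that conditional density need not be uniformly bounded; passing to $m(q)=\sup_p f(p,q)$---which is bounded by $\|f\|_\infty$ and decays exponentially---is precisely what recovers the linear-in-$\beta$ rate. Consequently the whole argument hinges on having sharp quantitative log-concave facts (the $L^\infty$ bound, the exponential tail, and the lower bound on the density near the origin) available, either as cited results or as auxiliary lemmas in this appendix.
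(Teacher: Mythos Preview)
Your proposal is correct and takes a genuinely different route from the paper. The paper's proof is essentially a citation: it handles $\bv=\bd{0}_d$ by the trivial bound $\Exp[|\bu'\bZ|]\le\sqrt{L}\,\|\bu\|_2$, and for $\bv\neq\bd{0}_d$ applies Lemma~3 of \cite{song2022truncated} to the unit vectors $\bu/\|\bu\|_2$ and $\bv/\|\bv\|_2$ to obtain both inequalities with $\bigl\|\bu/\|\bu\|_2-\bv/\|\bv\|_2\bigr\|_2^2$ on the right, and then invokes Lemma~36 of the same reference to convert the upper bound to $\|\bu-\bv\|_2^2/\|\bu\|_2^2$. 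Your argument instead reduces to the two-dimensional projection onto $\textup{span}(\bu,\bv)$ via Pr\'ekopa, identifies $A$ with a bow-tie of aperture $\alpha=\angle(\bu,\bv)$, and proceeds geometrically: the Fubini bound with $m(q)=\sup_p f(p,q)$ gives the linear-in-$\beta$ estimate for the upper bound (correctly avoiding the $\sqrt{\beta}$ loss from Cauchy--Schwarz), and the wedge-in-annulus gives the lower bound. What this buys you is transparency---the only external inputs are the standard log-concave regularity facts ($L^\infty$ bound, exponential tail, density lower bound near the mean), and the last of these is exactly Lemma~\ref{lemma:density_props} in this appendix---whereas the paper black-boxes the entire geometric content into two cited lemmas. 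The paper's proof is of course much shorter on the page.
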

\begin{proof}
Note that $\Exp\left[{|\bu'\bZ|} \right]  \leq \sqrt{\bu'\bSigma_{\bZ} \bu}\leq \sqrt{L} \|\bu\|$. Thus, if $\bv = \bd{0}_d$,  the upper bounds holds with $C = \sqrt{L}$. 
Now, we focus on the case that $\bv \neq \bd{0}_d$. By applying Lemma 3 in \cite{song2022truncated} with  $\bu/\|\bu\|_2$ and $\bv/\|\bv\|_2$, we have
$$
C^{-1}\left\|\frac{\bu}{\|\bu\|_2}-\frac{\bv}{\|\bv\|_2} \right\|_2^2\le \Exp\left[\frac{|\bu'\bZ|}{\|\bu\|_2} \idf\{\textup{sgn}(\bu'\bZ)\neq \textup{sgn}(\bv'\bZ) \} \right] \le C\left\|\frac{\bu}{\|\bu\|_2}-\frac{\bv}{\|\bv\|_2} \right\|_2^2.
$$
The lower bound follows immediately. Moreover, together with Lemma 36 in \cite{song2022truncated}, the upper bound also holds.
\end{proof}

\begin{lemma}\label{lemma:density_props}
Assume \eqref{app:log_assumption} holds. Let $m \geq 1$ be an integer, and $\{\bu_1, \cdots \bu_m\} \in \mR^d$ be an arbitrary set of orthonormal vectors (i.e. 
$\|\bu_i\|_2=1$ and $\bu_i'\bu_j = 0$ for all $i\neq j\in[m]$), and denote by $f_{\bu_1,\cdots,\bu_m}$ the joint density of $(\bu_1'\bZ,\cdots,\bu_m'\bZ)$. Then there exists a constant $C > 0$ depending only on $m,L$, such that 
for all $\max_{i\in[m]}|\tau_i|\le C^{-1}$, we have
$f_{\bu_1,\cdots,\bu_m}(\tau_1,\cdots,\tau_m) \ge C^{-1}$.
\end{lemma}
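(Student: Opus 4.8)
The plan is to reduce the $m$-dimensional density lower bound to a statement about a single log-concave density on $\mathbb{R}^m$ whose covariance matrix has bounded eigenvalues, and then invoke the known fact that such densities are bounded below near the origin (in fact near their mode, but the origin suffices after controlling the mode's distance from $0$). First I would note that the random vector $\bW := (\bu_1'\bZ, \ldots, \bu_m'\bZ)' \in \mathbb{R}^m$ has a log-concave density $f_{\bW} = f_{\bu_1,\ldots,\bu_m}$; this is because $\bW = U'\bZ$ for the $d \times m$ matrix $U = (\bu_1,\ldots,\bu_m)$ with orthonormal columns, and linear images (here, marginals under an orthogonal projection followed by a coordinate identification) of log-concave densities are log-concave — a standard consequence of the Prékopa–Leindler inequality. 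Its covariance is $\mathrm{Cov}(\bW) = U'\bSigma_{\bZ} U$, which by \eqref{app:log_assumption} and the variational characterization of eigenvalues satisfies $L^{-1} \le \lambda_{\min}(U'\bSigma_{\bZ} U) \le \lambda_{\max}(U'\bSigma_{\bZ} U) \le L$, since $U$ has orthonormal columns. Also $\Exp[\bW] = U'\Exp[\bZ] = \bd{0}_m$.

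Next I would establish two quantitative facts about an arbitrary mean-zero log-concave density $g$ on $\mathbb{R}^m$ with $L^{-1} \preceq \mathrm{Cov}(g) \preceq L$. First, $g$ is bounded above: $\|g\|_\infty \le C_1(m,L)$, because the upper bound on the covariance together with log-concavity forces a uniform sup-norm bound (e.g. via the estimate $\|g\|_\infty \le e^m / \sqrt{\det(2\pi \mathrm{Cov}(g))}$-type inequalities, or more simply: a log-concave density with $\mathrm{Cov} \preceq L I_m$ cannot be too peaked). Second, the mode $\bx^*$ of $g$ (which exists for a log-concave density whose level sets are bounded — guaranteed here since $\mathrm{Cov}$ is finite and nondegenerate) satisfies $\|\bx^*\|_2 \le C_2(m,L)$; this follows because the mean is $\bd 0_m$ and for log-concave densities the mean and mode are within a bounded multiple of the covariance scale. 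Granting these, the key analytic step is a lower bound: for a log-concave density $g$ with value $g(\bx^*) = \|g\|_\infty =: M$ at the mode, and with $\mathrm{Cov}(g) \succeq L^{-1} I_m$, the value $M$ is bounded below, $M \ge C_3(m,L)^{-1}$ — otherwise the density would be too flat to have the required variance in every direction. Then, by log-concavity, for any $\by$ on the segment from $\bx^*$ to $2\bx^* - \by$... more directly: Lemma in \cite[}] on the "in-radius" of the super-level set $\{g \ge M/2\}$ — a log-concave density with bounded covariance has $\{g \ge \|g\|_\infty/2\}$ containing a ball of radius $c(m,L) > 0$ around the mode. Combining $\|\bx^*\|_2 \le C_2$ with this in-radius bound, the ball of radius $C^{-1}$ around the origin (for $C$ large depending on $m,L$) is covered by $\{g \ge M/2\} \supseteq \{g \ge C_3^{-1}/2\}$ once we also ensure the origin itself lies in this set, which it does since $g(\bd 0_m) \ge $ (value of $g$ along the segment from mode, controlled by log-concavity and $\|\bx^*\|_2 \le C_2$) $\ge C_3^{-1} e^{-\text{const}}$. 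Applying this with $g = f_{\bW}$ yields the claim with a constant depending only on $m$ and $L$.

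The main obstacle is assembling the quantitative log-concavity lemmas cleanly: specifically, (i) the mode of $g$ lies within a bounded distance of the mean, and (ii) a mean-zero log-concave density with covariance bounded below and above is bounded below by a positive constant on a fixed ball around the origin. Both are standard in the log-concave-measures literature (see e.g. Lovász–Vempala, or the references in \cite{song2022truncated}), but the excerpt does not state them, so I would need to either cite a precise source or give a short self-contained argument. A clean self-contained route for (ii): let $M = \|g\|_\infty$ attained at $\bx^*$; by log-concavity the superlevel set $K = \{ \bx : g(\bx) \ge M/e \}$ is convex, and on $K^c$ we have $g < M/e$, while $\int g = 1$ forces $\mathrm{vol}(K) \ge$ some positive quantity and $M \,\mathrm{vol}(K) \ge 1/2$ (say); combined with the second-moment bound $\int \|\bx - \Exp\bW\|^2 g(\bx)\,d\bx = \mathrm{tr}(\mathrm{Cov}) \le mL$, which confines most mass to a ball of radius $O(\sqrt{mL})$, one gets matching upper and lower control on $M$ and on $\mathrm{diam}(K)$, hence on $\|\bx^*\|_2$ and on the value $g(\bd 0_m)$. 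Once these ingredients are in place, the rest of the proof is the two-line reduction via $\bW = U'\bZ$ described above. I would therefore structure the written proof as: (1) reduce to $\bW = U'\bZ$; (2) state the covariance and mean facts; (3) cite or prove the two log-concavity lemmas; (4) conclude by evaluating the lower bound at points with $\|\tau\|_\infty \le C^{-1}$.
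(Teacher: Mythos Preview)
Your proposal is correct and outlines precisely the standard argument: reduce via $\bW = U'\bZ$ to an $m$-dimensional mean-zero log-concave density with covariance eigenvalues in $[L^{-1},L]$, then invoke the known fact that such a density is bounded below on a fixed ball about the origin. The paper's own proof is simply a citation to Lemma~34(iii) of \cite{song2022truncated}, so your sketch is in fact more detailed than what appears here; the underlying approach is the same.
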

\begin{proof}
    The proof is essentially the same as that for  Lemma 34(iii) in \cite{song2022truncated}, and thus omitted.
\end{proof}
\begin{remark}
We emphasize that the constant $C$ in Lemma \ref{lemma:density_props} does not depend on $d$, but only on $m$ and $L$.
\end{remark}

\begin{lemma}[Margin Condition] \label{lemma: margin condition} 
Assume \eqref{app:log_assumption} holds. Then there exists a constant $C>0$ depending only on $L$, such that for any non-zero $\bu \in \bR^{d}\setminus \{\bd{0}_d\}$, we have 
$$
\Pro(|\bu'\bZ|\le \tau)\le C \|\bu\|_2^{-1} \tau, \text{ for all } \tau>0.
$$
\end{lemma}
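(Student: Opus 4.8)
\textbf{Proof proposal for Lemma~\ref{lemma: margin condition}.}
The plan is to reduce the $d$-dimensional statement to a one-dimensional density bound by projecting onto the direction $\bu$, and then to invoke the one-dimensional case of Lemma~\ref{lemma:density_props} (with $m=1$). First I would write $\bv := \bu/\|\bu\|_2$, so that $\bv$ is a unit vector and $\bu'\bZ = \|\bu\|_2 \cdot (\bv'\bZ)$. Consequently, for any $\tau>0$,
\begin{align*}
\Pro(|\bu'\bZ|\le \tau) = \Pro\!\left(|\bv'\bZ| \le \tau/\|\bu\|_2\right) = \int_{-\tau/\|\bu\|_2}^{\tau/\|\bu\|_2} f_{\bv}(s)\,ds,
\end{align*}
where $f_{\bv}$ denotes the (one-dimensional) density of the scalar random variable $\bv'\bZ$. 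The key point is that $f_{\bv}$ is bounded \emph{above} by a constant depending only on $L$, uniformly over all unit vectors $\bv$; this is the content I would extract from the log-concavity plus the eigenvalue bounds in~\eqref{app:log_assumption}. Indeed, $\bv'\bZ$ is a one-dimensional log-concave random variable with $\Exp[\bv'\bZ]=0$ and variance $\bv'\bSigma_{\bZ}\bv \le \lmax{\bSigma_{\bZ}} \le L$; a one-dimensional log-concave density with bounded variance has sup-norm bounded in terms of that variance (equivalently, bounded below away from zero variance — here what is needed is that the variance is not too small, $\bv'\bSigma_{\bZ}\bv \ge L^{-1}$, so the density cannot be too concentrated). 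The cleanest route within the paper is to apply Lemma~\ref{lemma:density_props} with $m=1$ and $\bu_1 = \bv$: it gives a lower bound $f_{\bv}(s)\ge C^{-1}$ for $|s|\le C^{-1}$, but the argument behind it (the standard estimates for log-concave marginals, as in Lemma~34 of \cite{song2022truncated}) simultaneously yields the matching upper bound $\sup_s f_{\bv}(s) \le C$ for a constant $C$ depending only on $L$; I would simply cite that.

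Given $\sup_s f_{\bv}(s)\le C$, the integral above is bounded by $C \cdot (2\tau/\|\bu\|_2)$, which is exactly $\Pro(|\bu'\bZ|\le\tau) \le 2C\|\bu\|_2^{-1}\tau$, i.e.\ the claim with constant $2C$. The main (very minor) obstacle is purely bookkeeping: making sure the upper bound on the log-concave marginal density is genuinely \emph{uniform} over unit vectors $\bv$ and depends only on $L$, not on $d$ — but this is immediate because the only feature of $\bv$ that enters is the scalar variance $\bv'\bSigma_{\bZ}\bv$, which lies in $[L^{-1},L]$ for every unit vector by~\eqref{app:log_assumption}, and the remark following Lemma~\ref{lemma:density_props} already records that the relevant constants are $d$-free. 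So there is no real analytic difficulty; the proof is a two-line projection argument once the one-dimensional sup-norm bound is in hand.

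Alternatively, if one prefers not to rely on extracting an upper bound from Lemma~\ref{lemma:density_props}, one can argue directly: a mean-zero log-concave density $g$ on $\bR$ with variance $\sigma_0^2 \in [L^{-1},L]$ satisfies $\|g\|_\infty \le c/\sigma_0 \le c\sqrt{L}$ for an absolute constant $c$ (this is a classical fact — e.g.\ the maximum of a log-concave density is controlled by its standard deviation). Applying this to $g = f_{\bv}$ gives the same conclusion. I would present whichever of these the surrounding text makes most self-contained; given that Lemma~\ref{lemma:density_props} and its proof are already cited from \cite{song2022truncated}, the first route is the least disruptive.
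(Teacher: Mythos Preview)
Your proposal is correct and essentially coincides with the paper's approach: the paper's proof is a one-line citation to Lemma~34(i) in \cite{song2022truncated}, and your projection-to-1D argument together with the sup-norm bound $\|f_{\bv}\|_\infty \le c/\sqrt{\mathrm{Var}(\bv'\bZ)} \le c\sqrt{L}$ for log-concave densities is precisely what that cited lemma encodes. One small note: Lemma~\ref{lemma:density_props} as stated only gives a \emph{lower} bound on the density, so your second route (the classical sup-norm bound for one-dimensional log-concave densities) or a direct citation to Lemma~34(i) of \cite{song2022truncated} is the cleaner justification, as you already recognize.
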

\begin{proof} 
It is due to Lemma 34(i) in \cite{song2022truncated}.
\end{proof}

\begin{lemma}\label{lemma: log_concave_eig_max}
Assume \eqref{app:log_assumption} holds.  Then for any constants $h, \ell_2>0$, there exists a constant $\ell_1>0$ depending only on $\ell_2,h,L$, such that  
$$
\lmax{\Exp[\bZ\bZ'\idf\left\{|\bu'\bZ|\ge h \right\}]} \le \ell_2,\; \text{ for all } \bu \in \mR^d \text{ such that } \|u\|_2\le \ell_1.
$$
\end{lemma}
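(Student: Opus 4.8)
\textbf{Proof proposal for Lemma \ref{lemma: log_concave_eig_max}.}

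The plan is to argue by contradiction, exploiting compactness together with the dominated (or monotone) convergence theorem. Suppose the conclusion fails for some fixed $h, \ell_2 > 0$. Then for every $n \in \bN$ there exists a vector $\bu_n \in \bR^d$ with $\|\bu_n\|_2 \le 1/n$ such that $\lmax{\Exp[\bZ\bZ' \idf\{|\bu_n'\bZ| \ge h\}]} > \ell_2$. Since $\lmax{\cdot}$ of a positive semidefinite matrix equals the supremum of $\bw' (\cdot) \bw$ over unit vectors $\bw$, and the unit sphere $\mS^{d-1}$ is compact, for each $n$ we may pick a unit vector $\bw_n$ with $\bw_n' \Exp[\bZ\bZ' \idf\{|\bu_n'\bZ| \ge h\}] \bw_n > \ell_2$, i.e. $\Exp[(\bw_n'\bZ)^2 \idf\{|\bu_n'\bZ| \ge h\}] > \ell_2$. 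Passing to a subsequence (not relabeled), we may assume $\bw_n \to \bw_*$ for some unit vector $\bw_* \in \mS^{d-1}$, while $\bu_n \to \bd{0}_d$.

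The key step is then to show that $\Exp[(\bw_n'\bZ)^2 \idf\{|\bu_n'\bZ| \ge h\}] \to 0$, which contradicts the lower bound $\ell_2 > 0$ and completes the proof. To see this, I would write $(\bw_n'\bZ)^2 \idf\{|\bu_n'\bZ| \ge h\} \le 2(\bw_*'\bZ)^2 \idf\{|\bu_n'\bZ| \ge h\} + 2((\bw_n - \bw_*)'\bZ)^2$. The second term has expectation at most $2(\bw_n - \bw_*)'\bSigma_{\bZ}(\bw_n - \bw_*) \le 2L \|\bw_n - \bw_*\|_2^2 \to 0$ by \eqref{app:log_assumption}. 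For the first term, note that $(\bw_*'\bZ)^2$ is an integrable random variable (its expectation is $\bw_*'\bSigma_{\bZ}\bw_* \le L$), and $\idf\{|\bu_n'\bZ| \ge h\} \to 0$ in probability: indeed, by Markov's inequality $\Pro(|\bu_n'\bZ| \ge h) \le h^{-2}\Exp[(\bu_n'\bZ)^2] \le h^{-2} L \|\bu_n\|_2^2 \to 0$. Hence $(\bw_*'\bZ)^2 \idf\{|\bu_n'\bZ| \ge h\} \to 0$ in probability, and since it is dominated by the integrable random variable $(\bw_*'\bZ)^2$, the dominated convergence theorem (applied along a further subsequence, or directly via uniform integrability of the single dominating function) gives $\Exp[(\bw_*'\bZ)^2 \idf\{|\bu_n'\bZ| \ge h\}] \to 0$. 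Combining the two bounds yields the desired contradiction.

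The main obstacle, such as it is, lies in making the convergence-in-probability-to-$L^1$ argument fully rigorous: since dominated convergence requires almost-sure convergence, one should either extract one more subsequence along which $\idf\{|\bu_n'\bZ| \ge h\} \to 0$ almost surely (possible because convergence in probability implies a.s.\ convergence along a subsequence), and note that it suffices to derive a contradiction along any subsequence; or phrase everything in terms of uniform integrability of the constant family $\{(\bw_*'\bZ)^2\}$. Either route is routine. Note that log-concavity of $f_{\bZ}$ is not actually needed here beyond what is already encoded in \eqref{app:log_assumption}; only the bound on the largest eigenvalue of $\bSigma_{\bZ}$ is used, through the second-moment estimates $\Exp[(\bu'\bZ)^2] \le L\|\bu\|_2^2$. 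An alternative, quantitative approach would bound $\Exp[(\bw'\bZ)^2 \idf\{|\bu'\bZ| \ge h\}]$ directly via Cauchy--Schwarz as $(\Exp[(\bw'\bZ)^4])^{1/2} (\Pro(|\bu'\bZ| \ge h))^{1/2}$ and invoke the standard fact that log-concave random variables have fourth moments controlled by their variances (so $\Exp[(\bw'\bZ)^4] \le C L^2$ for a universal $C$); then $\ell_1 = \ell_1(h, \ell_2, L)$ can be chosen explicitly. I would likely present this second, self-contained version to avoid subsequence bookkeeping.
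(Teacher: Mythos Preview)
Your first approach (contradiction via compactness) has a genuine gap relative to the statement: the lemma requires $\ell_1$ to depend \emph{only} on $\ell_2,h,L$, hence uniformly over all dimensions $d$ and over all log-concave $\bZ$ satisfying \eqref{app:log_assumption}. Your compactness argument fixes one particular $\bZ$ (and hence one $d$), uses compactness of $\mS^{d-1}$, and concludes by contradiction that \emph{some} $\ell_1>0$ works for that $\bZ$; nothing in the argument prevents $\ell_1$ from shrinking as $d$ grows or as the density changes within the class. Your remark that ``log-concavity is not actually needed'' is correct for this non-uniform statement but is precisely what makes the argument fall short of the lemma as written: without log-concavity (or some other moment hypothesis) the uniform version is false, since one can build centered distributions with covariance $\bI_d$ whose one-dimensional marginals have arbitrarily heavy fourth moments, forcing $\ell_1\to 0$.

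Your second, quantitative approach via Cauchy--Schwarz is the right one and is fully correct: for any unit $\bw$, $\Exp[(\bw'\bZ)^2\idf\{|\bu'\bZ|\ge h\}]\le (\Exp[(\bw'\bZ)^4])^{1/2}\,\Pro(|\bu'\bZ|\ge h)^{1/2}$, the fourth moment of the one-dimensional log-concave variable $\bw'\bZ$ is bounded by $CL^2$ for a universal $C$ (moment comparison for log-concave laws), and Chebyshev gives $\Pro(|\bu'\bZ|\ge h)\le L\|\bu\|_2^2/h^2$. This yields an explicit $\ell_1=\ell_2 h/(C'L^{3/2})$ depending only on $\ell_2,h,L$, exactly as required. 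The paper itself does not give a self-contained proof here but defers to Appendix~D.1 and Lemma~34(ii) of \cite{song2022truncated}; your Cauchy--Schwarz argument is a clean stand-alone substitute and is almost certainly what that reference does in spirit. Present that version and drop the compactness route.
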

\begin{proof} 
The proof is essentially the same as the arguments in Appendix D.1 of \cite{song2022truncated}, combined with Lemma 34(ii) therein. Thus, it is omitted.
\end{proof}

For each arm $k\in [K]$ and $h>0$, recall in \eqref{eq: U_h_k} that $\mathcal{U}_h^{(k)} = \{\bd{x}\in \mR^d: (\btheta^{(k)})'\bd{x}> \max_{j\ne k}(\btheta^{(j)})'\bd{x}+h\}$, whose definition only involves the arm parameters.

\begin{lemma}\label{lemma: arm optimality}
Suppose that \eqref{app:log_assumption} holds, and that Assumption \ref{assumption:arm_parameters} holds. Then there exist constants $h>0$ and $\ell_0 > 0$  depending only on $L,K,m_{\theta}, L_4$, such that  for each arm $k \in [K]$,
$$
\lmin{\Exp[\bZ\bZ'\idf\{\bZ\in \mathcal{U}_h^{(k)}\}]}\ge \ell_0.
$$
\end{lemma}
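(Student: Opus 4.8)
\textbf{Proof proposal for Lemma~\ref{lemma: arm optimality}.}
The plan is to reduce the statement to a fixed, dimension-free problem in the span of the arm-parameter differences, and then apply the positive-density estimate for log-concave vectors (Lemma~\ref{lemma:density_props}). First I would fix an arbitrary arm $k \in [K]$ and observe that the event $\{\bZ \in \mathcal{U}_h^{(k)}\}$ depends on $\bZ$ only through the $K-1$ linear functionals $\{\bu_{k,j}'\bZ : j \neq k\}$, where $\bu_{k,j} := \btheta^{(k)} - \btheta^{(j)}$: indeed $\bZ \in \mathcal{U}_h^{(k)}$ iff $\bu_{k,j}'\bZ > h$ for all $j \neq k$. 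By Assumption~\ref{assumption: linear_independent}, the vectors $\{\bu_{k,j} : j \neq k\}$ are linearly independent, with each one having a component of size at least $L_4^{-1}$ orthogonal to the span of the others; together with Assumption~\ref{assumption: theta_bound} (so $\|\bu_{k,j}\|_2 \leq 2m_\theta$) this means the Gram matrix of $\{\bu_{k,j}\}$ has eigenvalues bounded above and below by constants depending only on $K, m_\theta, L_4$. Hence there is an orthonormal set $\{\bv_1,\dots,\bv_{K-1}\}$ spanning the same subspace $V := \mathrm{span}(\{\bu_{k,j}:j\neq k\})$, and an invertible $(K-1)\times(K-1)$ matrix $\bA$ (with $\op{\bA}$ and $\op{\bA^{-1}}$ controlled by these constants) such that $(\bu_{k,j}'\bZ)_{j\neq k} = \bA (\bv_i'\bZ)_{i\in[K-1]}$.

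Next I would lower bound $\Exp[\bZ\bZ'\idf\{\bZ \in \mathcal{U}_h^{(k)}\}]$ by its restriction to $V$: for any unit vector $\bw \in \bR^d$, writing $\bw = \bw_V + \bw_{V^\perp}$, we have $\Exp[(\bw'\bZ)^2 \idf\{\bZ\in\mathcal{U}_h^{(k)}\}] \geq c\,\Exp[(\bw_V'\bZ)^2\idf\{\bZ\in\mathcal{U}_h^{(k)}\}] - (\text{cross terms})$; more cleanly, it suffices to bound $\lmin{\Exp[\bZ_V\bZ_V'\idf\{\bZ\in\mathcal{U}_h^{(k)}\}]}$ where $\bZ_V = (\bv_1'\bZ,\dots,\bv_{K-1}'\bZ)$, since the full matrix dominates its compression to $V$ and a positive-definite lower bound on the $(K-1)$-dimensional block propagates (after noting $\mathcal{U}_h^{(k)}$ is determined by $\bZ_V$) to a lower bound of the same order on the $d$-dimensional matrix along directions in $V$; directions in $V^\perp$ are handled because the indicator is independent of $\bw_{V^\perp}'\bZ$ given $\bZ_V$ only in distribution-of-support, not in law, so one actually argues via the block structure $\Exp[\bZ\bZ'\idf\{\cdots\}] \succeq \Pi_V \Exp[\bZ\bZ'\idf\{\cdots\}]\Pi_V$ is not automatic — instead I would simply note that $\Exp[\bZ\bZ'\idf\{A\}] \succeq \Exp[\bZ_V'\bZ_V \idf\{A\}]$-type comparison holds after conjugating by the projection, which gives $\lmin{\Exp[\bZ\bZ'\idf\{A\}]} \geq $ (a constant times) $\lmin{\Exp[\bZ_V \bZ_V' \idf\{A\}]}$. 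Then $\Exp[\bZ_V\bZ_V'\idf\{\bZ\in\mathcal{U}_h^{(k)}\}]$ is an integral over the region $\{\bA \bz > h\bd{1}\}$ against the joint density $f_{\bv_1,\dots,\bv_{K-1}}$ of $\bZ_V$, which by Lemma~\ref{lemma:density_props} is bounded below by a constant $C^{-1}$ (depending only on $K$ and $L$) on a cube $\{\max_i|z_i| \leq C^{-1}\}$. Choosing $h$ small enough — depending only on $K, L, m_\theta, L_4$ via $\op{\bA^{-1}}$ — that the set $\{\bz : h < (\bA\bz)_j, \ \bz \in \text{some fixed sub-cube away from }0\}$ has positive Lebesgue measure inside that cube, we get $\Exp[\bZ_V\bZ_V'\idf\{\bZ\in\mathcal{U}_h^{(k)}\}] \succeq \ell_0' \bI_{K-1}$ for a constant $\ell_0' > 0$, because integrating $\bz\bz'$ over a fixed solid region with density bounded below yields a positive-definite matrix.

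The main obstacle I anticipate is the bookkeeping in passing from the positive-definite lower bound on the $(K-1)$-dimensional compressed matrix back to a genuine $\lmin{}$ lower bound on the $d\times d$ matrix $\Exp[\bZ\bZ'\idf\{\bZ\in\mathcal{U}_h^{(k)}\}]$, since that matrix is degenerate-looking in directions orthogonal to $V$ only if $\bZ$ were supported on $V$, which it is not. The resolution is that directions $\bw \in V^\perp$ also receive mass: $\Exp[(\bw'\bZ)^2\idf\{\bZ\in\mathcal{U}_h^{(k)}\}] = \Exp[\idf\{\bZ_V\in\text{region}\}\,\Exp[(\bw'\bZ)^2\mid \bZ_V]]$, and the inner conditional second moment is bounded below by a constant uniformly on the relevant region — this again follows from log-concavity of the conditional density of $\bw'\bZ$ given $\bZ_V$ together with the eigenvalue bound \eqref{app:log_assumption}, exactly the type of estimate furnished by the arguments behind Lemma~\ref{lemma:density_props} and Lemma~\ref{lemma: log_concave_eig_max}. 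Handling the $V$, $V^\perp$, and cross directions simultaneously — e.g. by decomposing an arbitrary unit $\bw$ and using $2ab \geq -\tfrac12 a^2 - 2b^2$ to absorb cross terms — is the one place where care is needed, but it is routine once the two one-sided density bounds are in hand. All constants produced depend only on $L, K, m_\theta, L_4$, as required.
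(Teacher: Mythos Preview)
Your reduction to a $(K-1)$-dimensional problem is the right instinct, but the place you flag as ``routine'' --- absorbing the cross terms between $V$ and $V^\perp$ via $2ab \ge -\tfrac12 a^2 - 2b^2$ --- is where the argument actually breaks. For a unit vector $\bw = \alpha\bw_V + \beta\bw_{V^\perp}$, that inequality applied pointwise to $(\alpha\bw_V'\bZ)(\beta\bw_{V^\perp}'\bZ)$ only yields something like $\Exp[(\bw'\bZ)^2\idf\{A\}] \ge \tfrac12\alpha^2\Exp[(\bw_V'\bZ)^2\idf\{A\}] - \beta^2\Exp[(\bw_{V^\perp}'\bZ)^2\idf\{A\}]$, which can be negative. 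More generally, there is no reason the off-diagonal block $\Exp[(\bw_V'\bZ)(\bw_{V^\perp}'\bZ)\idf\{A\}]$ should be small compared to the diagonal lower bounds you obtain, so a block-wise argument of this type cannot close. Your fallback via conditioning (lower bounding $\Exp[(\bw_{V^\perp}'\bZ)^2\mid\bZ_V]$) would require a uniform lower bound on the smallest eigenvalue of the conditional covariance of a log-concave vector, which is not supplied by Lemma~\ref{lemma:density_props} and is an additional, nontrivial fact.

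The paper sidesteps all of this with one move: rather than fixing an orthonormal basis of $V$ in advance and treating $V^\perp$ separately, it takes the \emph{test direction $\bv$} itself and runs Gram--Schmidt on $\{\bu_{k,1},\ldots,\bu_{k,K-1},\bv\}$, producing $K$ orthonormal vectors $\{\tbu_1,\ldots,\tbu_{K-1},\bd{\omega}\}$ with $\bv = \sum_{i=1}^{K}\gamma_i\,(\cdot)$ and $\sum_i\gamma_i^2 = 1$. Lemma~\ref{lemma:density_props} is then applied with $m=K$ (not $K-1$), giving a joint density lower bound for $(\tbu_1'\bZ,\ldots,\tbu_{K-1}'\bZ,\bd{\omega}'\bZ)$ on a cube. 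The event $\{\bZ\in\mathcal{U}_h^{(k)}\}$ is shown to contain a product box in the first $K-1$ coordinates, so one integrates $(\sum_i\gamma_i\tau_i)^2$ over a product set $\mathcal{A}\times[-C_0^{-1},C_0^{-1}]$. Because this is a product set, the uniform coordinates $\xi_1,\ldots,\xi_K$ are independent, whence $\mathrm{Var}(\sum_i\gamma_i\xi_i) = \sum_i\gamma_i^2\,\mathrm{Var}(\xi_i)\ge\min_i\mathrm{Var}(\xi_i)$, and the cross terms never appear. The single idea you are missing is to fold the arbitrary direction $\bv$ into the orthonormal frame before invoking the density bound.
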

 \begin{proof} 
 Fix an arbitrary vector $\bv \in \bR^{d}$ such that $\|\bv\|_2 = 1$.  Without loss of generality, we consider the case $k = K$, and   show that for some $h,\ell_0 >0$, we have $\Exp[(\bv'\bZ)^2\idf\{\bZ\in \mathcal{U}_h^{(K)}\}] \geq \ell_0$. 
 Let $\bu_{i,j} = \btheta^{(i)}-\btheta^{(j)}$ for $i,j\in[K]$. Then by definition,
 \begin{align*}
    \mathcal{U}_h^{(K)} = \{\bd{x}\in \mR^d: (\bu_{K,j})'\bd{x}>  h, \text{ for } j =1,\ldots,K-1\}. 
 \end{align*}

We apply the  Gram–Schmidt process to transform $\{\bu_{K,1},\ldots,\bu_{K,K-1},\bv\}$ into a set of \textit{orthonormal} vectors $\{\tbu_1 ,\cdots,\tbu_{K-1} , \omega\}$. Specifically, 
we set $\gamma_1 := \|\bu_{K,1}\|_2$ and $\tbu_1  = \bu_{K,1}/\gamma_1$. Further,  for $j \in [2,K-1]$, define
\begin{align*}
   \tbu_j :=   \frac{\bu_{K,j} - \sum_{i=1}^{j-1} (\bu_{K,j}'\tbu_i) \tbu_i}{\gamma_{j,j}}, \;\;\text{ where } \gamma_{j,j}:= \|\bu_{K,j} - \sum_{i=1}^{j-1}(\bu_{K,j}'\tbu_i) \tbu_i\|_2
\end{align*}
Due to Assumption \ref{assumption: linear_independent}, this is well defined as $\gamma_{j,j} \geq L_4^{-1}$ for $1 \leq j \leq K-1$. Finally, if $\bv$ is in the linear span of $\{\bu_{K,1},\ldots,\bu_{K,K-1}\}$, let $\gamma_{K,K} = 0$ and $\bd{\omega}$ be any unit vector such that $\bd{\omega}'\tbu_j^{(k)}=0$ for all $j=1,\cdots,K-1$; otherwise, define
\begin{align*}
    \bd{\omega} := \frac{\bv  - \sum_{i=1}^{K-1} (\bv '\tbu_i) \tbu_i}{\gamma_{K,K}}, \;\;\text{ where } \gamma_{K,K} 
     := \|\bv  - \sum_{i=1}^{K-1}(\bv '\tbu_i) \tbu_i\|_2
\end{align*}
Then by definition, we have that for $1 \leq j \leq K-1$,
$$
\bu_{K,j} = \gamma_{j,j} \tbu_j + \sum_{i=1}^{j-1}  \gamma_{j,i} \tbu_i, \quad 
\bv = \gamma_{K,K} \bd{\omega} + \sum_{i=1}^{K-1}\gamma_{K,i} \tbu_i
$$
where $\gamma_{j,i} := \bu_{K,j}'\tbu_i$ and $\gamma_{K,i} = \bv '\tbu_i$. Due to Assumption \ref{assumption:arm_parameters} and the subsequent remark,  
\begin{align}\label{aux:coeffs}
    \gamma_{j,j} \geq L_4^{-1}, \quad |\gamma_{j,i}| \leq m_{\theta}, \; \text{ for } 1 \leq i < j \leq K-1, \quad \sum_{i=1}^{K} \gamma_{K,i}^2 = 1.
\end{align}

Now, define $(\bW_1,\ldots, \bW_K) := (\tbu_1'\bZ,\ldots,\tbu_{K-1}'\bZ, \bd{\omega}'\bZ)$,  and then by definition
\begin{align*}
 \{\bZ \in \mathcal{U}_h^{(K)}\}& = \bigcap_{j=1}^{K-1} \left\{ \bW_{j} > \gamma_{j,j}^{-1} (h - \sum_{i=1}^{j-1} \gamma_{j,i} \bW_i) \right\}  \\
 &\supseteq \bigcap_{j=1}^{K-1} \left\{ \bW_{j} > L_4 (h + \sum_{i=1}^{j-1} m_{\theta} |\bW_i|)\right\}.
\end{align*}
By elementary induction, we then have $\{(\bW_1,\ldots,\bW_{K-1}) \in \mathcal{A}\} \subseteq \{\bZ \in \mathcal{U}_h^{(K)}\} $, where $\mathcal{A}$ is defined as follows:
\begin{align*}
    \left\{(\tau_1,\ldots,\tau_{K-1})\in \bR^{K-1}:  (4m_{\theta}L_4)^{i-1}L_4 h \le \tau_i \le 2(4m_{\theta}L_4)^{i-1}L_4 h, \;\;\text{ for }  i \in [K-1]\right\}.
\end{align*}
As a result, due to the expression of $\bv$, we have 
\begin{align*}
    \Exp\left[(\bv'\bZ)^2\idf\{\bZ\in \mathcal{U}_h^{(K)}\}\right]  
    \geq \Exp\left[\left( \sum_{i=1}^{K} \gamma_{K,i} \bW_i\right)^2\idf\{(\bW_1,\ldots,\bW_{K-1}) \in \mathcal{A}\}\right].
\end{align*}

Finally, denote by $f$ the joint density of $(\bW_1,\ldots,\bW_K)'$. By Lemma \ref{lemma:density_props}, there exists a constant $C_0>0$ depending only on $K,L$, such that 
$f(\tau_1,\cdots,\tau_K)\ge C_0^{-1}$
for all $(\tau_1,\ldots,\tau_K) \in \bR^{K}$ such that $\max_{i\in[K]}|\tau_i|\le C_0^{-1}$. 
Due to the definition of $\mathcal{A}$, if $h \leq 1/\left( (4m_{\theta} L_4)^{K} \right)$, then
\begin{align*}
    &\Exp\left[(\bv'\bZ)^2\idf\{\bZ\in \mathcal{U}_h^{(K)}\}\right] 
    \geq C_0^{-1} \int_{\mathcal{A} \times [-C_0^{-1}, C_0^{-1}]} \left( \sum_{i=1}^{K} \gamma_{K,i} \tau_i\right)^2 d\tau_1\ldots d\tau_K.
\end{align*}

Denote by $r$ the Lebesgure area of $\mathcal{A}'$, where $\mathcal{A}' := \mathcal{A} \times [-C_0^{-1}, C_0^{-1}]$. Further, let $(\xi_1,\ldots,\xi_K)$ be a random vector uniformly distributed on $\mathcal{A}'$. By definition, $\xi_1,\ldots,\xi_K$ are independent, and then due to \eqref{aux:coeffs},
\begin{align*}
    &\Exp\left[(\bv'\bZ)^2\idf\{\bZ\in \mathcal{U}_h^{(K)}\}\right] 
    \geq r C_0^{-1} \text{Var} \left(\sum_{i=1}^{K} \gamma_{K,i} \xi_i\right) \ge rC_0^{-1} \min_{1 \leq i \leq K} \text{Var}(\xi_i).
\end{align*}
Then the proof is complete by letting 
$\ell_0 := r C_0^{-1} \min\{L_4^2 h^2/12, \;1/(48C_0^2)\}$.
\end{proof}

\section{Some Fundamental Results}
\begin{lemma} \label{lemma: subgaussian lower eigenvalue whp}
Let $\bA, \bd{B}\in \mR^{d\times d}$ be two symmetric matrices, and  $S \subseteq [d]$ with $s:=|S| > 0$.  Assume that there exist  $L,C>0$ such that 
$L^{-1}\le \lmin{\bA}\le\lmax{\bA}\le L$ and $\mn\bd{B}-\bA\mn_{\textup{max}}\le C$.
Then 
$$
L^{-1}-C\times s\le \lmin{\bd{B}_{S,S}} \le \lmax{\bd{B}_{S,S}}\le L+C \times s.
$$
\end{lemma}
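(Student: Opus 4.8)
The plan is to reduce everything to two standard facts about symmetric matrices: Cauchy's interlacing theorem and Weyl's perturbation inequality. Write $\bd{B}_{S,S} = \bA_{S,S} + \bd{E}$, where $\bd{E} := (\bd{B}-\bA)_{S,S} \in \mR^{s\times s}$ is symmetric. The first step is to control the eigenvalues of the principal submatrix $\bA_{S,S}$: by the Courant--Fischer variational characterization restricted to the subspace of vectors in $\mR^{d}$ supported on $S$ (equivalently, Cauchy's interlacing theorem), we have $\lmin{\bA} \le \lmin{\bA_{S,S}}$ and $\lmax{\bA_{S,S}} \le \lmax{\bA}$, so that under the hypothesis $L^{-1}\le \lmin{\bA_{S,S}} \le \lmax{\bA_{S,S}}\le L$.

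The second step is to bound the operator norm of the perturbation $\bd{E}$. Since $\bd{E}$ is symmetric, $\op{\bd{E}}_1 = \op{\bd{E}}_{\infty}$, and hence $\op{\bd{E}}_{\textup{OP}} \le \sqrt{\op{\bd{E}}_1\,\op{\bd{E}}_{\infty}} = \op{\bd{E}}_{\infty} = \max_{i\in S}\sum_{j\in S}|\bd{E}_{i,j}|$. Each such row sum has at most $s$ terms, each bounded by $\op{\bd{B}-\bA}_{\textup{max}} \le C$, so $\op{\bd{E}}_{\textup{OP}} \le C\,s$.

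The final step is Weyl's inequality: for symmetric matrices $\bA_{S,S}$ and $\bA_{S,S}+\bd{E}$,
\[
\lmin{\bd{B}_{S,S}} \ge \lmin{\bA_{S,S}} - \op{\bd{E}}_{\textup{OP}} \ge L^{-1} - C\,s,
\qquad
\lmax{\bd{B}_{S,S}} \le \lmax{\bA_{S,S}} + \op{\bd{E}}_{\textup{OP}} \le L + C\,s,
\]
which is the claimed bound; the middle inequality $\lmin{\bd{B}_{S,S}} \le \lmax{\bd{B}_{S,S}}$ is trivial. This argument is entirely elementary and I foresee no genuine obstacle; the only point requiring a little care is the norm comparison in the second step (passing from the entrywise bound $\op{\bd{B}-\bA}_{\textup{max}}\le C$ to an operator-norm bound on the $s\times s$ submatrix), where one must remember to pay the factor $s$ coming from the $\ell_1$-to-$\ell_2$ comparison of rows.
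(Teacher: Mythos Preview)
Your proposal is correct and follows essentially the same approach as the paper. The paper works directly with the Rayleigh quotient: for a unit vector $\bv\in\mR^s$ it notes $L^{-1}\le \bv'\bA_{S,S}\bv\le L$ and bounds $|\bv'(\bd{B}-\bA)_{S,S}\bv|\le C\sum_{i,j\in S}|v_iv_j|=C\|\bv\|_1^2\le Cs$ via Cauchy--Schwarz, which is the same decomposition and the same $Cs$ perturbation bound you obtain through the row-sum/operator-norm route and then Weyl.
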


\begin{proof}
Let $\bv \in \bR^{s}$ such that $\|\bv\|_2 = 1$. By Cauchy-Schwartz inequality,
\begin{align*}
    \left|\bv'[(\bd{B}-\bA)_{S,S}] \bv\right| \le \mn\bd{B}-\bA\mn_{\textup{max}}\sum_{i\in S}\sum_{j\in S}|\bv_i\bv_j| \le C (s\|\bv\|_2^2) = C \times s.
\end{align*}
Due to the assumption,  $L^{-1}\le \bv' \bA_{S,S} \bv \le L$, which implies that $L^{-1} - C \times s\le \bv' \bd{B}_{S,S} \bv \le L + C \times s$. The proof is then complete.
\end{proof}

\begin{lemma}\label{lemma: concentration inequality of bernoulli}
Let $\{B_i: i\in[n]\}$ be a sequence of independent Bernoulli random variables. Assume that for some $\delta \in [0,1]$, $\Exp[B_i] \leq \delta$ for $i \in [n]$. Then
$$
\Pro\left(\frac{1}{n}\sum_{i=1}^n B_i\ge 2\delta \right)\le \exp\left\{-\frac{3}{8}n\delta\right\}.
$$
\end{lemma}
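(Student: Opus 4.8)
The plan is to apply the standard exponential (Chernoff) bounding technique. Write $S_n := \sum_{i=1}^n B_i$ and $p_i := \Exp[B_i] \le \delta$. The case $\delta = 0$ is trivial (then $B_i = 0$ almost surely and both sides equal $1$), so assume $\delta \in (0,1]$. For any $t > 0$, Markov's inequality applied to $e^{tS_n}$ gives
\[
\Pro\left(\frac1n \sum_{i=1}^n B_i \ge 2\delta\right) = \Pro\left(e^{tS_n} \ge e^{2tn\delta}\right) \le e^{-2tn\delta}\, \Exp\!\left[e^{tS_n}\right].
\]

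Next I would bound the moment generating function. By independence, $\Exp[e^{tS_n}] = \prod_{i=1}^n \Exp[e^{tB_i}]$, and for each $i$, since $B_i$ is Bernoulli$(p_i)$,
\[
\Exp\!\left[e^{tB_i}\right] = 1 + p_i(e^t - 1) \le \exp\!\left(p_i(e^t-1)\right) \le \exp\!\left(\delta(e^t-1)\right),
\]
using $1 + x \le e^x$ together with $e^t - 1 > 0$ and $p_i \le \delta$. Hence $\Exp[e^{tS_n}] \le \exp\!\left(n\delta(e^t-1)\right)$, and combining with the previous display,
\[
\Pro\left(\frac1n \sum_{i=1}^n B_i \ge 2\delta\right) \le \exp\!\left(n\delta\,(e^t - 1 - 2t)\right).
\]

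Finally I would evaluate the exponent at its minimizer in $t$: the function $t \mapsto e^t - 1 - 2t$ has derivative $e^t - 2$ and is minimized at $t = \ln 2$, where it equals $1 - 2\ln 2$. Choosing $t = \ln 2$ therefore yields
\[
\Pro\left(\frac1n \sum_{i=1}^n B_i \ge 2\delta\right) \le \exp\!\left(-(2\ln 2 - 1)\, n\delta\right),
\]
and since $\ln 2 = 0.693\ldots \ge 11/16$ we have $2\ln 2 - 1 \ge 3/8$, giving the claimed bound. There is no substantive obstacle here — this is a routine one-sided Chernoff estimate; the only points to watch are keeping the direction of $p_i \le \delta$ consistent with the upper-tail bound (replacing $p_i$ by the larger value $\delta$ is what is needed), and verifying the numerical inequality $2\ln 2 - 1 \ge 3/8$ at the end.
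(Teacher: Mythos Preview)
Your proof is correct. The paper takes a different but closely related route: it invokes Bernstein's inequality (Theorem 2.8.4 in Vershynin) for the centered variables $B_i - \Exp[B_i]$, using that $\operatorname{Var}(B_i) \le \delta$ and $|B_i - \Exp[B_i]| \le 1$, which gives exactly $\exp\bigl(-\tfrac{(n\delta)^2/2}{n\delta + n\delta/3}\bigr) = \exp(-\tfrac{3}{8}n\delta)$. Your direct Chernoff argument is more self-contained and in fact yields the slightly sharper constant $2\ln 2 - 1 \approx 0.386$ before relaxing to $3/8$; the paper's citation of Bernstein is shorter but relies on an external result. Either approach is entirely adequate here.
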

\begin{proof}
By Bernstein's inequality (Theorem 2.8.4 in \cite{vershynin2018high}),
\begin{align*} 
&\Pro\left(\sum_{i=1}^n \left(B_i-\delta \right)\ge n\delta \right)  \le  \Pro\left(\sum_{i=1}^n (B_i-\Exp[B_i]) \ge n\delta \right)  \leq \exp\left\{-\frac{(n\delta)^2/2}{n\delta+(n\delta)/3} \right\}.
\end{align*}
The proof is complete.
\end{proof}

\begin{lemma} \label{lemma: max inequality of bounded mat}
Let $\{\bA_{\ell} \in\mR^{m\times n}: \ell \in[N]\}$ be a sequence of independent random matrices. Assume that for a constant $L>0$,  $\op{\bA_{\ell}}_{\textup{max}}\le L$ for all $\ell \in[N]$. Then, for any $\epsilon>0$ we have
$$\Pro\left(\op{\sum_{\ell=1}^N (\bA_\ell -\Exp[\bA_{\ell}]) }_{\textup{max}}\ge \epsilon \right) \le 2mn\exp\left\{-\frac{\epsilon^2}{2 L^2 N} \right\}.$$
\end{lemma}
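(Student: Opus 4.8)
The plan is to reduce the matrix max-norm bound to a family of scalar concentration inequalities, one for each entry of the matrix, and then close with a union bound. First I would write $\bM := \sum_{\ell=1}^N (\bA_\ell - \Exp[\bA_\ell])$ and observe that, by the definition of the max-norm, the event $\{\op{\bM}_{\textup{max}} \ge \epsilon\}$ coincides with the union over $i \in [m]$ and $j \in [n]$ of the scalar events $\{|\bM_{i,j}| \ge \epsilon\}$. Hence by the union bound it suffices to establish, for each fixed pair $(i,j)$, the estimate $\Pro(|\bM_{i,j}| \ge \epsilon) \le 2\exp(-\epsilon^2/(2L^2 N))$ and multiply by $mn$.

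Next, fix $(i,j)$ and set $X_\ell := (\bA_\ell)_{i,j}$ for $\ell \in [N]$. These are independent, since the matrices $\bA_1,\ldots,\bA_N$ are independent, and the hypothesis $\op{\bA_\ell}_{\textup{max}} \le L$ gives $|X_\ell| \le L$ almost surely, so each $X_\ell$ takes values in an interval of length at most $2L$. Then $\bM_{i,j} = \sum_{\ell=1}^N (X_\ell - \Exp[X_\ell])$ is a sum of independent, centered, bounded random variables, and Hoeffding's inequality yields
$$
\Pro\bigl(|\bM_{i,j}| \ge \epsilon\bigr) \le 2\exp\left(-\frac{2\epsilon^2}{\sum_{\ell=1}^N (2L)^2}\right) = 2\exp\left(-\frac{\epsilon^2}{2L^2 N}\right).
$$
Combining this with the union bound over the $mn$ entries gives exactly the stated inequality.

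The argument is entirely routine and I do not expect any substantive obstacle; the only points worth verifying are that the entrywise bound $|(\bA_\ell)_{i,j}| \le \op{\bA_\ell}_{\textup{max}} \le L$ supplies the bounded-range hypothesis required by Hoeffding's inequality, and that the assumed independence of the matrices passes to independence of their $(i,j)$-entries. Both are immediate.
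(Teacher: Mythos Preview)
Your proposal is correct and matches the paper's proof essentially verbatim: the paper also fixes an entry $(i,j)$, applies Hoeffding's inequality to the bounded, centered scalars $(\bA_\ell)_{i,j}-\Exp[(\bA_\ell)_{i,j}]$ to get the $2\exp(-\epsilon^2/(2L^2N))$ bound, and finishes with a union bound over the $mn$ entries.
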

\begin{proof} Fix $i \in [m]$ and $j \in [n]$. 
By the Hoeffding bound (see Proposition 2.5 in \cite{wainwright2019high}), we have
\begin{align*}
    \Pro\left(\left|\sum_{\ell=1}^N ((\bA_{\ell})_{i,j} -\Exp[(\bA_{\ell})_{i,j}]) \right| \ge \epsilon \right) \le 2 \exp\left\{-\frac{\epsilon^2}{2 L^2 N} \right\}.
\end{align*}
The proof is then complete by the union bound.
\end{proof}

\subsection{Restricted Eigenvalues}
Let $\bA \in \mR^{n\times d}$ be a matrix, where $d \geq 2$. Following Definition 1 in \cite{pmlr-v23-rudelson12}, for $1 \leq s \leq d$ and $\kappa > 0$, we define
\begin{equation}\label{eq: K_RE}
    \frac{1}{K(s, \kappa, \bA)} := \min_{J \subseteq [d], |J|\le s}\min_{\|\bv_{J^c}\|_1 \le \kappa\|\bv_J\|_1} \frac{\|\bA v\|_2}{\|v_J\|_2}, \;\;\text{ for all } \bv\in\mR^d\backslash\{\bd{0}_d\}.
\end{equation}
This quantity is introduced to enable the application of results from \cite{pmlr-v23-rudelson12}, and is closely related to the restricted eigenvalue condition in Definition \ref{def: RE}, as demonstrated in the following lemma.

\begin{lemma} \label{lemma: zhou lemma 12}
Let $a>0$ and $\bA\in \mR^{n\times d}$. Further, let  $1 \leq s \leq d$ and $\kappa > 0$.

\begin{enumerate}[label=\roman*)]
\item If 
$1/{K(s, \kappa, \bA/\sqrt{n})} \ge a$,
then $\bA$ satisfies $\textup{RE}(s,\kappa,a^2/(1+\kappa))$.

\item If $\bA$ satisfies $\textup{RE}(s,\kappa, a)$, then $1/{K(s, \kappa, \bA/\sqrt{n})} \ge \sqrt{a}$.
\end{enumerate}
\end{lemma}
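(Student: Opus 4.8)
The statement to prove is Lemma~\ref{lemma: zhou lemma 12}, which relates the quantity $1/K(s,\kappa,\bA/\sqrt{n})$ defined in \eqref{eq: K_RE} to the restricted eigenvalue condition $\textup{RE}(s,\kappa,\rho)$ in Definition~\ref{def: RE}. Both notions are comparing $\|\bA\bv\|_2/\sqrt{n}$ to a norm of $\bv$ restricted to the cone $\mathcal{C}(s,\kappa)$; the only real difference is that $K(s,\kappa,\cdot)$ measures the denominator by $\|\bv_J\|_2$ for the ``heavy'' index set $J$, whereas $\textup{RE}$ measures it by the full $\|\bv\|_2$. So the whole proof is a matter of relating $\|\bv_J\|_2$ and $\|\bv\|_2$ on the cone, and this is entirely elementary.

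For part (i): assume $1/K(s,\kappa,\bA/\sqrt n)\ge a$, i.e. for every $\bv\neq\bd 0_d$ and every $J$ with $|J|\le s$ and $\|\bv_{J^c}\|_1\le\kappa\|\bv_J\|_1$ one has $\|\bA\bv\|_2/\sqrt n\ge a\|\bv_J\|_2$. Take any $\bv\in\mathcal{C}(s,\kappa)$ and let $J$ be the witnessing index set from Definition~\ref{def: RE}, so $|J|=s$ and $\|\bv_{J^c}\|_1\le\kappa\|\bv_J\|_1$. Then
\[
\|\bv\|_2^2 = \|\bv_J\|_2^2 + \|\bv_{J^c}\|_2^2 \le \|\bv_J\|_2^2 + \|\bv_{J^c}\|_1^2 \le \|\bv_J\|_2^2 + \kappa^2\|\bv_J\|_1^2 \le (1+\kappa^2 s)\|\bv_J\|_2^2,
\]
using $\|\cdot\|_2\le\|\cdot\|_1$ and $\|\bv_J\|_1\le\sqrt{s}\|\bv_J\|_2$. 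Hmm — this gives the factor $(1+\kappa^2 s)$, not $(1+\kappa)$. To get the cleaner $(1+\kappa)$ constant claimed in the lemma one instead uses $\|\bv_{J^c}\|_2\le\|\bv_{J^c}\|_1\le\kappa\|\bv_J\|_1$ together with... actually the sharper route is: on the cone, the standard bound is $\|\bv\|_1\le(1+\kappa)\|\bv_J\|_1\le(1+\kappa)\sqrt s\,\|\bv_J\|_2$, but we want $\|\bv\|_2$. The cleanest is $\|\bv\|_2\le\|\bv_J\|_2+\|\bv_{J^c}\|_2$ and then bound $\|\bv_{J^c}\|_2$. Since we only need an inequality of the stated \emph{form}, I would follow the argument in \cite{pmlr-v23-rudelson12} / the cited Zhou reference verbatim: from $n^{-1}\|\bA\bv\|_2^2\ge a^2\|\bv_J\|_2^2$ and $\|\bv\|_2^2\le(1+\kappa)\|\bv_J\|_2^2$ (which holds on the cone by the refinement that puts the largest-in-magnitude coordinates into $J$, so that $\|\bv_{J^c}\|_2^2\le\|\bv_{J^c}\|_1\|\bv_{J^c}\|_\infty\le\kappa\|\bv_J\|_1\cdot\|\bv_J\|_1/s\le\kappa\|\bv_J\|_2^2$), we conclude $n^{-1}\|\bA\bv\|_2^2\ge \tfrac{a^2}{1+\kappa}\|\bv\|_2^2$, which is exactly $\textup{RE}(s,\kappa,a^2/(1+\kappa))$.

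For part (ii): assume $\bA$ satisfies $\textup{RE}(s,\kappa,a)$, i.e. $n^{-1}\|\bA\bv\|_2^2\ge a\|\bv\|_2^2$ for all $\bv\in\mathcal{C}(s,\kappa)$. For any $\bv\neq\bd0_d$ and any $J$ with $|J|\le s$, $\|\bv_{J^c}\|_1\le\kappa\|\bv_J\|_1$, enlarge $J$ if necessary to have exactly $s$ elements (this only relaxes the cone condition), so $\bv\in\mathcal C(s,\kappa)$; then $\|\bA\bv\|_2/\sqrt n\ge\sqrt a\,\|\bv\|_2\ge\sqrt a\,\|\bv_J\|_2$. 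Taking the infimum over $\bv$ and $J$ gives $1/K(s,\kappa,\bA/\sqrt n)\ge\sqrt a$.

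I do not anticipate any genuine obstacle here — it is a bookkeeping lemma. The only point that requires a moment's care is the cone inequality $\|\bv\|_2^2\le(1+\kappa)\|\bv_J\|_2^2$ (as opposed to the weaker $(1+\kappa^2 s)$ bound that follows from crude norm comparisons); this is obtained by choosing $J$ in the definition of $K(\cdot)$ to consist of the $s$ largest-magnitude coordinates of $\bv$ and then using $\|\bv_{J^c}\|_\infty\le s^{-1}\|\bv_J\|_1$. I would simply cite \cite{pmlr-v23-rudelson12} and the Zhou reference for this identification rather than re-derive it in full, since the excerpt already invokes their framework.
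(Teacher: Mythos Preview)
Your proposal is correct and follows essentially the same route as the paper's proof: for part~(i) the paper also passes to the set $T_0$ of the $s$ largest-magnitude coordinates (noting that $T_0$ then also witnesses the cone constraint) and invokes the inequality $\|\bv\|_2^2\le(1+\kappa)\|\bv_{T_0}\|_2^2$---citing it as Lemma~12 of \cite{pmlr-v23-rudelson12} rather than deriving it via your $\|\bv_{T_0^c}\|_2^2\le\|\bv_{T_0^c}\|_1\|\bv_{T_0^c}\|_\infty\le\kappa\|\bv_{T_0}\|_1\cdot s^{-1}\|\bv_{T_0}\|_1\le\kappa\|\bv_{T_0}\|_2^2$ chain, which is precisely the content of that lemma. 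For part~(ii) the paper simply asserts $\bv\in\mathcal{C}(s,\kappa)$ and uses $\|\bv\|_2\ge\|\bv_J\|_2$, so your enlargement-of-$J$ remark is a minor detail the paper leaves implicit.
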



\begin{proof}
Recall the definition of $\mathcal{C}(s,k)$ in Definition \ref{def: RE}. We begin with the first claim. Let $\bv \in \mathcal{C}(s,k)$, and denote by $T_0$ the locations of the $s$ largest coefficients of $\bv$ in absolute values. Then by definition, 
$\|\bv_{T_0^c}\|_1 \le \kappa \|\bv_{T_0}\|_1$. Due to \eqref{eq: K_RE}, 
\begin{align*}
   \frac{\|\bA \bv\|_2^2}{n\|\bv_{T_0}\|_2^2} \geq  \frac{1}{[K(s,\kappa,\bA/\sqrt{n})]^2}  \ge a^2. 
\end{align*}
By Lemma 12 of \cite{pmlr-v23-rudelson12}, we have 
$\|\bv\|_2^2 \leq (1+\kappa) \|\bv_{T_0}\|_2^2$, which implies that $n^{-1}\|\bA \bv\|_2^2 \geq (a^2/(1+\kappa)) \|\bv\|_2^2$. Thus, $\bA$ satisfies $\textup{RE}(s,\kappa,a^2/(1+\kappa))$.

Now, we focus on the second claim. Let $\bv \in\mR^d\backslash\{\bd{0}_d\}$, and $J \subseteq [d]$ such that $|J| \leq s$ and $\|\bv_{J^c}\|_1 \le \kappa\|\bv_J\|_1$. By definition, $\bv \in \mathcal{C}(s,k)$. As a result,
\begin{align*}
   a \leq  \frac{\|\bA \bv\|_2^2}{n\|\bv\|_2^2} \leq \frac{\|(\bA/\sqrt{n}) \bv\|_2^2}{\|\bv_J\|_2^2}.
\end{align*}
Since the above holds for any such pair $\bv,J$, we have $1/{K(s, \kappa, \bA/\sqrt{n})} \ge \sqrt{a}$. The proof is complete. 
\end{proof}

Recall the $\psi_2$ norm defined before Assumption \ref{assumption: covariates_bandit}, and $\mC(s,\kappa)$  in Definition \ref{def: RE}.

\begin{lemma}\label{lemma: RE between population and design} 
Let $\bZ = (\bZ_1,\cdots,\bZ_n)'\in \mR^{n\times d}$
be a random matrix. Let $1 \leq s \leq d$, $\kappa, \alpha_Z >0$ and $0 < a< b$. Assume that $\bZ_1,\ldots,\bZ_n$ are i.i.d.~$\bR^d$-random vectors, and that for each vector $\bu \in \bR^{d}$ with $\|\bu\|_2 = 1$, $\|\bu'\bZ_1\|_{\psi_2} \leq \alpha_Z$. Further, let 
$\bA := \Exp[\bZ_1\bZ_1']$, and assume that 
$a\le \lmin{\bA} \le \lmax{\bA} \le b$. 
Then, there exists a constant $C>0$ depending only on $\alpha_Z,\kappa,a,b$, such that for $n\ge Cs\log d$, with probability at least $1-2e^{-n/C}$, 
$$\frac{a}{4}\|\bv\|_2^2\le  \bv'\left(\frac{1}{n}\sum_{i=1}^{n} \bZ_i \bZ_i'\right) \bv\le \frac{9b}{4}\|\bv\|_2^2,\quad \text{for all } \bv\in \mC(s,\kappa).$$
\end{lemma}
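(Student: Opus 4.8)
\textbf{Proof plan for Lemma \ref{lemma: RE between population and design}.}
The plan is to deduce the two-sided bound over the cone $\mC(s,\kappa)$ from a uniform comparison between the sample quadratic form $\bv'(\frac1n\sum_i \bZ_i\bZ_i')\bv$ and the population quantity $\bv'\bA\bv$, restricted to the cone. First I would recall that for $\bv\in\mC(s,\kappa)$ with $J$ the index set of the $s$ largest entries, one has $\|\bv\|_1\le (1+\kappa)\|\bv_J\|_1\le (1+\kappa)\sqrt{s}\|\bv\|_2$, so the cone lies in the set of vectors with $\|\bv\|_1/\|\bv\|_2\le (1+\kappa)\sqrt s$. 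This is the standard reduction that converts a restricted-eigenvalue statement into a statement about the sample covariance operator acting on an $\ell_1$-ball intersected with the $\ell_2$-sphere.

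The key step is to invoke a sub-Gaussian sample-covariance concentration result — for instance the deviation bound of \cite{pmlr-v23-rudelson12} (their main theorem, as packaged through the quantity $K(s,\kappa,\bZ/\sqrt n)$ in \eqref{eq: K_RE}), or equivalently a Mendelson-type small-ball / one-sided bound together with a matching upper bound. Concretely: since each $\bZ_i$ is isotropic-after-whitening sub-Gaussian with parameter controlled by $\alpha_Z$ and $a\le\lmin\bA\le\lmax\bA\le b$, there is a constant $C_0=C_0(\alpha_Z,\kappa,a,b)$ such that for $n\ge C_0 s\log d$, with probability at least $1-2e^{-n/C_0}$,
$$
\sup_{\bv\in\mC(s,\kappa),\,\|\bv\|_2=1}\left| \bv'\Big(\tfrac1n\sum_{i=1}^n \bZ_i\bZ_i'\Big)\bv - \bv'\bA\bv \right|\le \frac{a}{4}\wedge \frac{5b}{4}.
$$
Given this, for every $\bv\in\mC(s,\kappa)$ we get
$\bv'(\frac1n\sum_i\bZ_i\bZ_i')\bv \ge \bv'\bA\bv - \frac{a}{4}\|\bv\|_2^2 \ge a\|\bv\|_2^2 - \frac a4\|\bv\|_2^2 \ge \frac a4\|\bv\|_2^2$,
and $\bv'(\frac1n\sum_i\bZ_i\bZ_i')\bv \le \bv'\bA\bv + \frac{5b}{4}\|\bv\|_2^2 \le b\|\bv\|_2^2 + \frac{5b}{4}\|\bv\|_2^2 = \frac{9b}{4}\|\bv\|_2^2$, which is exactly the claimed conclusion; absorbing $C_0$ and the various numeric constants into a single $C$ finishes the proof. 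Lemma \ref{lemma: zhou lemma 12} is available to translate between the $K(s,\kappa,\cdot)$ formulation and the $\textup{RE}(s,\kappa,\cdot)$ formulation if one prefers to quote \cite{pmlr-v23-rudelson12} verbatim rather than re-deriving the uniform deviation inequality.

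The main obstacle is obtaining the uniform deviation bound over the (non-compact, but scale-invariant) cone with the correct $s\log d$ sample-size scaling and the exponential-in-$n$ failure probability, rather than a polynomial one; this is where the sub-Gaussianity hypothesis $\|\bu'\bZ_1\|_{\psi_2}\le\alpha_Z$ is essential. The cleanest route is to cite the existing result (Rudelson–Zhou) directly: their Theorem gives, under exactly these sub-Gaussian and eigenvalue assumptions, that $1/K(s,3\kappa,\bZ/\sqrt n)$ is bounded below by a constant multiple of $\sqrt{\lmin\bA}$ with probability $1-2e^{-cn}$ once $n\gtrsim s\log d$, and an analogous upper bound on the operator norm restricted to the cone; combining these two one-sided statements via Lemma \ref{lemma: zhou lemma 12} yields the displayed inequality. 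I would therefore write the proof as a short reduction plus an appeal to that theorem, being careful only to track that the constants depend on $\alpha_Z,\kappa,a,b$ and nothing else (in particular not on $d$ or $n$), and that the logarithmic factor is $\log d$ and not $\log(d/s)$ or similar, which is all that the downstream applications (Lemmas \ref{lemma: Stage 1_restricted}, \ref{lemma: arm optimality sample}, \ref{lemma: prediction error controlled sample}) require.
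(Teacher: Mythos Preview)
Your proposal is correct and lands on essentially the same approach as the paper: both routes go through Theorem~15 of \cite{pmlr-v23-rudelson12} after whitening $\bZ_i\mapsto\bA^{-1/2}\bZ_i$ to make the rows isotropic. The paper's execution is slightly more direct than your additive-deviation framing: it applies the Rudelson--Zhou theorem with $\delta=1/2$ to obtain the multiplicative sandwich $\tfrac14\le \|\bZ\bv\|_2^2/(n\|\bA^{1/2}\bv\|_2^2)\le \tfrac94$ in one shot, and the constants $a/4$ and $9b/4$ then fall out immediately from $a\|\bv\|_2^2\le\|\bA^{1/2}\bv\|_2^2\le b\|\bv\|_2^2$---so your detour through an additive bound $|\text{sample}-\text{population}|\le \tfrac a4\wedge\tfrac{5b}{4}$ is unnecessary, and your ``cleanest route'' paragraph is already the whole proof.
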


\begin{proof}
Since $a\le \lmin{\bA}$, by Definition \ref{def: RE}, 
$n^{1/2} \bA^{1/2}$ satisfies $\textup{RE}(s,3\kappa, a)$ for any $s \geq 1,\kappa>0$. By Lemma \ref{lemma: zhou lemma 12}, $1/K(s,3\kappa,A^{1/2}) \geq \sqrt{a}$, where $K(\cdot)$ is defined in \eqref{eq: K_RE}.

Now, define $\bd{\Psi}_i := \bA^{-1/2}\bZ_i$ for $i\in[n]$, and $\bd{\Psi}=(\bd{\Psi}_1,\cdots,\bd{\Psi}_n)'=\bZ \bA^{-1/2}$. Then, $\bZ = \bd{\Psi}\bA^{1/2} $. First, since  $\Exp[\bd{\Psi}_1 \bd{\Psi}_1']$ is the $d$-by-$d$ identity matrix,  $\bd{\Psi}_1$ is isotropic as defined in Definition 5 of \cite{pmlr-v23-rudelson12}. Second, for any vector $\bu \in \bR^{d}$ with $\|\bu\|_2 = 1$,
\begin{align*}
   \|\bu' \bd{\Psi}_1\|_{\psi_2} =    \|(\bA^{-1/2}\bu)' \bZ_1\|_{\psi_2} \leq \|\bA^{-1/2}\bu\|_{2} \alpha_{Z}.
\end{align*}
Since $a \leq \lmin{\bA}$, we have $\|\bu' \bd{\Psi}_1\|_{\psi_2} \leq a^{-1/2} \alpha_Z$. 

Finally, we apply Theorem 15 in \cite{pmlr-v23-rudelson12} with $\delta=1/2$ and $p = q = d$ to obtain that there exists some constant $C>0$ depending only on $\alpha_Z, \kappa,a,b$ such that if $n \geq Cs\log(d)$, with probability at least $1-2e^{-n/C}$,
\begin{align*}
    \frac{1}{4} \leq \frac{\|\bd{\Psi}\bA^{1/2} \bv\|_2^2}{n \|\bA^{1/2} \bv\|_2^2} \leq \frac{9}{4}.
\end{align*}
Since  
$a\le \lmin{\bA} \le \lmax{\bA} \le b$, we have $a \|\bv\|_2^2\leq \|\bA^{1/2} \bv\|_2^2 \leq b \|\bv\|_2^2$. The proof is then complete since $\bZ = \bd{\Psi}\bA^{1/2}$.
\end{proof}

\section{Additional Simulation Results}
\label{app: more_simulations}
\underline{\textbf{Sequential Estimation.}}
We present additional simulation results for the sequential estimation setting, focusing on scenarios (a), (b), (c), and (d) as described in Section~\ref{sec: HSLR simulation}.
Figure \ref{fig: HSLR plot 1} displays the running cumulative estimation error from time $T/10$ to time $t \in [T/10, T]$, comparing the performance of OPT-Lasso, Lasso, and an oracle estimator that has access to the true support and applies ordinary least squares on it.
Two different combinations of the tuning parameters $C_0$ and $C_0^{\textup{hard}}$ are considered.
Table~\ref{fig: HSLR sensitivity 1} reports a sensitivity analysis showing the cumulative estimation error from time $T/10$ to time $T$ across these parameter combinations.

\medskip

\underline{\textbf{Bandit Setup.}} We present additional simulation results for the bandit setup, focusing on scenarios 
(a), (b), (c), (d), (g) and (h) in Section \ref{sec: bandit simulation}. In Figure \ref{fig: bandit plot 1}, we report the cumulative regret of the three-stage algorithm and the ``Two-stage with Lasso'' algorithm up to time $t$ for $t \in (\gamma_2, T]$.
In Table \ref{table: sensi bandit 1}, we present a sensitivity analysis of the three-stage algorithm to the tuning parameters $C_0$ and $C_0^{\textup{hard}}$. 

\begin{figure}[t!]
\centering
\includegraphics[width=14cm]{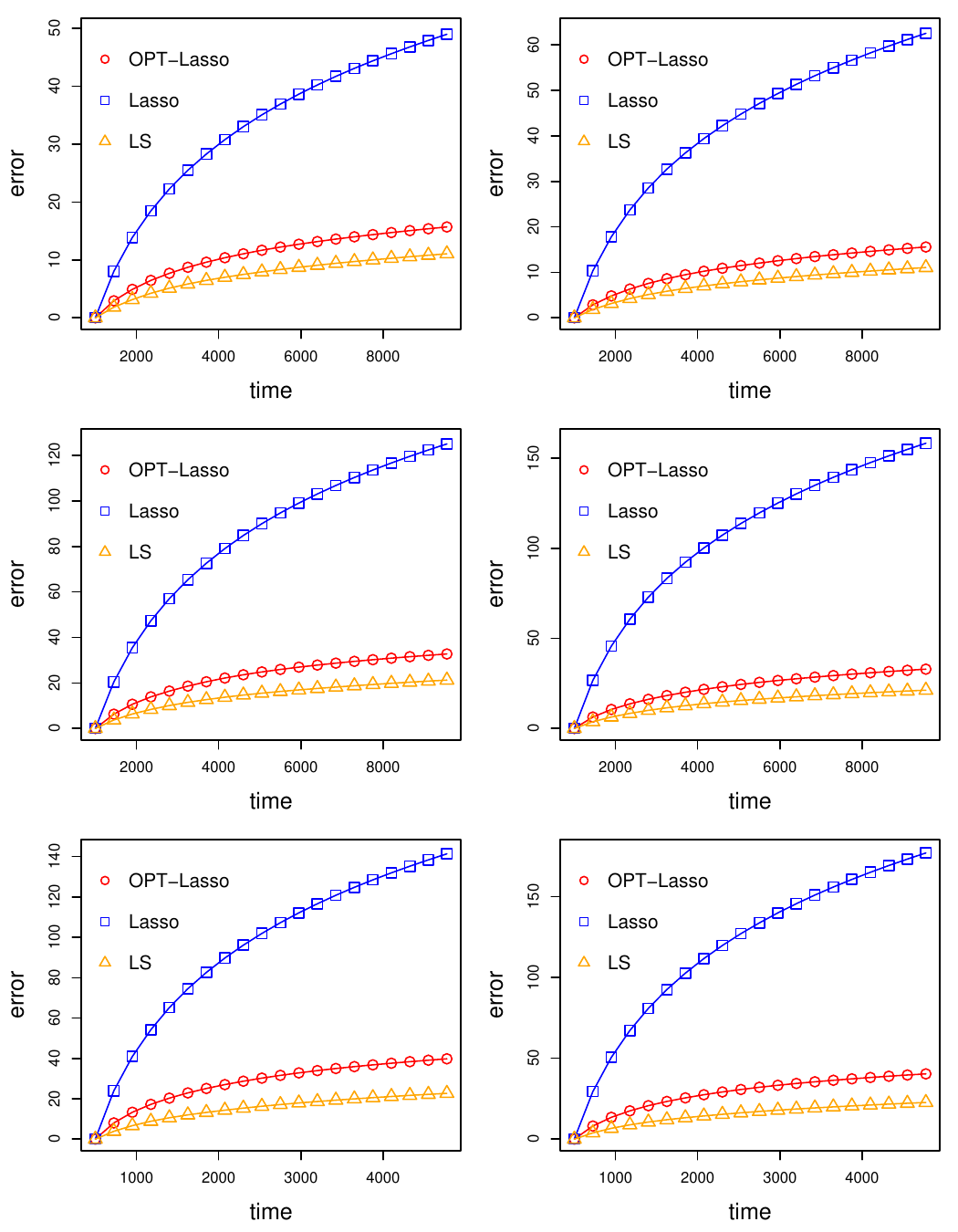}
\caption{The $x$-axis represents time, and the $y$-axis shows the cumulative estimation error from time $T/10$ to  time  $t \in [T/10, T]$.
The three rows correspond to scenarios (a), (b), and (d), respectively, while the first and second columns correspond to the parameter settings $(C_0 = 0.8,\ C_0^{\textup{hard}} = 0.6)$ and $(C_0 = 1,\ C_0^{\textup{hard}} = 0.4)$, respectively.
Each plot displays the running cumulative estimation error of OPT-Lasso, Lasso, and an oracle estimator (denoted as “LS”), which has access to the true support.
}
\label{fig: HSLR plot 1}
\end{figure} 

\begin{table}[t!]
\caption{
Under four scenarios, 
for different combinations of $C_0$ and $C_0^{\textup{hard}}$, we report the cumulative estimation error from time $T/10$ to $T$ for OPT-Lasso and Lasso. The combinations with the smallest cumulative errors are marked in bold.
} \label{fig: HSLR sensitivity 1}
\begin{tabular}{c|ccccc|c}
\hline
(a)  &      &        & \text{OPT-Lasso}      &        &            & \text{Lasso} \\ \hline
$C_0\;\backslash \; C_0^{\textup{hard}}$    & 0.2         & 0.4         & 0.6        & 0.8        & 2.0           & -      \\ \hline
0.4 & 176.8 $\pm$ 7.7 & 135.3 $\pm$ 2.1 & 100.0 $\pm$ 1.1  & 69.7 $\pm$ 1.0   & 16.1 $\pm$ 0.8  & 64.2 $\pm$ 1.4 \\ 
0.6 & 99.0 $\pm$ 1.1    & 56.7 $\pm$ 1.3  & 31.7 $\pm$ 0.6 & 20.1 $\pm$ 0.4 & 23.9 $\pm$ 1.4  & 45.7 $\pm$ 1.1 \\
0.8 & 46.3 $\pm$ 1.5  & 22.7 $\pm$ 0.5  & \textbf{16.0 $\pm$ 0.4}   & \textbf{15.9 $\pm$ 0.7} & 41.0 $\pm$ 2.3    & 52.5 $\pm$ 1.5 \\
1.0   & 23.1 $\pm$ 0.8  & \textbf{15.9 $\pm$ 0.3}  & 17.2 $\pm$ 0.4 & 21.9 $\pm$ 1.1 & 70.0 $\pm$ 3.9    & 64.6 $\pm$ 1.4 \\
1.2 & 16.7 $\pm$ 0.4  & 16.9 $\pm$ 0.4  & 20.4 $\pm$ 1.0   & 26.1 $\pm$ 0.8 & 102.4 $\pm$ 6.1 & 88.5 $\pm$ 1.9 \\   
\end{tabular}

\medskip

\begin{tabular}{c|ccccc|c}
\hline
(b)  &      &        & \text{OPT-Lasso}      &        &            & \text{Lasso} \\ \hline
$C_0\;\backslash \; C_0^{\textup{hard}}$    & 0.2         & 0.4         & 0.6         & 0.8         & 2.0          & -       \\ \hline
0.4 & 761.6 $\pm$ 4.9  & 482.0 $\pm$ 26.4  & 356.3 $\pm$ 9.6 & 224.2 $\pm$ 4.9 & 33.6 $\pm$ 0.9 & 203.9 $\pm$ 2.6 \\
0.6 & 322.9 $\pm$ 5.4 & 156.5 $\pm$ 2.6 & 74.7 $\pm$ 1.2  & 42.1 $\pm$ 1.0    & 53.2 $\pm$ 1.7 & 124.4 $\pm$ 1.4 \\
0.8 & 115.7 $\pm$ 1.6 & 47.4 $\pm$ 1.0    & \textbf{33.3 $\pm$ 0.9}  & 35.3 $\pm$ 0.8  & 94.4 $\pm$ 4.9 & 120.7 $\pm$ 2.2 \\
1.0   & 46.3 $\pm$ 1.0    & \textbf{33.5 $\pm$ 0.7}  & 39.6 $\pm$ 1.0    & 47.1 $\pm$ 1.2  & 171.2 $\pm$ 6.0  & 162.4 $\pm$ 2.1 \\
1.2 & \textbf{34.2 $\pm$ 0.9}  & 37.4 $\pm$ 0.9  & 50.6 $\pm$ 1.7  & 65.8 $\pm$ 1.8  & 272.9 $\pm$ 13.0 & 222.0 $\pm$ 4.0    \\  
\end{tabular}

\medskip

\begin{tabular}{c|ccccc|c}
\hline
(c)  &      &        & \text{OPT-Lasso}      &        &            & \text{Lasso} \\ \hline
$C_0\;\backslash \; C_0^{\textup{hard}}$    & 0.2          & 0.4         & 0.6        & 0.8         & 2.0           & -       \\ \hline
0.4 & 1108.6 $\pm$ 2.6 & 794.9 $\pm$ 2.3 & 515.1 $\pm$ 2.0  & 308.5 $\pm$ 1.7 & 23.8 $\pm$ 1.0    & 258.9 $\pm$ 1.8 \\
0.6 & 468.6 $\pm$ 2.2  & 207.8 $\pm$ 2.1 & 79.6 $\pm$ 0.8 & 33.3 $\pm$ 0.5  & 36.3 $\pm$ 1.8  & 113.8 $\pm$ 1.4 \\
0.8 & 144.3 $\pm$ 1.1  & 40.8 $\pm$ 0.5  & \textbf{21.1 $\pm$ 0.4} & 21.7 $\pm$ 0.4  & 68.3 $\pm$ 2.1  & 79.9 $\pm$ 1.1  \\
1.0   & 38.6 $\pm$ 0.5   & \textbf{20.6 $\pm$ 0.4}  & 24.2 $\pm$ 0.5 & 30.4 $\pm$ 0.6  & 121.3 $\pm$ 6.4 & 94.3 $\pm$ 1.9  \\
1.2 & \textbf{20.4 $\pm$ 0.4}   & 23.8 $\pm$ 0.5  & 31.9 $\pm$ 0.7 & 42.2 $\pm$ 0.9  & 196.1 $\pm$ 5.5 & 118.7 $\pm$ 1.4 \\  
\end{tabular}

\medskip

\begin{tabular}{c|ccccc|c}
\hline
(d)  &      &        & \text{OPT-Lasso}      &        &            & \text{Lasso} \\ \hline
$C_0\;\backslash \; C_0^{\textup{hard}}$    & 0.2          & 0.4         & 0.6         & 0.8         & 2.0           & -      \\ \hline
0.4 & 1122.1 $\pm$ 3.2 & 815.6 $\pm$ 2.8 & 534.3 $\pm$ 2.1 & 316.8 $\pm$ 1.6 & 41.6 $\pm$ 0.8  & 294.1 $\pm$ 1.5 \\
0.6 & 487.8 $\pm$ 1.9  & 225.8 $\pm$ 1.4 & 97.3 $\pm$ 1.3  & 49.9 $\pm$ 0.6  & 73.7 $\pm$ 1.2  & 157.7 $\pm$ 0.9 \\
0.8 & 162.1 $\pm$ 1.3  & 60.3 $\pm$ 1.0    & \textbf{40.5 $\pm$ 0.8}  & 42.9 $\pm$ 0.6  & 136.5 $\pm$ 2.2 & 144.9 $\pm$ 1.0   \\
1.0   & 57.4 $\pm$ 0.6   & \textbf{41.0 $\pm$ 0.6}    & 47.6 $\pm$ 0.7  & 61.4 $\pm$ 1.4  & 239.0 $\pm$ 3.5   & 182.0 $\pm$ 1.2   \\
1.2 & \textbf{40.2 $\pm$ 0.7}   & 48.5 $\pm$ 1.0    & 66.3 $\pm$ 1.1  & 87.0 $\pm$ 1.4    & 403.0 $\pm$ 5.1   & 241.6 $\pm$ 1.4  
\end{tabular}
\end{table}

\begin{figure}[!t]
\centering
\includegraphics[width=\textwidth]{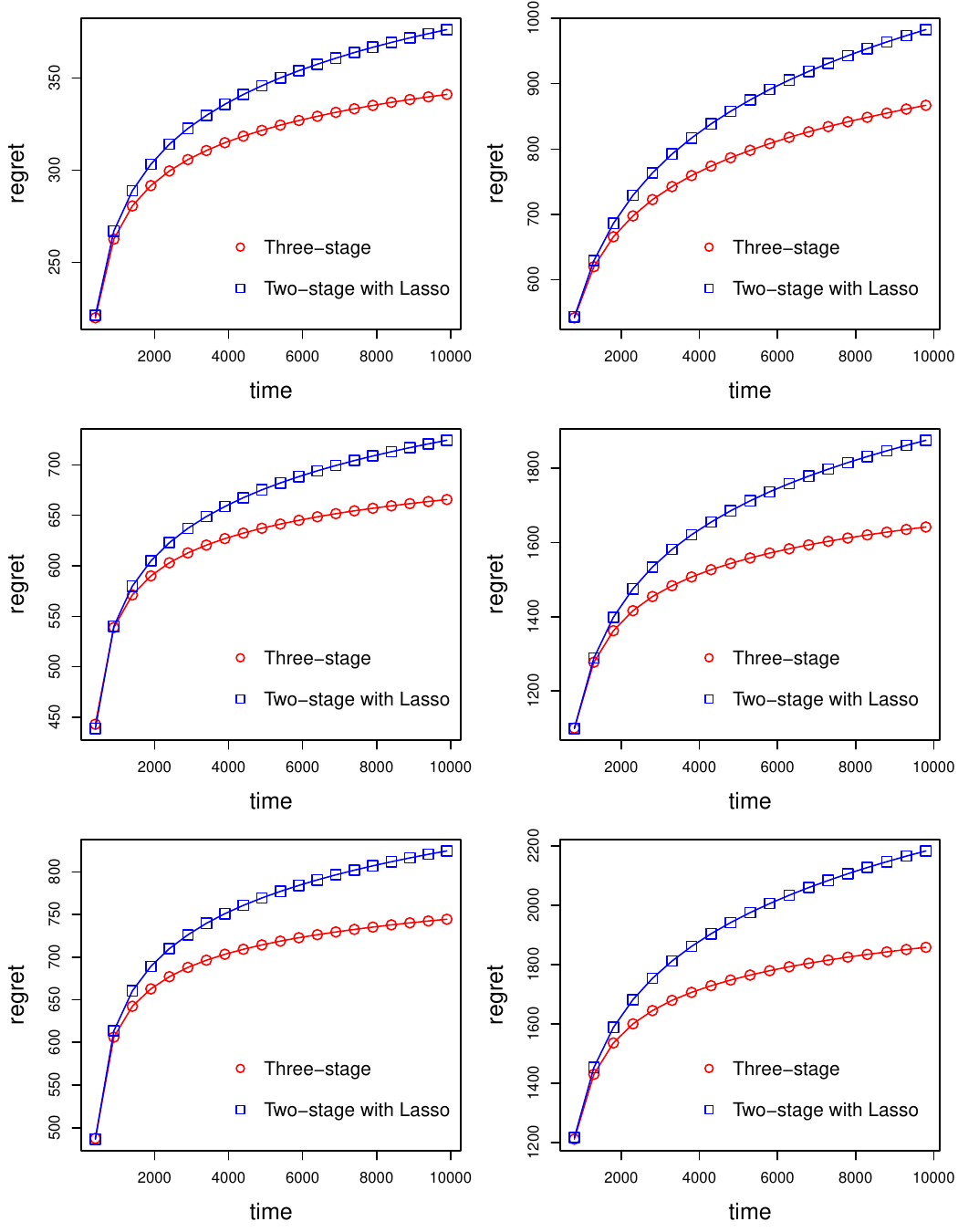}
\caption{The $x$-axis represents time, and the $y$-axis denotes the cumulative regret up to time $t \in [\gamma_2, T]$. The three rows in the left column correspond to scenarios (a), (c), and (g), with $C_0 = 2$ and $C_0^{\textup{hard}} = 0.6$, while the three rows in the right column correspond to scenarios (b), (d), and (h), with $C_0 = 2$ and $C_0^{\textup{hard}} = 1$.} \label{fig: bandit plot 1}
\end{figure}
\begin{table}[!t]
\centering
\caption{The tables show the cumulative regret for scenarios (a), (b), (c), (d), (g) and (h). The parameter combinations with the smallest cumulative regret are marked in bold.
}\label{table: sensi bandit 1}
$(a)$
\begin{tabular}{c|ccccc}
\hline
$C_0\;\backslash \; C_0^{\textup{hard}}$ & 0.2         & 0.6         & 1.0           & 1.5         & 2.0             \\ \hline
1.0   & 826.2 $\pm$ 2.9 & 607.3 $\pm$ 2.3 & 441.0 $\pm$ 1.4   & 355.4 $\pm$ 1.1  & 351.0 $\pm$ 1.1     \\
1.6 & 526.0 $\pm$ 2.1   & 361.4 $\pm$ 1.7 & \textbf{341.0 $\pm$ 1.5}   & 405.6 $\pm$ 2.2  & 503.2 $\pm$ 3.1   \\
2.0   & 416.8 $\pm$ 1.9 & \textbf{341.4 $\pm$ 1.6} & 388.6 $\pm$ 2.0   & 520.9 $\pm$ 3.1  & 721.5 $\pm$ 5.0     \\
2.6 & 356.2 $\pm$ 1.7 & 398.0 $\pm$ 2.0     & 525.4 $\pm$ 3.3 & 813.5 $\pm$ 5.4  & 1258.5 $\pm$ 8.5  \\
3.0   & 366.7 $\pm$ 1.8 & 466.1 $\pm$ 2.7 & 670.5 $\pm$ 4.4 & 1107.5 $\pm$ 7.9 & 1740.3 $\pm$ 10.7 \\ 
\end{tabular}
\medskip

$(b)$
\begin{tabular}{c|ccccc}
\hline
$C_0\;\backslash \; C_0^{\textup{hard}}$ & 0.2          & 0.6          & 1.0            & 1.5          & 2.0             \\ \hline
1.0   & 2166.8 $\pm$ 4.4 & 1879.5 $\pm$ 4.0   & 1546.9 $\pm$ 2.8 & 1211.7 $\pm$ 2.4 & 1014.4 $\pm$ 2.2  \\
1.6 & 1664.5 $\pm$ 4.0   & 1237.2 $\pm$ 3.7 & 967.2 $\pm$ 3.3  & 872.5 $\pm$ 3.1  & 920.5 $\pm$ 3.5   \\
2.0   & 1401.0 $\pm$ 3.7   & 1000.3 $\pm$ 3.3 & \textbf{869.1 $\pm$ 2.8}  & 916.1 $\pm$ 3.6  & 1082.7 $\pm$ 4.5  \\
2.6 & 1112.5 $\pm$ 3.4 & 877.6 $\pm$ 3.1  & 918.3 $\pm$ 3.6  & 1131.9 $\pm$ 4.9 & 1510.1 $\pm$ 7.9  \\
3.0   & 1009.3 $\pm$ 3.5 & 891.7 $\pm$ 3.4  & 1018.4 $\pm$ 4.6 & 1365.0 $\pm$ 6.6   & 1929.6 $\pm$ 10.5 \\  
\end{tabular}
\medskip

$(c)$
\begin{tabular}{c|ccccc}
\hline
$C_0\;\backslash \; C_0^{\textup{hard}}$ & 0.2         & 0.6          & 1.0            & 1.5          & 2.0             \\ \hline
1.0   & 1668.7 $\pm$ 3.7 & 1155.4 $\pm$ 3.1 & 793.2 $\pm$ 1.4  & 664.0 $\pm$ 1.2    & 688.4 $\pm$ 1.3   \\
1.6 & 1007.7 $\pm$ 2.7 & 674.1 $\pm$ 2.1  & 673.4 $\pm$ 2.1  & 797.8 $\pm$ 2.7  & 1017.9 $\pm$ 4.0    \\
2.0   & 781.8 $\pm$ 2.5  & \textbf{666.0 $\pm$ 2.2}    & 776.2 $\pm$ 2.7  & 1028.8 $\pm$ 4.2 & 1418.2 $\pm$ 6.2  \\
2.6 & 682.1 $\pm$ 2.4  & 774.3 $\pm$ 2.7  & 1042.1 $\pm$ 4.0   & 1576.0 $\pm$ 7.1   & 2360.6 $\pm$ 10.4 \\
3.0   & 702.3 $\pm$ 2.4  & 905.3 $\pm$ 3.3  & 1309.9 $\pm$ 5.6 & 2096.3 $\pm$ 9.2 & 3161.9 $\pm$ 12.3 \\
 \end{tabular}
 \medskip

$(d)$
\begin{tabular}{c|ccccc}
\hline
$C_0\;\backslash \; C_0^{\textup{hard}}$ & 0.2          & 0.6          & 1.0            & 1.5          & 2.0             \\ \hline
1.0   & 4263.2 $\pm$ 6.4 & 3614.1 $\pm$ 6.3 & 2832.8 $\pm$ 3.0   & 2108.4 $\pm$ 2.6 & 1779.4 $\pm$ 2.3  \\
1.6 & 3177.1 $\pm$ 5.6 & 2242.0 $\pm$ 4.5   & 1741.6 $\pm$ 4.1 & 1674.8 $\pm$ 4.1 & 1852.6 $\pm$ 4.9  \\
2.0   & 2623.9 $\pm$ 5.1 & 1810.8 $\pm$ 4.1 & \textbf{1644.5 $\pm$ 4.2} & 1827.6 $\pm$ 4.6 & 2218.2 $\pm$ 7.0    \\
2.6 & 2070.1 $\pm$ 4.9 & \textbf{1652.3 $\pm$ 3.9} & 1813.7 $\pm$ 5.1 & 2309.7 $\pm$ 7.1 & 3084.4 $\pm$ 10.8 \\
3.0   & 1861.3 $\pm$ 4.4 & 1706.8 $\pm$ 4.4 & 2039.1 $\pm$ 5.9 & 2790.1 $\pm$ 9.1 & 3869.8 $\pm$ 13.3 \\ 
\end{tabular}\\

\medskip

$(g)$
\begin{tabular}{c|ccccc}
\hline
$C_0\;\backslash \; C_0^{\textup{hard}}$  & 0.2              & 0.6              & 1.0                & 1.5               & 2.0                 \\ \hline
1.0   & 2129.6 $\pm$ 4.7 & 1404.3 $\pm$ 3.4 & 905.1 $\pm$ 2.8  & \textbf{745.2 $\pm$ 2.2}   & 779.9 $\pm$ 2.4   \\
1.6 & 1239.8 $\pm$ 3.3 & 759.1 $\pm$ 2.5  & 755.6 $\pm$ 2.5  & 904.0 $\pm$ 3.2     & 1154.0 $\pm$ 4.4    \\
2.0   & 923.6 $\pm$ 2.7  & \textbf{744.8 $\pm$ 2.5}  & 863.4 $\pm$ 2.7  & 1157.1 $\pm$ 4.7  & 1609.4 $\pm$ 6.7  \\
2.6 & 765.5 $\pm$ 2.4  & 856.2 $\pm$ 2.9  & 1164.2 $\pm$ 4.5 & 1763.5 $\pm$ 7.4  & 2636.6 $\pm$ 10.6 \\
3.0   & 778.8 $\pm$ 2.5  & 999.2 $\pm$ 3.8  & 1460.9 $\pm$ 6.3 & 2336.2 $\pm$ 10.1 & 3456.2 $\pm$ 13.8 \\ 
\end{tabular}

\medskip

$(h)$
\begin{tabular}{c|ccccc}
\hline
$C_0\;\backslash \; C_0^{\textup{hard}}$ & 0.2          & 0.6          & 1.0            & 1.5          & 2.0             \\ \hline
1.0   & 4935.7 $\pm$ 7.1 & 4227.7 $\pm$ 6.8 & 3246.3 $\pm$ 5.9 & 2336.2 $\pm$ 4.9 & 1986.5 $\pm$ 4.8  \\
1.6 & 3817.5 $\pm$ 6.3 & 2597.0 $\pm$ 5.6   & 1963.8 $\pm$ 4.3 & 1906.1 $\pm$ 4.6 & 2130.3 $\pm$ 5.8  \\
2.0   & 3138.4 $\pm$ 5.8 & 2069.7 $\pm$ 4.9 & \textbf{1861.3 $\pm$ 4.7} & 2104.8 $\pm$ 5.8 & 2557.4 $\pm$ 7.8  \\
2.6 & 2422.7 $\pm$ 5.3 & \textbf{1854.3 $\pm$ 4.8} & 2069.9 $\pm$ 5.4 & 2658.4 $\pm$ 8.4 & 3575.2 $\pm$ 12.4 \\
3.0   & 2149.2 $\pm$ 4.9 & 1909.7 $\pm$ 4.8 & 2297.2 $\pm$ 7.0   & 3197.0 $\pm$ 10.9  & 4505.6 $\pm$ 16.2 
\end{tabular}
\end{table}
\end{appendix}

\end{document}